\theoremstyle{plain}
\newtheorem{thm}{Theorem}[section]
\newtheorem{lemma}[thm]{Lemma}
\newtheorem*{thm*}{Theorem}
\newtheorem*{lemma*}{Lemma}
\newtheorem*{prop*}{Proposition}
\newtheorem*{cor*}{Corollary}
\newtheorem*{conj*}{Conjecture}
\theoremstyle{definition}
\theoremstyle{remark}
\newtheorem*{rmk}{Remark}
\numberwithin{equation}{section}
\newcommand{\cc}{\mathbb{C}}
\newcommand{\ca}{\mathcal{A}}
\newcommand{\clr}{\mathcal{R}}
\begin{document}

\title{Classification of $5-$Dimensional Complex Nilpotent Leibniz Algebras}


\author{Ismail Demir}
\address{Department of Mathematics, North Carolina State University, Raleigh, NC 27695-8205}
\email{idemir@ncsu.edu}
\subjclass[2010]{17A32 , 17A60 }

\subjclass[2000]{Primary }

\date{04/10/2017}

\begin{abstract}
Leibniz algebras are certain generalization of Lie algebras. In this paper we give the classification of $5-$dimensional complex non-Lie nilpotent Leibniz algebras. We use the canonical forms for the congruence classes of matrices of bilinear forms to classify the case $\dim(A^2)=3$ and $\dim(Leib(A))=1$ which can be applied to higher dimensions. The remaining cases are classified via direct method. 
\end{abstract}

\maketitle
\bigskip
\section{Introduction}

Leibniz algebras introduced by Loday \cite{l1993} are natural generalization of Lie algebras. Such algebras had been first considered by Bloh who called them D-algebras \cite{b1965}, considering their connections with derivations. A left (resp. right) Leibniz algebra $A$ is a $\mathbb{C}$-vector space equipped with a bilinear product $[ \, , \,] : A \times A \longrightarrow A$ such that the left (resp. right) multiplication operator is a derivation. A left Leibniz algebra is not necessarily a right Leibniz algebra. In this paper, we consider (left) Leibniz algebras following Barnes \cite{b2011}. Any Lie algebra is a Leibniz algebra, but the converse is not true.  For a Leibniz algebra $A$, we define the ideals $A^1 = A $ and $A^i = [A, A^{i-1}]$ for $i \in \mathbb{Z}_{\geq 2}$. The Leibniz algebra $A$ is said be  abelian if $A^2 = 0$. It is nilpotent of class $c$ if $A^{c+1}=0$ but $A^c \neq 0$ for some positive integer $c$. A subspace $Leib(A)={\rm span}\{[a, a] \mid a\in A\}$ is an important abelian ideal of $A$. $A$ is a Lie algebra if and only if $Leib(A)= \{0\}$. The center of $A$ is denoted by $Z(A)=\{x\in A\mid [a, x]=0=[x, a] \  \rm{for \ all} \ a\in A\}$. If $A$ is a nilpotent Leibniz algebra of class $c$ then $A^c\subseteq Z(A)$. $A$ is nilpotent and $\dim(Leib(A))=1$ implies $Leib(A)\subseteq Z(A)$. We say that a Leibniz algebra $A$ is split if it can be written as a direct sum of two nontrivial ideals. Otherwise $A$ is said to be non-split.  If $A$ is a non-split Leibniz algebra then $Z(A)\subseteq A^2$. A $n-$dimensional Leibniz algebra $A$ is said to be filiform Leibniz algebra if $\dim(A^i)=n-i$, for $2\le i\le n$. In this paper we consider only non-split non-Lie Leibniz algebras over the field of complex numbers $\mathbb{C}$.

The classification of nilpotent Lie algebras is still unsolved and it is a very complicated problem. So far the complete classification of complex nilpotent Lie algebras of dimension $n\le7$ is given \cite{g1998}. The lack of antisymmetry property in Leibniz algebras makes the problem of classification of non-Lie nilpotent Leibniz algebras even harder. The complete classification of non-Lie nilpotent Leibniz algebras over $\cc$ of dimension $n\le4$ is known (see \cite{ao2006},  \cite{ao1999}, \cite{cil2012}, \cite{dms2014},  \cite{dms2016c}, \cite{l1993}, \cite{rr2012}), and some partial results on higher dimensions (see \cite{dms2016}, \cite{rb2010}). In this paper we give the classification of $5-$dimensional non-Lie nilpotent Leibniz algebras using congruence classes of bilinear forms as in (\cite{dms2014}, \cite{dms2016}, \cite{dms2016c}) for the classification of $5-$dimensional non-Lie nilpotent Leibniz algebras with $\dim(A^2)=3$ and $\dim(Leib(A))=1$. For the remaining cases we use direct method. Our approach of using congruence classes of bilinear forms can be used to classify $n-$dimensional Leibniz algebras with $\dim(A^2)=n-2$ and $\dim(Leib(A))=1$ which we plan to pursue in future.

We give the following Lemmas which are very useful. 
\begin{lemma} \label{L4} Let $A$ be a $n-$dimensional nilpotent Leibniz algebra with $\dim(Z(A))=n-k$. If $\dim(Leib(A))=1$ then $\dim(A^2)\le \frac{k^2-k+2}{2}$.
\end{lemma}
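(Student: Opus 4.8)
The plan is to reduce the bracket to a complement of the center and then observe that, modulo the one-dimensional ideal $Leib(A)$, the resulting bilinear map is alternating, so its image is governed by an exterior square.

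First I would fix a subspace $V\subseteq A$ with $A=V\oplus Z(A)$; then $\dim V=k$. Since every $z\in Z(A)$ satisfies $[z,a]=[a,z]=0$, writing arbitrary elements of $A$ as $v+z$ with $v\in V,\ z\in Z(A)$ and expanding the bracket shows that $[a,b]$ only depends on the $V$-components of $a$ and $b$; hence $A^2=\operatorname{span}\{[v,w]\mid v,w\in V\}$. Also recall $Leib(A)\subseteq A^2$, since $[a,a]\in[A,A]=A^2$, and by hypothesis $\dim(Leib(A))=1$.

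Next I would use the Leibniz identity in the form $[a,b]+[b,a]=[a+b,a+b]-[a,a]-[b,b]\in Leib(A)$, valid for all $a,b\in A$. Composing the bracket with the quotient map gives a bilinear map $\psi\colon V\times V\longrightarrow A^2/Leib(A)$ with $\psi(v,v)=0$ for all $v\in V$, i.e. $\psi$ is alternating. Therefore $\psi$ factors through $\bigwedge^2 V$, and by the first step the image of $\psi$ spans all of $A^2/Leib(A)$; consequently
\[
\dim\bigl(A^2/Leib(A)\bigr)\le \dim\textstyle\bigwedge^2 V=\binom{k}{2}=\frac{k^2-k}{2}.
\]
Adding $\dim(Leib(A))=1$ yields $\dim(A^2)\le\frac{k^2-k}{2}+1=\frac{k^2-k+2}{2}$, as claimed.

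I do not expect a serious obstacle here: the argument is essentially bookkeeping once one notices the alternating structure mod $Leib(A)$. The only points requiring a little care are the verification that $A^2$ is genuinely spanned by brackets of elements of the fixed complement $V$ (so that the image of $\psi$ really spans the quotient) and the standard linear-algebra fact that an alternating bilinear map factors through the exterior square. The same reasoning in fact bounds $\dim(A^2)$ in terms of the codimension of $Z(A)$ for any nilpotent Leibniz algebra, with the $+1$ replaced by $\dim(Leib(A))$.
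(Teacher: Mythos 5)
Your proof is correct and is essentially the paper's argument recast in coordinate-free language: the paper fixes a basis $\{e_1,\dots,e_k\}$ of a complement to $Z(A)$ and observes directly that $[e_i,e_j]+[e_j,e_i]\in Leib(A)$, which is exactly your statement that the induced map $V\times V\to A^2/Leib(A)$ is alternating, giving the same count $\binom{k}{2}+1$. One cosmetic remark: the identity $[a,b]+[b,a]=[a+b,a+b]-[a,a]-[b,b]$ is just bilinearity of the bracket together with the definition of $Leib(A)$, not an instance of the Leibniz identity, and as you note the argument never actually uses nilpotency.
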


\begin{proof} We have $Leib(A)\subseteq Z(A)$. Let $Leib(A)=\rm span\{e_n\}$ and extend this to a basis $\{e_{k+1}, e_{k+2},\ldots, e_{n-1}, e_n\}$ for $Z(A)$. Then the nonzero products in $A=\rm span\{e_1, e_2, \ldots, e_k, e_{k+1}, \ldots, e_n\}$ are given by
\begin{align*}
[e_r, e_r]=\theta_re_n, [e_i, e_j]=\sum^{n-1}_{t=1}\alpha^t_{ij}e_t+\beta_{ij}e_n, [e_j, e_i]=-\sum^{n-1}_{t=1}\alpha^t_{ij}e_t+\gamma_{ji}e_n.
\end{align*}
for $1\le r, i, j \le k,  i\neq j$. Then $dim(A^2)\le$ \{number of $(i, j)$'s where $1\le i < j\le k\}+1$. Note that the number of $(i, j)$'s where $1\le i < j\le k$ is equal to $\frac{k^2-k}{2}$. Hence $\dim(A^2)\le \frac{k^2-k+2}{2}$.
\end{proof}

\begin{lemma} \label{L5} Let $A$ be a $n-$dimensional nilpotent Leibniz algebra with $\dim(A^2)=n-k, \dim(Leib(A))=1$ and $\dim(A^3)=t$. Then 
\begin{description}
\item[(i)] $n\le t+\frac{k^2+k+2}{2}$ 
\item[(ii)] $n\le t+\frac{k^2+k}{2}$ if $Leib(A)\subseteq A^3$
\end{description}
\end{lemma}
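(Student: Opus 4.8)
The plan is to run a dimension count analogous to Lemma \ref{L4}, but now working modulo $A^3$. The one fact that needs a genuine argument is that the lower central series of a \emph{left} Leibniz algebra satisfies $[A^i,A^j]\subseteq A^{i+j}$; in particular $[A^2,A]\subseteq A^3$. I would prove this by induction on $i$: for $i=1$ it is the definition $[A,A^j]=A^{j+1}$, and for the inductive step one writes $A^{i+1}=[A,A^i]$ and applies the Leibniz identity in the form $[[a,b],c]=[a,[b,c]]-[b,[a,c]]$ with $a\in A$, $b\in A^i$, $c\in A^j$, so that both terms land in $A^{i+j+1}$ by the inductive hypothesis together with the definition of the series.

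With this in hand, pick $x_1,\dots,x_k\in A$ whose images form a basis of $A/A^2$; this is possible since $\dim(A^2)=n-k$. Writing $V=\mathrm{span}\{x_1,\dots,x_k\}$ and $A=V+A^2$, bilinearity gives $A^2=[V,V]+[V,A^2]+[A^2,V]+[A^2,A^2]$, and by the previous step the last three summands lie in $A^3$. Hence the $k^2$ products $[x_i,x_j]$ ($1\le i,j\le k$) span $A^2$ modulo $A^3$, and therefore also modulo $Leib(A)+A^3$. Next I would use $\dim(Leib(A))=1$ to shrink this spanning set: since $[x_r,x_r]\in Leib(A)$ and $[x_i,x_j]+[x_j,x_i]\in Leib(A)$, modulo $Leib(A)+A^3$ the diagonal products vanish and $[x_i,x_j]\equiv-[x_j,x_i]$, leaving only the $\frac{k^2-k}{2}$ products with $i<j$ as a spanning set. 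Thus $\dim\big(A^2/(Leib(A)+A^3)\big)\le \frac{k^2-k}{2}$.

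It remains to compute $\dim(Leib(A)+A^3)$ and read off the two bounds. Since $Leib(A)\subseteq A^2$ and $\dim(Leib(A))=1$, there are two possibilities. If $Leib(A)\subseteq A^3$ then $Leib(A)+A^3=A^3$, so $n-k-t=\dim\big(A^2/(Leib(A)+A^3)\big)\le\frac{k^2-k}{2}$, which rearranges to $n\le t+\frac{k^2+k}{2}$, proving (ii). Otherwise $Leib(A)\cap A^3=0$, so $\dim(Leib(A)+A^3)=t+1$ and $n-k-t-1\le\frac{k^2-k}{2}$, i.e. $n\le t+\frac{k^2+k+2}{2}$; and since this bound is weaker than the one in (ii), (i) holds in all cases. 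The only real obstacle is justifying $[A^2,A]\subseteq A^3$ in the left-Leibniz setting, since everything else is bookkeeping with a one-dimensional $Leib(A)$; I would also double-check whether the paper records $[A^i,A^j]\subseteq A^{i+j}$ elsewhere, in which case even that step is immediate.
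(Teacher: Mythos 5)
Your proof is correct and follows essentially the same counting argument as the paper: span $A^2$ modulo $A^3$ by the brackets $[x_i,x_j]$ of a complement to $A^2$, then use $[x,x]\in Leib(A)$ and $[x_i,x_j]+[x_j,x_i]\in Leib(A)$ to reduce to the $\frac{k^2-k}{2}$ products with $i<j$ modulo $Leib(A)+A^3$, and adjust by $\dim(Leib(A)+A^3)=t$ or $t+1$. The only point where you go beyond the paper is the explicit verification that $[A^2,A]\subseteq A^3$ via the left Leibniz identity (the paper absorbs this into its coordinate expressions for the products), which is a welcome clarification but not a different method.
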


\begin{proof} First suppose $Leib(A)\subseteq A^3$. Let $Leib(A)=\rm span\{e_n\}$. Then we can extend this to a basis $\{e_{n-t+1}, e_{n-t+2},\ldots, e_{n-1}, e_n\}$ for $A^3$ and a basis $\{e_{k+1}, e_{k+2},\ldots, e_{n-1}, e_n\}$ for $A^2$. Then the nonzero products in $A=\rm span\{e_1, e_2, \ldots, e_k, e_{k+1}, \ldots, e_n\}$ are given by
\begin{align*}
[e_r, e_r]=\theta_re_n, [e_i, e_j]=\sum^{n-1}_{m=k+1}\alpha^t_{ij}e_t+\beta_{ij}e_n, [e_j, e_i]=-\sum^{n-1}_{m=k+1}\alpha^t_{ij}e_t+\gamma_{ji}e_n, \\ [e_p, e_s]=\sum^{n-1}_{m=n-t+1}\alpha^t_{ij}e_t+\theta_{ij}e_n, [e_s, e_p]=-\sum^{n-1}_{m=n-t+1}\alpha^t_{ij}e_t+\delta_{ji}e_n,
\end{align*}
for $1\le i < j \le k, \ \ k+1\le s \le n-1$ and $1\le r, p\le n$. Note that $dim(A^2)\le$ \{number of $(i, j)$'s where $1\le i < j\le k\}+\dim(A^3)$. Note that the number of $(i, j)$'s where $1\le i < j\le k$ is equal to $\frac{k^2-k}{2}$. Hence $n\le t+\frac{k^2+k}{2}$. This completes the proof of part (ii). Similarly we can prove part (i). In that case notice that $dim(A^2)\le$ \{number of $(i, j)$'s where $1\le i < j\le k\}+\dim(A^3)$+1 because we may have $Leib(A)\not\subseteq A^3$. Therefore, we obtain $n\le t+\frac{k^2+k+2}{2}$.
\end{proof}

\maketitle
\bigskip
\section{Classification of Nilpotent Leibniz Algebras With $\dim(A^2)=n-2$ and $\dim(Leib(A))=1$}

Let $A$ be a $n-$dimensional complex non-Lie nilpotent Leibniz algebra with $\dim(A^2)=n-2$ and $\dim(Leib(A))=1$. Choose $Leib(A)={\rm span}\{e_n\}$, and extend this to a basis $\{e_3, e_4, \ldots, e_{n-1}, e_n\}$ for $A^2$. Let $V$ be a complementary subspace to $A^2$ in $A$ so that $A=A^2\oplus V$. Then for any $u, v\in V$, we have $[u, v]= \alpha_3 e_3+ \alpha_4 e_4+\alpha_{n-1}e_{n-1}+\alpha_ne_n$ for some $\alpha_i \in \cc, \ 3\le i \le n$. Define the bilinear form $f( \ , \ ): V\times V\rightarrow \cc$ by $f(u, v)= \alpha_n$ for all $u, v\in V$. 
\par
Using ( \cite{t2010}, Theorem 3.1), we choose a basis $\{e_1, e_2\}$ for $V$ so that the matrix $N$ of the bilinear form $f( \ , \ ): V\times V\rightarrow \cc$ is one of the following:

\begin{scriptsize}
\vskip 7pt
\noindent $(i) \left( \begin{array}{cc}
0&1 \\
-1&0
\end{array} \right), \qquad
(ii) \left( \begin{array}{cc}
1&0 \\
0&0
\end{array} \right), \qquad
(iii) \left( \begin{array}{cc}
1&0 \\
0&1
\end{array} \right), \qquad
(iv) \left( \begin{array}{cc}
0&1 \\
-1&1
\end{array} \right), \qquad
(v) \left( \begin{array}{cc}
0&1 \\
c&0
\end{array} \right)$\qquad

\noindent 
\vskip 7pt 
\end{scriptsize}
${\rm where} \ \ c \neq 1, -1$. However, since $e_n\in Leib(A)$ we observe that $N$ cannot be the matrix (i). We can reduce the number of possible matrices to two using the following Lemma.

\begin{lemma} \label{L7} The Leibniz algebras corresponding to the matrices (ii) and (iv) are isomorphic. Additionally, the Leibniz algebras corresponding to the matrices (iii) and (v) are isomorphic.
\end{lemma}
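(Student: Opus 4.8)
The plan is to exhibit explicit linear changes of basis realizing the two claimed isomorphisms, by writing down the structure equations attached to each canonical matrix $N$ and then transforming them. First I would fix the setup: for a matrix $N=(n_{ij})$ the algebra has basis $\{e_1,e_2,e_3,\dots,e_n\}$ with $A^2=\mathrm{span}\{e_3,\dots,e_n\}$, $Leib(A)=\mathrm{span}\{e_n\}$, and $[e_1,e_2]=\alpha_3e_3+\cdots+\alpha_{n-1}e_{n-1}+n_{12}e_n$, $[e_2,e_1]=\alpha'_3e_3+\cdots+\alpha'_{n-1}e_{n-1}+n_{21}e_n$, $[e_1,e_1]=n_{11}e_n$, $[e_2,e_2]=n_{22}e_n$, together with whatever products of $e_1,e_2$ with $A^2$ are forced by the Leibniz identity and nilpotency. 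The point is that the only data distinguishing cases (ii), (iv) and (iii), (v) at the level of $V\times V\to\mathbb{C}$ is the diagonal-plus-antidiagonal part recorded by $N$; the rest of the multiplication table can be normalized independently.

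Second, for the pair (ii) $\sim$ (iv): matrix (ii) gives $[e_1,e_1]=e_n$, $[e_2,e_2]=0$, $[e_1,e_2]$ and $[e_2,e_1]$ having zero $e_n$-component, while matrix (iv) gives $[e_1,e_1]=0$, $[e_2,e_2]=e_n$, $[e_1,e_2]=\cdots+e_n$, $[e_2,e_1]=\cdots-e_n$. I would look for new generators $e_1'=ae_1+be_2$, $e_2'=ce_1+de_2$ (mod $A^2$), whose self-products and cross-products reproduce the (iv) pattern starting from the (ii) pattern: concretely one needs the quadratic form $q(x,y)=f(xe_1+ye_2,xe_1+ye_2)$ attached to the symmetric part to match, and the antisymmetric part of $f$ to match, simultaneously. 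Since a $2\times2$ matrix splits as symmetric $+$ antisymmetric and the canonical forms in (\cite{t2010}, Theorem~3.1) are representatives of $\mathrm{GL}_2$-congruence classes, the two matrices (ii) and (iv) are \emph{not} congruent as bilinear forms — but they can still yield isomorphic Leibniz algebras because an algebra isomorphism need not preserve the splitting $A=A^2\oplus V$: it may send $e_i'\in V$ to an element with a nonzero $A^2$-component. So the substitution I actually use will be of the form $e_1'=ae_1+be_2+(\text{something in }A^2)$, and the freedom to add $A^2$-terms is exactly what lets $f$ change outside its congruence class. The same strategy handles (iii) $\sim$ (v): matrix (iii) is the identity, matrix (v) is $\left(\begin{smallmatrix}0&1\\c&0\end{smallmatrix}\right)$ with $c\neq\pm1$, and after rescaling one checks these two produce the same Leibniz multiplication once the $A^2$-components of $[e_1,e_2]$ and $[e_2,e_1]$ are absorbed.

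The main obstacle I anticipate is bookkeeping rather than conceptual: one must verify that the candidate change of basis is compatible with \emph{all} the structure constants, i.e. that after substituting $e_i'$ the induced products $[e_i',e_j]$ for $e_j\in A^2$, and the Leibniz identity relating them, remain consistent and still force $Leib(A)=\mathrm{span}\{e_n\}$ and $\dim(A^2)=n-2$. In particular I need to check that the $A^2$-components I am free to choose can genuinely be chosen — this uses that $A$ is nilpotent and non-split, so $Z(A)\subseteq A^2$ and the relevant products into $A^2$ are sufficiently flexible (cf. Lemma~\ref{L4}). A secondary subtlety is the role of the parameter $c$ in case (v): I should confirm the isomorphism with (iii) is uniform in $c$ (for all admissible $c\neq\pm1$) rather than only for special values, which amounts to checking the transformation's defining equations are solvable over $\mathbb{C}$ for every such $c$. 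Once these compatibility checks go through, writing out the two explicit isomorphisms completes the proof.
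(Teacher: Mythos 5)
Your overall strategy --- writing out the two multiplication tables and exhibiting an explicit change of basis, while recognizing that (ii)/(iv) and (iii)/(v) are \emph{not} congruent as bilinear forms so the isomorphism must exploit freedom beyond $\mathrm{GL}(V)$-congruence --- is exactly the paper's. However, the specific degree of freedom you propose to exploit is not the one that does the work, and the step ``the substitution I actually use will be of the form $e_1'=ae_1+be_2+(\text{something in }A^2)$'' would fail in general. Shifting $e_1',e_2'$ by elements of $A^2$ alters the $e_n$-component of $[e_1',e_2']$ only through products $[e_i,w]$ and $[w,e_j]$ with $w\in A^2$; these are unconstrained structure constants and may all vanish (for instance whenever $A^2\subseteq Z(A)$). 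In that situation your substitution changes the form $f$ only by congruence together with a rescaling of $e_n$, both of which preserve rank and symmetry type --- so the rank-$1$ matrix (ii) can never reach the rank-$2$ matrix (iv), nor the symmetric matrix (iii) the non-symmetric matrix (v), by this device alone.

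The mechanism that actually works, and that the paper uses, is to change the basis of $A^2$ rather than the $V$-part of $x_1,x_2$: since $\dim(A^2)=n-2\ge2$, the product $[e_1,e_2]$ must have a nonzero component $\alpha_m e_m$ outside $\mathrm{span}\{e_n\}$ (otherwise $A^2\subseteq\mathrm{span}\{e_n\}+A^3$ and, as $e_n\in Z(A)$, nilpotency forces $\dim(A^2)\le1$), and one replaces $e_m$ by $x_m=\alpha_me_m+\mu e_n$ for a suitable $\mu$, so that the unwanted $e_n$-component of $[x_1,x_2]$ is absorbed into the $x_m$-direction instead of the $x_n$-direction. Concretely the paper takes, for (iv)$\to$(ii), $x_1=e_2$, $x_2=e_1$, $x_m=\alpha_me_m+e_n$, $x_n=e_n$; and for (v)$\to$(iii), $x_1=e_1+\tfrac12e_2$, $x_2=-ie_1+\tfrac{i}{2}e_2$ (which symmetrizes the form), $x_n=\tfrac12(1+c)e_n$ (solvable uniformly since $c\ne-1$), and $x_m=\alpha_me_m+\tfrac{i}{2}(1-c)e_n$ (which kills the antisymmetric discrepancy). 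If you replace your ``$+(\text{something in }A^2)$'' device by this modification of the $A^2$-basis and record the nonvanishing of some $\alpha_m$, your outline becomes the paper's proof; as written, the defining equations for your candidate substitution are not solvable in general.
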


\begin{proof} If $N$ is the matrix $(ii)$ we have the nontrivial products in $A$ given by 
\\ $[e_1, e_1]=e_n, [e_1, e_2]=\alpha_3 e_3+ \alpha_4e_4+\ldots+\alpha_{n-1}e_{n-1}=-[e_2, e_1]$
and $[e_i, e_j]=\beta_3e_3+\beta_4e_4+\ldots+\beta_ne_n$ for the rest of $i, j's , 1\le i, j \le n-1$.
\\ If $N$ is the matrix $(iv)$ we have the nontrivial products in $A$ given by 
\\ $[e_2, e_2]=e_n, [e_1, e_2]=\alpha_3 e_3+ \alpha_4e_4+\ldots+\alpha_{n-1}e_{n-1}+e_n=-[e_2, e_1]$
and $[e_i, e_j]=\beta_3e_3+\beta_4e_4+\ldots+\beta_ne_n$ for the rest of $i, j's , 1\le i, j \le n-1$. Here the base change $x_1=e_2, x_2=e_1, x_k=e_k, x_m=\alpha_me_m+e_n, x_n=e_n$ for some m such that $\alpha_m\neq0$, for k such that $3\le k \le n-1$; shows that these two algebras are isomorphic.
\par If $N$ is the matrix $(iii)$ we have the nontrivial products in $A$ given by 
\\ $[e_1, e_1]=e_n, [e_1, e_2]=\alpha_3 e_3+ \alpha_4e_4+\ldots+\alpha_{n-1}e_{n-1}=-[e_2, e_1], [e_2, e_2]=e_n,$ and $[e_i, e_j]=\beta_3e_3+\beta_4e_4+\ldots+\beta_ne_n$ for the rest of $i, j's , 1\le i, j \le n-1$.
\\ If $N$ is the matrix $(v)$ we have the nontrivial products in $A$ given by 
\\ $[e_1, e_2]=\alpha_3 e_3+ \alpha_4e_4+\ldots+\alpha_{n-1}e_{n-1}+e_n, [e_2, e_1]=-\alpha_3 e_3- \alpha_4e_4-\ldots-\alpha_{n-1}e_{n-1}+ce_n$
and $[e_i, e_j]=\beta_3e_3+\beta_4e_4+\ldots+\beta_ne_n$ for the rest of $i, j's , 1\le i, j \le n-1$. The base change $x_1=e_1+\frac{1}{2}e_2, x_2=-ie_1+\frac{i}{2}e_2, x_k=e_k, x_m=\alpha_me_m+\frac{i}{2}(1-c)e_n, x_n=\frac{1}{2}(1+c)e_n$ for some m such that $\alpha_m\neq0$, for k such that $3\le k \le n-1$; shows that these two algebras are isomorphic.
\end{proof}

Let $A$ be a $5-$dimensional complex non-split non-Lie nilpotent Leibniz algebra with $\dim(A^2)=3$ and $\dim(Leib(A))=1$. Then $\dim(A^3)\le2$. If $\dim(A^3)=0$ then we have $A^2=Z(A)$. Then from Lemma \ref{L4} we get $\dim(A^2)\le2$, which is a contradiction. Hence $\dim(A^3)=1$ or $2$. First let $\dim(A^3)=2$. Then $\dim(A^4)=0$ or $1$.

\begin{thm} \label{leib1} Let $A$ be a $5-$dimensional non-split non-Lie nilpotent Leibniz algebra with $\dim(A^2)=3$, $\dim(A^3)=2$, $ \dim(A^4)=1$, and $\dim(Leib(A))=1$. Then $A$ is isomorphic to a Leibniz algebra spanned by $\{x_1, x_2, x_3, x_4, x_5\}$ with the nonzero products given by one of  the following:
\begin{scriptsize}
\begin{description}
\item[$\ca_{1}$] $[x_1, x_1]=x_5, [x_1, x_2]=x_3=-[x_2, x_1], [x_1, x_3]=x_4=-[x_3, x_1], [x_1, x_4]=x_5=-[x_4, x_1]$.
\item[$\ca_{2}$] $[x_1, x_1]=x_5, [x_1, x_2]=x_3=-[x_2, x_1], [x_1, x_3]=x_4=-[x_3, x_1], [x_2, x_3]=x_5=-[x_3, x_2], [x_1, x_4]=x_5=-[x_4, x_1]$.
\item[$\ca_{3}$] $[x_1, x_1]=x_5, [x_1, x_2]=x_3=-[x_2, x_1], [x_2, x_3]=x_4=-[x_3, x_2], [x_2, x_4]=x_5=-[x_4, x_2]$.
\item[$\ca_{4}$] $[x_1, x_1]=x_5, [x_1, x_2]=x_3=-[x_2, x_1], [x_1, x_3]=x_5=-[x_3, x_1], [x_2, x_3]=x_4=-[x_3, x_2], [x_2, x_4]=x_5=-[x_4, x_2]$.
\item[$\ca_{5}(\alpha)$] $[x_1, x_1]=x_5, [x_1, x_2]=x_3=-[x_2, x_1], [x_2, x_2]=x_5, [x_1, x_3]=x_4=-[x_3, x_1], [x_2, x_3]=\alpha x_5=-[x_3, x_2], [x_1, x_4]=x_5=-[x_4, x_1], \quad \alpha\in\cc$.
\item[$\ca_{6}$] $[x_1, x_1]=x_5, [x_1, x_2]=x_3, [x_2, x_1]=-x_3+x_5, [x_1, x_3]=x_4=-[x_3, x_1], [x_1, x_4]=x_5=-[x_4, x_1]$.
\item[$\ca_{7}$] $[x_1, x_1]=x_5, [x_1, x_2]=x_3, [x_2, x_1]=-x_3+x_5, [x_1, x_3]=x_4=-[x_3, x_1], [x_2, x_3]=x_5=-[x_3, x_2], [x_1, x_4]=x_5=-[x_4, x_1]$.
\end{description}
\end{scriptsize}
\end{thm}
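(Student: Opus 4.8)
The plan is to work with the structural constraints forced by the hypotheses $\dim(A^2)=3$, $\dim(A^3)=2$, $\dim(A^4)=1$, and $\dim(Leib(A))=1$, and then split into the two surviving normal-form cases for the bilinear form $f$. First I would record the chain of ideals: since $\dim(A^4)=1$ and $A$ is nilpotent, $A^5=0$, so $A^4\subseteq Z(A)$; fixing $A^4={\rm span}\{x_5\}$ we have $x_5\in Z(A)$, and since $\dim(Leib(A))=1$ with $Leib(A)\subseteq Z(A)$, I would first check whether $Leib(A)=A^4$ or not — the filiform-like tower $A\supset A^2\supset A^3\supset A^4$ with dimensions $5,3,2,1$ means $A^3={\rm span}\{x_4,x_5\}$ and $A^2={\rm span}\{x_3,x_4,x_5\}$ for a suitable basis, with $[A,x_i]$ moving each $x_i$ one step down the tower (up to elements further down). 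Then by Lemma~\ref{L7} only the bilinear-form matrices $(ii)$ and $(iii)$ need to be considered, i.e. either exactly one of the generators squares into $Leib(A)$, or two independent combinations do (equivalently, after normalization, $[x_1,x_1]=x_5$ alone, or $[x_1,x_1]=[x_2,x_2]=x_5$).

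**Case analysis.**

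In the case of matrix $(ii)$ I would take generators $x_1,x_2$ with $[x_1,x_1]=x_5$, $[x_1,x_2]=-[x_2,x_1]\in A^2$ (no $x_5$-component, by the definition of $f$), and I would use the Leibniz identity $[x_1,[x_1,x_2]]=[[x_1,x_1],x_2]+[x_1,[x_1,x_2]]$ type relations — more precisely the left-multiplication-is-a-derivation rule applied to $L_{x_1}$ and $L_{x_2}$ — to propagate the products down the tower. The subcase split will be governed by which generator "drives" the descent $A^2\to A^3\to A^4$: whether $[x_1,x_3]$ or $[x_2,x_3]$ (where $x_3:=[x_1,x_2]$) lands in $A^3\setminus A^4$, and whether both $[x_1,x_4]$ and $[x_2,x_4]$ contribute to $A^4$. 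Working through the Jacobi/Leibniz constraints and then removing residual freedom by base changes of the form $x_3\mapsto x_3+(\ast)x_5$, $x_4\mapsto (\ast)x_4+(\ast)x_5$, $x_2\mapsto x_2+(\ast)x_3+\cdots$ should collapse matrix $(ii)$ down to the families $\ca_1,\ca_2,\ca_3,\ca_4$ (the "$\alpha$" does not yet appear here since the diagonal of $f$ has rank $1$), and matrix $(iii)$ — where both $[x_1,x_1]$ and $[x_2,x_2]$ equal $x_5$ — down to $\ca_5(\alpha)$, with the parameter $\alpha$ being the one genuinely invariant scalar (the coefficient of $x_5$ in $[x_2,x_3]$) that cannot be scaled away because the scaling freedom is already exhausted by normalizing the two diagonal entries. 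Finally the cases $\ca_6,\ca_7$ arise, I expect, from the matrix $(iv)$ branch before Lemma~\ref{L7} is invoked, or equivalently appear here because the reduction of $(ii)$ to $(iv)$ in Lemma~\ref{L7} requires some $\alpha_m\neq 0$; when that genericity fails one is left with the "non-symmetric" products $[x_1,x_2]=x_3$, $[x_2,x_1]=-x_3+x_5$, giving $\ca_6,\ca_7$.

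**Non-isomorphism.**

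The last step is to show the seven algebras (and the family $\ca_5(\alpha)$ for distinct $\alpha$) are pairwise non-isomorphic. I would separate them by computing a short list of invariants: whether $Leib(A)\subseteq A^3$ or $Leib(A)\subseteq A^4$ or neither; $\dim Z(A)$; whether $A$ is filiform (only $\ca_1$ and the symmetric-looking ones with a single generator driving the whole descent are) or has a two-generator descent pattern; the isomorphism type of the associated bilinear form $f$ (this already distinguishes the $(ii)$-type $\ca_1$–$\ca_4$, the $(iii)$-type $\ca_5$, and the $(iv)$-type $\ca_6,\ca_7$); and for $\ca_5(\alpha)$ versus $\ca_5(\beta)$, tracking how $\alpha$ transforms under the residual stabilizer of the normal form — I expect $\alpha$ to be a true modulus, so $\ca_5(\alpha)\cong\ca_5(\beta)$ iff $\alpha=\beta$.

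**Main obstacle.**

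The main obstacle I anticipate is not any single hard idea but the bookkeeping in the matrix-$(ii)$ case: there the descent can be driven by $x_1$, by $x_2$, or by both, the Leibniz identity gives several coupled scalar relations among the structure constants, and one must carefully exhaust the base-change freedom (which is itself constrained by the need to preserve the normal form of $f$ and the ideal $Leib(A)$) to be sure no two of $\ca_1,\ldots,\ca_4$ coincide and that the list is complete. Keeping the case tree organized — ideally by first fixing the image of the map $A/A^2 \to A^3/A^4$ induced by left multiplication — is what will make this tractable rather than a combinatorial explosion.
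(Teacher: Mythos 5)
Your overall strategy (reduce to the bilinear-form normal forms $(ii)$ and $(iii)$ via Lemma~\ref{L7}, impose the Leibniz identities, and exhaust base-change freedom) is the same as the paper's, and the preliminary structure you describe is essentially right once you add the short argument the paper gives to pin down $\dim Z(A)=1$, hence $Leib(A)=Z(A)=A^4$ (if $\dim Z(A)=2$, writing $A^2=A^3\oplus W$ with $W\subseteq Z(A)$ forces $A^3=[A,A^2]=A^4$, a contradiction). However, there are two concrete errors in your plan that would change the outcome. First, your attribution of $\ca_6,\ca_7$ to a degeneration of the reduction $(iv)\to(ii)$ in Lemma~\ref{L7} cannot be right: in this theorem the coefficient $\alpha_1$ of $e_3$ in $[e_1,e_2]$ is automatically nonzero (otherwise $e_3\notin A^2$), so the genericity hypothesis of that lemma never fails here. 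In the paper, $\ca_6$ and $\ca_7$ arise \emph{inside} the matrix-$(iii)$ branch, in the subcase $\beta_4\neq0$ with $\beta_1^2+\beta_4^2=0$: over $\mathbb{C}$ the rotation $x_1=\beta_1e_1+\beta_4e_2$, $x_2=\beta_4e_1-\beta_1e_2$ used to return to the clean form rescales $e_5$ by $\beta_1^2+\beta_4^2$ and therefore breaks down exactly on this isotropic locus. If you follow your plan as written you will either miss these two algebras or place them in a branch that is in fact empty, so the completeness of your list is not secured.

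Second, your predicted isomorphism criterion for the one-parameter family is wrong: the paper records $\ca_5(\alpha_1)\cong\ca_5(\alpha_2)$ if and only if $\alpha_2^4=\alpha_1^4$, not $\alpha_2=\alpha_1$. The residual stabilizer of the normal form $[x_1,x_1]=[x_2,x_2]=x_5$ is larger than you assume (it contains simultaneous rescalings of $x_1,x_2$ compensated on $x_3,x_4,x_5$, together with swaps and sign changes of the generators), and it acts on $\alpha$ with orbits $\{\pm\alpha,\pm i\alpha\}$. Since distinguishing the members of the list is part of what the theorem asserts, this is a genuine gap rather than a cosmetic one; the rest of your invariant-based separation of $\ca_1,\ldots,\ca_4$ from $\ca_5$ and from $\ca_6,\ca_7$ via the congruence class of $f$ is sound in spirit.
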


\begin{proof} 
We have $A^4\subseteq Z(A) \subset A^2$. Assume $\dim(Z(A))=2$. Note that $A^3\neq Z(A)$ since $A^4\neq0$. Take $W$ such that $A^2=A^3\oplus W$. Then $W\subseteq Z(A)$. $A^3=[A, A^2]=[A, A^3\oplus W]=A^4$ which is a contradiction. Therefore $\dim(Z(A))=1$, and $A^4=Z(A)$. Let $Leib(A)=Z(A)=A^4={\rm span}\{e_5\}$. Extend this to bases $\{e_4, e_5\}$ and $\{e_3, e_4, e_5\}$ of $A^3$ and $A^2$, respectively. Take $V=\rm span\{e_1, e_2\}$. 
\par If N is the matrix (ii) then the nontrivial products in $A$ given by 
\\ $[e_1, e_1]=e_5, [e_1, e_2]=\alpha_1e_3+\alpha_2e_4=-[e_2, e_1], [e_1, e_3]=\beta_1e_4+\beta_2e_5, [e_3, e_1]=-\beta_1e_4+\beta_3e_5, [e_2, e_3]=\beta_4e_4+\beta_5e_5, [e_3, e_2]=-\beta_4e_4+\beta_6e_5, [e_3, e_3]=\beta_7e_5, [e_1, e_4]=\gamma_1e_5, [e_4, e_1]=\gamma_2e_5, [e_2, e_4]=\gamma_3e_5, [e_4, e_2]=\gamma_4e_5, [e_3, e_4]=\gamma_5e_5, [e_4, e_3]=\gamma_6e_5, [e_4, e_4]=\gamma_7e_5$.
\\ From Leibniz identities we get the following equations:
\begin{equation}  \label{eq2}
\begin{cases}
\alpha_1(\beta_2+\beta_3)+\alpha_2(\gamma_1+\gamma_2)=0 & \qquad
\alpha_1(\beta_5+\beta_6)+\alpha_2(\gamma_3+\gamma_4)=0 \\
\beta_4\gamma_1=\alpha_1\beta_7+\alpha_2\gamma_6+\beta_1\gamma_3 & \qquad
\alpha_1\gamma_5+\alpha_2\gamma_7=0 \\
\beta_1(\gamma_1+\gamma_2)=0 & \qquad
\beta_1\gamma_4+\beta_4\gamma_1+\alpha_1\beta_7+\alpha_2\gamma_5=0 \\
\beta_1(\gamma_5+\gamma_6)=0 & \qquad
\beta_1\gamma_7=0 \\
\beta_1\gamma_3+\beta_4\gamma_2-\alpha_1\beta_7-\alpha_2\gamma_5=0 & \qquad
\beta_4(\gamma_3+\gamma_4)=0 \\
\beta_4(\gamma_5+\gamma_6)=0 & \qquad
\beta_4\gamma_7=0
\end{cases}
\end{equation}
So we have $\gamma_5=0=\gamma_6=\gamma_7$.
\\ \indent {\bf Case 1:} Let $\beta_4=0$. Then $\beta_1\neq0$ since $\dim(A^3)=2$. So by (\ref{eq2}) we obtain $\gamma_2=-\gamma_1$ and $\beta_3=-\beta_2$. If we assume $\gamma_4\neq0$ then we arrive contradiction using the equations (\ref{eq2}). Hence $\gamma_4=0$. Then by (\ref{eq2}) we have $\beta_7=0=\gamma_3$ and $\beta_6=-\beta_5$. 
\\ If $\beta_5=0$ then the base change $x_1=e_1, x_2=\frac{1}{\alpha_1\beta_1\gamma_1}e_2, x_3=\frac{1}{\alpha_1\beta_1\gamma_1}(\alpha_1e_3+\alpha_2e_4), x_4=\frac{1}{\beta_1\gamma_1}(\beta_1e_4+\beta_2e_5)+\frac{\alpha_2}{\alpha_1\beta_1}e_5, x_5=e_5$ shows that $A$ is isomorphic to the algebra $\ca_{1}$. If $\beta_5\neq0$ then the base change $x_1=\frac{\beta^{1/3}_5}{\alpha^{1/3}_1(\beta_1\gamma_1)^{2/3}}e_1, x_2=\frac{1}{(\alpha^2_1\beta_1\beta_5\gamma_1)^{1/3}}e_2, x_3=\frac{1}{\alpha_1\beta_1\gamma_1}(\alpha_1e_3+\alpha_2e_4), x_4=\frac{\beta^{1/3}_5}{\alpha^{1/3}_1(\beta_1\gamma_1)^{5/3}}(\beta_1e_4+\beta_2e_5)+\frac{\alpha_2\beta^{1/3}_5}{\alpha^{4/3}_1\beta_1^{5/3}\gamma^{2/3}_1}e_5, x_5=\frac{\beta^{2/3}_5}{\alpha^{2/3}_1(\beta_1\gamma_1)^{4/3}}e_5$ shows that $A$ is isomorphic to the algebra $\ca_{2}$. 
\\ \indent {\bf Case 2:} Let $\beta_4\neq0$. Then with the base change $x_1=\beta_4e_1-\beta_1e_2, x_2=e_2, x_3=e_3, x_4=e_4, x_5=\beta^2_4e_5$ we can make $\beta_1=0$. Then by (\ref{eq2}) we have $\beta_7=0=\gamma_1=\gamma_2, \beta_3=-\beta_2, \beta_6=-\beta_5$ and $\gamma_4=-\gamma_3$. 
\\ If $\beta_2=0$ then the base change $x_1=\alpha_1\beta_4\gamma_3e_1, x_2=e_2, x_3=\alpha_1\beta_4\gamma_3(\alpha_1e_3+\alpha_2e_4), x_4=\alpha^2_1\beta_4\gamma_3(\beta_4e_4+\beta_5e_5)+\alpha_1\alpha_2\beta_4\gamma^2_3e_5, x_5=(\alpha_1\beta_4\gamma_3)^2e_5$ shows that $A$ is isomorphic to the algebra $\ca_{3}$. If $\beta_2\neq0$ then the base change $x_1=\frac{\beta_4\gamma_3}{\alpha^2_1\beta^3_2}e_1, x_2=\frac{1}{\alpha_1\beta_2}e_2, x_3=\frac{\beta_4\gamma_3}{\alpha^3_1\beta^4_2}(\alpha_1e_3+\alpha_2e_4), x_4=\frac{\beta_4\gamma_3}{\alpha^3_1\beta^5_2}(\beta_4e_4+\beta_5e_5)+\frac{\alpha_2\beta_4\gamma^2_3}{\alpha^4_1\beta^5_2}e_5, x_5=(\frac{\beta_4\gamma_3}{\alpha^2_1\beta^3_2})^2e_5$ shows that $A$ is isomorphic to the algebra $\ca_{4}$. 
\par If N is the matrix (iii) then the nontrivial products in $A$ given by 
\\ $[e_1, e_1]=e_5, [e_1, e_2]=\alpha_1e_3+\alpha_2e_4=-[e_2, e_1], [e_2, e_2]=e_5, [e_1, e_3]=\beta_1e_4+\beta_2e_5, [e_3, e_1]=-\beta_1e_4+\beta_3e_5, [e_2, e_3]=\beta_4e_4+\beta_5e_5, [e_3, e_2]=-\beta_4e_4+\beta_6e_5, [e_3, e_3]=\beta_7e_5, [e_1, e_4]=\gamma_1e_5, [e_4, e_1]=\gamma_2e_5, [e_2, e_4]=\gamma_3e_5, [e_4, e_2]=\gamma_4e_5, [e_3, e_4]=\gamma_5e_5, [e_4, e_3]=\gamma_6e_5, [e_4, e_4]=\gamma_7e_5$.
\\ From Leibniz identities we get the equations (\ref{eq2}). So we have $\gamma_5=0=\gamma_6=\gamma_7$.
\\ \indent {\bf Case 1:} Let $\beta_4=0$. Then $\beta_1\neq0$ since $\dim(A^3)=2$. So by (\ref{eq2}) we obtain $\gamma_2=-\gamma_1$ and $\beta_3=-\beta_2$. If we assume $\gamma_4\neq0$ then we arrive contradiction using the equations (\ref{eq2}). Hence $\gamma_4=0$. Then by (\ref{eq2}) we have $\beta_7=0=\gamma_3$ and $\beta_6=-\beta_5$. Then the nontrivial products in $A$ given by 
\begin{multline} \label{eq3}
[e_1, e_1]=e_5, [e_1, e_2]=\alpha_1e_3+\alpha_2e_4=-[e_2, e_1], [e_2, e_2]=e_5, [e_1, e_3]=\beta_1e_4+\beta_2e_5=-[e_3, e_1], \\ [e_2, e_3]=\beta_5e_5=-[e_3, e_2], [e_1, e_4]=\gamma_1e_5=-[e_4, e_1]. 
\end{multline}
Then the base change $x_1=\frac{1}{(\alpha_1\beta_1\gamma_1)^{1/2}}e_1, x_2=\frac{1}{(\alpha_1\beta_1\gamma_1)^{1/2}}e_2, x_3=\frac{1}{\alpha_1\beta_1\gamma_1}(\alpha_1e_3+\alpha_2e_4), x_4=\frac{\alpha_1}{(\alpha_1\beta_1\gamma_1)^{3/2}}(\beta_1e_4+\beta_2e_5)+\frac{\alpha_2\gamma_1}{(\alpha_1\beta_1\gamma_1)^{3/2}}e_5, x_5=\frac{1}{\alpha_1\beta_1\gamma_1}e_5$ shows that $A$ is isomorphic to the algebra $\ca_{5}(\alpha)$.
\\ \indent {\bf Case 2:} Let $\beta_4\neq0$. Then by (\ref{eq2}) we have $\beta_7=0$ and $\beta_4\gamma_1=\beta_1\gamma_3$. If $\beta^2_1+\beta^2_4\neq0$ then the base change $x_1=\beta_1e_1+\beta_4e_2, x_2=\beta_4e_1-\beta_1e_2, x_3=e_3, x_4=e_4, x_5=(\beta^2_1+\beta^2_4)e_5$ shows that $A$ is isomorphic to an algebra with the nonzero products given by (\ref{eq3}). Hence $A$ is isomorphic to $\ca_{5}(\alpha)$. So let $\beta^2_1+\beta^2_4=0$. Take $\theta=-\frac{i(\beta_4\beta_2-\beta_1\beta_5)}{\beta_4}(\frac{\alpha_1}{2\beta_4\gamma_1})^{1/2}$. If $\beta_4\beta_2-\beta_1\beta_5=0$ then the base change $x_1=i(\frac{2\beta_4}{\alpha_1\beta^2_1\gamma_1})^{1/2}e_1, x_2=i(\frac{1}{2\alpha_1\beta^2_1\beta_4\gamma_1})^{1/2}(\beta_4e_1-\beta_1e_2), x_3=\frac{1}{\alpha_1\beta^2_1\gamma_1}(\alpha_1e_3+\alpha_2e_4-\beta_4e_5), x_4=\frac{i}{\beta^2_1\gamma_1}(\frac{2\beta_4}{\alpha_1\beta^2_1\gamma_1})^{1/2}(\beta_1e_4+\beta_2e_5)+\frac{i\alpha_2}{\alpha_1\beta^2_1}(\frac{2\beta_4}{\alpha_1\beta^2_1\gamma_1})^{1/2}e_5, x_5=-\frac{2\beta_4}{\alpha_1\beta^2_1\gamma_1}e_5$ shows that $A$ is isomorphic to the algebra $\ca_{6}$. If $\beta_4\beta_2-\beta_1\beta_5\neq0$ then the base change $x_1=\frac{i(1-\theta)}{\theta}(\frac{2\beta_4}{\alpha_1\beta^2_1\gamma_1})^{1/2}e_1-\frac{i\beta_1(1+\theta)}{2\beta_4\theta}(\frac{2\beta_4}{\alpha_1\beta^2_1\gamma_1})^{1/2}e_2, x_2=\frac{i}{2\theta}(\frac{2\beta_4}{\alpha_1\beta^2_1\gamma_1})^{1/2}e_1-\frac{i\beta_1}{2\beta_4\theta}(\frac{2\beta_4}{\alpha_1\beta^2_1\gamma_1})^{1/2}e_2+\frac{1+\theta}{\beta^2_1\gamma_1\theta}e_3+\frac{\alpha_2(1+\theta)}{\alpha_1\beta^2_1\gamma_1\theta}e_4, x_3=-\frac{1}{\beta^2_1\gamma_1\theta}e_3-(\frac{\alpha_2}{\alpha_1\beta^2_1\gamma_1\theta}+\frac{i(1+\theta)}{\gamma_1\theta}(\frac{2\beta_4}{\alpha_1\beta^2_1\gamma_1})^{1/2})e_4+(\frac{\beta_4}{\alpha_1\beta^2_1\gamma_1\theta}-\frac{i\beta_2(1+\theta)}{\beta_1\gamma_1\theta}(\frac{2\beta_4}{\alpha_1\beta^2_1\gamma_1})^{1/2}-\frac{2\beta_4(1+\theta)^2}{\alpha_1\beta^2_1\gamma_1\theta})e_5, x_4=\frac{i}{\beta_1\gamma_1\theta}(\frac{2\beta_4}{\alpha_1\beta^2_1\gamma_1})^{1/2}(\beta_1e_4+\beta_2e_5), x_5=\frac{2\beta_4}{\alpha_1\beta^2_1\gamma_1\theta}e_5$ shows that $A$ is isomorphic to the algebra $\ca_{7}$. 
\end{proof}

\begin{rmk}
\begin{scriptsize}
If $\alpha_1, \alpha_2\in\cc$ then $\ca_5(\alpha_1)$ and $\ca_5(\alpha_2)$ are isomorphic if and only if $\alpha^4_2=\alpha^4_1$.
\end{scriptsize}
\end{rmk}

\begin{thm} Let $A$ be a $5-$dimensional non-split non-Lie nilpotent Leibniz algebra with $\dim(A^2)=3$, $\dim(A^3)=2$, $ \dim(A^4)=0$, and $\dim(Leib(A))=1$. Then $A$ is isomorphic to a Leibniz algebra spanned by $\{x_1, x_2, x_3, x_4, x_5\}$ with the nonzero products given by one of  the following:
\begin{scriptsize}
\begin{description}
\item[$\ca_{8}$] $[x_1, x_1]=x_5, [x_1, x_2]=x_3=-[x_2, x_1], [x_1, x_3]=x_4=-[x_3, x_1], [x_2, x_3]=x_5=-[x_3, x_2]$.
\item[$\ca_{9}$] $[x_1, x_1]=x_5, [x_1, x_2]=x_3=-[x_2, x_1], [x_1, x_3]=x_5=-[x_3, x_1], [x_2, x_3]=x_4=-[x_3, x_2]$.
\item[$\ca_{10}$] $[x_1, x_1]=x_5, [x_1, x_2]=x_3=-[x_2, x_1], [x_2, x_2]=x_5, [x_1, x_3]=x_4=-[x_3, x_1], [x_2, x_3]=x_5=-[x_3, x_2]$.
\item[$\ca_{11}$] $[x_1, x_1]=x_5, [x_1, x_2]=x_3, [x_2, x_1]=-x_3+x_5, [x_1, x_3]=x_4=-[x_3, x_1], [x_2, x_3]=x_5=-[x_3, x_2]$.
\end{description}
\end{scriptsize}
\end{thm}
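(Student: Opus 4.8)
The plan is to follow exactly the strategy used in Theorem~\ref{leib1}, adjusted for the case $\dim(A^4)=0$. First I would establish structural constraints on the center. Since $\dim(A^2)=3$, $\dim(A^3)=2$, and $A^3=[A,A^2]\subseteq Z(A)$ is not forced here, I would argue that $\dim(Z(A))=2$: indeed $A^3\subseteq Z(A)$ because $[A,A^3]=A^4=0$, so $\dim(Z(A))\ge 2$; and Lemma~\ref{L4} with $n=5$, $k=3$ gives $\dim(A^2)\le 4$, which is not tight enough, so instead I would rule out $\dim(Z(A))=3$ directly: if $Z(A)=A^2$ were $3$-dimensional, then taking a complement argument (as in the $A^4\ne 0$ case) forces $A^3=[A,A^2]=0$, contradicting $\dim(A^3)=2$. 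Hence $\dim(Z(A))=2$, and since $Leib(A)\subseteq Z(A)$ with $\dim(Leib(A))=1$, I can choose $Leib(A)={\rm span}\{e_5\}$ and extend to a basis $\{e_4,e_5\}$ of $A^3$, then $\{e_3,e_4,e_5\}$ of $A^2$ with $Z(A)={\rm span}\{e_4,e_5\}$, and pick $V={\rm span}\{e_1,e_2\}$ complementary to $A^2$.

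Next I would invoke Lemma~\ref{L7}: the bilinear form $f$ on $V$ has matrix congruent to one of (i)--(v), but (i) is excluded since $e_5\in Leib(A)$, and (ii)$\sim$(iv), (iii)$\sim$(v), so it suffices to treat the two cases $N=$ (ii) and $N=$ (iii). In each case I would write down the most general system of nonzero products: $[e_1,e_1]=e_5$ (plus $[e_2,e_2]=e_5$ in case (iii)), $[e_1,e_2]=\alpha_1 e_3+\alpha_2 e_4=-[e_2,e_1]$, and products $[e_i,e_j]$ for $i,j\in\{1,2,3\}$ landing in ${\rm span}\{e_4,e_5\}$, with products involving $e_4,e_5$ landing in ${\rm span}\{e_5\}$; note $e_4,e_5$ being central kills most of the latter. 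Then I would impose the left Leibniz identity $[x,[y,z]]=[[x,y],z]+[y,[x,z]]$ on all triples of basis vectors, obtaining a system of polynomial relations among the structure constants (essentially the analogue of equation~(\ref{eq2}), but simpler since $A^4=0$ forces additional products into the center to vanish). The nondegeneracy conditions $\dim(A^2)=3$, $\dim(A^3)=2$ translate into $\alpha_1\ne 0$ (after adjusting basis within $V$) and into a rank-$2$ condition on the map $e\mapsto[e_1,e]$ or $[e_2,e]$ restricted to $A^2$.

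Then I would split into subcases exactly as before — whether the coefficient $\beta_4$ (the $e_4$-component of $[e_2,e_3]$) vanishes, and within each, whether a further coefficient ($\beta_5$ or $\beta_2$, the $e_5$-components) vanishes. In the $\beta_4=0$ subcase, $\dim(A^3)=2$ forces $\beta_1\ne 0$ (the $e_4$-component of $[e_1,e_3]$); in the $\beta_4\ne 0$ subcase a linear change within $V$ of the form $x_1=\beta_4 e_1-\beta_1 e_2$ normalizes $\beta_1=0$. For case (ii) this should yield $\ca_8$ and $\ca_9$ (and possibly reduce one branch into another); for case (iii), the extra generator $[e_2,e_2]=e_5$ produces $\ca_{10}$, and for matrix (iii) with $\beta_4\ne 0$ a rotation $x_1=\beta_1 e_1+\beta_4 e_2$, $x_2=\beta_4 e_1-\beta_1 e_2$ (available when $\beta_1^2+\beta_4^2\ne 0$) reduces to the earlier form, while the degenerate locus $\beta_1^2+\beta_4^2=0$ after a complex rescaling gives $\ca_{11}$. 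In each terminal subcase I would exhibit an explicit invertible change of basis (of the scaling-plus-shift type seen throughout Theorem~\ref{leib1}) normalizing the remaining free constants to $0$ or $1$.

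The main obstacle I expect is not the derivation of the candidate list but the bookkeeping: first, correctly enumerating which Leibniz-identity relations survive once $e_4,e_5$ are central (many terms drop, but one must be careful with the non-antisymmetric products $[e_i,e_j]$ vs $[e_j,e_i]$ differing in their $e_5$-component); second, checking that the seemingly different subcase outputs are genuinely pairwise non-isomorphic (or explicitly collapsing the redundant ones), which requires computing invariants such as $\dim Z(A)$, the isomorphism type of the pair $(A/A^3, \text{induced form})$, and whether $Leib(A)\subseteq A^3$ — here I would tabulate these invariants for $\ca_8,\ldots,\ca_{11}$ to confirm the classification is irredundant, mirroring the Remark following Theorem~\ref{leib1}.
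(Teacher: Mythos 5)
Your proposal is correct and follows essentially the same route as the paper: establish $A^3=Z(A)$ (two‑dimensional), set up the basis $\{e_3,e_4,e_5\}$ of $A^2$ with $Leib(A)={\rm span}\{e_5\}$, reduce via Lemma~\ref{L7} to the two congruence classes (ii) and (iii) of the bilinear form on $V$, impose the Leibniz identities, and split on the vanishing of the $e_4$‑component of $[e_2,e_3]$, with the rotation $x_1=\beta_1e_1+\beta_4e_2$ in case (iii) and the degenerate locus $\beta_1^2+\beta_4^2=0$ producing $\ca_{11}$ — exactly as the paper does. The only minor difference is that you anticipate an extra sub‑split on the $e_5$‑components in case (ii), which the paper's explicit base changes render unnecessary.
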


\begin{proof} 
We have $A^3\subseteq Z(A) \subset A^2$. Then $A^3=Z(A)$. Let $Leib(A)={\rm span}\{e_5\}$. Extend this to bases $\{e_4, e_5\}$ and $\{e_3, e_4, e_5\}$ of $A^3=Z(A)$ and $A^2$, respectively. Take $V=\rm span\{e_1, e_2\}$. 
\par If N is the matrix (ii) then the nontrivial products in $A$ given by 
\\ $[e_1, e_1]=e_5, [e_1, e_2]=\alpha_1e_3+\alpha_2e_4=-[e_2, e_1], [e_1, e_3]=\alpha_3e_4+\alpha_4e_5, [e_3, e_1]=-\alpha_3e_4+\alpha_5e_5, [e_2, e_3]=\beta_1e_4+\beta_2e_5, [e_3, e_2]=-\beta_1e_4+\beta_3e_5, [e_3, e_3]=\beta_4e_5$.
\\ Then from Leibniz identities we get the equations $\beta_3=-\beta_2, \alpha_5=-\alpha_4$ and $\beta_4=0$. Also $\alpha_3\beta_2-\alpha_4\beta_1\neq0$ since $\dim(A^3)=2$.
\\ Suppose $\beta_1=0$. Then $\alpha_3, \beta_2\neq0$. Then the base change $x_1=\alpha_1\beta_2e_1, x_2=e_2, x_3=\alpha_1\beta_2(\alpha_1e_3+\alpha_2e_4), x_4=\alpha^3_1\beta^2_2(\alpha_3e_4+\alpha_4e_5), x_5=(\alpha_1\beta_2)^2e_5$ shows that $A$ is isomorphic to the algebra $\ca_{8}$. Now suppose $\beta_1\neq0$. Then with the base change $x_1=\beta_1e_1-\alpha_3e_2, x_2=e_2, x_3=e_3, x_4=e_4, x_5=\beta^2_1e_5$ we can make $\alpha_3=0$. So let $\alpha_3=0$. Then the base change $x_1=e_1, x_2=\frac{1}{\alpha_1\alpha_4}e_2, x_3=\frac{1}{\alpha_1\alpha_4}(\alpha_1e_3+\alpha_2e_4), x_4=\frac{1}{\alpha_1\alpha^2_4}(\beta_1e_4+\beta_2e_5), x_5=e_5$ shows that $A$ is isomorphic to the algebra $\ca_{9}$.
\par If N is the matrix (iii) then the nontrivial products in $A$ given by 
\\ $[e_1, e_1]=e_5, [e_1, e_2]=\alpha_1e_3+\alpha_2e_4=-[e_2, e_1], [e_2, e_2]=e_5, [e_1, e_3]=\alpha_3e_4+\alpha_4e_5, [e_3, e_1]=-\alpha_3e_4+\alpha_5e_5, [e_2, e_3]=\beta_1e_4+\beta_2e_5, [e_3, e_2]=-\beta_1e_4+\beta_3e_5, [e_3, e_3]=\beta_4e_5$.
\\ Then from Leibniz identities we get the equations $\beta_3=-\beta_2, \alpha_5=-\alpha_4$ and $\beta_4=0$. Also $\alpha_3\beta_2-\alpha_4\beta_1\neq0$ since $\dim(A^3)=2$.
\\ Suppose $\beta_1=0$. Then the nontrivial products in $A$ are the following:
\begin{multline} \label{eq1}
[e_1, e_1]=e_5, [e_1, e_2]=\alpha_1e_3+\alpha_2e_4=-[e_2, e_1], [e_2, e_2]=e_5, [e_1, e_3]=\alpha_3e_4+\alpha_4e_5=-[e_3, e_1], \\ [e_2, e_3]=\beta_2e_5=-[e_3, e_2].
\end{multline}
Then $\alpha_3, \beta_2\neq0$. Then the base change $x_1=\frac{1}{\alpha_1\beta_2}e_1, x_2=\frac{1}{\alpha_1\beta_2}e_2, x_3=\frac{1}{\alpha^2_1\beta^2_2}(\alpha_1e_3+\alpha_2e_4), x_4=\frac{1}{\alpha^2_1\beta^3_2}(\alpha_3e_4+\alpha_4e_5), x_5=\frac{1}{\alpha^2_1\beta^2_2}e_5$ shows that $A$ is isomorphic to the algebra $\ca_{10}$. Now suppose $\beta_1\neq0$. If $\alpha^2_3+\beta^2_1\neq0$ then the base change $x_1=\alpha_3e_1+\beta_1e_2, x_2=\beta_1e_1-\alpha_3e_2, x_3=e_3, x_4=e_4, x_5=(\alpha^2_3+\beta^2_1)e_5$ shows that $A$ is isomorphic to an algebra with the nonzero products given by (\ref{eq1}). Hence $A$ is isomorphic to $\ca_{10}$. So let $\alpha^2_3+\beta^2_1=0$. Then the base change $x_1=-\frac{4\beta^2_1}{\alpha_1\alpha_3(\alpha_4\beta_1-\alpha_3\beta_2)}e_1, x_2=-\frac{\beta_1(\beta_1e_1-\alpha_3e_2)}{\alpha_1\alpha_3(\alpha_4\beta_1-\alpha_3\beta_2)}, x_3=\frac{8\beta^3_1}{(\alpha_1\alpha_3)^2(\alpha_4\beta_1-\alpha_3\beta_2)^2}[-\alpha_1\alpha_3e_3-\alpha_2\alpha_3e_4+\beta_1e_5], x_4=\frac{32\beta^5_1}{(\alpha_1\alpha_3)^2(\alpha_4\beta_1-\alpha_3\beta_2)^3}(\alpha_3e_4+\alpha_4e_5),$ $ x_5=\frac{16\beta^4_1}{(\alpha_1\alpha_3)^2(\alpha_4\beta_1-\alpha_3\beta_2)^2}e_5$ shows that $A$ is isomorphic to the algebra $\ca_{11}$.
\end{proof}

Now let $\dim(A^3)=1$. Then we have $A^3\subseteq Z(A)\subset A^2$. Assume $\dim(Z(A))=1$.  Then we get $A^3=Z(A)=Leib(A)$, and from part {\bf(ii)} in Lemma \ref{L5} we arrive $5\le4$, which is a contradiction. Hence $\dim(Z(A))=2$.

\begin{thm} Let $A$ be a $5-$dimensional non-split non-Lie nilpotent Leibniz algebra with $\dim(A^2)=3$, $\dim(A^3)=1$, and $\dim(Leib(A))=1$. Then $A$ is isomorphic to a Leibniz algebra spanned by $\{x_1, x_2, x_3, x_4, x_5\}$ with the nonzero products given by one of  the following:
\begin{scriptsize}
\begin{description}
\item[$\ca_{12}$] $[x_1, x_1]=x_5, [x_1, x_2]=x_3=-[x_2, x_1], [x_1, x_3]=x_4=-[x_3, x_1]$.
\item[$\ca_{13}$] $[x_1, x_2]=x_3=-[x_2, x_1], [x_2, x_2]=x_5, [x_1, x_3]=x_4=-[x_3, x_1]$.
\item[$\ca_{14}$] $[x_1, x_1]=x_5, [x_1, x_2]=x_3=-[x_2, x_1], [x_2, x_2]=x_5, [x_1, x_3]=x_4=-[x_3, x_1]$.
\item[$\ca_{15}$] $[x_1, x_1]=x_5, [x_1, x_2]=x_3, [x_2, x_1]=-x_3+x_5, [x_1, x_3]=x_4=-[x_3, x_1]$.
\end{description}
\end{scriptsize}
\end{thm}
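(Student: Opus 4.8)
The plan is to follow the template of the two preceding theorems. As noted above, $\dim(Z(A))=2$; since $A^3\subseteq Z(A)\subset A^2$ and $\dim(A^3)=1$, part (ii) of Lemma \ref{L5} (with $n=5$, $t=1$, $k=2$) forces $Leib(A)\not\subseteq A^3$ (otherwise $5\le 4$), whence $A^3\cap Leib(A)=0$ and $Z(A)=A^3\oplus Leib(A)$. I would fix $Leib(A)={\rm span}\{e_5\}$ and $A^3={\rm span}\{e_4\}$, so that $\{e_4,e_5\}$ is a basis of $Z(A)$, extend it to a basis $\{e_3,e_4,e_5\}$ of $A^2$, and take $V={\rm span}\{e_1,e_2\}$. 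By the reduction established above (Thompson's canonical forms, exclusion of form (i), and Lemma \ref{L7}), the matrix $N$ of the bilinear form on $V$ may be taken to be (ii) or (iii).

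The next step is to observe that almost every product is forced. Since $e_4,e_5\in Z(A)$, all brackets involving $e_4$ or $e_5$ vanish and $A^4=[A,A^3]=0$. Because $[A,A^2]\subseteq A^3$ and $[A^2,A]\subseteq A^3$, each $[e_i,e_3]$ lies in $A^3={\rm span}\{e_4\}$, and since $[e_i,e_3]+[e_3,e_i]\in Leib(A)\cap A^3=0$ we get $[e_3,e_i]=-[e_i,e_3]$ and $[e_3,e_3]=0$. Hence in case (ii) the only possibly nonzero products are $[e_1,e_1]=e_5$, $[e_1,e_2]=\alpha_1e_3+\alpha_2e_4=-[e_2,e_1]$, $[e_1,e_3]=\beta_1e_4=-[e_3,e_1]$ and $[e_2,e_3]=\beta_2e_4=-[e_3,e_2]$, with the single extra product $[e_2,e_2]=e_5$ in case (iii). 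A direct check, in which only triples from $\{e_1,e_2,e_3\}$ give nontrivial identities, shows that all Leibniz identities are automatically satisfied; moreover the hypotheses $\dim(A^2)=3$ and $\dim(A^3)=1$ amount exactly to $\alpha_1\neq0$ and $(\beta_1,\beta_2)\neq(0,0)$.

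It then remains to normalize the parameters. Replacing $e_3$ by $\alpha_1e_3+\alpha_2e_4$ lets me assume $[e_1,e_2]=e_3$ (after renaming $\alpha_1\beta_1$, $\alpha_1\beta_2$ as $\beta_1$, $\beta_2$). In case (ii): if $\beta_2=0$ then $\beta_1\neq0$ and rescaling $e_4$ gives $\ca_{12}$; if $\beta_2\neq0$, replacing $e_1$ by $e_1-(\beta_1/\beta_2)e_2$ kills $\beta_1$ without disturbing $[e_1,e_1]=e_5$, and rescaling $e_4$ and relabeling yields $\ca_{13}$. In case (iii) the residual freedom on $V$ that keeps $N$ in form (iii) is, up to rescaling $e_4$ and $e_5$, the complex orthogonal group, under which $(\beta_1,\beta_2)$ is multiplied by an orthogonal matrix and by a scalar; the orbit is thus governed by whether $\beta_1^2+\beta_2^2\neq0$ or $\beta_1^2+\beta_2^2=0$. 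In the first case $(\beta_1,\beta_2)$ can be rotated and rescaled to $(1,0)$, giving $\ca_{14}$; in the second $(\beta_1,\beta_2)$ spans a null line, so after a rotation and rescaling the algebra becomes $[e_1,e_1]=e_5$, $[e_2,e_2]=e_5$, $[e_1,e_2]=e_3$, $[e_1,e_3]=e_4$, $[e_2,e_3]=ie_4$, and the complex base change $x_1=e_1$, $x_2=\tfrac{1}{2}e_1+\tfrac{i}{2}e_2$, $x_3=\tfrac{i}{2}e_3+\tfrac{1}{2}e_5$, $x_4=\tfrac{i}{2}e_4$, $x_5=e_5$ (in the spirit of Lemma \ref{L7}) transforms it into $\ca_{15}$. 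Finally I would record that the four algebras are pairwise non-isomorphic: the quadratic form on $A/A^2$ sending $v$ to the $e_5$-component of $[v,v]$ is an isomorphism invariant, of rank $1$ for $\ca_{12}$, $\ca_{13}$ and of rank $2$ for $\ca_{14}$, $\ca_{15}$, and within each pair it is distinguished by whether this form vanishes on the kernel of the induced map $A/A^2\to A^3$, $v\mapsto[v,e_3]$.

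The step I expect to be the main obstacle is the last case of (iii): once the complex orthogonal symmetries and the rescalings are exhausted, the identification with $\ca_{15}$ requires the genuinely complex base change displayed above, which also repairs $[e_2,e_2]$, and pinning down its coefficients is the one calculation that needs care. Verifying that the Leibniz identities impose no hidden relations among $\alpha_1,\alpha_2,\beta_1,\beta_2$, and checking the non-isomorphism claims, are routine and should be carried out explicitly.
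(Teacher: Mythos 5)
Your proposal is correct and follows essentially the same route as the paper: the same reduction via Lemma \ref{L5} to $Z(A)=A^3\oplus Leib(A)$ and the bilinear-form cases (ii)/(iii), the same case split on $\beta_2$ and on $\beta_1^2+\beta_2^2$ (your ``complex orthogonal group'' framing is just a conceptual packaging of the paper's explicit base changes $x_1=\beta_1e_1+\beta_2e_2,\ x_2=\beta_2e_1-\beta_1e_2$), and the same four target algebras; I checked that your displayed base change in the isotropic case does carry the algebra onto $\ca_{15}$. The only additions beyond the paper are your verification that the Leibniz identities impose no constraints and the non-isomorphy invariants, both of which are sound.
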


\begin{proof} Using Lemma \ref{L5} we see that $A^3\neq Leib(A)$. Let $Leib(A)={\rm span}\{e_5\}$ and $A^3={\rm span}\{e_4\}$. Then $Z(A)=\rm span\{e_4, e_5\}$. Extend this to a basis $\{e_3, e_4, e_5\}$ of $A^2$. Take $V=\rm span\{e_1, e_2\}$.
\par If N is the matrix (ii) then the nontrivial products in $A$ given by 
\\ $[e_1, e_1]=e_5, [e_1, e_2]=\alpha_1e_3+\alpha_2e_4=-[e_2, e_1], [e_1, e_3]=\beta_1e_4=-[e_3, e_1], [e_2, e_3]=\beta_2e_4=-[e_3, e_2]$.
\\ Suppose $\beta_2=0$. Then $\beta_1\neq0$ since $A^3\neq0$. Then the base change $x_1=e_1, x_2=e_2, x_3=\alpha_1e_3+\alpha_2e_4, x_4=\alpha_1\beta_1e_4, x_5=e_5$ shows that $A$ is isomorphic to the algebra $\ca_{12}$. Now suppose $\beta_2\neq0$. Then with the base change $x_1=\beta_2e_1-\beta_1e_2, x_2=e_2, x_3=e_3, x_4=e_4, x_5=\beta^2_2e_5$ we can make $\beta_1=0$. Then the base change $x_1=e_2, x_2=e_1, x_3=-\alpha_1e_3-\alpha_2e_4, x_4=-\alpha_1\beta_2e_4, x_5=e_5$
 shows that $A$ is isomorphic to the algebra $\ca_{13}$.
\par If N is the matrix (iii) then the nontrivial products in $A$ given by 
\\ $[e_1, e_1]=e_5, [e_1, e_2]=\alpha_1e_3+\alpha_2e_4=-[e_2, e_1], [e_2, e_2]=e_5, [e_1, e_3]=\beta_1e_4=-[e_3, e_1], [e_2, e_3]=\beta_2e_4=-[e_3, e_2]$.
\\ Suppose $\beta_2=0$. Then $\beta_1\neq0$ since $A^3\neq0$. Then the base change $x_1=e_1, x_2=e_2, x_3=\alpha_1e_3+\alpha_2e_4, x_4=\alpha_1\beta_1e_4, x_5=e_5$ shows that $A$ is isomorphic to the algebra $\ca_{14}$. Now suppose $\beta_2\neq0$. If $\beta^2_1+\beta^2_2\neq0$ then the base change $x_1=\beta_1e_1+\beta_2e_2, x_2=\beta_2e_1-\beta_1e_2, x_3=-(\beta^2_1+\beta^2_2)(\alpha_1e_3+\alpha_2e_4), x_4=-\alpha_1(\beta^2_1+\beta^2_2)^2e_4, x_5=(\beta^2_1+\beta^2_2)e_5$ shows that $A$ is isomorphic to the algebra $\ca_{3}$ again. Now let $\beta^2_1+\beta^2_2=0$. Then the base change $x_1=2\beta_2e_1, x_2=\beta_2e_1-\beta_1e_2, x_3=-2\beta_1\beta_2(\alpha_1e_3+\alpha_2e_4)+2\beta^2_2e_5, x_4=-2\alpha_2\beta^2_1\beta^2_2e_4, x_5=4\beta^2_2e_5$ shows that $A$ is isomorphic to the algebra $\ca_{15}$.
\end{proof}

\section{Classification of $5-$Dimensional Complex Nilpotent Leibniz Algebras}

Let $A$ be a $5-$dimensional complex non-split non-Lie nilpotent Leibniz algebra. Then $\dim(A^2)=1, 2, 3 \  \rm{or} \ 4$. The case $\dim(A^2)=4$ can be done using Lemma 1 in \cite{ao2001}.
\begin{thm} Let $A$ be a $5-$dimensional non-split non-Lie nilpotent Leibniz algebra with $\dim(A^2)=4$. Then $A$ is isomorphic to a Leibniz algebra spanned by $\{x_1, x_2, x_3, x_4, x_5\}$ with the nonzero products given by the following:
\begin{scriptsize}
\begin{description}
\item[$\ca_{16}$] $[x_1, x_1]=x_2, [x_1, x_2]=x_3, [x_1, x_3]=x_4, [x_1, x_4]=x_5.$
\end{description}
\end{scriptsize}
\end{thm}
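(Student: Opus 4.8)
The plan is to identify $A$ as the unique null-filiform Leibniz algebra of dimension $5$ and then apply Lemma 1 of \cite{ao2001}. Since $\dim(A)=5$ and $\dim(A^2)=4$, the quotient $A/A^2$ is one-dimensional, so I would fix $x_1\in A\setminus A^2$; then $A=\cc x_1+A^2$ and, because any subset of a nilpotent Leibniz algebra whose image spans $A/A^2$ generates the whole algebra, $x_1$ generates $A$.

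The central step is to show that $\dim(A^i/A^{i+1})\le 1$ for every $i\ge 1$. Using the standard inclusion $[A^i,A^j]\subseteq A^{i+j}$ (a quick induction from the left Leibniz identity) together with $A=\cc x_1+A^2$, one has $A^{i+1}=[A,A^i]\subseteq \cc[x_1,A^i]+[A^2,A^i]\subseteq \cc[x_1,A^i]+A^{i+2}$; moreover, if the class of $v$ spans $A^i/A^{i+1}$ then $[x_1,A^i]\subseteq \cc[x_1,v]+[x_1,A^{i+1}]\subseteq \cc[x_1,v]+A^{i+2}$. Since $A^2/A^3$ is already spanned by the class of $[x_1,x_1]$, an induction on $i$ gives $\dim(A^i/A^{i+1})\le 1$ throughout. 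Because $A$ is nilpotent, the chain $A\supseteq A^2\supseteq A^3\supseteq\cdots$ cannot stabilize before reaching $0$, so the numbers $\dim(A^i/A^{i+1})$ form a string of $1$'s followed by $0$'s; combined with $\dim(A^2)=\sum_{i\ge 2}\dim(A^i/A^{i+1})=4$ this forces $\dim(A^i/A^{i+1})=1$ for $i=2,3,4,5$ and $A^6=0$, i.e. $\dim(A^i)=6-i$ for $1\le i\le 6$. Thus $A$ is null-filiform; in particular $[x_1,x_1]\notin A^3$, so $[x_1,x_1]\neq 0$ (the non-Lie hypothesis is automatic here), and $\dim(A/A^2)=1$ makes $A$ automatically non-split.

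Finally, by Lemma 1 of \cite{ao2001} there is, up to isomorphism, exactly one null-filiform Leibniz algebra of dimension $5$; rewritten in the left Leibniz convention it is the algebra $\ca_{16}$ determined by $x_{i+1}=[x_1,x_i]$ for $1\le i\le 4$ with all other products zero. It then remains only to verify the routine facts that $\ca_{16}$ does satisfy the left Leibniz identity, is nilpotent of class $5$, has $Leib(A)={\rm span}\{x_2,x_3,x_4,x_5\}\neq\{0\}$, and is non-split, so that it is a legitimate member of the class being classified.

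The hard part is essentially already done in \cite{ao2001}: the only points that need genuine care are the induction-and-counting argument showing that $\dim(A^2)=n-1$ forces null-filiformity, and the translation between the (right) Leibniz conventions of \cite{ao2001} and the (left) conventions used throughout this paper.
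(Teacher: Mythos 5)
Your proposal is correct and follows exactly the route the paper intends: the paper gives no proof at all, simply remarking that the case $\dim(A^2)=4$ "can be done using Lemma 1 in \cite{ao2001}", and your argument supplies precisely the missing reduction — the induction showing $\dim(A^i/A^{i+1})\le 1$, the count forcing null-filiformity, and the appeal to the uniqueness of the null-filiform algebra. The details you fill in (including the observations that non-Lie and non-split are automatic here) are all sound.
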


The classification for the case $\dim(A^2)=1$ is given in \cite{dms2016}.

\begin{thm} Let $A$ be a $5-$dimensional non-split non-Lie nilpotent Leibniz algebra with $\dim(A^2)=1$. Then $A$ is isomorphic to a Leibniz algebra spanned by $\{x_1, x_2, x_3, x_4, x_5\}$ with the nonzero products given by one of the following:
\begin{scriptsize}
\begin{description}
\item[$\ca_{17}(\alpha)$] $[x_1, x_4]= x_5, [x_2, x_3]=x_5, [x_2, x_4]=\alpha x_5, [x_3, x_2]=\alpha x_5, [x_4, x_1]=\alpha x_5, [x_4, x_2]=x_5 , \quad  \alpha\in \cc\backslash\{-1, 1\}$.
\item[$\ca_{18}$]  $[x_1, x_4]= x_5, [x_2, x_3]=x_5, [x_2, x_4]=x_5=-[x_4, x_2], [x_3, x_2]=x_5, [x_4, x_1]=x_5$.
\item[$\ca_{19}$]  $[x_1, x_4]= x_5=-[x_4, x_1], [x_2, x_3]=x_5=-[x_3, x_2], [x_2, x_4]=x_5, [x_3, x_3]=x_5, [x_4, x_2]=x_5$.
\item[$\ca_{20}$]  $[x_1, x_3]= x_5, [x_3, x_2]=x_5, [x_4, x_4]=x_5$.
\item[$\ca_{21}$]  $[x_1, x_3]= x_5, [x_2, x_2]=x_5, [x_2, x_3]=x_5=-[x_3, x_2], [x_3, x_1]=x_5, [x_4, x_4]=x_5$.
\item[$\ca_{22}$]  $[x_1, x_2]=x_5=-[x_2, x_1], [x_3, x_4]=x_5=-[x_4, x_3], [x_4, x_4]=x_5$.
\item[$\ca_{23}(\alpha)$]  $[x_1, x_2]=x_5=-[x_2, x_1], [x_3, x_4]=x_5, [x_4, x_3]=\alpha x_5, \quad \alpha\in \cc\backslash\{-1, 1\}$.
\item[$\ca_{24}$]  $[x_1, x_2]=x_5=-[x_2, x_1], [x_2, x_2]=x_5, [x_3, x_4]=x_5=-[x_4, x_3], [x_4, x_4]=x_5$
\item[$\ca_{25}(\alpha)$]  $[x_1, x_2]=x_5=-[x_2, x_1], [x_2, x_2]=x_5, [x_3, x_4]=x_5, [x_4, x_3]=\alpha x_5, \quad \alpha\in \cc\backslash\{-1, 1\}$.
\item[$\ca_{26}(\alpha, \beta)$]  $[x_1, x_2]=x_5, [x_2, x_1]=\alpha x_5, [x_3, x_4]=x_5, [x_4, x_3]=\beta x_5, \quad \alpha,\beta\in \cc\backslash\{-1, 1\}$.
\item[$\ca_{27}$]  $[x_1, x_2]=x_5=-[x_2, x_1], [x_3, x_3]=x_5, [x_4, x_4]=x_5$.
\item[$\ca_{28}$]  $[x_1, x_2]= x_5=-[x_2, x_1], [x_2, x_2]=x_5, [x_3, x_3]=x_5, [x_4, x_4]=x_5$.
\item[$\ca_{29}(\alpha)$]  $[x_1, x_2]= x_5, [x_2, x_1]=\alpha x_5, [x_3, x_3]=x_5, [x_4, x_4]=x_5 \quad \alpha\in \cc\backslash\{-1, 1\}$.
\item[$\ca_{30}$]  $[x_1, x_1]=x_5, [x_2, x_2]=x_5, [x_3, x_3]=x_5, [x_4, x_4]=x_5$.
\end{description}
\end{scriptsize}
\end{thm}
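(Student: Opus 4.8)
The plan is to proceed by the direct structural method, using $Z(A)$ and the bilinear forms attached to the product map. Since $\dim(A^2)=1$, we have $A^2 = Leib(A) = {\rm span}\{e_5\}$ (it cannot be that $Leib(A)=0$, else $A$ is Lie), and $A^2 \subseteq Z(A)$. First I would show $\dim(Z(A)) \in \{1,2\}$ is impossible for the non-split case: if $\dim(Z(A))\geq 4$ then $A$ is essentially abelian plus a $1$-dimensional piece and splits, and one checks $\dim(Z(A))\geq 1$ always; more precisely, since $A$ is non-split we need $Z(A)\subseteq A^2$, forcing $Z(A) = {\rm span}\{e_5\}$. Wait — but then the whole product descends to a map on the $4$-dimensional quotient $A/Z(A)$; so the real content is to classify, up to the induced change of basis, pairs of bilinear forms on a $4$-dimensional space. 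Concretely, write $A = V \oplus {\rm span}\{e_5\}$ with $\dim V = 4$, and for $u,v\in V$ set $[u,v] = \varphi(u,v)\,e_5$. The symmetric part $S(u,v) = \tfrac12(\varphi(u,v)+\varphi(v,u))$ and antisymmetric part $K(u,v)=\tfrac12(\varphi(u,v)-\varphi(v,u))$ are both bilinear forms on $V$, and $Leib(A)$-nontriviality means $S \neq 0$ as a form (equivalently $\varphi$ is not antisymmetric). The Leibniz identity is automatically satisfied because all triple products land in $A^3 = [A,A^2] = 0$, so the ONLY constraint is the dimension count $\dim\varphi(V,V) = 1$ plus $S\neq 0$.

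Next I would reduce to a normal form for the pair $(S,K)$ under the action of $GL(V)$ (acting simultaneously on both, with a scalar on $e_5$ which lets us rescale). This is the classification of a symmetric form together with a skew form on $\cc^4$, up to simultaneous congruence. I would split on $\rho := {\rm rank}(S) \in \{1,2,3,4\}$ and, within each case, on ${\rm rank}(K)\in\{0,2,4\}$ (skew forms have even rank) and on the relative position of the radicals ${\rm rad}(S)$ and ${\rm rad}(K)$. For $\rho = 4$ one gets $\ca_{30}$ (and the degenerate skew contributions get absorbed). For $\rho = 2$ combined with a full-rank skew form one expects the one- and two-parameter families $\ca_{26}(\alpha,\beta)$, $\ca_{23}(\alpha)$, $\ca_{25}(\alpha)$, $\ca_{17}(\alpha)$ — the parameter being the congruence invariant $\left(\begin{smallmatrix}0&1\\ \alpha&0\end{smallmatrix}\right)$ of a $2\times 2$ non-symmetric block with $\alpha\neq\pm1$, exactly the canonical form from (\cite{t2010}) cited earlier. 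The small-rank cases $\rho=1$ or $\rho=2$ with degenerate $K$ give $\ca_{18}$–$\ca_{22}$, $\ca_{24}$, $\ca_{27}$–$\ca_{29}$; here one also has to use the non-split hypothesis to kill cases where ${\rm rad}(S)\cap{\rm rad}(K)$ is large (those produce a central direct summand). I would organize this as a table indexed by $({\rm rank}\,S, {\rm rank}\,K, \dim({\rm rad}\,S\cap{\rm rad}\,K))$ and, in each box, pick an adapted basis $e_1,\dots,e_4$ block-diagonalizing $S$ and $K$ as far as possible, then read off the products.

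The hard part will be the simultaneous reduction of $(S,K)$ when \emph{neither} form is of full rank and their radicals are in general position — there the available $GL(V)$ freedom is constrained from both sides at once, and several superficially different adapted bases can give isomorphic algebras, so one must be careful both to exhaust all cases and to prove the listed algebras are pairwise non-isomorphic. For the non-isomorphism direction I would compute a discriminating list of invariants: $\dim Z(A)$, $\dim A^2$, whether $A^2\subseteq Z(A)$ (automatic here), the isomorphism type of the pair $(S,K)$ on $A/Z(A)$ — in particular the rank of $S$, the rank of $K$, and for the parametrized families the cross-ratio-type invariant $\alpha$ (noting $\alpha \sim 1/\alpha$ under swapping the two vectors of a hyperbolic pair, which is why $\alpha\neq\pm1$ is the stated range and why, e.g., $\ca_{26}(\alpha,\beta)\cong\ca_{26}(\beta,\alpha)\cong\ca_{26}(1/\alpha,1/\beta)$ must be tracked). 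I would assume the canonical-form result of \cite{t2010} for $2\times 2$ blocks and the congruence normal forms for symmetric forms over $\cc$, and cite \cite{ao2001} where its Lemma 1 streamlines the bookkeeping, so that the body of the proof is the case analysis and the final non-isomorphism check rather than the linear algebra from scratch.
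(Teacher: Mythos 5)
Your approach is correct and is essentially the one the paper relies on: the paper gives no proof here but defers to \cite{dms2016}, whose method is exactly your reduction — $Z(A)=A^2=Leib(A)={\rm span}\{e_5\}$ by non-splitness, the bracket descends to a bilinear form on a $4$-dimensional complement with the Leibniz identity automatic since $A^3=0$, and the list is read off from the congruence canonical forms of \cite{t2010} after discarding forms with nonzero two-sided radical (split) or zero symmetric part (Lie). Your opening sentence about $\dim Z(A)\in\{1,2\}$ being impossible is garbled, but the corrected statement $Z(A)={\rm span}\{e_5\}$ that you actually use is right.
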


\begin{rmk}
\
\begin{scriptsize}
\begin{enumerate} \item If $\alpha_1, \alpha_2\in\cc\backslash\{-1, 1\}$ such that $\alpha_1\neq\alpha_2$, then $\ca_{17}(\alpha_1)$ and $\ca_{17}(\alpha_2)$ are isomorphic if and only if $\alpha_2=\frac{1}{\alpha_1}$.
\item If $\alpha_1, \alpha_2\in\cc\backslash\{-1, 1\}$ such that $\alpha_1\neq\alpha_2$, then $\ca_{23}(\alpha_1)$ and $\ca_{23}(\alpha_2)$ are isomorphic if and only if $\alpha_2=\frac{1}{\alpha_1}$.
\item If $\alpha_1, \alpha_2\in\cc\backslash\{-1, 1\}$ such that $\alpha_1\neq\alpha_2$, then $\ca_{25}(\alpha_1)$ and $\ca_{25}(\alpha_2)$ are isomorphic if and only if $\alpha_2=\frac{1}{\alpha_1}$.

\item If $\alpha_1, \beta_1\in\cc\backslash\{-1, 1\}$ then we have the following isomorphism criterions in the family $A_{26}(\alpha, \beta)$: $A_{26}(\alpha_1, \beta_1)\cong A_{26}(\alpha_1, \beta_1), A_{26}(\alpha_1, \beta_1)\cong A_{26}(\alpha_1, \frac{1}{\beta_1}),$ $A_{26}(\alpha_1, \beta_1)\cong A_{26}(\frac{1}{\alpha_1}, \beta_1), A_{26}(\alpha_1, \beta_1)\cong A_{26}(\frac{1}{\alpha_1}, \frac{1}{\beta_1}), A_{26}(\alpha_1, \beta_1)\cong A_{26}(\beta_1, \alpha_1), A_{26}(\alpha_1, \beta_1)\cong A_{26}(\beta_1, \frac{1}{\alpha_1}),$ $ A_{26}(\alpha_1, \beta_1)\cong A_{26}(\frac{1}{\beta_1}, \alpha_1)$ and $A_{26}(\alpha_1, \beta_1)\cong A_{26}(\frac{1}{\beta_1}, \frac{1}{\alpha_1})$.

\item If $\alpha_1, \alpha_2\in\cc\backslash\{-1, 1\}$ such that $\alpha_1\neq\alpha_2$, then $\ca_{29}(\alpha_1)$ and $\ca_{29}(\alpha_2)$ are isomorphic if and only if $\alpha_2=\frac{1}{\alpha_1}$.
\end{enumerate}
\end{scriptsize}
\end{rmk}

In Section 2, we give the classification of $5-$dimensional non-Lie nilpotent Leibniz algebra with $\dim(A^2)=3$ and $\dim(Leib(A))=1$. It is left to consider the case $\dim(A^2)=2$ and $\dim(Leib(A))=1$ in order to finish the case $\dim(Leib(A))=1$. Notice that if $\dim(A^2)=2$ then $\dim(A^3)=1$ or $0$.

\begin{thm} Let $A$ be a $5-$dimensional non-split non-Lie nilpotent Leibniz algebra with $\dim(A^2)=2$ and $\dim(A^3)=1=\dim(Leib(A))$. Then $A$ is isomorphic to a Leibniz algebra spanned by $\{x_1, x_2, x_3, x_4, x_5\}$ with the nonzero products given by one of  the following:
\begin{scriptsize}
\begin{description}
\item[$\ca_{31}$] $[x_1, x_2]=x_4=-[x_2, x_1], [x_3, x_3]=x_5, [x_1, x_4]=x_5=-[x_4, x_1]$.
\item[$\ca_{32}$] $[x_1, x_2]=x_4, [x_2, x_1]=-x_4+x_5, [x_3, x_3]=x_5, [x_1, x_4]=x_5=-[x_4, x_1]$.
\item[$\ca_{33}$] $[x_1, x_2]=x_4=-[x_2, x_1], [x_2, x_2]=x_5, [x_3, x_3]=x_5, [x_1, x_4]=x_5=-[x_4, x_1]$.
\item[$\ca_{34}$] $[x_1, x_2]=x_4, [x_2, x_1]=-x_4+x_5, [x_2, x_2]=x_5, [x_3, x_3]=x_5, [x_1, x_4]=x_5=-[x_4, x_1]$.
\item[$\ca_{35}$] $[x_1, x_1]=x_5, [x_1, x_2]=x_4=-[x_2, x_1], [x_3, x_3]=x_5, [x_1, x_4]=x_5=-[x_4, x_1]$.
\item[$\ca_{36}$] $[x_1, x_2]=x_4=-[x_2, x_1], [x_1, x_3]=x_5, [x_1, x_4]=x_5=-[x_4, x_1]$.
\item[$\ca_{37}$] $[x_1, x_2]=x_4=-[x_2, x_1], [x_2, x_2]=x_5, [x_1, x_3]=x_5, [x_1, x_4]=x_5=-[x_4, x_1]$.
\item[$\ca_{38}$] $[x_1, x_2]=x_4=-[x_2, x_1], [x_2, x_3]=x_5, [x_1, x_4]=x_5=-[x_4, x_1]$.
\item[$\ca_{39}$] $[x_1, x_1]=x_5, [x_1, x_2]=x_4=-[x_2, x_1], [x_2, x_3]=x_5, [x_1, x_4]=x_5=-[x_4, x_1]$.
\item[$\ca_{40}(\alpha)$] $[x_1, x_2]=x_4=-[x_2, x_1], [x_2, x_2]=\alpha x_5, [x_3, x_3]=x_5, [x_1, x_4]=x_5=-[x_4, x_1], \quad \alpha\in\cc$.
\item[$\ca_{41}(\alpha)$] $[x_1, x_2]=x_4, [x_2, x_1]=-x_4+x_5, [x_2, x_2]=\alpha x_5, [x_2, x_3]=x_5, [x_3, x_3]=x_5, [x_1, x_4]=x_5=-[x_4, x_1], \quad \alpha\in\cc$.
\item[$\ca_{42}$] $[x_1, x_1]=x_5, [x_1, x_2]=x_4=-[x_2, x_1], [x_2, x_2]=\frac{1}{4}x_5, [x_2, x_3]=x_5, [x_3, x_3]=x_5, [x_1, x_4]=x_5=-[x_4, x_1]$.
\item[$\ca_{43}$] $[x_1, x_2]=x_4, [x_2, x_1]=-x_4+x_5, [x_3, x_2]=x_5=-[x_2, x_3], [x_1, x_4]=x_5=-[x_4, x_1]$.
\item[$\ca_{44}(\alpha)$] $[x_1, x_2]=x_4=-[x_2, x_1], [x_2, x_3]=\alpha x_5, [x_3, x_2]=x_5, [x_1, x_4]=x_5=-[x_4, x_1], \quad \alpha\in\cc\backslash\{-1\}$.
\item[$\ca_{45}(\alpha)$] $[x_1, x_2]=x_4, [x_2, x_1]=-x_4+x_5, [x_1, x_3]=x_5, [x_2, x_3]=\alpha x_5, [x_3, x_2]=x_5, [x_1, x_4]=x_5=-[x_4, x_1], \quad \alpha\in\cc$.
\item[$\ca_{46}$] $[x_1, x_1]=x_5, [x_1, x_2]=x_4=-[x_2, x_1], [x_3, x_2]=x_5=-[x_2, x_3], [x_1, x_4]=x_5=-[x_4, x_1]$.
\item[$\ca_{47}$] $[x_1, x_2]=x_4=-[x_2, x_1], [x_2, x_2]=x_5, [x_3, x_2]=x_5=-[x_2, x_3], [x_1, x_4]=x_5=-[x_4, x_1]$.
\item[$\ca_{48}$] $[x_1, x_2]=x_4, [x_2, x_1]=-x_4+x_5, [x_2, x_2]=x_5, [x_3, x_2]=x_5=-[x_2, x_3], [x_1, x_4]=x_5=-[x_4, x_1]$.
\item[$\ca_{49}$] $[x_1, x_2]=x_4=-[x_2, x_1], [x_2, x_2]=x_5, [x_1, x_3]=x_5, [x_3, x_2]=x_5=-[x_2, x_3], [x_1, x_4]=x_5=-[x_4, x_1]$.
\end{description}
\end{scriptsize}
\end{thm}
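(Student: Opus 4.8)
The plan is to proceed exactly as in the proof of Theorem \ref{leib1} and the two theorems that follow it: fix a convenient basis adapted to the filtration, write down the most general bracket compatible with the data $\dim(A^2)=2$, $\dim(A^3)=1=\dim(Leib(A))$, impose the left Leibniz identity to cut down the structure constants, and then normalize by a change of basis. Concretely, since $\dim(Leib(A))=1$ we have $Leib(A)={\rm span}\{e_5\}\subseteq Z(A)$; since $\dim(A^3)=1$ and (by Lemma \ref{L5}, as used just before the statement) $A^3\neq Leib(A)$, we may put $A^3={\rm span}\{e_4\}$, so $A^2={\rm span}\{e_4,e_5\}$ and $Z(A)\supseteq{\rm span}\{e_4,e_5\}$. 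Choose a complement $V={\rm span}\{e_1,e_2,e_3\}$ to $A^2$ and — this is the key structural device, parallel to Section 2 — consider the bilinear form $f\colon V\times V\to\cc$ recording the $e_5$-component of $[u,v]$. Its matrix may be brought by the congruence-canonical-form theorem of \cite{t2010} to a short list of normal forms; since $e_5\in Leib(A)$, the antisymmetric form is excluded, and Lemma \ref{L7} (applied with the obvious three-dimensional analogue, or simply re-run for $\dim V=3$) further collapses the list. One then also records the $e_4$-component of $[u,v]$ for $u,v\in V$, which must land in $A^2$; since $[A^2,A]\subseteq A^3={\rm span}\{e_4\}$ and in fact $[A^2,A^2]=0$ and $[e_3$-type generators, $e_4]\subseteq A^4=0$, only a handful of brackets $[e_i,e_j]$ with $i,j\in\{1,2,3\}$ and the brackets $[e_1,e_4]=-[e_4,e_1]$ survive.

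The main steps, in order, are as follows. First I would enumerate, for each congruence normal form of $f$ on $V$, the general product table: $[e_i,e_j]$ for $1\le i\le j\le 3$ take values $\alpha_{ij}e_4+\beta_{ij}e_5$ (with the antisymmetric part of $\beta$ fixed by the normal form), and $[e_1,e_4]=\gamma e_5=-[e_4,e_1]$, all other products zero. Second, I would impose the left Leibniz identity $[e_i,[e_j,e_k]]=[[e_i,e_j],e_k]+[e_j,[e_i,e_k]]$ on all triples from $\{e_1,e_2,e_3\}$ (triples involving $e_4,e_5$ give nothing new since those are central and in $A^2$), obtaining a system of polynomial relations among the $\alpha$'s, $\beta$'s, $\gamma$. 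Third, using that $\dim(A^3)=1$ forces the $\alpha$-matrix to have rank exactly $1$ and that $\dim(A^2)=2$ forces $\gamma\neq0$ (otherwise $A^3=0$), I would split into cases according to which $[e_i,e_j]$ carries the surviving $e_4$ and according to the rank/vanishing pattern of the remaining $e_5$-coefficients — this branching is precisely what produces the nineteen algebras $\ca_{31}$–$\ca_{49}$, with the one-parameter families $\ca_{40},\ca_{41},\ca_{44},\ca_{45}$ arising when a genuine modulus cannot be scaled away. Fourth, in each branch I would exhibit an explicit base change $x_i=\sum c_{ij}e_j$ (respecting the filtration, i.e. upper-triangular with respect to $V\supset A^2\supset A^3$) bringing the table to the stated normal form, exactly in the style of the rescalings in Theorem \ref{leib1}.

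The hard part will be the bookkeeping in step three: keeping the case split exhaustive and non-redundant. Unlike the $\dim(A^2)=3$ case, here $V$ is three-dimensional, so the "quadratic part" $(\alpha_{ij})$ is a $3\times3$ symmetric-type array of rank $1$ while the "$e_5$-part" $(\beta_{ij})$ is an essentially unconstrained $3\times3$ matrix modulo the normal-form constraint, and the two interact through the Leibniz relations and through the single nonzero $\gamma$; organizing this into disjoint cases (e.g. whether $e_3$ acts trivially, whether the rank-one image of $\alpha$ is "aligned" with the $\gamma$-channel, which symmetric bilinear form on $V$ occurs) and checking that no two resulting algebras are isomorphic is where essentially all the work lies. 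I would also need to verify the claimed non-isomorphisms inside the families — for instance that $\ca_{40}(\alpha)\not\cong\ca_{40}(\alpha')$ for $\alpha\neq\alpha'$ — which, as with the remark after Theorem \ref{leib1}, amounts to identifying an invariant (a suitable ratio of structure constants fixed under filtered automorphisms). Everything else is the same routine linear algebra already exercised twice above; I would present it by tabulating, for each normal form of $f$, the Leibniz constraints and the normalizing substitutions, suppressing the arithmetic.
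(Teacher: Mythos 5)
Your setup rests on a structural claim that is false in this case, and the error is fatal to the whole construction. You assert that $A^3\neq Leib(A)$ ``by Lemma \ref{L5}, as used just before the statement'', and accordingly place $A^3={\rm span}\{e_4\}$ and $Leib(A)={\rm span}\{e_5\}$ as two distinct lines inside $Z(A)$, so that $Z(A)\supseteq{\rm span}\{e_4,e_5\}=A^2$. But for a non-split algebra $Z(A)\subseteq A^2$, so this would give $Z(A)=A^2$ and hence $A^3=[A,A^2]=[A,Z(A)]=0$, contradicting $\dim(A^3)=1$. The correct conclusion is the opposite one: $A^3=Leib(A)=Z(A)$ is one-dimensional. (The appeal to Lemma \ref{L5} does not help you here: that lemma rules out $Leib(A)\subseteq A^3$ only when $\dim(A^2)=3$, where $k=2$ and part (ii) gives $5\le4$; with $\dim(A^2)=2$ one has $k=3$ and the inequality reads $5\le7$, which is vacuous.) Your own table is internally inconsistent on this point --- you declare $e_4$ central and then insist on $[e_1,e_4]=\gamma e_5$ with $\gamma\neq0$ --- and every algebra in the target list does in fact have $[x_1,x_4]=x_5\neq0$, i.e.\ $x_4\notin Z(A)$.

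Because of this, the classification cannot be organized the way you propose. With $e_4\notin Z(A)$ the products $[e_i,e_4]$ and $[e_4,e_i]$ for $i=1,2,3$ are essential unknowns (seven scalars $\gamma_1,\dots,\gamma_7$ in the paper's notation), not a single coefficient $\gamma$, and they interact with the $V\times V$ products through the Leibniz identities; the bilinear form $f$ on the three-dimensional complement $V$ does not see them at all. Moreover, transplanting the Section 2 device to $\dim V=3$ is not the routine matter you suggest: the congruence canonical forms of $3\times3$ matrices form a longer list than the five $2\times2$ forms used there, and Lemma \ref{L7} has no stated three-dimensional analogue. The paper proves this theorem by the direct method --- basis $e_5$ spanning $A^3=Z(A)=Leib(A)$, $e_4$ completing $A^2$, the full system of Leibniz constraints on the $\gamma_i$, and a long sequence of normalizing base changes --- and that is essentially forced here. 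Your steps three and four (rank conditions, filtered base changes, invariants for the parameter families) are the right kind of moves, but they must be run on the correct product table.
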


\begin{proof} Note that we have $A^3=Z(A)=Leib(A)$. Let $A^3=Z(A)=Leib(A)=\rm span\{e_5\}$. Extend this to a basis $\{e_4, e_5\}$ of $A^2$. Then the nonzero products in $A=\rm span\{e_1, e_2, e_3, e_4, e_5\}$ are given by
\begin{align*}
[e_1, e_1]=\alpha_1e_5, [e_1, e_2]=\alpha_2e_4+ \alpha_3e_5, [e_2, e_1]=-\alpha_2e_4+\alpha_4e_5, [e_2, e_2]=\alpha_5e_5, [e_1, e_3]=\beta_1e_4+\beta_2e_5, \\ [e_3, e_1]=-\beta_1e_4+\beta_3e_5, [e_2, e_3]=\beta_4e_4+\beta_5e_5, [e_3, e_2]=-\beta_4e_4+\beta_6e_5, [e_3, e_3]=\beta_7e_5, [e_1, e_4]=\gamma_1e_5, \\ [e_4, e_1]=\gamma_2e_5, [e_2, e_4]=\gamma_3e_5, [e_4, e_2]=\gamma_4e_5, [e_3, e_4]=\gamma_5e_5, [e_4, e_3]=\gamma_6e_5, [e_4, e_4]=\gamma_7e_7.
\end{align*}
\\ Leibniz identities give the following equations: 
\begin{equation}  \label{eq211,1}
\begin{cases}
\alpha_2\gamma_2=-\alpha_2\gamma_1 & \qquad
\alpha_2\gamma_4=-\alpha_2\gamma_3 \\
\beta_4\gamma_1=\alpha_2\gamma_6+\beta_1\gamma_3 & \qquad
\alpha_2\gamma_7=0 \\
\beta_1\gamma_2=-\beta_1\gamma_1 & \qquad
\beta_4\gamma_1+\beta_1\gamma_4+\alpha_2\gamma_5=0 \\
\beta_1\gamma_6=-\beta_1\gamma_5 & \qquad
\beta_1\gamma_3+\beta_4\gamma_2-\alpha_2\gamma_5=0 \\
\beta_4\gamma_4=-\beta_4\gamma_3 & \qquad
\beta_4\gamma_6=-\beta_4\gamma_5 \\
\beta_1\gamma_7=0 & \qquad
\beta_4\gamma_7=0
\end{cases}
\end{equation}
Note that $(\alpha_2, \beta_1, \beta_4)\neq(0, 0, 0)$ since $\dim(A^2)=2$. Then by (\ref{eq211,1}) we have $\gamma_7=0$. Note that if $\beta_4\neq0$ and $\beta_1=0$(resp. $\beta_1\neq0$) then with the base change $x_1=e_2, x_2=e_1, x_3=e_3, x_4=e_4, x_5=e_5$(resp. $x_1=e_1, x_2=\beta_4e_1-\beta_1e_2, x_3=e_3, x_4=e_4, x_5=e_5$) we can make $\beta_4=0$. So let $\beta_4=0$. 
If $\beta_1\neq0$ and $\alpha_2=0$(resp. $\alpha_2\neq0$) then with the base change $x_1=e_1, x_2=e_3, x_3=e_2, x_4=e_4, x_5=e_5$(resp. $x_1=e_1, x_2=e_2, x_3=\beta_1e_2-\alpha_2e_3, x_4=e_4, x_5=e_5$) we can make $\beta_1=0$. So we can assume $\beta_1=0$. Then by (\ref{eq211,1}) we have $\gamma_2=-\gamma_1, \gamma_4=-\gamma_3$ and $\gamma_6=0=\gamma_5$. 
If $\gamma_3\neq0$ and $\gamma_1=0$(resp. $\gamma_1\neq0$) then with the base change $x_1=e_2, x_2=e_1, x_3=e_3, x_4=e_4, x_5=e_5$(resp. $x_1=e_1, x_2=\gamma_3e_1-\gamma_1e_2, x_3=e_3, x_4=e_4, x_5=e_5$) we can make $\gamma_3=0$. So we can assume $\gamma_3=0$. Then $\gamma_1\neq0$ since $\dim(Z(A))=1$. 
\\ \indent {\bf Case 1:} Let $\beta_6=0$. Then we have the following products in $A$:
\begin{multline} \label{eq211,2}
[e_1, e_1]=\alpha_1e_5, [e_1, e_2]=\alpha_2e_4+ \alpha_3e_5, [e_2, e_1]=-\alpha_2e_4+\alpha_4e_5, [e_2, e_2]=\alpha_5e_5, [e_1, e_3]=\beta_2e_5, \\ [e_3, e_1]=\beta_3e_5, [e_2, e_3]=\beta_5e_5, [e_3, e_3]=\beta_7e_5, [e_1, e_4]=\gamma_1e_5=-[e_4, e_1].
\end{multline}
Without loss of generality we can assume $\beta_3=0$, because if $\beta_3\neq0$ then with the base change  $x_1=e_1, x_2=e_2, x_3=\gamma_1e_3+\beta_3e_4, x_4=e_4, x_5=e_5$ we can make $\beta_3=0$. 
\\ \indent {\bf Case 1.1:} Let $\beta_5=0$. 
\\ \indent {\bf Case 1.1.1:} Let $\beta_2=0$. Then $\beta_7\neq0$ since $\dim(Z(A))=1$. Hence we have the following products in $A$:
\begin{multline} \label{eq211,3}
[e_1, e_1]=\alpha_1e_5, [e_1, e_2]=\alpha_2e_4+ \alpha_3e_5, [e_2, e_1]=-\alpha_2e_4+\alpha_4e_5, [e_2, e_2]=\alpha_5e_5,\\ [e_3, e_3]=\beta_7e_5, [e_1, e_4]=\gamma_1e_5=-[e_4, e_1].
\end{multline}
\\ \indent {\bf Case 1.1.1.1:} Let $\alpha_1=0$. Then we have the following products in $A$:
\begin{multline} \label{eq211,4}
[e_1, e_2]=\alpha_2e_4+ \alpha_3e_5, [e_2, e_1]=-\alpha_2e_4+\alpha_4e_5, [e_2, e_2]=\alpha_5e_5,\\ [e_3, e_3]=\beta_7e_5, [e_1, e_4]=\gamma_1e_5=-[e_4, e_1].
\end{multline}
\begin{itemize}
\item If $\alpha_5=0=\alpha_3+\alpha_4$ then the base change $x_1=e_1, x_2=\frac{\beta_7}{\alpha_2\gamma_1}e_2, x_3=e_3, x_4=\frac{\beta_7}{\alpha_2\gamma_1}(\alpha_2e_4+\alpha_3e_5), x_5=\beta_7e_5$ shows that $A$ is isomorphic to $\ca_{31}$. 
\item If $\alpha_5=0$ and $\alpha_3+\alpha_4\neq0$ then the base change $x_1=\frac{\alpha_3+\alpha_4}{\alpha_2\gamma_1}e_1, x_2=\frac{\alpha_2\beta_7\gamma_1}{(\alpha_3+\alpha_4)^2}e_2, x_3=e_3, x_4=\frac{\beta_7}{\alpha_3+\alpha_4}(\alpha_2e_4+\alpha_3e_5), x_5=\beta_7e_5$ shows that $A$ is isomorphic to $\ca_{32}$. 
\item If $\alpha_5\neq0$ and $\alpha_3+\alpha_4=0$ then the base change $x_1=e_1, x_2=\frac{\alpha_2\gamma_1}{\alpha_5}e_2, x_3=\frac{\alpha_2\gamma_1}{\sqrt{\alpha_5\beta_7}}e_3, x_4=\frac{\alpha_2\gamma_1}{\alpha_5}(\alpha_2e_4+\alpha_3e_5), x_5=\frac{(\alpha_2\gamma_1)^2}{\alpha_5}e_5$ shows that $A$ is isomorphic to $\ca_{33}$. 
\item If $\alpha_5\neq0$ and $\alpha_3+\alpha_4\neq0$ then the base change $x_1=\frac{\alpha_3+\alpha_4}{\alpha_2\gamma_1}e_1, x_2=\frac{(\alpha_3+\alpha_4)^2}{\alpha_2\alpha_5\gamma_1}e_2, x_3=\frac{(\alpha_3+\alpha_4)^2}{\alpha_2\gamma_1\sqrt{\alpha_5\beta_7}}e_3, x_4=\frac{(\alpha_3+\alpha_4)^3}{(\alpha_2\gamma_1)^2\alpha_5}(\alpha_2e_4+\alpha_3e_5), x_5=\frac{(\alpha_3+\alpha_4)^4}{(\alpha_2\gamma_1)^2\alpha_5}e_5$ shows that $A$ is isomorphic to $\ca_{34}$. 
\end{itemize}

\indent {\bf Case 1.1.1.2:} Let $\alpha_1\neq0$. If $(\alpha_3+\alpha_4, \alpha_5)\neq(0, 0)$ then the base change $x_1=xe_1+e_2, x_2=e_2, x_3=e_3, x_4=e_4, x_5=e_5$ (where $\alpha_1x^2+(\alpha_3+\alpha_4)x+\alpha_5=0$) shows that $A$ is isomorphic to an algebra with the nonzero products given by (\ref{eq211,4}). Hence $A$ is isomorphic to $\ca_{31}, \ca_{32}, \ca_{33}$ or $\ca_{34}$.  So let $\alpha_3+\alpha_4=0=\alpha_5$. Then the base change $x_1=e_1, x_2=\frac{\alpha_1}{\alpha_2\gamma_1}e_2, x_3=\sqrt{\frac{\alpha_1}{\beta_7}}e_3, x_4=\frac{\alpha_1}{\alpha_2\gamma_1}(\alpha_2e_4+\alpha_3e_5), x_5=\alpha_1e_5$ shows that $A$ is isomorphic to $\ca_{35}$. 
\\ \indent {\bf Case 1.1.2:} Let $\beta_2\neq0$. If $\beta_7\neq0$ then the base change $x_1=2\beta_7e_1-\beta_2e_3, x_2=e_2, x_3=-\frac{2\gamma_1}{\beta_2}e_3+e_4, x_4=e_4, x_5=e_5$ shows that $A$ is isomorphic to an algebra with the nonzero products given by (\ref{eq211,3}). Hence $A$ is isomorphic to $\ca_{31}, \ca_{32}, \ca_{33}, \ca_{34}$ or $\ca_{35}$. So let $\beta_7=0$. Note that if $\alpha_1\neq0$ then with the base change  $x_1=\beta_2e_1-\alpha_1e_3, x_2=e_2, x_3=e_3, x_4=e_4, x_5=e_5$ we can make $\alpha_1=0$. So we can assume $\alpha_1=0$. 
Take $\theta=\frac{\alpha_3+\alpha_4}{\alpha_2\gamma_1}$. The base change $y_1=e_1, y_2=e_2, y_3=\frac{\alpha_2\gamma_1}{\beta_2}e_3, y_4=\alpha_2e_4-\alpha_4e_5, y_5=\alpha_2\gamma_1e_5$ shows that $A$ is isomorphic to the following algebra:
\begin{align*}
[y_1, y_2]=y_4+\theta y_5, [y_2, y_1]=-y_4, [y_2, y_2]=\frac{\alpha_5}{\alpha_2\gamma_1}y_5, [y_1, y_3]=y_5, [y_1, y_4]=y_5=-[y_4, y_1].
\end{align*}
Without loss of generality we can assume $\theta=0$ because if $\theta\neq0$ then with the base change $x_1=y_1, x_2=y_2-\theta y_3, x_3=y_3, x_4=y_4, x_5=y_5$ we can make $\theta=0$. 

\begin{itemize}
\item If $\alpha_5=0$ then the base change $x_1=y_1, x_2=y_2, x_3=y_3, x_4=y_4, x_5=y_5$ shows that $A$ is isomorphic to $\ca_{36}$.
\item If $\alpha_5\neq0$ then the base change $x_1=y_1, x_2=\frac{\alpha_2\gamma_1}{\alpha_5}y_2, x_3=\frac{\alpha_2\gamma_1}{\alpha_5}y_3, x_4=\frac{\alpha_2\gamma_1}{\alpha_5}y_4, x_5=\frac{\alpha_2\gamma_1}{\alpha_5}y_5$ shows that $A$ is isomorphic to $\ca_{37}$.
\end{itemize}

\indent {\bf Case 1.2:} Let $\beta_5\neq0$.  
Without loss of generality we can assume $\beta_2=0$. This is because if $\beta_2\neq0$ then with the base change  $x_1=\beta_5e_1-\beta_2e_2, x_2=e_2, x_3=e_3, x_4=e_4, x_5=e_5$ we can make $\beta_2=0$. 
\\ \indent {\bf Case 1.2.1:} Let $\beta_7=0$.
If $\alpha_5\neq0$ then with the base change  $x_1=e_1, x_2=\beta_5e_2-\alpha_5e_3, x_3=e_3, x_4=e_4, x_5=e_5$ we can make $\alpha_5=0$. So assume $\alpha_5=0$. 
\begin{itemize}
\item If $\alpha_1=0$ then the base change $x_1=e_1-\frac{\alpha_3+\alpha_4}{\beta_5}e_3, x_2=e_2, x_3=\frac{\alpha_2\gamma_1}{\beta_5}e_3, x_4=\alpha_2e_4+\alpha_3e_5, x_5=\alpha_2\gamma_1e_5$ shows that $A$ is isomorphic to $\ca_{38}$. 
\item If $\alpha_1\neq0$ then the base change $x_1=-e_1+\frac{\alpha_3+\alpha_4}{\beta_5}e_3, x_2=\frac{\alpha_1}{\alpha_2\gamma_1}e_2, x_3=\frac{\alpha_2\gamma_1}{\beta_5}e_3, x_4=-\frac{\alpha_1}{\alpha_2\gamma_1}(\alpha_2e_4+\alpha_3e_5), x_5=\alpha_1e_5$ shows that $A$ is isomorphic to $\ca_{39}$. 
\end{itemize}

\indent {\bf Case 1.2.2:} Let $\beta_7\neq0$. 
\\ \indent {\bf Case 1.2.2.1:} Let $\alpha_1=0$. Then we have the following products in $A$:
\begin{multline} \label{eq211,5}
[e_1, e_2]=\alpha_2e_4+ \alpha_3e_5, [e_2, e_1]=-\alpha_2e_4+\alpha_4e_5, [e_2, e_2]=\alpha_5e_5, \\ [e_2, e_3]=\beta_5e_5, [e_3, e_3]=\beta_7e_5, [e_1, e_4]=\gamma_1e_5=-[e_4, e_1].
\end{multline}

\begin{itemize}
\item If $\alpha_3+\alpha_4=0$ then the base change $x_1=\sqrt{\frac{\beta_5}{\alpha_2\gamma_1}}e_1, x_2=\frac{\beta_7}{\beta_5}e_2, x_3=e_3, x_4=\sqrt{\frac{\beta_5}{\alpha_2\gamma_1}}\frac{\beta_7}{\beta_5}(\alpha_2e_4+\alpha_3e_5), x_5=\beta_7e_5$ shows that $A$ is isomorphic to $\ca_{40}(\alpha)$. 
\item If $\alpha_3+\alpha_4\neq0$ then the base change $x_1=\frac{\alpha_3+\alpha_4}{\alpha_2\gamma_1}e_1, x_2=\frac{(\alpha_3+\alpha_4)^2\beta_7}{\alpha_2\gamma_1\beta^2_5}e_2, x_3=\frac{(\alpha_3+\alpha_4)^2}{\alpha_2\gamma_1\beta_5}e_3, x_4=\frac{(\alpha_3+\alpha_4)^3\beta_7}{(\alpha_2\gamma_1\beta_5)^2}(\alpha_2e_4+\alpha_3e_5), x_5=\frac{(\alpha_3+\alpha_4)^4\beta_7}{(\alpha_2\gamma_1\beta_5)^2}e_5$ shows that $A$ is isomorphic to $\ca_{41}(\alpha)$. 
\end{itemize}

\indent {\bf Case 1.2.2.2:} Let $\alpha_1\neq0$. If $(\alpha_3+\alpha_4, 4\alpha_5\beta_7-\beta^2_5)\neq(0, 0)$ then the base change  $x_1=\frac{x\beta_7}{\gamma_1}e_1-\frac{2\beta_7}{\beta_5}e_2+e_3, x_2=e_2, x_3=xe_3+e_4, x_4=e_4, x_5=e_5$ (where $\frac{\alpha_1\beta^2_7}{\gamma^2_1}x^2-\frac{2(\alpha_3+\alpha_4)\beta^2_7}{\beta_5\gamma_1}x+\frac{\beta_7(4\alpha_5\beta_7-\beta^2_5)}{\beta^2_5}=0$) shows that $A$ is isomorphic to an algebra with the nonzero products given by (\ref{eq211,5}). Hence $A$ is isomorphic to $\ca_{40}(\alpha)$ or $\ca_{41}(\alpha)$. So let $\alpha_3+\alpha_4=0=4\alpha_5\beta_7-\beta^2_5$. Then the base change $x_1=\sqrt{\frac{\alpha_1}{\beta_7}}\frac{\beta_5}{\alpha_2\gamma_1}e_1, x_2=\frac{\alpha_1}{\alpha_2\gamma_1}e_2, x_3=\frac{\alpha_1\beta_5}{\alpha_2\gamma_1\beta_7}e_3, x_4=\sqrt{\frac{\alpha_1}{\beta_7}}\frac{\beta_5\alpha_1}{(\alpha_2\gamma_1)^2}(\alpha_2e_4+\alpha_3e_5), x_5=\frac{(\alpha_1\beta_5)^2}{(\alpha_2\gamma_1)^2\beta_7}e_5$ shows that $A$ is isomorphic to $\ca_{42}$.

\indent {\bf Case 2:} Let $\beta_6\neq0$.
Without loss of generality we can assume $\beta_3=0$, because if $\beta_3\neq0$ then with the base change $x_1=\beta_6e_1-\beta_3e_2, x_2=e_2, x_3=e_3, x_4=e_4, x_5=e_5$ we can make $\beta_3=0$. Note that if $\beta_7\neq0$ then the base change $x_1=e_1, x_2=\beta_7e_2-\beta_6e_3, x_3=e_3, x_4=e_4, x_5=e_5$ shows that $A$ is isomorphic to an algebra with the nonzero products given by (\ref{eq211,2}). Hence $A$ is isomorphic to $\ca_{31}, \ca_{32}, \ldots, \ca_{41}(\alpha)$ or $\ca_{42}$. So let $\beta_7=0$. 
\\ \indent {\bf Case 2.1:} Let $\alpha_5=0$. Then we have the following products in $A$:
\begin{multline} \label{eq211,6}
[e_1, e_1]=\alpha_1e_5, [e_1, e_2]=\alpha_2e_4+ \alpha_3e_5, [e_2, e_1]=-\alpha_2e_4+\alpha_4e_5, [e_1, e_3]=\beta_2e_5, \\ [e_2, e_3]=\beta_5e_5, [e_3, e_2]=\beta_6e_5, [e_1, e_4]=\gamma_1e_5=-[e_4, e_1].
\end{multline}
\\ \indent {\bf Case 2.1.1:} Let $\alpha_1=0$. Then we have the following products in $A$:
\begin{multline} \label{eq211,7}
[e_1, e_2]=\alpha_2e_4+ \alpha_3e_5, [e_2, e_1]=-\alpha_2e_4+\alpha_4e_5, [e_1, e_3]=\beta_2e_5, \\ [e_2, e_3]=\beta_5e_5, [e_3, e_2]=\beta_6e_5, [e_1, e_4]=\gamma_1e_5=-[e_4, e_1].
\end{multline}
\begin{itemize}
\item If $\beta_2=0$ and $\beta_5+\beta_6=0$ then $\alpha_3+\alpha_4\neq0$ since $Leib(A)\neq0$. Then the base change $x_1=\frac{\alpha_3+\alpha_4}{\alpha_2\gamma_1}e_1, x_2=(\frac{\alpha_2\gamma_1}{\alpha_3+\alpha_4})^2e_2, x_3=\frac{(\alpha_3+\alpha_4)^2}{\alpha_2\gamma_1\beta_6}e_3, x_4=\frac{\alpha_2\gamma_1}{\alpha_3+\alpha_4}(\alpha_2e_4+\alpha_3e_5), x_5=\alpha_2\gamma_1e_5$ shows that $A$ is isomorphic to $\ca_{43}$. 

\item If $\beta_2=0$ and $\beta_5+\beta_6\neq0$ then the base change $x_1=e_1-\frac{\alpha_3+\alpha_4}{\beta_5+\beta_6}e_3, x_2=e_2, x_3=\frac{\alpha_2\gamma_1}{\beta_6}e_3, x_4=\alpha_2e_4+(\alpha_3-\frac{(\alpha_3+\alpha_4)\beta_6}{\beta_5+\beta_6})e_5, x_5=\alpha_2\gamma_1e_5$ shows that $A$ is isomorphic to $\ca_{44}(\alpha)$. 

\item If $\beta_2\neq0, \beta_5+\beta_6=0$ and $\alpha_3+\alpha_4=0$ then the base change $x_1=\beta_6e_1, x_2=\beta_2e_2-\alpha_2\beta_6\gamma_1e_3, x_3=\alpha_2\beta_6\gamma_1e_3, x_4=\beta_2\beta_6(\alpha_2e_4+(\alpha_3-\alpha_2\beta_6\gamma_1)e_5), x_5=\alpha_2\beta_2\beta^2_6\gamma_1e_5$ shows that $A$ is isomorphic to $\ca_{45}(-1)$.  

\item If $\beta_2\neq0, \beta_5+\beta_6=0$ and $\alpha_3+\alpha_4\neq0$ then the base change $x_1=\frac{\alpha_3+\alpha_4}{\alpha_2\gamma_1}e_1, x_2=\frac{(\alpha_3+\alpha_4)\beta_2}{\alpha_2\beta_6\gamma_1}e_2, x_3=\frac{(\alpha_3+\alpha_4)^2}{\alpha_2\beta_6\gamma_1}e_3, x_4=\frac{(\alpha_3+\alpha_4)^2\beta_2}{\alpha^2_2\beta_6\gamma^2_1}(\alpha_2e_4+\alpha_3e_5), x_5=\frac{(\alpha_3+\alpha_4)^3\beta_2}{\alpha^2_2\beta_6\gamma^2_2}e_5$ shows that $A$ is isomorphic to $\ca_{45}(-1)$.  

\item If $\beta_2\neq0, \beta_5+\beta_6\neq0$ and $\alpha_3+\alpha_4=0$ then the base change $x_1=\frac{\beta_6}{\beta_2}e_1-\frac{\beta_6}{\beta_5+\beta_6}e_2, x_2=e_2, x_3=\frac{\alpha_2\beta_6\gamma_1}{\beta^2_2}e_3-\frac{\alpha_2\beta^2_6}{\beta_2(\beta_5+\beta_6)}e_4, x_4=\frac{\beta_6}{\beta_2}(\alpha_2e_4+\alpha_3e_5), x_5=\frac{\alpha_2\beta^2_6\gamma_1}{\beta^2_2}e_5$ shows that $A$ is isomorphic to $\ca_{44}(\alpha)$.

\item If $\beta_2\neq0, \beta_5+\beta_6\neq0$ and $\alpha_3+\alpha_4\neq0$ then the base change $x_1=\frac{\alpha_3+\alpha_4}{\alpha_2\gamma_1}e_1, x_2=\frac{(\alpha_3+\alpha_4)\beta_2}{\alpha_2\beta_6\gamma_1}e_2, x_3=\frac{(\alpha_3+\alpha_4)^2}{\alpha_2\beta_6\gamma_1}e_3, x_4=\frac{(\alpha_3+\alpha_4)^2\beta_2}{\alpha^2_2\beta_6\gamma^2_1}(\alpha_2e_4+\alpha_3e_5), x_5=\frac{(\alpha_3+\alpha_4)^3\beta_2}{\alpha^2_2\beta_6\gamma^2_2}e_5$ shows that $A$ is isomorphic to $\ca_{45}(\alpha)(\alpha\in\cc\backslash\{-1\})$.  

\end{itemize}

\indent {\bf Case 2.1.2:} Let $\alpha_1\neq0$.
\begin{itemize}
\item If $\beta_2\neq0$ then the base change $x_1=\beta_2e_1-\alpha_1e_3, x_2=e_2, x_3=e_3, x_4=e_4, x_5=e_5$ shows that $A$ is isomorphic to an algebra with the nonzero products given by (\ref{eq211,7}). Hence $A$ is isomorphic to $\ca_{43}, \ca_{44}(\alpha)$ or $\ca_{45}(\alpha)$.
\item If $\beta_2=0$ and $\alpha_3+\alpha_4\neq0$ then the base change $x_1=(\alpha_3+\alpha_4)e_1-\alpha_1e_2, x_2=e_2, x_3=(\alpha_3+\alpha_4)\gamma_1e_3-\alpha_1\beta_6e_4, x_4=e_4, x_5=e_5$ shows that $A$ is isomorphic to an algebra with the nonzero products given by (\ref{eq211,7}). Hence $A$ is isomorphic to $\ca_{43}, \ca_{44}(\alpha)$ or $\ca_{45}(\alpha)$.
\item If $\beta_2=0, \alpha_3+\alpha_4=0$ and $\beta_5+\beta_6\neq0$ then the base change $x_1=e_1+e_2-\frac{\alpha_1}{\beta_5+\beta_6}e_3, x_2=e_2, x_3=\gamma_1e_3+\beta_6e_4, x_4=e_4, x_5=e_5$ shows that $A$ is isomorphic to an algebra with the nonzero products given by (\ref{eq211,7}). Hence $A$ is isomorphic to $\ca_{43}, \ca_{44}(\alpha)$ or $\ca_{45}(\alpha)$.
\item If $\beta_2=0, \alpha_3+\alpha_4=0$ and $\beta_5+\beta_6=0$ then the base change $x_1=e_1, x_2=\frac{\alpha_1}{\alpha_2\gamma_1}e_2, x_3=\frac{\alpha_2\gamma_1}{\beta_6}e_3, x_4=\frac{\alpha_1}{\alpha_2\gamma_1}(\alpha_2e_4+\alpha_3e_5), x_5=\alpha_1e_5$ shows that $A$ is isomorphic to $\ca_{46}$. 
\end{itemize}

\indent {\bf Case 2.2:} Let $\alpha_5\neq0$. If $\beta_5+\beta_6\neq0$ then the base change $x_1=e_1, x_2=(\beta_5+\beta_6)e_2-\alpha_5e_3, x_3=e_3, x_4=e_4, x_5=e_5$ shows that $A$ is isomorphic to an algebra with the nonzero products given by (\ref{eq211,6}). Hence $A$ is isomorphic to $\ca_{43}, \ca_{44}(\alpha), \ca_{45}(\alpha)$ or $\ca_{46}$. So let $\beta_5+\beta_6=0$. 
Note that if $\alpha_1\neq0$ and $\beta_2=0$[resp. $\beta_2\neq0$] then with the base change $x_1=xe_1+e_2, x_2=e_2, x_3=e_3+\frac{\beta_6}{x\gamma_1}e_4, x_4=e_4, x_5=e_5$ (where $\alpha_1x^2+(\alpha_3+\alpha_4)x+\alpha_5=0$)[resp. $x_1=\beta_2e_1-\alpha_1e_3, x_2=e_2, x_3=e_3, x_4=e_4, x_5=e_5$] we can make $\alpha_1=0$. So assume $\alpha_1=0$. 
\begin{itemize}
\item If $\beta_2=0$ and $\alpha_3+\alpha_4=0$ then the base change $x_1=e_1, x_2=\frac{\alpha_2\gamma_1}{\alpha_5}e_2, x_3=\frac{\alpha_2\gamma_1}{\beta_6}e_3, x_4=\frac{\alpha_2\gamma_1}{\alpha_5}(\alpha_2e_4+\alpha_3e_5), x_5=\frac{(\alpha_2\gamma_1)^2}{\alpha_5}e_5$ shows that $A$ is isomorphic to $\ca_{47}$.

\item If $\beta_2=0$ and $\alpha_3+\alpha_4\neq0$ then the base change $x_1=\frac{\alpha_3+\alpha_4}{\alpha_2\gamma_1}e_1, x_2=\frac{(\alpha_3+\alpha_4)^2}{\alpha_2\alpha_5\gamma_1}e_2, x_3=\frac{(\alpha_3+\alpha_4)^2}{\alpha_2\beta_6\gamma_1}e_3, x_4=\frac{(\alpha_3+\alpha_4)^3}{(\alpha_2\gamma_1)^2\alpha_5}(\alpha_2e_4+\alpha_3e_5), x_5=\frac{(\alpha_3+\alpha_4)^4}{(\alpha_2\gamma_1)^2\alpha_5}e_5$ shows that $A$ is isomorphic to $\ca_{48}$.

\item If $\beta_2\neq0$ then the base change $x_1=\frac{\alpha_5\beta_2}{\alpha_2\beta_6\gamma_1}e_1, x_2=\frac{\alpha_5\beta^2_2}{\alpha_2\beta^2_6\gamma_1}e_2-\frac{(\alpha_3+\alpha_4)\alpha_5\beta_2}{\alpha_2\beta^2_6\gamma_1}e_3, x_3=\frac{(\alpha_5\beta_2)^2}{\alpha_2\beta^3_6\gamma_1}e_3, x_4=\frac{\alpha^2_5\beta^3_2}{\alpha_2\beta^3_6\gamma^2_1}e_4-\frac{\alpha^2_5\alpha_4\beta^3_2}{(\alpha_2\gamma_1)^2\beta^3_6}e_5, x_5=\frac{\alpha^3_5\beta^4_2}{(\alpha_2\gamma_1)^2\beta^4_6}e_5$ shows that $A$ is isomorphic to $\ca_{49}$.
\end{itemize}
\end{proof}

\begin{rmk}
\
\begin{scriptsize}
\begin{enumerate}
\item If $\alpha_1, \alpha_2\in\cc$ such that $\alpha_1\neq\alpha_2$, then $\ca_{40}(\alpha_1)$ and $\ca_{40}(\alpha_2)$ are not isomorphic. 
\item If $\alpha_1, \alpha_2\in\cc$ such that $\alpha_1\neq\alpha_2$, then $\ca_{41}(\alpha_1)$ and $\ca_{41}(\alpha_2)$ are not isomorphic. 
\item If $\alpha_1, \alpha_2\in\cc\backslash\{-1\}$ such that $\alpha_1\neq\alpha_2$, then $\ca_{44}(\alpha_1)$ and $\ca_{44}(\alpha_2)$ are not isomorphic. 
\item If $\alpha_1, \alpha_2\in\cc$ such that $\alpha_1\neq\alpha_2$, then $\ca_{45}(\alpha_1)$ and $\ca_{45}(\alpha_2)$ are not isomorphic. 
\end{enumerate}
\end{scriptsize}
\end{rmk}

\begin{thm} Let $A$ be a $5-$dimensional non-split non-Lie nilpotent Leibniz algebra with $\dim(A^2)=2, \dim(A^3)=0$ and $\dim(Leib(A))=1$. Then $A$ is isomorphic to a Leibniz algebra spanned by $\{x_1, x_2, x_3, x_4, x_5\}$ with the nonzero products given by one of  the following:
\begin{scriptsize}
\begin{description}
\item[$\ca_{50}$] $[x_1, x_2]=x_4, [x_2, x_1]=-x_4+x_5, [x_3, x_3]=x_5$.
\item[$\ca_{51}$] $[x_1, x_2]=x_4=-[x_2, x_1], [x_3, x_1]=x_5$.
\item[$\ca_{52}$] $[x_1, x_2]=x_4, [x_2, x_1]=-x_4+x_5, [x_3, x_1]=x_5, [x_3, x_3]=x_5$.
\item[$\ca_{53}(\alpha)$] $[x_1, x_2]=x_4=-[x_2, x_1], [x_1, x_3]=x_5, [x_3, x_1]=\alpha x_5, \quad \alpha\in\cc\backslash\{-1\}$.
\item[$\ca_{54}$] $[x_1, x_2]=x_4, [x_2, x_1]=-x_4+x_5, [x_1, x_3]=x_5=-[x_3, x_1]$.
\item[$\ca_{55}(\alpha)$] $[x_1, x_2]=x_4=-[x_2, x_1], [x_1, x_3]=x_5, [x_3, x_1]=\alpha x_5, [x_3, x_3]=x_5, \quad \alpha\in\cc$.
\item[$\ca_{56}(\alpha)$]  $[x_1, x_2]=x_4, [x_2, x_1]=-x_4+\alpha x_5, [x_3, x_1]=x_5, [x_3, x_2]=x_5, [x_3, x_3]=x_5, \quad \alpha\in\cc\backslash\{0\}$.
\item[$\ca_{57}$] $[x_1, x_2]=x_4, [x_2, x_1]=-x_4+x_5, [x_3, x_1]=x_5, [x_3, x_2]=x_5$.
\item[$\ca_{58}$] $[x_1, x_2]=x_4=-[x_2, x_1], [x_1, x_3]=x_5, [x_3, x_2]=x_5$.
\item[$\ca_{59}$] $[x_1, x_2]=x_4, [x_2, x_1]=-x_4+x_5, [x_1, x_3]=x_5, [x_3, x_2]=x_5$.
\item[$\ca_{60}$] $[x_1, x_2]=x_4, [x_2, x_1]=-x_4+x_5, [x_1, x_3]=x_5, [x_3, x_1]=x_5, [x_3, x_2]=x_5, [x_3, x_3]=x_5$.
\item[$\ca_{61}(\alpha)$] $[x_1, x_2]=x_4, [x_2, x_1]=-x_4+x_5, [x_1, x_3]=x_5, [x_3, x_1]=\alpha x_5, [x_2, x_3]=x_5, [x_3, x_2]=\alpha x_5, \quad \alpha\in\cc$.
\item[$\ca_{62}$] $[x_1, x_2]=x_4=-[x_2, x_1], [x_2, x_2]=x_5, [x_2, x_3]=x_5=-[x_3, x_2]$.
\item[$\ca_{63}$] $[x_1, x_2]=x_4=-[x_2, x_1], [x_2, x_2]=x_5, [x_1, x_3]=x_5=-[x_3, x_1]$.
\end{description}
\end{scriptsize}
\end{thm}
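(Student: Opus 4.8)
The plan is to run the same direct-method machinery used in the preceding theorems, now with the structural simplification that $A^3 = 0$ forces $A^2$ to be abelian and central. First I would record the consequences of the hypotheses: since $A^3=0$ we have $A^2 \subseteq Z(A)$, and since $A^2$ has dimension $2$ while $\dim(Leib(A))=1$, Lemma~\ref{L4} (with $n=5$, so $k=5-\dim Z(A)$) constrains $\dim Z(A)$; combined with non-splitness ($Z(A)\subseteq A^2$) this pins down $\dim Z(A)$ to be either $2$ or $3$. I would treat $\dim Z(A)=2$ (so $Z(A)=A^2$) as the main case and check that $\dim Z(A)=3$ leads either to a contradiction or to one of the listed algebras. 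Choose $Leib(A)={\rm span}\{e_5\}$ and extend to a basis $\{e_4,e_5\}$ of $A^2$, then to a basis $\{e_1,e_2,e_3,e_4,e_5\}$ of $A$. Because $A^2$ is central, the only possibly-nonzero products are among $e_1,e_2,e_3$, landing in ${\rm span}\{e_4,e_5\}$; so the data is two bilinear forms on $V={\rm span}\{e_1,e_2,e_3\}$, one valued in the $e_4$-direction (call its matrix the "$\alpha$-part", which must be skew on the $e_4$-component since $[x,x]\in Leib(A)={\rm span}\{e_5\}$) and one valued in the $e_5$-direction (the "$\beta$-part"), with the constraint that the $e_4$-part of the bracket spans a $1$-dimensional space (as $A^2$ is $2$-dimensional and $e_5\in Leib(A)$ already accounts for the other dimension) and that the symmetric-part data of the $e_5$-valued form be rank-one-ish so that $\dim Leib(A)=1$.

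Next I would write down the general product table with parameters $\alpha_i,\beta_j$ (as in the proof of the previous theorem, but with all $\gamma$'s absent since $[e_i,e_4]=0$), impose the Leibniz identities to get the defining equations among the parameters, and then split into cases according to which of the structure constants controlling the $e_4$-valued bracket and the $e_5$-valued bracket vanish. The natural case tree is: first by the rank/position of the skew $e_4$-part (one may normalize so that $[e_1,e_2]=x_4$ is the only $e_4$-producing bracket, after a change of basis in $V$ mixing in $e_3$); then by whether the restriction of the $e_5$-valued form to the relevant subspace is symmetric or has a genuine skew part; then by the congruence type of the induced symmetric form on $\{e_1,e_2,e_3\}$ (rank $0,1,2,3$ via the canonical forms), which is exactly where the bilinear-form classification from the introduction feeds in. Each leaf of the tree yields an explicit basis change rescaling/combining the $e_i$ to hit one of $\ca_{50}$ through $\ca_{63}$; the families $\ca_{53}(\alpha),\ca_{55}(\alpha),\ca_{56}(\alpha),\ca_{61}(\alpha)$ arise precisely when a ratio of two structure constants is a genuine continuous invariant that cannot be scaled away (the two-sided nature of the bracket, i.e. $[e_i,e_j]$ versus $[e_j,e_i]$ differing by more than a sign in the $e_5$-component, is what produces these one-parameter families, analogously to $\ca_{23},\ca_{25},\ca_{29}$).

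Finally, I would verify that the algebras $\ca_{50},\dots,\ca_{63}$ genuinely satisfy the Leibniz identity, are nilpotent with $\dim A^2=2$, $\dim A^3=0$, $\dim Leib(A)=1$, and are non-split and non-Lie, and note the pairwise non-isomorphism (the continuous parameters being distinguished by the invariant described above, and the discrete list by invariants such as $\dim Z(A)$, the isomorphism type of the pair $(A/Leib(A), Leib(A))$, and whether the right-hand brackets are strictly skew). The main obstacle I expect is bookkeeping: the case tree has many leaves because there are two separate bilinear forms interacting (the $e_4$-valued skew form and the $e_5$-valued form with both symmetric and skew parts), and one must be careful that basis changes used to normalize the $e_4$-part do not disturb the already-normalized $e_5$-part, so the order of normalizations matters. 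A secondary subtlety is making sure no two leaves collapse to the same algebra and no case is missed when a structure constant that was assumed nonzero is allowed to vanish — this is handled, as in the previous proofs, by explicitly showing that the "generic" sub-case reduces back to an algebra already on the list via a further change of basis (the recurring "$A$ is isomorphic to an algebra with the nonzero products given by \eqref{...}" step).
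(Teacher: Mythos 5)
Your plan is essentially the paper's proof: since $A^3=0$ forces $A^2=Z(A)$ (note the left Leibniz identity gives $[A^2,A]\subseteq A^3$ as well, so $A^2\subseteq Z(A)$ exactly and, by non-splitness, $Z(A)=A^2$; there is no $\dim Z(A)=3$ case to rule out), one writes the brackets of $e_1,e_2,e_3$ into ${\rm span}\{e_4,e_5\}$ with skew $e_4$-components, normalizes so that $[e_1,e_2]=\alpha_2 e_4$ is the only $e_4$-producing bracket, and then runs the case tree on the $e_5$-valued form — exactly as in the paper. Two small corrections: because $A^2$ is central here, every Leibniz identity is automatically satisfied, so that step yields no equations (unlike the $\dim(A^3)=1$ case); and the substance of the theorem is the explicit multi-level case analysis with the concrete base changes producing $\ca_{50},\dots,\ca_{63}$, which your proposal outlines but does not carry out.
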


\begin{proof} Let $Leib(A)=\rm span\{e_5\}$. Extend this to a basis of $\{e_4, e_5\}$ of $A^2=Z(A)$. Then the nonzero products in $A=\rm span\{e_1, e_2, e_3, e_4, e_5\}$ are given by
\begin{align*}
[e_1, e_1]=\alpha_1e_5, [e_1, e_2]=\alpha_2e_4+ \alpha_3e_5, [e_2, e_1]=-\alpha_2e_4+\alpha_4e_5, [e_2, e_2]=\alpha_5e_5, [e_1, e_3]=\beta_1e_4+\beta_2e_5, \\ [e_3, e_1]=-\beta_1e_4+\beta_3e_5, [e_2, e_3]=\beta_4e_4+\beta_5e_5, [e_3, e_2]=-\beta_4e_4+\beta_6e_5, [e_3, e_3]=\beta_7e_5.
\end{align*}
Without loss of generality we can assume $\beta_4=0$, because if $\beta_4\neq0$ and $\beta_1=0$(resp. $\beta_1\neq0$) then with the base change $x_1=e_2, x_2=e_1, x_3=e_3, x_4=e_4, x_5=e_5$(resp. $x_1=e_1, x_2=\beta_4e_1-\beta_1e_2, x_3=e_3, x_4=e_4, x_5=e_5$) we can make $\beta_4=0$. Note that if $\beta_1\neq0$ and $\alpha_2=0$(resp. $\alpha_2\neq0$) then with the base change $x_1=e_1, x_2=e_3, x_3=e_2, x_4=e_4, x_5=e_5$(resp. $x_1=e_1, x_2=e_2, x_3=\beta_1e_2-\alpha_2e_3, x_4=e_4, x_5=e_5$) we can make $\beta_1=0$. So let $\beta_1=0$. Then $\alpha_2\neq0$ since $\dim(A^2)=2$. If $\alpha_1\neq0$ and $(\alpha_3+\alpha_4, \alpha_5)\neq(0, 0)$[resp. $\alpha_3+\alpha_4=0=\alpha_5$] then with the base change $x_1=xe_1+e_2, x_2=e_2, x_3=e_3, x_4=e_4, x_5=e_5$ (where $\alpha_1x^2+(\alpha_3+\alpha_4)x+\alpha_5=0$)[resp. $x_1=e_2, x_2=e_1, x_3=e_3, x_4=e_4, x_5=e_5$] we can make $\alpha_1=0$. So we can assume $\alpha_1=0$. 

\indent {\bf Case 1:} Let $\alpha_5=0$. Then we have the following products in $A$:
\begin{multline} \label{eq201,1}
[e_1, e_2]=\alpha_2e_4+ \alpha_3e_5, [e_2, e_1]=-\alpha_2e_4+\alpha_4e_5, [e_1, e_3]=\beta_2e_5, [e_3, e_1]=\beta_3e_5, \\ [e_2, e_3]=\beta_5e_5, [e_3, e_2]=\beta_6e_5, [e_3, e_3]=\beta_7e_5.
\end{multline}

\indent {\bf Case 1.1:} Let $\beta_5=0$. Then we have the following products in $A$:
\begin{multline} \label{eq201,2}
[e_1, e_2]=\alpha_2e_4+ \alpha_3e_5, [e_2, e_1]=-\alpha_2e_4+\alpha_4e_5, [e_1, e_3]=\beta_2e_5, [e_3, e_1]=\beta_3e_5, \\ [e_3, e_2]=\beta_6e_5, [e_3, e_3]=\beta_7e_5.
\end{multline}

\indent {\bf Case 1.1.1:} Let $\beta_6=0$. Then we have the following products in $A$:
\begin{equation} \label{eq201,3}
[e_1, e_2]=\alpha_2e_4+ \alpha_3e_5, [e_2, e_1]=-\alpha_2e_4+\alpha_4e_5, [e_1, e_3]=\beta_2e_5, [e_3, e_1]=\beta_3e_5, [e_3, e_3]=\beta_7e_5.
\end{equation}

\indent {\bf Case 1.1.1.1:} Let $\beta_2=0$. Then we have the following products in $A$:
\begin{equation} \label{eq201,4}
[e_1, e_2]=\alpha_2e_4+ \alpha_3e_5, [e_2, e_1]=-\alpha_2e_4+\alpha_4e_5, [e_3, e_1]=\beta_3e_5, [e_3, e_3]=\beta_7e_5.
\end{equation}
\\ \indent {\bf Case 1.1.1.1.1:} Let $\beta_3=0$. Then $\beta_7\neq0$ since $\dim(Z(A))=2$. Note that $\alpha_3+\alpha_4\neq0$ since $A$ is non-split. Then the base change $x_1=\frac{\beta_7}{\alpha_3+\alpha_4}e_1, x_2=e_2, x_3=e_3, x_4=\frac{\beta_7}{\alpha_3+\alpha_4}(\alpha_2e_4+\alpha_3e_5), x_5=\beta_7e_5$ shows that $A$ is isomorphic to $\ca_{50}$.
\\ \indent {\bf Case 1.1.1.1.2:} Let $\beta_3\neq0$.
\begin{itemize} 
\item If $\beta_7=0$ then the base change $x_1=e_1, x_2=\beta_3e_2-(\alpha_3+\alpha_4)e_3, x_3=e_3, x_4=\beta_3(\alpha_2e_4+\alpha_3e_5), x_5=\beta_3e_5$ shows that $A$ is isomorphic to $\ca_{51}$.
\item If $\beta_7\neq0$ and $\alpha_3+\alpha_4=0$ then the base change $x_1=\beta_7e_1-\beta_3e_3, x_2=e_2, x_3=-\beta_3e_3, x_4=\beta_7(\alpha_2e_4+\alpha_3e_5), x_5=\beta^2_3\beta_7e_5$ shows that $A$ is isomorphic to $\ca_{55}(0)$.
\item If $\beta_7\neq0$ and $\alpha_3+\alpha_4\neq0$ then the base change $x_1=\beta_7e_1, x_2=\frac{\beta^2_3}{\alpha_3+\alpha_4}e_2, x_3=\beta_3e_3, x_4=\frac{\beta^2_3\beta_7}{\alpha_3+\alpha_4}(\alpha_2e_4+\alpha_3e_5), x_5=\beta^2_3\beta_7e_5$ shows that $A$ is isomorphic to $\ca_{52}$.
\end{itemize}

\indent {\bf Case 1.1.1.2:} Let $\beta_2\neq0$. 
\begin{itemize}
\item If $\beta_7=0$ and $\beta_2+\beta_3\neq0$ then the base change $x_1=e_1, x_2=(\beta_2+\beta_3)e_2-(\alpha_3+\alpha_4)e_3, x_3=e_3, x_4=\alpha_2(\beta_2+\beta_3)e_4+(\alpha_3\beta_3-\alpha_4\beta_2)e_5, x_5=\beta_2e_5$ shows that $A$ is isomorphic to $\ca_{53}(\alpha)$.

\item If $\beta_7=0$ and $\beta_2+\beta_3=0$ then $\alpha_3+\alpha_4\neq0$ since $Leib(A)\neq0$. Then the base change $x_1=e_1, x_2=\beta_2e_2, x_3=(\alpha_3+\alpha_4)e_3, x_4=\beta_2(\alpha_2e_4+\alpha_3e_5), x_5=\beta_2(\alpha_3+\alpha_4)e_5$ shows that $A$ is isomorphic to $\ca_{54}$.

\item If $\beta_7\neq0$ and $\alpha_3+\alpha_4=0$ then the base change $x_1=\beta_7e_1, x_2=e_2, x_3=\beta_2e_3, x_4=\beta_7(\alpha_2e_4+\alpha_3e_5), x_5=\beta^2_2\beta_7e_5$ shows that $A$ is isomorphic to $\ca_{55}(\alpha)$.

\item If $\beta_7\neq0$ and $\alpha_3+\alpha_4\neq0$ then the base change $x_1=-\beta_7e_1-\frac{\beta_2\beta_3}{\alpha_3+\alpha_4}e_2+\beta_2e_3, x_2=e_2, x_3=e_3, x_4=e_4, x_5=e_5$ shows that $A$ is isomorphic to an algebra with the nonzero products given by (\ref{eq201,4}). Hence $A$ is isomorphic to $\ca_{50}, \ca_{51}, \ca_{52}$ or $\ca_{55}(0)$.
\end{itemize}

\indent {\bf Case 1.1.2:} Let $\beta_6\neq0$.
\\ \indent {\bf Case 1.1.2.1:} Let $\beta_2=0$. Then we have the following products in $A$:
\begin{equation} \label{eq201,5}
[e_1, e_2]=\alpha_2e_4+ \alpha_3e_5, [e_2, e_1]=-\alpha_2e_4+\alpha_4e_5, [e_3, e_1]=\beta_3e_5, [e_3, e_2]=\beta_6e_5, [e_3, e_3]=\beta_7e_5.
\end{equation}
\begin{itemize} 
\item If $\beta_3=0$ then the base change $x_1=e_2, x_2=e_1, x_3=e_3, x_4=e_4, x_5=e_5$ shows that $A$ is isomorphic to an algebra with the nonzero products given by (\ref{eq201,3}). Hence $A$ is isomorphic to $\ca_{50}, \ca_{51}, \ldots, \ca_{54}$ or $\ca_{55}(\alpha)$. 
\item If $\beta_3\neq0$ and $\alpha_3+\alpha_4=0$ then the base change $x_1=e_1, x_2=\beta_6e_1-\beta_3e_2, x_3=e_3, x_4=e_4, x_5=e_5$ shows that $A$ is isomorphic to an algebra with the nonzero products given by (\ref{eq201,3}). Hence $A$ is isomorphic to $\ca_{50}, \ca_{51}, \ldots, \ca_{54}$ or $\ca_{55}(\alpha)$. 
\item If $\beta_3\neq0, \alpha_3+\alpha_4\neq0$ and $\beta_7\neq0$ then the base change $x_1=\beta_6e_1, x_2=\beta_3e_2, x_3=\frac{\beta_3\beta_6}{\beta_7}e_3, x_4=\beta_3\beta_6(\alpha_2e_4+\alpha_3e_5), x_5=\frac{(\beta_3\beta_6)^2}{\beta_7}e_5$ shows that $A$ is isomorphic to $\ca_{56}(\alpha)$.
\item If $\beta_3\neq0, \alpha_3+\alpha_4\neq0$ and $\beta_7=0$ then the base change $x_1=\beta_6e_1, x_2=\beta_3e_2, x_3=(\alpha_3+\alpha_4)e_3, x_4=\beta_3\beta_6(\alpha_2e_4+\alpha_3e_5), x_5=\beta_3\beta_6(\alpha_3+\alpha_4)e_5$ shows that $A$ is isomorphic to $\ca_{57}$.
\end{itemize}

\indent {\bf Case 1.1.2.2:} Let $\beta_2\neq0$. 
\\ \indent {\bf Case 1.1.2.2.1:} Let $\beta_7=0$. Note that if $\beta_3\neq0$ then with the base change $x_1=\beta_6e_1-\beta_3e_2+\frac{\beta_3(\alpha_3+\alpha_4)}{\beta_2}e_3, x_2=e_2, x_3=e_3, x_4=e_4, x_5=e_5$ we can make $\beta_3=0$. So we can assume $\beta_3=0$. 

\begin{itemize}
\item If $\alpha_3+\alpha_4=0$ then the base change $x_1=\beta_6e_1, x_2=\beta_2e_2, x_3=e_3, x_4=\beta_2\beta_6(\alpha_2e_4+\alpha_3e_5), x_5=\beta_2\beta_6e_5$ shows that $A$ is isomorphic to $\ca_{58}$.
\item If $\alpha_3+\alpha_4\neq0$ then the base change $x_1=\beta^2_6e_1, x_2=\beta_2\beta_6e_2, x_3=\beta_6(\alpha_3+\alpha_4)e_3, x_4=\beta_2\beta^3_6(\alpha_2e_4+\alpha_3e_5), x_5=(\alpha_3+\alpha_4)\beta_2\beta^3_6e_5$ shows that $A$ is isomorphic to $\ca_{59}$.
\end{itemize}

\indent {\bf Case 1.1.2.2.2:} Let $\beta_7\neq0$. Take $\theta=\beta_7(\alpha_3+\alpha_4)-\beta_2\beta_6$. 
\begin{itemize}
\item If $\theta\neq0$ then the base change $x_1=\beta_7e_1+\frac{\beta_2\beta_3\beta_7}{\theta}e_2-\beta_2e_3, x_2=e_2, x_3=e_3, x_4=e_4, x_5=e_5$ shows that $A$ is isomorphic to an algebra with the nonzero products given by (\ref{eq201,5}). Hence $A$ is isomorphic to $\ca_{50}, \ca_{51}, \ldots, \ca_{56}(\alpha)$ or $\ca_{57}$. 
\item If $\theta=0$ and $\beta_3=0$ then the base change $x_1=\beta_7e_1-\beta_2e_3, x_2=e_2, x_3=e_3, x_4=e_4, x_5=e_5$ shows that $A$ is isomorphic to an algebra with the nonzero products given by (\ref{eq201,5}). Hence $A$ is isomorphic to $\ca_{50}, \ca_{51}, \ldots, \ca_{56}(\alpha)$ or $\ca_{57}$. 
\item If $\theta=0$ and $\beta_3\neq0$ then the base change $x_1=\beta_6e_1+(\beta_2-\beta_3)e_2, x_2=\beta_2e_2, x_3=\frac{\beta_2\beta_6}{\beta_7}e_3, x_4=\beta_2\beta_6(\alpha_2e_4+\alpha_3e_5), x_5=\frac{(\beta_2\beta_6)^2}{\beta_7}e_5$ shows that $A$ is isomorphic to $\ca_{60}$. 
\end{itemize}

\indent {\bf Case 1.2:} Let $\beta_5\neq0$.

\begin{itemize}
\item If $\beta_2=0$ then the base change $x_1=e_2, x_2=e_1, x_3=e_3, x_4=e_4, x_5=e_5$ shows that $A$ is isomorphic to an algebra with the nonzero products given by (\ref{eq201,2}). Hence $A$ is isomorphic to $\ca_{50}, \ca_{51}, \ldots, \ca_{59}$ or $\ca_{60}$. 
\item If $\beta_2\neq0$ and $\alpha_3+\alpha_4=0$ then the base change $x_1=e_1, x_2=\beta_5e_1-\beta_2e_2, x_3=e_3, x_4=e_4, x_5=e_5$ shows that $A$ is isomorphic to an algebra with the nonzero products given by (\ref{eq201,2}). Hence $A$ is isomorphic to $\ca_{50}, \ca_{51}, \ldots, \ca_{59}$ or $\ca_{60}$. 

\item If $\beta_2\neq0, \alpha_3+\alpha_4\neq0$ and $(\beta_7, \beta_5\beta_3-\beta_2\beta_6)\neq(0, 0)$ then the base change $x_1=e_1, x_2=xe_1-\frac{x\beta_2+\beta_7}{\beta_5}e_2+e_3, x_3=e_3, x_4=e_4, x_5=e_5$(where $(\alpha_3+\alpha_4)\beta_2x^2+((\alpha_3+\alpha_4)\beta_7-\beta_3\beta_5+\beta_2\beta_6)x+\beta_6\beta_7=0$) shows that $A$ is isomorphic to an algebra with the nonzero products given by (\ref{eq201,2}). Hence $A$ is isomorphic to $\ca_{50}, \ca_{51}, \ldots, \ca_{59}$ or $\ca_{60}$. 

\item If $\beta_2\neq0, \alpha_3+\alpha_4\neq0$ and $(\beta_7, \beta_5\beta_3-\beta_2\beta_6)=(0, 0)$ hen the base change $x_1=\beta_5e_1, x_2=\beta_2e_2, x_3=(\alpha_3+\alpha_4)e_3, x_4=\beta_2\beta_5(\alpha_2e_4+\alpha_3e_5), x_5=\beta_2\beta_5(\alpha_3+\alpha_4)e_5$ shows that $A$ is isomorphic to $\ca_{61}(\alpha)$. 

\end{itemize}

\indent {\bf Case 2:} Let $\alpha_5\neq0$. Note that if $(\beta_5+\beta_6, \beta_7)\neq(0, 0)$ then the base change $x_1=e_1, x_2=xe_2+e_3, x_3=e_3, x_4=e_4, x_5=e_5$(where $\alpha_5x^2+(\beta_5+\beta_6)x+\beta_7=0$) shows that $A$ is isomorphic to an algebra with the nonzero products given by (\ref{eq201,1}). Hence $A$ is isomorphic to $\ca_{50}, \ca_{51}, \ldots,  \ca_{60}$ or $\ca_{61}(\alpha)$. So let $\beta_5+\beta_6=0=\beta_7$. Furthermore, if $\alpha_3+\alpha_4\neq0$ then the base change $x_1=e_1, x_2=\alpha_5e_1-(\alpha_3+\alpha_4)e_2, x_3=e_3, x_4=e_4, x_5=e_5$ shows that $A$ is isomorphic to an algebra with the nonzero products given by (\ref{eq201,1}). Hence $A$ is isomorphic to $\ca_{50}, \ca_{51}, \ldots,  \ca_{60}$ or $\ca_{61}(\alpha)$. So we can assume $\alpha_3+\alpha_4=0$. If $\beta_2+\beta_3\neq0$ then the base change $x_1=e_1, x_2=-\frac{\alpha_5}{\beta_2+\beta_3}e_1+e_2+e_3, x_3=e_3, x_4=e_4, x_5=e_5$ shows that $A$ is isomorphic to an algebra with the nonzero products given by (\ref{eq201,1}). Hence $A$ is isomorphic to $\ca_{50}, \ca_{51}, \ldots,  \ca_{60}$ or $\ca_{61}(\alpha)$. So let $\beta_2+\beta_3=0$. 
\\ \indent {\bf Case 2.1:} Let $\beta_2=0$. Then $\beta_5\neq0$ since $\dim(Z(A))=2$. Then the base change $x_1=e_1, x_2=e_2, x_3=\frac{\alpha_5}{\beta_5}e_3, x_4=\alpha_2e_4+\alpha_3e_5, x_5=\alpha_5e_5$ shows that $A$ is isomorphic to $\ca_{62}$.
\\ \indent {\bf Case 2.2:} Let $\beta_2\neq0$. Without loss of generality we can assume $\beta_5=0$ because if $\beta_5\neq0$ then with the base change $x_1=e_1, x_2=\beta_5e_1-\beta_2e_2, x_3=e_3, x_4=e_4, x_5=e_5$ we can make $\beta_5=0$. 
Then the base change $x_1=\frac{\alpha_5}{\beta_2}e_1, x_2=e_2, x_3=e_3, x_4=\frac{\alpha_5}{\beta_2}(\alpha_2e_4+\alpha_3e_5), x_5=\alpha_5e_5$ shows that $A$ is isomorphic to $\ca_{63}$.
\end{proof}

\begin{rmk}
\
\begin{scriptsize}
\begin{enumerate}
\item If $\alpha_1, \alpha_2\in\cc\backslash\{-1\}$ such that $\alpha_1\neq\alpha_2$, then $\ca_{53}(\alpha_1)$ and $\ca_{53}(\alpha_2)$ are not isomorphic. 
\item If $\alpha_1, \alpha_2\in\cc$ such that $\alpha_1\neq\alpha_2$, then $\ca_{55}(\alpha_1)$ and $\ca_{55}(\alpha_2)$ are isomorphic if and only if $\alpha_2=\frac{1}{\alpha_1}$.
\item If $\alpha_1, \alpha_2\in\cc\backslash\{0\}$ such that $\alpha_1\neq\alpha_2$, then $\ca_{56}(\alpha_1)$ and $\ca_{56}(\alpha_2)$ are not isomorphic. 
\item If $\alpha_1, \alpha_2\in\cc$ such that $\alpha_1\neq\alpha_2$, then $\ca_{61}(\alpha_1)$ and $\ca_{61}(\alpha_2)$ are not isomorphic. 
\end{enumerate}
\end{scriptsize}
\end{rmk}

Now we need to consider the case $\dim(Leib(A))\neq1$. We only give the proof of the classification of $5-$dimensional filiform Leibniz algebras and compare it with the classification given in \cite{rb2010}. We skip the proofs of the remaining cases since they are similar. The detailed proofs of the remaining cases can be seen in \cite{d2016}. 
\par Let $A$ be a $5-$dimensional filiform Leibniz algebra. Then $\dim(A^2)=3, \dim(A^3)=2$ and $\dim(A^4)=1$. Hence using $0\neq A^4\subseteq A^3$ we get $A^4=Z(A)$. Assume $\dim(Leib(A))=2$. Take $W$ such that $A^2=Leib(A)\oplus W$. If $W=Z(A)$ then $A^3=Leib(A)$ since $Leib(A)$ is an ideal. If $W\neq Z(A)$ and $W\not\subseteq A^3$ then $A^3=Leib(A)$. Furthermore if $W\neq Z(A)$ and $W\subseteq A^3$ then $[A, W]\subseteq [A, A^3]=A^4=Z(A)\subseteq Leib(A)$. Then $A^3=[A, A^2]\subseteq Leib(A)$, so $A^3=Leib(A)$. In all cases we get $A^3=Leib(A)$. Hence $A^3=Leib(A)$. Let $Z(A)=A^4=\rm span\{e_5\}$. Extend this to bases of $\{e_4, e_5\}$ and $\{e_3, e_4, e_5\}$ of $Leib(A)=A^3$ and $A^2$, respectively. Then the nonzero products in $A=\rm span\{e_1, e_2, e_3, e_4, e_5\}$ are given by
\begin{align*}
[e_1, e_1]=\alpha_1e_4+\alpha_2e_5, [e_1, e_2]=\alpha_3e_3+\alpha_4e_4+\alpha_5e_5, [e_2, e_1]=-\alpha_3e_3+\beta_1e_4+\beta_2e_5, \\ [e_2, e_2]=\beta_3e_4+\beta_4e_5, [e_1, e_3]=\beta_5e_4+\beta_6e_5, [e_3, e_1]=\gamma_1e_4+\gamma_2e_5, [e_2, e_3]=\gamma_3e_4+\gamma_4e_5, \\ [e_3, e_2]=\gamma_5e_4+\gamma_6e_5,  [e_3, e_3]=\theta_1e_4+\theta_2e_5, [e_1, e_4]=\theta_3e_5, [e_2, e_4]=\theta_4e_5, [e_3, e_4]=\theta_5e_5.
\end{align*}
Leibniz identities give the following equations:
\begin{equation}  \label{eq3212,1}
\begin{cases}
\gamma_1=-\beta_5 & \qquad
\alpha_3(\beta_6+\gamma_2)+\alpha_1\theta_4-\beta_1\theta_3=0 \\
\gamma_5=-\gamma_3 & \qquad
\alpha_3(\gamma_4+\gamma_6)+\alpha_4\theta_4-\beta_3\theta_3=0 \\
\theta_1=0 & \qquad
\alpha_3\theta_2+\beta_5\theta_4-\gamma_3\theta_3 \\
\theta_5=0 & \qquad
\gamma_1\theta_3=0 \\
\gamma_5\theta_3=\alpha_3\theta_2 & \qquad
\gamma_1\theta_4=-\alpha_3\theta_2 \\
\gamma_5\theta_4=0
\end{cases}
\end{equation}

Suppose $\theta_4=0$. Then $\theta_3\neq0$ since $\dim(Z(A))=1$. By (\ref{eq3212,1}) we have $\theta_2=0=\gamma_5=\gamma_3=\gamma_1=\beta_5$. Then $\dim(A^3)=1$ which is a contradiction. Now suppose $\theta\neq0$. If $\theta_3=0$ then by (\ref{eq3212,1}) we have $\theta_2=0=\gamma_1=\gamma_5=\gamma_3=\beta_5$. This implies that $\dim(A^3)=1$, contradiction. If $\theta_3\neq0$ then by (\ref{eq3212,1}) we have $\gamma_1=0=\gamma_5=\gamma_3=\beta_5$. Then again we get $\dim(A^3)=1$, contradiction. Hence our assumption was wrong. Then $\dim(Leib(A))=3$.

\begin{thm} \label{leib3} Let $A$ be a $5-$dimensional non-split non-Lie nilpotent Leibniz algebra with $\dim(A^2)=3=\dim(Leib(A)), \dim(A^3)=2$ and $\dim(A^4)=1$. Then $A$ is isomorphic to a Leibniz algebra spanned by $\{x_1, x_2, x_3, x_4, x_5\}$ with the nonzero products given by one of  the following:
\begin{scriptsize}
\begin{description}
\item[$\ca_{64}$] $[x_1, x_2]=x_3, [x_1, x_3]=x_4, [x_1, x_4]=x_5$.
\item[$\ca_{65}$] $[x_1, x_2]=x_3, [x_2, x_1]=x_5, [x_1, x_3]=x_4, [x_1, x_4]=x_5$.
\item[$\ca_{66}$] $[x_1, x_2]=x_3, [x_2, x_2]=x_5, [x_1, x_3]=x_4, [x_1, x_4]=x_5$.
\item[$\ca_{67}$] $[x_1, x_2]=x_3, [x_2, x_1]=x_5, [x_2, x_2]=x_5, [x_1, x_3]=x_4, [x_1, x_4]=x_5$.
\item[$\ca_{68}$] $[x_1, x_2]=x_3, [x_2, x_2]=x_4, [x_1, x_3]=x_4, [x_2, x_3]=x_5, [x_1, x_4]=x_5$.
\item[$\ca_{69}$] $[x_1, x_2]=x_3, [x_2, x_1]=x_5, [x_2, x_2]=x_4, [x_1, x_3]=x_4, [x_2, x_3]=x_5, [x_1, x_4]=x_5$.
\item[$\ca_{70}(\alpha)$] $[x_1, x_2]=x_3, [x_2, x_1]=\alpha x_5, [x_2, x_2]=x_4+x_5, [x_1, x_3]=x_4, [x_2, x_3]=x_5, [x_1, x_4]=x_5, \quad \alpha\in\cc$.
\item[$\ca_{71}$] $[x_1, x_1]=x_3, [x_2, x_1]=x_5, [x_1, x_3]=x_4, [x_1, x_4]=x_5$.
\item[$\ca_{72}$] $[x_1, x_1]=x_3, [x_2, x_2]=x_5, [x_1, x_3]=x_4, [x_1, x_4]=x_5$.
\item[$\ca_{73}(\alpha)$] $[x_1, x_1]=x_3, [x_2, x_1]=x_4, [x_2, x_2]=\alpha x_5, [x_1, x_3]=x_4, [x_2, x_3]=x_5, [x_1, x_4]=x_5, \quad \alpha\in\cc$.
\item[$\ca_{74}$] $[x_1, x_1]=x_3, [x_2, x_1]=x_4+x_5, [x_2, x_2]=2x_5, [x_1, x_3]=x_4, [x_2, x_3]=x_5, [x_1, x_4]=x_5$.
\end{description}
\end{scriptsize}
\end{thm}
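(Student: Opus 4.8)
The plan is to build a basis adapted to the lower central series, reduce to a single ``first–class'' normal form, and then run a finite normalization. Since the hypotheses say precisely that $A$ is filiform, the discussion preceding the statement gives $A^4=Z(A)$ with $\dim A^4=1$, $\dim A^3=2$, and $Leib(A)=A^2$ with $\dim Leib(A)=3$. Choose $e_5$ spanning $A^4=Z(A)$, extend to a basis $\{e_4,e_5\}$ of $A^3$, then to $\{e_3,e_4,e_5\}$ of $A^2=Leib(A)$, and finally to $\{e_1,\dots,e_5\}$ of $A$. The Leibniz identity yields $[[x,y]+[y,x],z]=0$ for all $x,y,z$, hence $[Leib(A),A]=0$; together with $A^4\subseteq Z(A)$ and $A^5=0$ this kills every product whose first argument lies in $\{e_3,e_4,e_5\}$ and every product with second argument $e_5$. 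Thus the only possibly nonzero products are $[e_1,e_1],[e_1,e_2],[e_2,e_1],[e_2,e_2]\in A^2$, $[e_1,e_3],[e_2,e_3]\in A^3$, and $[e_1,e_4],[e_2,e_4]\in A^4$, which I record with generic coefficients; filiformity then translates into three non-degeneracy conditions, namely that the $e_3$–components of $[e_1,e_1],[e_1,e_2],[e_2,e_1],[e_2,e_2]$ are not all zero, the $e_4$–components of $[e_1,e_3],[e_2,e_3]$ are not both zero, and the $e_5$–components of $[e_1,e_4],[e_2,e_4]$ are not both zero.

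Next I would impose that left multiplication by $e_1$ and by $e_2$ are derivations (those by $e_3,e_4,e_5$ being zero) and evaluate on the generators; this gives a short explicit system of quadratic relations among the structure constants. The key structural point is that the symmetric part of the induced bilinear form $A/A^2\times A/A^2\to A^2/A^3$ cannot vanish: if it did, that form would be alternating, and combining this with the quadratic relations and the filiform conditions forces the $e_4$–components of $[e_1,e_3]$ and $[e_2,e_3]$ both to vanish, a contradiction. Hence there is $v\in A\setminus A^2$ with $[v,v]\notin A^3$; taking $e_1=v$, then replacing $e_1$ by $e_1+te_2$ for a suitable scalar $t$ so that also $[e_1,e_3]\notin A^4$ (possible since for generic $t$ the $e_3$–component of $[e_1+te_2,e_1+te_2]$ stays nonzero while its bracket with the new $e_3$ acquires the nonzero $e_4$–component of $[e_2,e_3]$), and finally setting $e_3:=[e_1,e_1]$, $e_4:=[e_1,e_3]$, $e_5:=[e_1,e_4]$, we reach the first–class form
\[
[e_1,e_1]=e_3,\qquad [e_1,e_3]=e_4,\qquad [e_1,e_4]=e_5 .
\]
In this form the Leibniz identity forces $[e_2,e_3]$, $[e_2,e_4]$, and the $e_3$– and $e_4$–components of $[e_2,e_2]$ to be determined by $[e_1,e_2]$ and $[e_2,e_1]$, so the algebra depends only on the seven parameters furnished by $[e_1,e_2],[e_2,e_1]\in A^2$ together with the $e_5$–component of $[e_2,e_2]$, and conversely every such choice defines a $5$–dimensional filiform Leibniz algebra with $Leib(A)=A^2$.

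It remains to classify these seven-parameter algebras up to isomorphism, that is, to describe the orbits of the residual group of filtration-preserving linear automorphisms: substitutions $e_1\mapsto ae_1+be_2+(\text{element of }A^2)$, $e_2\mapsto ce_1+de_2+(\text{element of }A^2)$ with $ad-bc\ne0$ and $[e_1^{\mathrm{new}},e_1^{\mathrm{new}}]\notin A^3$, after which $e_3,e_4,e_5$ are renormalised. I would use these substitutions to clear, in order, the $e_3$–component of $[e_1,e_2]$ (by adjusting $e_2$ by a multiple of $e_1$), then the remaining components of $[e_1,e_2]$ and of $[e_2,e_1]$ and the surviving component of $[e_2,e_2]$ (by adjusting $e_1,e_2$ by elements of $A^2$ and rescaling), splitting into sub-cases according to which coefficients can be normalised to $0$ or $1$ and which persist as a genuine modulus; this yields exactly $\ca_{64},\dots,\ca_{74}$, with the two one-parameter families $\ca_{70}(\alpha)$ and $\ca_{73}(\alpha)$. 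Finally I would verify these eleven algebras are pairwise non-isomorphic, using standard invariants and, for the two families, direct comparison of structure constants, and check that the list agrees with the classification of $5$–dimensional filiform Leibniz algebras in \cite{rb2010}. The main obstacle is this last normalisation stage: keeping track of which further substitutions preserve the first–class form already achieved, organising the degenerate branches (for instance whether $[e_2,e_1]$ is proportional to $e_3$ modulo $A^3$, or whether $[e_2,e_2]$ lies in $A^3$), and ensuring the resulting list is simultaneously exhaustive and irredundant.
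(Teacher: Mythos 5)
Your framework is sound and genuinely different from the paper's. The paper first normalizes so that $[e_2,e_3]$ has no $e_4$-component (forcing $[e_1,e_3]$ to supply the $e_4$-direction) and then splits on whether $[e_1,e_1]$ lies in $A^3$; this is why its list has two shapes, $[x_1,x_2]=x_3$ for $\ca_{64}$--$\ca_{70}(\alpha)$ and $[x_1,x_1]=x_3$ for $\ca_{71}$--$\ca_{74}$. You instead force the single shape $[e_1,e_1]=e_3$, $[e_1,e_3]=e_4$, $[e_1,e_4]=e_5$, and that is legitimate: since every square $[a,a]$ is congruent mod $A^3$ to the square of the $V$-component of $a$ (using $[Leib(A),A]=0$ and $[V,A^2]\subseteq A^3$), the vanishing of the quadratic form would give $Leib(A)\subseteq A^3$, contradicting $\dim Leib(A)=3$ outright --- a shorter route than your detour through the quadratic relations, though yours also works --- and over $\cc$ the three genericity conditions on $v$ (namely $[v,v]\notin A^3$, $[v,[v,v]]\notin A^4$, $[v,[v,[v,v]]]\neq 0$) are the nonvanishing of a product of a nonzero quadratic and two nonzero linear forms, so a common $v$ exists. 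Your count of seven residual parameters, and the claim that the Leibniz identities determine $[e_2,e_3]$, $[e_2,e_4]$ and the $e_3$-, $e_4$-components of $[e_2,e_2]$ from $[e_1,e_2]$ and $[e_2,e_1]$, both check out. One consequence to make explicit: in your normalization the classes $\ca_{64}$--$\ca_{70}(\alpha)$ appear with $[e_1,e_1]=e_3$ and a nonzero $e_3$-term in $[e_1,e_2]$, so a further base change is needed at the end to recover the stated presentations.

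The genuine gap is the final stage. The entire content of the theorem --- that the orbits of your seven-parameter family under the residual group are exactly the eleven listed classes, with precisely the moduli $\ca_{70}(\alpha)$, $\ca_{73}(\alpha)$ and the particular degenerate forms such as $[x_2,x_2]=2x_5$ in $\ca_{74}$ --- is asserted (``this yields exactly $\ca_{64},\dots,\ca_{74}$'') rather than derived; this case tree is where the paper spends essentially all of its effort, and nothing in your write-up shows that your normalization terminates in this list, that it is exhaustive, or that the resulting algebras are pairwise non-isomorphic. A further point that will bite you there: the residual group is not all substitutions with $[e_1^{\mathrm{new}},e_1^{\mathrm{new}}]\notin A^3$; a substitution preserves your normal form only if the new $e_1$ satisfies all three genericity conditions, and computing orbits with the larger group would be an error. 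As it stands the proposal is a correct and workable reduction, but not yet a proof of the classification.
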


\begin{proof} Let $A^4=Z(A)=\rm span\{e_5\}$. Extend this to bases $\{e_4, e_5\}$ and $\{e_3, e_4, e_5\}$ of $A^3$ and $A^2$, respectively. Then the nonzero products in $A=\rm span\{e_1, e_2, e_3, e_4, e_5\}$ are given by
\begin{align*}
[e_1, e_1]=\alpha_1e_5+\alpha_2e_4+\alpha_3e_5, [e_1, e_2]=\alpha_4e_3+ \alpha_5e_4+ \alpha_6e_5, [e_2, e_1]=\beta_1e_3+\beta_2e_4+ \beta_3e_5, \\ [e_2, e_2]=\beta_4e_3+\beta_5e_4+\beta_6e_5, [e_1, e_3]=\gamma_1e_4+\gamma_2e_5, [e_2, e_3]=\gamma_3e_4+\gamma_4e_5, [e_1, e_4]=\gamma_5e_5, [e_2, e_4]=\gamma_6e_5.
\end{align*}
Leibniz identities give the following equations: 
\begin{equation}  \label{eq3213,1}
\begin{cases}
\beta_1\gamma_1=\alpha_1\gamma_3 & \qquad
\beta_1\gamma_2+\beta_2\gamma_5=\alpha_1\gamma_4+\alpha_2\gamma_6 \\
\beta_4\gamma_1=\alpha_4\gamma_3 & \qquad
\beta_4\gamma_2+\beta_5\gamma_5=\alpha_4\gamma_4+\alpha_5\gamma_6 \\
\gamma_3\gamma_5=\gamma_1\gamma_6
\end{cases}
\end{equation}
We can assume $\gamma_3=0$, because if $\gamma_3\neq0$ and $\gamma_1=0$(resp. $\gamma_1\neq0$) then with the base change $x_1=e_2, x_2=e_1, x_3=e_3, x_4=e_4, x_5=e_5$(resp. $x_1=e_1, x_2=\gamma_3e_1-\gamma_1e_2, x_3=e_3, x_4=e_4, x_5=e_5$) we can make $\gamma_3=0$. Then $\gamma_1\neq0$ since $\dim(A^3)=2$. So by (\ref{eq3213,1}) we have $\beta_1=0=\beta_4=\gamma_6$. Then $\gamma_5\neq0$ since $\dim(Z(A))=1$. 
\\ \indent {\bf Case 1:} Let $\alpha_1=0$. Then $\alpha_4\neq0$ since $\dim(A^2)=3$. Then from (\ref{eq3213,1}) we get $\beta_2=0$. Hence we have the following products in $A$:
\begin{multline} \label{eq3213,2} 
[e_1, e_1]=\alpha_2e_4+\alpha_3e_5, [e_1, e_2]=\alpha_4e_3+ \alpha_5e_4+ \alpha_6e_5, [e_2, e_1]=\beta_3e_5, \\ [e_2, e_2]=\beta_5e_4+\beta_6e_5, [e_1, e_3]=\gamma_1e_4+\gamma_2e_5, [e_2, e_3]=\gamma_4e_5, [e_1, e_4]=\gamma_5e_5.
\end{multline}
If $\alpha_2\neq0$ then with the base change $x_1=\gamma_1e_1-\alpha_2e_3, x_2=e_2, x_3=e_3, x_4=e_4, x_5=e_5$ we can make $\alpha_2=0$. So let $\alpha_2=0$. Also if $\alpha_3\neq0$ then with the base change $x_1=\gamma_5e_1-\gamma_3e_4, x_2=e_2, x_3=e_3, x_4=e_4, x_5=e_5$ we can make $\alpha_3=0$. So we can assume $\alpha_3=0$. 
\\ \indent {\bf Case 1.1:}  Let $\gamma_4=0$. Then by (\ref{eq3213,1}) we have $\beta_5=0$.
\begin{itemize}
\item If $\beta_6=0=\beta_3$ then $x_1=e_1, x_2=e_2, x_3=\alpha_4e_3+ \alpha_5e_4+ \alpha_6e_5, x_4=\alpha_4(\gamma_1e_4+\gamma_2e_5), x_5=\alpha_4\gamma_1\gamma_5e_5$ shows that $A$ is isomorphic to $\ca_{64}$. 
\item If $\beta_6=0$ and $\beta_3\neq0$ then the base change $x_1=\sqrt{\frac{\beta_3}{\alpha_4\gamma_1\gamma_5}}e_1, x_2=e_2, x_3=\sqrt{\frac{\beta_3}{\alpha_4\gamma_1\gamma_5}}(\alpha_4e_3+ \alpha_5e_4+ \alpha_6e_5), x_4=\frac{\beta_3}{\gamma_1\gamma_5}(\gamma_1e_4+\gamma_2e_5), x_5=\beta_3\sqrt{\frac{\beta_3}{\alpha_4\gamma_1\gamma_5}}e_5$ shows that $A$ is isomorphic to $\ca_{65}$. 
\item If $\beta_6\neq0$ and $\beta_3=0$ then the base change $x_1=e_1, x_2=\frac{\alpha_4\gamma_1\gamma_5}{\beta_6}e_2, x_3=\frac{\alpha_4\gamma_1\gamma_5}{\beta_6}(\alpha_4e_3+ \alpha_5e_4+ \alpha_6e_5), x_4=\frac{\alpha^2_4\gamma_1\gamma_5}{\beta_6}(\gamma_1e_4+\gamma_2e_5), x_5=\frac{(\alpha_4\gamma_1\gamma_5)^2}{\beta_6}e_5$ shows that $A$ is isomorphic to $\ca_{66}$. 
\item If $\beta_6\neq0$ and $\beta_3\neq0$ then the base change $x_1=(\frac{\beta_3}{\alpha_4\gamma_1\gamma_5})^{1/2}e_1, x_2=\frac{(\beta_3)^{3/2}}{\beta_6(\alpha_4\gamma_1\gamma_5)^{1/2}}e_2, x_3=\frac{\beta^2_3}{\alpha_4\beta_6\gamma_1\gamma_5}(\alpha_4e_3+ \alpha_5e_4+ \alpha_6e_5), x_4=\frac{\beta^{5/2}_3}{\alpha^{1/2}_4\beta_6\gamma^{3/2}_1\gamma^{3/2}_5}(\gamma_1e_4+\gamma_2e_5), x_5=\frac{\beta^3_3}{\alpha_4\beta_6\gamma_1\gamma_5}e_5$ shows that $A$ is isomorphic to $\ca_{67}$. 
\end{itemize}

\indent {\bf Case 1.2:}  Let $\gamma_4\neq0$. Then $\beta_5=\frac{\alpha_4\gamma_4}{\gamma_5}$ from (\ref{eq3213,1}). Take $\theta=\frac{\alpha^2_4\gamma_1(\beta_6\gamma_1-\beta_5\gamma_2)}{\beta^3_5\gamma_5}$. The base change $y_1=e_1, y_2=\frac{\alpha_4\gamma_1}{\beta_5}e_2, y_3=\frac{\alpha_4\gamma_1}{\beta_5}(\alpha_4e_3+ \alpha_5e_4+ \alpha_6e_5), y_4=\frac{\alpha^2_4\gamma_1}{\beta_5}(\gamma_1e_4+\gamma_2e_5), y_5=\frac{\alpha^2_4\gamma^2_1\gamma_5}{\beta_5}e_5$ shows that $A$ is isomorphic to the following algebra:
\begin{align*}
[y_1, y_2]=y_3, [y_2, y_1]=\frac{\beta_3}{\alpha_4\gamma_1\gamma_5}y_5, [y_2, y_2]=y_4+\theta y_5, [y_1, y_3]=y_4, [y_2, y_3]=y_5, [y_1, y_4]=y_5.
\end{align*}

\begin{itemize}
\item If $\theta=0$ and $\beta_3=0$ then the base change $x_1=y_1, x_2=y_2, x_3=y_3, x_4=y_4, x_5=y_5$ shows that $A$ is isomorphic to $\ca_{68}$. 
\item If $\theta=0$ and $\beta_3\neq0$ then the base change $x_1=\sqrt{\frac{\beta_3}{\alpha_4\gamma_1\gamma_5}}y_1, x_2=\frac{\beta_3}{\alpha_4\gamma_1\gamma_5}y_2, x_3=(\frac{\beta_3}{\alpha_4\gamma_1\gamma_5})^{3/2}y_3, x_4=(\frac{\beta_3}{\alpha_4\gamma_1\gamma_5})^{2}y_4, x_5=(\frac{\beta_3}{\alpha_4\gamma_1\gamma_5})^{5/2}y_5$ shows that $A$ is isomorphic to $\ca_{69}$. 
\item If $\theta\neq0$ then the base change $x_1=\theta y_1, x_2=\theta^2y_2, x_3=\theta^3y_3, x_4=\theta^4y_4, x_5=\theta^5y_5$ shows that $A$ is isomorphic to $\ca_{70}(\alpha)$. 
\end{itemize}

\indent {\bf Case 2:} Let $\alpha_1\neq0$. If $\alpha_4\neq0$ the base change $x_1=\alpha_4e_1-\alpha_1e_2, x_2=e_2, x_3=e_3, x_4=e_4, x_5=e_5$ shows that $A$ is isomorphic to an algebra with the nonzero products given by (\ref{eq3213,2}). Hence $A$ is isomorphic to $\ca_{64}, \ca_{65}, \ca_{66}, \ca_{67},  \ca_{68},  \ca_{69}$ or $\ca_{70}(\alpha)$. So let $\alpha_4=0$. Then from (\ref{eq3213,1}) we have $\beta_5=0$. Note that here if $\alpha_5\neq0$ then with the base change $x_1=e_1, x_2=\gamma_1e_2-\alpha_5e_3, x_3=e_3, x_4=e_4, x_5=e_5$ we can make $\alpha_5=0$. Then we can assume $\alpha_5=0$. If $\alpha_6\neq0$ then with the base change $x_1=e_1, x_2=\gamma_5e_2-\alpha_6e_4, x_3=e_3, x_4=e_4, x_5=e_5$ we can make $\alpha_6=0$. Then we can assume $\alpha_6=0$. 
\\ \indent {\bf Case 2.1:} Let $\gamma_4=0$. Then by (\ref{eq3213,1}) we have $\beta_2=0$. Note that if $\beta_3=0=\beta_6$ then $A$ is split. So let $(\beta_3, \beta_6)\neq(0, 0)$. If $\beta_6=0$ then the base change $x_1=e_1, x_2=\frac{\alpha_1\gamma_1\gamma_5}{\beta_3}e_2, x_3=\alpha_1e_3+\alpha_2e_4+\alpha_3e_5, x_4=\alpha_1(\gamma_1e_4+\gamma_2e_5), x_5=\alpha_1\gamma_1\gamma_5e_5$ shows that $A$ is isomorphic to $\ca_{71}$. Now suppose $\beta_6\neq0$. Without loss of generality we can assume $\beta_3=0$ because if $\beta_3\neq0$ then with the base change $x_1=\beta_6e_1-\beta_3e_2, x_2=e_2+\frac{\beta_3}{\gamma_5}e_4, x_3=e_3, x_4=e_4, x_5=e_5$ we can make $\beta_3=0$. Then the base change $x_1=e_1, x_2=\sqrt{\frac{\alpha_1\gamma_1\gamma_5}{\beta_6}}e_2, x_3=\alpha_1e_3+\alpha_2e_4+\alpha_3e_5, x_4=\alpha_1(\gamma_1e_4+\gamma_2e_5), x_5=\alpha_1\gamma_1\gamma_5e_5$ shows that $A$ is isomorphic to $\ca_{72}$.

\indent {\bf Case 2.2:} Let $\gamma_4\neq0$. Then $\beta_2=\frac{\alpha_1\gamma_4}{\gamma_5}$ from (\ref{eq3213,1}). Take $\theta=\frac{\beta_3\gamma_1-\beta_2\gamma_2}{\beta_2\gamma_1\gamma_5}$. The base change $y_1=e_1, y_2=\frac{\alpha_1\gamma_1}{\beta_2}e_2, y_3=\alpha_1e_3+\alpha_2e_4+\alpha_3e_5, y_4=\alpha_1(\gamma_1e_4+\gamma_2e_5), y_5=\alpha_1\gamma_1\gamma_5e_5$ shows that $A$ is isomorphic to the following algebra:
\begin{align*}
[y_1, y_1]=y_3, [y_2, y_1]=y_4+\theta y_5, [y_2, y_2]=\frac{\alpha_1\beta_6\gamma_1}{\beta^2_2\gamma_5}y_5, [y_1, y_3]=y_4, [y_2, y_3]=y_5, [y_1, y_4]=y_5.
\end{align*}

\begin{itemize}
\item If $\theta=0$ then the base change $x_1=y_1, x_2=y_2, x_3=y_3, x_4=y_4, x_5=y_5$ shows that $A$ is isomorphic to $\ca_{73}(\alpha)$.
\item If $\theta\neq0$ and $\frac{\alpha_1\beta_6\gamma_1}{\beta^2_2\gamma_5}\neq2$ then with suitable change of basis (w.s.c.o.b.) $A$ is isomorphic to $\ca_{73}(\alpha)$.
\item If $\theta\neq0$ and $\frac{\alpha_1\beta_6\gamma_1}{\beta^2_2\gamma_5}=2$ then  the base change $x_1=\theta y_1, x_2=\theta^2y_2, x_3=\theta^2y_3, x_4=\theta^3y_4, x_6=\theta^4y_5$ shows that $A$ is isomorphic to $\ca_{74}$.
\end{itemize}
\end{proof}

\begin{rmk}
\
\begin{scriptsize}
\begin{enumerate}
\item If $\alpha_1, \alpha_2\in\cc$ such that $\alpha_1\neq\alpha_2$, then $\ca_{70}(\alpha_1)$ and $\ca_{70}(\alpha_2)$ are not isomorphic. 
\item If $\alpha_1, \alpha_2\in\cc$ such that $\alpha_1\neq\alpha_2$, then $\ca_{73}(\alpha_1)$ and $\ca_{73}(\alpha_2)$ are not isomorphic. 
\end{enumerate}
\end{scriptsize}
\end{rmk}

Note that we obtain the complete classification of $5-$dimensional filiform Leibniz algebras from Theorem \ref{leib1} and Theorem \ref{leib3}. We compare our classification with the classification given in \cite{rb2010}. They obtained the isomorphism classes in the classes $FLb_5, SLb_5$ and $TLb_5$. It can be seen that $\dim(Leib(FLb_5))=3=\dim(Leib(SLb_5))$ and $\dim(Leib(TLb_5))=1$ or $0$. The classification of $FLb_5$ and $SLb_5$ given in \cite{rb2010} completely agrees with Theorem \ref{leib3}. However, we find some redundancy in the classification of $TLb_5$ since $L(2, 1, 0)\cong L(0, 1, 0)$ and $L(2, 1, 1)\cong L(0, \frac{1}{8}, \frac{1}{8})$. Also they missed the isomorphism classes $\ca_{2}$ and $\ca_{4}$ listed in Theorem \ref{leib1}.
\par
Let $\dim(A^2)=3, \dim(A^3)=2$ and $A^4=0$. Then we have $A^3=Z(A).$ Assume $\dim(Leib(A))=3$. Let $A^3=Z(A)=\rm span\{e_4, e_5\}$. Extend this to a basis $\{e_3, e_4, e_5\}$ of $Leib(A)=A^2$. Then the nonzero products in  $A=\rm span\{e_1, e_2, e_3, e_4, e_5\}$ are given by
\begin{align*}
[e_1, e_1]=\alpha_1e_3+\alpha_2e_4+\alpha_3e_5, [e_1, e_2]=\alpha_4e_3+ \alpha_5e_4+ \alpha_6e_5, [e_2, e_1]=\beta_1e_3+\beta_2 e_4+ \beta_3e_5, \\ [e_2, e_2]=\beta_4e_3+\beta_5e_4+\beta_6e_5, [e_1, e_3]=\gamma_1e_4+\gamma_2e_4, [e_2, e_3]=\gamma_3e_4+\gamma_4e_5.
\end{align*} 
From the Leibniz identities we get the following equations:
\begin{equation}  \label{eq3203,1}
\begin{cases}
\beta_1\gamma_1=\alpha_1\gamma_3 & \qquad
\beta_1\gamma_2=\alpha_1\gamma_4 \\
\beta_4\gamma_1=\alpha_4\gamma_3 & \qquad
\beta_4\gamma_2=\alpha_4\gamma_4
\end{cases}
\end{equation}

Suppose $\gamma_3=0$. Then $\gamma_1, \gamma_4\neq0$ since $\dim(A^3)=2$. From (\ref{eq3203,1}) we have $\beta_1=0=\beta_4=\alpha_1=\alpha_4$, which is a contradiction since $\dim(A^2)=3$. Now suppose $\gamma_3\neq0$. If $\beta_1=0=\beta_4$ then by (\ref{eq3203,1}) we get $\alpha_1=0=\alpha_4$, contradiction. If $\beta_1\neq0$(resp. $\beta_4\neq0$) then by (\ref{eq3203,1}) we have $\gamma_3\gamma_2-\gamma_1\gamma_4=0$ that contradicts with the fact that $\dim(A^3)=2$. Hence our assumption was wrong. Therefore $\dim(Leib(A))=2$.

\begin{thm} Let $A$ be a $5-$dimensional non-split non-Lie nilpotent Leibniz algebra with $\dim(A^2)=3, \dim(A^3)=2=\dim(Leib(A))$ and $A^4=0$. Then $A$ is isomorphic to a Leibniz algebra spanned by $\{x_1, x_2, x_3, x_4, x_5\}$ with the nonzero products given by one of  the following:
\begin{scriptsize}
\begin{description}
\item[$\ca_{75}(\alpha)$] $[x_1, x_2]=x_3, [x_2, x_1]=-x_3+x_4, [x_2, x_2]=\alpha x_5, [x_1, x_3]=x_4=-[x_3, x_1], [x_2, x_3]=x_5=-[x_3, x_2], \quad \alpha\in\cc\backslash\{0\}$.
\item[$\ca_{76}(\alpha)$] $[x_1, x_1]=x_5, [x_1, x_2]=x_3, [x_2, x_1]=-x_3+x_4, [x_2, x_2]=\alpha x_5, [x_1, x_3]=x_4=-[x_3, x_1], [x_2, x_3]=x_5=-[x_3, x_2], \quad \alpha\in\cc$.
\item[$\ca_{77}(\alpha)$] $[x_1, x_1]=\alpha x_5, [x_1, x_2]=x_3, [x_2, x_1]=-x_3+ x_4+ x_5, [x_1, x_3]=x_4=-[x_3, x_1], [x_2, x_3]=x_5=-[x_3, x_2], \quad \alpha\in\cc\backslash\{0\}$.
\item[$\ca_{78}(\alpha)$] $[x_1, x_1]=\alpha x_5, [x_1, x_2]=x_3, [x_2, x_1]=-x_3+ x_4+ x_5, [x_2, x_2]=x_5, [x_1, x_3]=x_4=-[x_3, x_1], [x_2, x_3]=x_5=-[x_3, x_2], \quad \alpha\in\cc\backslash\{0\}$.
\item[$\ca_{79}(\alpha)$] $[x_1, x_1]=\alpha x_5, [x_1, x_2]=x_3, [x_2, x_1]=-x_3+ x_4+ x_5, [x_2, x_2]=-\frac{1}{2}x_5, [x_1, x_3]=x_4=-[x_3, x_1], [x_2, x_3]=x_5=-[x_3, x_2], \quad \alpha\in\cc\backslash\{-\frac{1}{6}, 0\}$.
\item[$\ca_{80}(\alpha)$] $[x_1, x_1]=\alpha x_5, [x_1, x_2]=x_3=-[x_2, x_1], [x_2, x_2]=x_4+x_5, [x_1, x_3]=x_4=-[x_3, x_1], [x_2, x_3]=x_5=-[x_3,x_2], \quad \alpha\in\cc\backslash\{-\frac{4}{27}, 0\}$.
\item[$\ca_{81}(\alpha, \beta)$] $[x_1, x_1]=\alpha x_5, [x_1, x_2]=x_3, [x_2, x_1]=-x_3+x_5, [x_2, x_2]=x_4+\beta x_5, [x_1, x_3]=x_4=-[x_3, x_1], [x_2, x_3]=x_5=-[x_3,x_2], \quad \alpha\in\cc\backslash\{0\},\beta\in\cc, 4\alpha\beta\neq1, 8\alpha\beta^3-2\beta^2+1\neq0, 16\alpha\beta^3\neq1+6\beta^2\pm\sqrt{4\beta^2+12\beta+1}, -27\alpha\beta\neq9\beta^2+2\beta^4\pm2\sqrt{\beta^2(3+\beta^2)^3}$.
\item[$\ca_{82}(\alpha, \beta, \gamma)$] $[x_1, x_1]=\alpha x_5, [x_1, x_2]=x_3, [x_2, x_1]=-x_3+x_4+\beta x_5, [x_2, x_2]=x_4+\gamma x_5, [x_1, x_3]=x_4=-[x_3, x_1], [x_2, x_3]=x_5=-[x_3, x_2], \quad \alpha, \beta, \gamma\in\cc$.
\item[$\ca_{83}(\alpha, \beta)$] $[x_1, x_1]=x_4+\alpha x_5, [x_1, x_2]=x_3, [x_2, x_1]=-x_3+\beta x_5, [x_2, x_2]=x_5, [x_1, x_3]=x_4=-[x_3, x_1], [x_2, x_3]=x_5=-[x_3, x_2], \quad \alpha, \beta\in\cc$.
\end{description}
\end{scriptsize}
\end{thm}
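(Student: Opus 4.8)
The plan is to run the same machine used in the two preceding theorems: fix a basis adapted to the central filtration, write down the generic multiplication table, trim it with the Leibniz identity, and then normalize by base changes.

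\textbf{Step 1 (locating the invariant subspaces).} From $A^{4}=[A,A^{3}]=0$ and the Leibniz identity one checks $[A^{2},A^{2}]\subseteq A^{4}=0$ and then $[A^{3},A]\subseteq A^{4}=0$, so $A^{3}\subseteq Z(A)$; since $A$ is non-split, $Z(A)\subseteq A^{2}$, and the dimensions $\dim A^{2}=3,\ \dim A^{3}=2$ force $\dim Z(A)=2$, hence $A^{3}=Z(A)$. Recalling that $[Leib(A),A]=0$ in any left Leibniz algebra (add the Leibniz identities for $L_{a}$ and $L_{b}$ acting on $c$) and that $Leib(A)\subseteq A^{2}$, if $Leib(A)\neq A^{3}$ then $A^{2}=Leib(A)+A^{3}$, so $A^{3}=[A,A^{2}]=[A,Leib(A)]\subseteq Leib(A)\cap A^{3}$, a space of dimension $\le 1$ --- contradicting $\dim A^{3}=2$. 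Thus $Leib(A)=A^{3}=Z(A)$. I then take $Z(A)=A^{3}=Leib(A)=\mathrm{span}\{e_{4},e_{5}\}$, extend to a basis $\{e_{3},e_{4},e_{5}\}$ of $A^{2}$ and to a basis $\{e_{1},\dots ,e_{5}\}$ of $A$. Since $e_{4},e_{5}\in Z(A)$, every product involving $e_{4}$ or $e_{5}$ vanishes, so the only possibly nonzero products are among $e_{1},e_{2},e_{3}$, landing in $\mathrm{span}\{e_{3},e_{4},e_{5}\}$ when $\{i,j\}=\{1,2\}$ and in $\mathrm{span}\{e_{4},e_{5}\}$ otherwise; the $e_{3}$-coefficient of $[e_{1},e_{2}]$ is nonzero (else $A^{2}\neq\mathrm{span}\{e_{3},e_{4},e_{5}\}$), so after rescaling I may take $[e_{1},e_{2}]=e_{3}$ and $[e_{2},e_{1}]=-e_{3}+\sigma_{1}e_{4}+\sigma_{2}e_{5}$.

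\textbf{Step 2 (Leibniz relations and reduction to six scalars).} Testing the derivation property of $L_{e_{1}},L_{e_{2}},L_{e_{3}}$ on pairs of basis vectors: the instances whose inner bracket is central are vacuous, and the rest collapse to $[e_{3},e_{1}]=-[e_{1},e_{3}]$, $[e_{3},e_{2}]=-[e_{2},e_{3}]$ and $[e_{3},e_{3}]=0$. Because $A^{3}=\mathrm{span}\{[e_{1},e_{3}],[e_{2},e_{3}],[e_{3},e_{3}]\}$ is $2$-dimensional, $[e_{1},e_{3}]$ and $[e_{2},e_{3}]$ are linearly independent; replacing $e_{4},e_{5}$ by them, I may assume $[e_{1},e_{3}]=e_{4}=-[e_{3},e_{1}]$ and $[e_{2},e_{3}]=e_{5}=-[e_{3},e_{2}]$. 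The algebra is then encoded by six scalars through $[e_{1},e_{1}]=\alpha_{1}e_{4}+\alpha_{2}e_{5}$, $[e_{2},e_{2}]=\beta_{1}e_{4}+\beta_{2}e_{5}$, $[e_{2},e_{1}]=-e_{3}+\sigma_{1}e_{4}+\sigma_{2}e_{5}$, subject only to the open condition that $(\alpha_{1},\alpha_{2}),(\beta_{1},\beta_{2}),(\sigma_{1},\sigma_{2})$ span a $2$-dimensional subspace of $\cc^{2}$ (this is $\dim Leib(A)=2$); non-splitness is then automatic from $Z(A)\subseteq A^{2}$.

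\textbf{Step 3 (orbit count, and where the difficulty lies).} It remains to classify these six-tuples up to the base changes preserving the above shape, namely substitutions $e_{1}\mapsto a e_{1}+b e_{2}+(\text{lower order})$, $e_{2}\mapsto c e_{1}+d e_{2}+(\text{lower order})$ together with the induced changes on $e_{3}=[e_{1},e_{2}]$, $e_{4}=[e_{1},e_{3}]$, $e_{5}=[e_{2},e_{3}]$. One computes the resulting (polynomial) action on $(\alpha_{1},\alpha_{2},\beta_{1},\beta_{2},\sigma_{1},\sigma_{2})$ and picks canonical representatives. The main dichotomy is whether the $e_{4}$-part of $[e_{1},e_{1}]$ can be removed: in the generic situation it can, and normalizing in addition $[e_{2},e_{2}]$ and the ``defect'' $(\sigma_{1},\sigma_{2})$ by completing squares, scaling and clearing coefficients produces the families $\ca_{75}$--$\ca_{82}$ (subcases organized by $(\sigma_{1},\sigma_{2})$ and by which coordinates of $[e_{2},e_{2}]$ survive), while the stubborn case, where that $e_{4}$-part cannot be killed, yields $\ca_{83}$. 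I expect the real obstacle to be not the case split but the verification of non-redundancy inside the multi-parameter families $\ca_{80}(\alpha),\ca_{81}(\alpha,\beta),\ca_{82}(\alpha,\beta,\gamma),\ca_{83}(\alpha,\beta)$: the stabilizer of a normalized tuple is a small algebraic group, and deciding whether two parameter values give isomorphic algebras amounts to solving a polynomial system in its coordinates; the loci where that system acquires extra solutions (equivalently, the algebra acquires extra automorphisms) are exactly what force the exclusions $\alpha\notin\{-\frac{1}{6},0\}$ in $\ca_{79}$, $\alpha\notin\{-\frac{4}{27},0\}$ in $\ca_{80}$, and $4\alpha\beta\neq1$, $8\alpha\beta^{3}-2\beta^{2}+1\neq0$, $16\alpha\beta^{3}\neq1+6\beta^{2}\pm\sqrt{4\beta^{2}+12\beta+1}$, $-27\alpha\beta\neq9\beta^{2}+2\beta^{4}\pm2\sqrt{\beta^{2}(3+\beta^{2})^{3}}$ in $\ca_{81}$. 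Finally one records the pairwise non-isomorphisms among $\ca_{75},\dots ,\ca_{83}$ via the coarse invariants, namely the position of $\mathrm{span}\{[e_{1},e_{1}],[e_{2},e_{2}],[e_{1},e_{2}]+[e_{2},e_{1}]\}$ relative to the flag inside $A^{3}$ and the rank/Jordan data of the pairing $(u,v)\mapsto(\text{the scalar extracted from }[u,v])$ on $A/A^{2}$.
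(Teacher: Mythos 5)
Your Steps 1 and 2 are correct and reproduce the paper's reduction: $A^{3}=Z(A)=Leib(A)$ (your argument for $Leib(A)=A^{3}$ is actually more complete than the paper's one-line assertion), the Leibniz identity kills exactly $[e_{3},e_{3}]$ and forces $[e_{3},e_{i}]=-[e_{i},e_{3}]$, and after renaming $e_{3}=[e_{1},e_{2}]$, $e_{4}=[e_{1},e_{3}]$, $e_{5}=[e_{2},e_{3}]$ the algebra is encoded by the six scalars $(\alpha_{1},\alpha_{2},\beta_{1},\beta_{2},\sigma_{1},\sigma_{2})$ subject to the rank-two condition coming from $\dim Leib(A)=2$. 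This is the same parametrization the paper works with, and your identification of the dichotomy (whether the $e_{4}$-part of $[e_{1},e_{1}]$ can be removed by a substitution $e_{1}\mapsto xe_{1}+e_{2}$, which fails exactly when $\sigma_{1}=0=\beta_{1}$ and leads to $\ca_{83}$, or to $\ca_{75}$ when $\alpha_1=0$ too) matches Case 2 of the paper's proof.

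The gap is Step 3: it is a plan, not a proof. The theorem's content is the explicit list of nine families together with their precise parameter exclusions, and none of that is derived. "Compute the polynomial action and pick canonical representatives" is exactly the part the paper spends roughly twenty-five subcases on, and the outcome is not predictable from the setup; in particular your account of where the exclusions come from is not accurate. The conditions $\alpha\neq-\tfrac{1}{6}$ in $\ca_{79}$, $\alpha\neq-\tfrac{4}{27}$ in $\ca_{80}$, and the four constraints on $(\alpha,\beta)$ in $\ca_{81}$ are not loci where the stabilizer jumps or the algebra "acquires extra automorphisms"; they are (per the paper's case list) the loci where the normalized tuple lands in the orbit of an \emph{earlier, lower-parameter} family --- e.g.\ $16\theta_{6}\theta_{7}^{3}=1+6\theta_{7}^{2}\pm\sqrt{4\theta_{7}^{2}+12\theta_{7}+1}$ is precisely where the would-be $\ca_{81}$ algebra degenerates to some $\ca_{76}(\alpha)$, and $-27\theta_{6}\theta_{7}=9\theta_{7}^{2}+2\theta_{7}^{4}\pm2\sqrt{\theta_{7}^{2}(3+\theta_{7}^{2})^{3}}$ is where it becomes some $\ca_{78}(\alpha)$. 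Detecting these coincidences requires actually solving for the cross-family base changes, which is the hard computational core of the theorem; without it one can neither certify that the nine listed families exhaust the orbits nor that the stated constraints are the right ones. Your closing appeal to "coarse invariants" for pairwise non-isomorphism likewise will not separate, say, $\ca_{77}(\alpha)$ from $\ca_{78}(\alpha)$ or distinguish parameter values within $\ca_{81}$, since those families agree on the position of $\mathrm{span}\{[e_{1},e_{1}],[e_{2},e_{2}],[e_{1},e_{2}]+[e_{2},e_{1}]\}$ and on the induced pairing on $A/A^{2}$.
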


\begin{proof} Assume $Leib(A)\neq Z(A)$. Using $A$ is nilpotent and $Leib(A)\neq Z(A)$ we see that $\dim(A^3)=1$, which is a contradiction. Hence $Leib(A)=Z(A)=A^3$. Let $Leib(A)=Z(A)=A^3=\rm span\{e_4, e_5\}$. Extend this to a basis $\{e_3, e_4, e_5\}$ of $A^2$. Then the nonzero products in $A=\rm span\{e_1, e_2, e_3, e_4, e_5\}$ are given by
\begin{align*}
[e_1, e_1]=\alpha_1e_4+\alpha_2e_5, [e_1, e_2]=\alpha_3e_3+ \alpha_4e_4+ \alpha_5e_5, [e_2, e_1]=-\alpha_3e_3+\beta_1e_4+ \beta_2e_5, \\ [e_2, e_2]=\beta_3e_4+\beta_4e_5, [e_1, e_3]=\beta_5e_4+\beta_6e_5, [e_2, e_3]=\gamma_1e_4+\gamma_2e_5, \\ [e_3, e_1]=\gamma_3e_4+\gamma_4e_5, [e_3, e_2]=\gamma_5e_4+\gamma_6e_5, [e_3, e_3]=\gamma_7e_4+\gamma_8e_5.
\end{align*}
From the Leibniz identities we get the following equations: 
\begin{equation}  \label{eq3202,1}
\begin{cases}
\gamma_3=-\beta_5 & \qquad
\gamma_4=-\beta_6 \\
\gamma_5=-\gamma_1 & \qquad
\gamma_6=-\gamma_2 \\
\gamma_7=0=\gamma_8
\end{cases}
\end{equation}
Note that if $\gamma_1\neq0$ and $\beta_5=0$(resp. $\beta_5\neq0$) then with the base change $x_1=e_2, x_2=e_1, x_3=e_3, x_4=e_4, x_5=e_5$(resp. $x_1=e_1, x_2=\gamma_1e_1-\beta_5e_2, x_3=e_3, x_4=e_4, x_5=e_5$) we can make $\gamma_1=0$. So let $\gamma_1=0.$ Then $\gamma_5=0$ by (\ref{eq3202,1}) and $\beta_5, \gamma_2\neq0$ since $\dim(A^3)=2$. 
\\ \indent {\bf Case 1:} Let $\alpha_1=0.$ Then we have the following products in $A$:
\begin{multline} \label{eq3202,2}
[e_1, e_1]=\alpha_2e_5, [e_1, e_2]=\alpha_3e_3+ \alpha_4e_4+ \alpha_5e_5, [e_2, e_1]=-\alpha_3e_3+\beta_1e_4+ \beta_2e_5, [e_2, e_2]=\beta_3e_4+\beta_4e_5, \\ [e_1, e_3]=\beta_5e_4+\beta_6e_5=-[e_3, e_1], [e_2, e_3]=\gamma_2e_5=-[e_3, e_2].
\end{multline}

Take $\theta_1=\frac{\alpha_2}{\alpha_3\gamma_2}, \theta_2=\frac{\alpha_4+\beta_1}{\alpha_3\beta_5}, \theta_3=\frac{(\alpha_5+\beta_2)\beta_5-(\alpha_4+\beta_1)\beta_6}{\alpha_3\beta_5\gamma_2}, \theta_4=\frac{\beta_3}{\alpha_3\beta_5}$ and $\theta_5=\frac{\beta_4\beta_5-\beta_3\beta_6}{\beta_5}$. The base change $y_1=e_1, y_2=e_2, y_3=\alpha_3e_3+ \alpha_4e_4+ \alpha_5e_5, y_4=\alpha_3(\beta_5e_4+\beta_6e_5), y_5=\alpha_3\gamma_2e_5$ shows that $A$ is isomorphic to the following algebra:
\begin{align*}
[y_1, y_1]=\theta_1y_5, [y_1, y_2]=y_3, [y_2, y_1]=-y_3+\theta_2y_4+\theta_3y_5, [y_2, y_2]=\theta_4y_4+\theta_5y_5, \\ [y_1, y_3]=y_4=-[y_3, y_1], [y_2, y_3]=y_5=-[y_3, y_2]. 
\end{align*}
Note that $(\theta_2, \theta_4)\neq(0, 0)$ and $(\theta_1, \theta_3, \theta_5)\neq(0, 0, 0)$ since $\dim(Leib(A))=2$. Take $\theta_6=\frac{\theta_1\sqrt{\theta_3\theta_4}}{\theta^2_3}$ and $\theta_7=\frac{\theta_5}{\sqrt{\theta_3\theta_4}}$.
\begin{itemize}

\item If $\theta_4=0, \theta_3=0$ and $\theta_1=0$ then $\theta_2, \theta_5\neq0$. Then the base change $x_1=\theta_2y_1, x_2=y_2, x_3=\theta_2y_3, x_4=\theta^2_2y_4, x_5=\theta_2y_5$ shows that $A$ is isomorphic to $\ca_{75}(\alpha)$.

\item If $\theta_4=0, \theta_3=0$ and $\theta_1\neq0$ then $\theta_2\neq0$. Then the base change $x_1=\theta_2y_1, x_2=\sqrt{\theta_1\theta_2}y_2, x_3=\theta_2\sqrt{\theta_1\theta_2}y_3, x_4=\theta^2_2\sqrt{\theta_1\theta_2}y_4, x_5=\theta_1\theta^2_2y_5$ shows that $A$ is isomorphic to $\ca_{76}(\alpha)$.

\item If $\theta_4=0, \theta_3\neq0$ and $\frac{\theta_5}{\theta_2}=0$ then $\theta_2\neq0$. Then the base change $x_1=\theta_2y_1, x_2=\theta_3y_2, x_3=\theta_2\theta_3y_3, x_4=\theta^2_2\theta_3y_4, x_5=\theta_2\theta^2_3y_5$ shows that $A$ is isomorphic to $\ca_{77}(\alpha)$.

\item If $\theta_4=0, \theta_3\neq0$ and $\frac{\theta_5}{\theta_2}=0$ then $\theta_1, \theta_2\neq0$ since $\dim(Leib(A))=2$. Then the base change $x_1=\theta_2y_1, x_2=\theta_3y_2, x_3=\theta_2\theta_3y_3, x_4=\theta^2_2\theta_3y_4, x_5=\theta_2\theta^2_3y_5$ shows that $A$ is isomorphic to $\ca_{77}(\alpha)$.

\item If $\theta_4=0, \theta_3\neq0, \frac{\theta_5}{\theta_2}=1$ and $\theta_1=0$ then w.s.c.o.b. $A$ is isomorphic to $\ca_{77}(-\frac{1}{4})$.

\item If $\theta_4=0, \theta_3\neq0, \frac{\theta_5}{\theta_2}=1$ and $\theta_1\neq0$ then w.s.c.o.b. $A$ is isomorphic to $\ca_{78}(\alpha)$.

\item If $\theta_4=0, \theta_3\neq0, \frac{\theta_5}{\theta_2}=-\frac{1}{2}$ and $\theta_1=0$ then w.s.c.o.b. $A$ is isomorphic to $\ca_{77}(2)$.

\item If $\theta_4=0, \theta_3\neq0, \frac{\theta_5}{\theta_2}=-\frac{1}{2}$ and $\theta_1=-\frac{1}{6}$ then w.s.c.o.b. $A$ is isomorphic to $\ca_{78}(\frac{1}{3})$.

\item If $\theta_4=0, \theta_3\neq0, \frac{\theta_5}{\theta_2}=-\frac{1}{2}$ and $\theta_1\in\cc\backslash\{-\frac{1}{6}, 0\}$ then the base change $x_1=\theta_2y_1, x_2=\theta_3y_2, x_3=\theta_2\theta_3y_3, x_4=\theta^2_2\theta_3y_4, x_5=\theta_2\theta^2_3y_5$ shows that $A$ is isomorphic to $\ca_{79}(\alpha)$.

\item If $\theta_4=0, \theta_3\neq0$ and $\frac{\theta_5}{\theta_2}\in\cc\backslash\{-\frac{1}{2}, 0, 1\}$ then w.s.c.o.b. $A$ is isomorphic to $\ca_{76}(\alpha)(\alpha\in\cc\backslash\{-\frac{1}{2}, \frac{1}{2}, 1\})$.

\item If $\theta_4\neq0, \theta_2=0, \theta_3=0, \theta_5=0$ then $\theta_1\neq0$. Then w.s.c.o.b. $A$ is isomorphic to $\ca_{76}(-\frac{1}{2})$. 

\item If $\theta_4\neq0, \theta_2=0, \theta_3=0, \theta_5\neq0$ and $\frac{\theta_1\theta^2_4}{\theta^3_5}=-\frac{4}{27}$ then w.s.c.o.b. $A$ is isomorphic to $\ca_{78}(\frac{1}{9})$. 

\item If $\theta_4\neq0, \theta_2=0, \theta_3=0, \theta_5\neq0$ and $\frac{\theta_1\theta^2_4}{\theta^3_5}\neq-\frac{4}{27}$ then w.s.c.o.b. $A$ is isomorphic to $\ca_{80}(\alpha)$. 

\item If $\theta_4\neq0, \theta_2=0, \theta_3\neq0, 4\theta_6\theta_7=1$ and $\theta^2_6=-\frac{1}{54}$ then w.s.c.o.b. $A$ is isomorphic to $\ca_{78}(\frac{1}{9})$. 

\item If $\theta_4\neq0, \theta_2=0, \theta_3\neq0, 4\theta_6\theta_7=1$ and $\theta^2_6\neq-\frac{1}{54}$ then w.s.c.o.b. $A$ is isomorphic to $\ca_{80}(\alpha)$. 

\item If $\theta_4\neq0, \theta_2=0, \theta_3\neq0, 4\theta_6\theta_7\neq1, 8\theta_6\theta^3_7-2\theta^2_7+1=0$ and $\theta_6=0$ then w.s.c.o.b. $A$ is isomorphic to $\ca_{77}(2)$. 

\item If $\theta_4\neq0, \theta_2=0, \theta_3\neq0, 4\theta_6\theta_7\neq1, 8\theta_6\theta^3_7-2\theta^2_7+1=0, \theta_6=0$ and $\theta^2_7=-\frac{3}{2}\pm \sqrt{3}$ then w.s.c.o.b. $A$ is isomorphic to $\ca_{78}(\frac{1}{3})$. 

\item If $\theta_4\neq0, \theta_2=0, \theta_3\neq0, 4\theta_6\theta_7\neq1, 8\theta_6\theta^3_7-2\theta^2_7+1=0, \theta_6=0$ and $\theta^2_7\neq-\frac{3}{2}\pm \sqrt{3}$ then w.s.c.o.b. $A$ is isomorphic to $\ca_{79}(\alpha)(\alpha\in\cc\backslash\{-\frac{1}{6}, 0, \frac{1}{8}\})$. 

\item If $\theta_4\neq0, \theta_2=0, \theta_3\neq0, 4\theta_6\theta_7\neq1, 8\theta_6\theta^3_7-2\theta^2_7+1\neq0, \theta_6=0$ and $\theta_7=0$ then w.s.c.o.b. $A$ is isomorphic to $\ca_{76}(0)$.

\item If $\theta_4\neq0, \theta_2=0, \theta_3\neq0, 4\theta_6\theta_7\neq1, 8\theta_6\theta^3_7-2\theta^2_7+1\neq0, \theta_6=0$ and $\theta_7\neq0$ then w.s.c.o.b. $A$ is isomorphic to $\ca_{77}(\alpha)(\alpha\in\cc\backslash\{0, 2\})$.

\item If $\theta_4\neq0, \theta_2=0, \theta_3\neq0, 4\theta_6\theta_7\neq1, 8\theta_6\theta^3_7-2\theta^2_7+1\neq0, \theta_6\neq0$ and $16\theta_6\theta^3_7=1+6\theta^2_7\pm\sqrt{4\theta^2_7+12\theta_7+1}$ then w.s.c.o.b. $A$ is isomorphic to $\ca_{76}(\alpha)$ for some $\alpha$ values.

\item If $\theta_4\neq0, \theta_2=0, \theta_3\neq0, 4\theta_6\theta_7\neq1, 8\theta_6\theta^3_7-2\theta^2_7+1\neq0, \theta_6\neq0, 16\theta_6\theta^3_7\neq1+6\theta^2_7\pm\sqrt{4\theta^2_7+12\theta_7+1}$ and $-27\theta_6\theta_7=9\theta^2_7+2\theta^4_7\pm 2\sqrt{\theta^2_7(3+\theta^2_7)^3}$ then w.s.c.o.b. $A$ is isomorphic to $\ca_{78}(\alpha)$ for some $\alpha$ values.

\item If $\theta_4\neq0, \theta_2=0, \theta_3\neq0, 4\theta_6\theta_7\neq1, 8\theta_6\theta^3_7-2\theta^2_7+1\neq0, \theta_6\neq0, 16\theta_6\theta^3_7\neq1+6\theta^2_7\pm\sqrt{4\theta^2_7+12\theta_7+1}$ and $-27\theta_6\theta_7\neq9\theta^2_7+2\theta^4_7\pm 2\sqrt{\theta^2_7(3+\theta^2_7)^3}$ then w.s.c.o.b. $A$ is isomorphic to $\ca_{81}(\alpha, \beta)$.

\item If $\theta_4\neq0$ and $\theta_2\neq0$ then w.s.c.o.b. $A$ is isomorphic to $\ca_{82}(\alpha, \beta, \gamma)$.

\end{itemize}

\indent {\bf Case 2:} Let $\alpha_1\neq0.$ If $(\alpha_4+\beta_1, \beta_3)\neq(0, 0)$ then the base change $x_1=xe_1+e_2, x_2=e_2, x_3=e_3, x_4=e_4, x_5=e_5$ (where $\alpha_1x^2+ (\alpha_4+\beta_1)x+\beta_3=0$) shows that $A$ is isomorphic to an algebra with the nonzero products given by (\ref{eq3202,2}). 
Hence $A$ is isomorphic to $\ca_{75}(\alpha), \ca_{76}(\alpha), \ldots, \ca_{81}(\alpha, \beta)$ or $\ca_{82}(\alpha, \beta, \gamma)$. So we can assume $\alpha_4+\beta_1=0=\beta_3$. Note that if $\beta_4=0$ then $\alpha_5+\beta_2\neq0$ since $\dim(Leib(A))$=2.

\begin{itemize}
\item If $\beta_4=0$ then w.s.c.o.b. $A$ is isomorphic to $\ca_{75}(\alpha)$.
\item If $\beta_4\neq0$ then w.s.c.o.b. $A$ is isomorphic to $\ca_{83}(\alpha, \beta)$.
\end{itemize}

\end{proof}

\begin{rmk}
\
\begin{scriptsize}
\begin{enumerate}
\item If $\alpha_1, \alpha_2\in\cc\backslash\{0\}$ such that $\alpha_1\neq\alpha_2$, then $\ca_{75}(\alpha_1)$ and $\ca_{75}(\alpha_2)$ are not isomorphic.
\item If $\alpha_1, \alpha_2\in\cc$ such that $\alpha_1\neq\alpha_2$, then $\ca_{76}(\alpha_1)$ and $\ca_{76}(\alpha_2)$ are not isomorphic.
\item If $\alpha_1, \alpha_2\in\cc\backslash\{0\}$ such that $\alpha_1\neq\alpha_2$, then $\ca_{77}(\alpha_1)$ and $\ca_{77}(\alpha_2)$ are not isomorphic.
\item If $\alpha_1, \alpha_2\in\cc\backslash\{0\}$ such that $\alpha_1\neq\alpha_2$, then $\ca_{78}(\alpha_1)$ and $\ca_{78}(\alpha_2)$ are not isomorphic.
\item If $\alpha_1, \alpha_2\in\cc\backslash\{-\frac{1}{6}, 0\}$ such that $\alpha_1\neq\alpha_2$, then $\ca_{79}(\alpha_1)$ and $\ca_{79}(\alpha_2)$ are not isomorphic.
\item If $\alpha_1, \alpha_2\in\cc\backslash\{-\frac{4}{27}, 0\}$ such that $\alpha_1\neq\alpha_2$, then $\ca_{80}(\alpha_1)$ and $\ca_{80}(\alpha_2)$ are not isomorphic.
\item Isomorphism conditions for the families  $\ca_{81}(\alpha, \beta), \ca_{82}(\alpha, \beta, \gamma)$ and $\ca_{83}(\alpha, \beta)$ are hard to compute.
\end{enumerate}
\end{scriptsize}
\end{rmk}

Let $\dim(A^2)=3$ and $\dim(A^3)=1$. Then we have $A^3\subseteq Z(A)\subseteq A^2$. Note that $A^2\neq Z(A)$ since $A^3\neq0$. Hence $\dim(Z(A))=1$ or $2.$ First we consider the case $\dim(Z(A))=2$. Note that since $Leib(A)\subseteq A^2$ we have $2\le \dim(Leib(A))\le3$.

\begin{thm} Let $A$ be a $5-$dimensional non-split non-Lie nilpotent Leibniz algebra with $\dim(A^2)=3$, $\dim(A^3)=1$, $\dim(Z(A))=2=\dim(Leib(A))$ and $Leib(A)\neq Z(A)$. Then $A$ is isomorphic to a Leibniz algebra spanned by $\{x_1, x_2, x_3, x_4, x_5\}$ with the nonzero products given by one of  the following:
\begin{scriptsize}
\begin{description}
\item[$\ca_{84}$] $[x_1, x_2]=x_3+ x_4, [x_2, x_1]=-x_3, [x_1, x_4]=x_5$.
\item[$\ca_{85}$] $[x_1, x_2]=x_3+ x_4, [x_2, x_1]=-x_3, [x_2, x_2]=x_5, [x_1, x_4]=x_5$.
\item[$\ca_{86}$] $[x_1, x_1]=x_4, [x_1, x_2]=x_3=-[x_2, x_1], [x_1, x_4]=x_5$.
\item[$\ca_{87}$] $[x_1, x_1]=x_4, [x_1, x_2]=x_3=-[x_2, x_1], [x_2, x_2]=x_5, [x_1, x_4]=x_5$.
\end{description}
\end{scriptsize}
\end{thm}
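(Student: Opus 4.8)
The plan is to follow the pattern of the preceding proofs: cut the multiplication table down to a few structure constants, impose the Leibniz identities, and normalize by base changes, landing on $\ca_{84},\dots,\ca_{87}$. Two facts do the heavy lifting. First, in any left Leibniz algebra $[Leib(A),A]=0$: indeed $[[a,a],x]=[a,[a,x]]-[a,[a,x]]=0$, and one polarizes. Second, as recalled just before the statement, $A^3\subseteq Z(A)\subseteq A^2$.

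I would first pin down where $A^3$ sits. Every square of $A$ dies modulo $Leib(A)$, so $A/Leib(A)$ is a $3$-dimensional nilpotent Lie algebra, and $(A/Leib(A))^2=A^2/Leib(A)$ is $1$-dimensional (since $Leib(A)\subseteq A^2$, $\dim A^2=3$, $\dim Leib(A)=2$); a $1$-dimensional ideal of a nilpotent Lie algebra is central, so $(A/Leib(A))^3=0$, i.e. $A^3\subseteq Leib(A)$. Since also $A^3\subseteq Z(A)$, and the two distinct $2$-dimensional subspaces $Leib(A),Z(A)$ of the $3$-dimensional space $A^2$ meet in a line, we get $A^3=Leib(A)\cap Z(A)$.

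Choose $e_5$ spanning $A^3=Leib(A)\cap Z(A)$, extend to a basis $\{e_4,e_5\}$ of $Leib(A)$ and to a basis $\{e_3,e_5\}$ of $Z(A)$ (so $\{e_3,e_4,e_5\}$ is a basis of $A^2$), and take a complement $V=\operatorname{span}\{e_1,e_2\}$ of $A^2$. Since $e_3,e_5$ are central, $e_4\in Leib(A)$ gives $[e_4,-]=0$, and $[e_i,e_4]\in[A,A^2]=A^3=\operatorname{span}\{e_5\}$, the only possibly-nonzero products are $[e_i,e_j]$ with $i,j\in\{1,2\}$ together with $[e_1,e_4],[e_2,e_4]$; and $[e_1,e_2]+[e_2,e_1]\in Leib(A)$ forces the $e_3$-component of $[e_2,e_1]$ to be the negative of that of $[e_1,e_2]$. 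This leaves eleven structure constants, with three side conditions: the $e_3$-coefficient of $[e_1,e_2]$ is nonzero (from $\dim A^2=3$); $[e_1,e_4],[e_2,e_4]$ are not both zero (this is $A^3\neq0$, and it is also what makes $\dim Z(A)=2$, since otherwise $e_4$ would be central); and a genericity condition ensuring $\dim Leib(A)=2$ rather than $1$. The Leibniz identities on triples from $\{e_1,e_2\}$ collapse to just two bilinear relations among these constants, relating the coefficients of $[e_1,e_4],[e_2,e_4]$ to the $e_4$-components of the squares.

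It then remains to normalize. Using that $[e_1,e_4],[e_2,e_4]$ are not both zero, after possibly swapping $e_1,e_2$ and a shear $e_2\mapsto e_2-c\,e_1$ I arrange $[e_2,e_4]=0$ and $[e_1,e_4]=\gamma e_5$ with $\gamma\neq0$; the two Leibniz relations then kill the $e_4$-components of $[e_2,e_1]$ and $[e_2,e_2]$. The remaining cases are organized by whether $[e_1,e_1]$ has a nonzero $e_4$-component — then suitable rescalings of $e_1,e_2,e_3,e_4,e_5$ give $\ca_{86}$ or $\ca_{87}$ — or not, in which case the $e_4$-direction of $Leib(A)$ must be carried by the symmetric part of $[e_1,e_2]$, giving $\ca_{84}$ or $\ca_{85}$; within each case the split is according to whether the surviving $e_5$-component of $[e_2,e_2]$ can be normalized to $0$ or $1$. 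The main obstacle is the bookkeeping of these base changes: one must order the normalizations so they do not interfere (for instance, rescaling $e_4$ to make $[e_1,e_4]=x_5$ disturbs the $e_4$-components of all the squares), confirm that each leftover constant really can be cleared, and finally verify that $\ca_{84},\dots,\ca_{87}$ are pairwise non-isomorphic and non-split — routine from their lower central series and centres, but still to be checked.
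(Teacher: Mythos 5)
Your setup is essentially the paper's: the same basis adapted to $A^3\subseteq Leib(A)\cap Z(A)$, the same eleven structure constants, the same two surviving Leibniz relations $\beta_1\beta_5=\alpha_1\beta_6$ and $\alpha_4\beta_6=\beta_3\beta_5$, and the same first normalization ($[e_2,e_4]=0$, $[e_1,e_4]=\gamma e_5\neq0$, which then forces the $e_4$-components of $[e_2,e_1]$ and $[e_2,e_2]$ to vanish). Your preliminaries are in fact cleaner than the paper's: the identity $[Leib(A),A]=0$ obtained by polarizing $[[a,a],x]=0$, and the observation that $A^2/Leib(A)$ is a one-dimensional, hence central, ideal of the nilpotent Lie algebra $A/Leib(A)$, together justify both $A^3=Leib(A)\cap Z(A)$ and the collapse of the Leibniz identities to two relations --- points the paper uses without comment.

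The step that fails as written is your final dichotomy. After the normalizations the surviving $e_4$-components are $\alpha_1$ in $[e_1,e_1]$ and $\alpha_4$ in $[e_1,e_2]$, and you send the branch ``$\alpha_1\neq0$'' to $\ca_{86},\ca_{87}$ by ``suitable rescalings.'' When $\alpha_1\neq0$ \emph{and} $\alpha_4\neq0$ this is impossible: no rescaling removes the $e_4$-component of $[e_1,e_2]$, and no isomorphism at all lands in $\ca_{86}/\ca_{87}$, because the induced quadratic form $\bar a\mapsto[a,a]\bmod A^3$ on $A/A^2$ with values in the line $Leib(A)/A^3$ is $\alpha_1a^2+\alpha_4ab$, of rank $2$, whereas it has rank $1$ for $\ca_{86},\ca_{87}$ and rank $2$ for $\ca_{84},\ca_{85}$. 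The rank of this form is the invariant that actually separates the two pairs; concretely, when $\alpha_1,\alpha_4$ are both nonzero one must first apply the shear $e_1\mapsto\alpha_4e_1-\alpha_1e_2$ (as the paper does) to kill $\alpha_1$ and then proceed to $\ca_{84}$ or $\ca_{85}$. With that subcase rerouted, the remaining splits on the $e_5$-component of $[e_2,e_2]$ match the paper and go through.
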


\begin{proof} Let $A^3=\rm span\{e_5\}$. Extend this to bases of $\{e_4, e_5\}$, $\{e_3, e_5\}$ of $Leib(A)$ and $Z(A)$, respectively. Then $A^2=\rm span\{e_3, e_4, e_5\}$ and the nonzero products in $A=\rm span\{e_1, e_2, e_3, e_4, e_5\}$ are given by
\begin{align*}
[e_1, e_1]=\alpha_1e_4+ \alpha_2e_5, [e_1, e_2]=\alpha_3e_3+\alpha_4e_4+ \alpha_5e_5, [e_2, e_1]=-\alpha_3e_3+ \beta_1e_4+ \beta_2e_5, \\ [e_2, e_2]=\beta_3e_4+ \beta_4e_5, [e_1, e_4]=\beta_5e_5, [e_2, e_4]=\beta_6e_5.
\end{align*} 
From the Leibniz identities we get the following equations:
\begin{equation}  \label{eq312,1}
\begin{cases}
\beta_1\beta_5=\alpha_1\beta_6 \\
\alpha_4\beta_6=\beta_3\beta_5
\end{cases}
\end{equation}
If $\beta_6\neq0$ and $\beta_5=0$ then by (\ref{eq312,1}) we have $\alpha_1=0=\alpha_4$. Then with the base change $x_1=e_2, x_2=e_1, x_3=e_3, x_4=e_4, x_5=e_5$ we can make $\beta_6=0$. Also if $\beta_6\neq0$ and $\beta_5\neq0$ then with the base change $x_1=e_1, x_2=\beta_6e_1-\beta_5e_2, x_3=e_3, x_4=e_4, x_5=e_5$ we can make  $\beta_6=0$. So we can assume $\beta_6=0$. Then $\beta_5\neq0$ since $A^3\neq0$. Using (\ref{eq312,1}) we get $\beta_1=0=\beta_3$. 
\\ \indent {\bf Case 1:} Let $\alpha_1=0$. Then $\alpha_4\neq0$ since $\dim(A^2)=3$. Hence we have the following products in $A$:
\begin{multline} \label{eq312,2} [e_1, e_1]=\alpha_2e_5, [e_1, e_2]=\alpha_3e_3+\alpha_4e_4+ \alpha_5e_5, [e_2, e_1]=-\alpha_3e_3+ \beta_2e_5, \\ [e_2, e_2]=\beta_4e_5, [e_1, e_4]=\beta_5e_5.
\end{multline}
Without loss of generality we can assume $\alpha_2=0$. Otherwise with the base change $x_1=\beta_5e_1-\alpha_2e_4, x_2=e_2, x_3=e_3, x_4=e_4, x_5=e_5$ we can make $\alpha_2=0$. 
\begin{itemize}
\item If $\beta_4=0$ then the base change $x_1=e_1, x_2=e_2, x_3=\alpha_3e_3-\beta_2e_5, x_4=\alpha_4e_4+(\alpha_5+\beta_2)e_5, x_5=\alpha_4\beta_5e_5$ shows that $A$ is isomorphic to $\ca_{84}$.
\item If $\beta_4\neq0$ then the base change $x_1=e_1, x_2=\frac{\alpha_4\beta_5}{\beta_4}e_2, x_3=\frac{\alpha_4\beta_5}{\beta_4}(\alpha_3e_3-\beta_2e_5), x_4=\frac{\alpha_4\beta_5}{\beta_4}(\alpha_4e_4+(\alpha_5+\beta_2)e_5), x_5=\frac{(\alpha_4\beta_5)^2}{\beta_4}e_5$ shows that $A$ is isomorphic to $\ca_{85}$.
\end{itemize}

\indent {\bf Case 2:} Let $\alpha_1\neq0$. 
\begin{itemize}
\item If  $\alpha_4=0$ and $\beta_4=0$ then the base change $x_1=e_1, x_2=\beta_5e_2-(\alpha_5+\beta_2)e_4, x_3=\beta_5(\alpha_3e_3-\beta_2e_5), x_4=\alpha_1e_4+\alpha_2e_5, x_5=\alpha_1\beta_5e_5$ shows that $A$ is isomorphic to $\ca_{86}$.
\item If  $\alpha_4=0$ and $\beta_4\neq0$ then the base change $x_1=e_1, x_2=\sqrt{\frac{\alpha_1}{\beta_5}}(\beta_5e_2-(\alpha_5+\beta_2)e_4), x_3=\sqrt{\frac{\alpha_1}{\beta_5}}\beta_5(\alpha_3e_3-\beta_2e_5), x_4=\alpha_1e_4+\alpha_2e_5, x_5=\alpha_1\beta_5e_5$ shows that $A$ is isomorphic to $\ca_{87}$.
\item If $\alpha_4\neq0$ then the base change $x_1=\alpha_4e_1-\alpha_1e_2, x_2=e_2, x_3=e_3, x_4=e_4, x_5=e_5$ shows that $A$ is isomorphic to an algebra with the nonzero products given by (\ref{eq312,2}). Hence $A$ is isomorphic to $\ca_{84}$ or $\ca_{85}$.
\end{itemize}
\end{proof}

\begin{thm} Let $A$ be a $5-$dimensional non-split non-Lie nilpotent Leibniz algebra with $\dim(A^2)=3$, $\dim(A^3)=1$, $\dim(Z(A))=2=\dim(Leib(A))$ and $Leib(A)=Z(A)$. Then $A$ is isomorphic to a Leibniz algebra spanned by $\{x_1, x_2, x_3, x_4, x_5\}$ with the nonzero products given by one of  the following:
\begin{scriptsize}
\begin{description}
\item[$\ca_{88}$] $[x_1, x_1]=x_5, [x_1, x_2]=x_3, [x_2, x_1]=-x_3+ x_4, [x_1, x_3]=x_5=-[x_3, x_1]$.
\item[$\ca_{89}$] $[x_1, x_2]=x_3, [x_2, x_1]=-x_3+ x_4, [x_2, x_2]=x_5, [x_1, x_3]=x_5=-[x_3, x_1]$.
\item[$\ca_{90}$] $[x_1, x_1]=x_5, [x_1, x_2]=x_3, [x_2, x_1]=-x_3+ x_4, [x_2, x_2]=x_5, [x_1, x_3]=x_5=-[x_3, x_1]$.
\item[$\ca_{91}$] $[x_1, x_2]=x_3, [x_2, x_1]=-x_3+x_5, [x_2, x_2]=x_4, [x_1, x_3]=x_5=-[x_3, x_1]$.
\item[$\ca_{92}$] $[x_1, x_1]=x_5, [x_1, x_2]=x_3=-[x_2, x_1], [x_2, x_2]=x_4, [x_1, x_3]=x_5=-[x_3, x_1]$.
\item[$\ca_{93}$] $[x_1, x_1]=x_5, [x_1, x_2]=x_3, [x_2, x_1]=-x_3+x_5, [x_2, x_2]=x_4, [x_1, x_3]=x_5=-[x_3, x_1]$.
\item[$\ca_{94}(\alpha)$] $[x_1, x_1]=x_5, [x_1, x_2]=x_3, [x_2, x_1]=-x_3+ x_4+\alpha x_5, [x_2, x_2]=x_4, [x_1, x_3]=x_5=-[x_3, x_1], \quad \alpha\in\cc$.
\item[$\ca_{95}$] $[x_1, x_1]=x_4, [x_1, x_2]=x_3, [x_2, x_1]=-x_3+x_5, [x_1, x_3]=x_5=-[x_3, x_1]$.
\item[$\ca_{96}$] $[x_1, x_1]=x_4, [x_1, x_2]=x_3=-[x_2, x_1], [x_2, x_2]=x_5, [x_1, x_3]=x_5=-[x_3, x_1]$.
\end{description}
\end{scriptsize}
\end{thm}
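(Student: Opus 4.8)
The plan is to follow the pattern used for the other strata in this section: choose a basis adapted to the chain $A^3\subseteq A^2$ and to $Leib(A)=Z(A)$, write down the generic multiplication table, impose the left Leibniz identity, and then normalize by flag-preserving base changes. Since $A^3\subseteq Z(A)=Leib(A)$ and $\dim(A^3)=1$, I would take $A^3=\mathrm{span}\{e_5\}$, extend to a basis $\{e_4,e_5\}$ of $Leib(A)=Z(A)$ and then to a basis $\{e_3,e_4,e_5\}$ of $A^2$, and let $V=\mathrm{span}\{e_1,e_2\}$ be a complement of $A^2$ in $A$. Centrality of $e_4,e_5$ kills all products involving them; $[a,b]+[b,a]\in Leib(A)$ together with $e_3\notin Leib(A)$ forces the $e_3$-parts of $[e_1,e_2]$ and $[e_2,e_1]$ to be opposite; the squares lie in $Leib(A)$; and $[e_i,e_3]\in[A,A^2]=A^3=\mathrm{span}\{e_5\}$, as does $[e_3,e_3]\in[A^2,A^2]$. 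This yields
\begin{align*}
[e_1,e_1]&=\alpha_1e_4+\alpha_2e_5, & [e_1,e_2]&=\alpha_3e_3+\alpha_4e_4+\alpha_5e_5, & [e_2,e_1]&=-\alpha_3e_3+\beta_1e_4+\beta_2e_5,\\
[e_2,e_2]&=\beta_3e_4+\beta_4e_5, & [e_1,e_3]&=\gamma_1e_5, & [e_3,e_1]&=\gamma_2e_5,\\
[e_2,e_3]&=\gamma_3e_5, & [e_3,e_2]&=\gamma_4e_5, & [e_3,e_3]&=\gamma_5e_5,
\end{align*}
where $\alpha_3\neq0$, since $\alpha_3=0$ would give $A^2\subseteq Leib(A)$, contradicting $\dim(A^2)=3$.

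Evaluating $[x,[y,z]]=[[x,y],z]+[y,[x,z]]$ on the triples drawn from $\{e_1,e_2,e_3\}$ and using $\alpha_3\neq0$ produces the relations $\gamma_5=0$, $\gamma_2=-\gamma_1$, $\gamma_4=-\gamma_3$, which I would record as a displayed system. Because $A^3\neq0$, $(\gamma_1,\gamma_3)\neq(0,0)$; after swapping $e_1\leftrightarrow e_2$ if necessary and then replacing $e_2$ by $\gamma_3e_1-\gamma_1e_2$ I may assume $\gamma_3=0$ and $\gamma_1\neq0$, so $[e_1,e_3]=\gamma_1e_5=-[e_3,e_1]$ become the only nonzero products involving $e_3$. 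The remaining freedom consists of adding a multiple of $e_3$ to $e_2$ (which shifts $\alpha_5,\beta_2$ while preserving $\alpha_5+\beta_2$), replacing $e_1,e_2$ by a unipotent-plus-diagonal combination compatible with $\gamma_1\neq0$, and rescaling/reshuffling $e_3,e_4,e_5$ inside the flag; these are the moves I would use to reach the normal forms. A convenient reformulation: using the relations just found, the $e_3$-cross terms in $[x,x]$ cancel, so for $x=ae_1+be_2+(A^2)$ one gets $[x,x]=q_1(a,b)\,e_4+q_2(a,b)\,e_5$ with $q_1=\alpha_1a^2+(\alpha_4+\beta_1)ab+\beta_3b^2$ and $q_2=\alpha_2a^2+(\alpha_5+\beta_2)ab+\beta_4b^2$; hence the hypothesis $\dim(Leib(A))=2$ is exactly linear independence of the binary quadratic forms $q_1,q_2$, and in particular some $e_4$-component of a square is nonzero.

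The case split I would run is by where that nonzero $e_4$-component can be brought after base change — from $[e_1,e_1]$ (the branch containing $\ca_{95},\ca_{96}$), from $[e_2,e_2]$ (the branch $\ca_{91}$–$\ca_{94}$), or from the symmetrized product $[e_1,e_2]+[e_2,e_1]$ (the branch $\ca_{88}$–$\ca_{90}$) — with sub-cases recording whether $[e_1,e_1]$ or $[e_2,e_2]$ additionally carries an $e_5$-part, whether $q_1$ and $q_2$ share a common root, and whether $A$ would otherwise split; in each leaf one scales $\gamma_1$ and the surviving coefficients to $0$ or $1$ to land on one of $\ca_{88},\ldots,\ca_{96}$, the one-parameter family $\ca_{94}(\alpha)$ arising in the case where both $[e_2,e_2]$ and the symmetrized product feed $e_4$. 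The main obstacle is precisely this bookkeeping: verifying that the list of base changes is exhaustive (every admissible set of structure constants is moved onto one of the listed forms), that no degenerate sub-case is overlooked, and that $\ca_{94}(\alpha)$ genuinely requires a parameter while the others do not. Finally, since $\dim(A^i)$, $\dim(Z(A))$, $\dim(Leib(A))$ and the relation $Leib(A)=Z(A)$ already isolate this stratum, one confirms that $\ca_{88},\ldots,\ca_{96}$ are pairwise non-isomorphic by a short direct check on the remaining structure (e.g. on the image of the quadratic map $x\mapsto[x,x]$, i.e. on the pair $(q_1,q_2)$ up to the allowed group).
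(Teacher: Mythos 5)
Your proposal follows essentially the same route as the paper: the same adapted basis $A^3=\mathrm{span}\{e_5\}\subset Leib(A)=Z(A)=\mathrm{span}\{e_4,e_5\}\subset A^2$, the same generic multiplication table, the Leibniz identities forcing $[e_3,e_3]=0$ and $[e_3,e_i]=-[e_i,e_3]$, the normalization to $[e_1,e_3]=\gamma_1e_5\neq0$, $[e_2,e_3]=0$, and the same trichotomy according to whether the $e_4$-direction of $Leib(A)$ comes from $[e_1,e_2]+[e_2,e_1]$ (yielding $\ca_{88}$--$\ca_{90}$), from $[e_2,e_2]$ (yielding $\ca_{91}$--$\ca_{94}(\alpha)$), or only from $[e_1,e_1]$ (yielding $\ca_{95},\ca_{96}$). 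One small caution: a priori $[e_3,e_1]$, $[e_3,e_2]$, $[e_3,e_3]$ may carry $e_4$-components (they lie in $-[e_i,e_3]+Leib(A)$, not automatically in $A^3$), so these terms should appear in the initial table and then be killed by the Leibniz identities, exactly as in the paper.
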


\begin{proof} Let $A^3=\rm span\{e_5\}$. Extend this to bases of $\{e_4, e_5\}$, $\{e_3, e_4, e_5\}$ of $Leib(A)$ and $A^2$, respectively. Then the nonzero products in $A=\rm span\{e_1, e_2, e_3, e_4, e_5\}$ are given by
\begin{align*}
[e_1, e_1]=\alpha_1e_4+ \alpha_2e_5, [e_1, e_2]=\alpha_3e_3+\alpha_4e_4+ \alpha_5e_5, [e_2, e_1]=-\alpha_3e_3+ \beta_1e_4+ \beta_2e_5, \\ [e_2, e_2]=\beta_3e_4+ \beta_4e_5, [e_1, e_3]=\gamma_1e_5, [e_2, e_3]=\gamma_2e_5, [e_3, e_1]=\gamma_3e_4+\gamma_4e_5, \\ [e_3, e_2]=\gamma_5e_4+\gamma_6e_5, [e_3, e_3]=\gamma_7e_4+\gamma_8e_5.
\end{align*} 
\\ From the Leibniz identities we get the following equations:
\begin{equation}  \label{eq312,3}
\begin{cases}
\gamma_3=\gamma_5=\gamma_7=\gamma_8=0 \\
\gamma_4=-\gamma_1 \\
\gamma_6=-\gamma_2
\end{cases}
\end{equation}
Note that if $\gamma_2\neq0$ and $\gamma_1=0$(resp. $\gamma_1\neq0$) then with the base change $x_1=e_2, x_2=e_1, x_3=e_3, x_4=e_4, x_5=e_5$(resp. $x_1=e_1, x_2=\gamma_2e_1-\gamma_1e_2, x_3=e_3, x_4=e_4, x_5=e_5$) we can make $\gamma_2=0$. So we can assume $\gamma_2=0$. Then by (\ref{eq312,3}) $\gamma_6=0$, and so $\gamma_1, \gamma_4\neq0$ since $A^3\neq0$. 
\\ \indent {\bf Case 1:} Let $\alpha_1=0$. Then we have the following products in $A$:
\begin{multline} \label{eq312,4}
[e_1, e_1]=\alpha_2e_5, [e_1, e_2]=\alpha_3e_3+\alpha_4e_4+ \alpha_5e_5, [e_2, e_1]=-\alpha_3e_3+ \beta_1e_4+ \beta_2e_5, \\ [e_2, e_2]=\beta_3e_4+ \beta_4e_5, [e_1, e_3]=\gamma_1e_5=-[e_3, e_1]. 
\end{multline}
\\ \indent {\bf Case 1.1:} Let $\beta_3=0.$ Then $\alpha_4+\beta_1\neq0$ since $\dim(Leib(A))=2$. 
\begin{itemize}
\item If $\beta_4=0$ then $\alpha_2\neq0$ since $\dim(Leib(A))=2$. Then the base change $x_1=e_1, x_2=\frac{\alpha_2}{\alpha_3\gamma_1}e_2, x_3=\frac{\alpha_2}{\alpha_3\gamma_1}(\alpha_3e_3+\alpha_4e_4+ \alpha_5e_5), x_4=\frac{\alpha_2}{\alpha_3\gamma_1}((\alpha_4+\beta_1)e_4+(\alpha_5+\beta_2)e_5), x_5=\alpha_2e_5$ shows that $A$ is isomorphic to $\ca_{88}$.
\item If $\beta_4\neq0$ and $\alpha_2=0$ then the base change $x_1=e_1, x_2=\frac{\alpha_3\gamma_1}{\beta_4}e_2, x_3=\frac{\alpha_3\gamma_1}{\beta_4}(\alpha_3e_3+\alpha_4e_4+ \alpha_5e_5), x_4=\frac{\alpha_3\gamma_1}{\beta_4}((\alpha_4+\beta_1)e_4+(\alpha_5+\beta_2)e_5), x_5=\frac{(\alpha_3\gamma_1)^2}{\beta_4}e_5$ shows that $A$ is isomorphic to $\ca_{89}$.
\item If $\beta_4\neq0$ and $\alpha_2\neq0$ then the base change $x_1=\frac{\sqrt{\alpha_2\beta_4}}{\alpha_3\gamma_1}e_1, x_2=\frac{\alpha_2}{\alpha_3\gamma_1}e_2, x_3=\frac{\alpha_2\sqrt{\alpha_2\beta_4}}{(\alpha_3\gamma_1)^2}(\alpha_3e_3+\alpha_4e_4+ \alpha_5e_5), x_4=\frac{\alpha_2\sqrt{\alpha_2\beta_4}}{(\alpha_3\gamma_1)^2}((\alpha_4+\beta_1)e_4+(\alpha_5+\beta_2)e_5), x_5=\frac{\alpha^2_2\beta_4}{(\alpha_3\gamma_1)^2}e_5$ shows that $A$ is isomorphic to $\ca_{90}$.
\end{itemize}

\indent {\bf Case 1.2:} Let $\beta_3\neq0$. 
\\ \indent {\bf Case 1.2.1:} Let $\alpha_2=0$. Take $\theta=(\alpha_5+\beta_2)\beta_3-(\alpha_4+\beta_1)\beta_4$. Note that $\theta\neq0$ because otherwise $\dim(Leib(A))=1$, which contradicts with our claim.
\begin{itemize}
\item If $\alpha_4+\beta_1=0$ and $\theta\neq0$ then the base change $x_1=\frac{\alpha_5+\beta_2}{\alpha_3\gamma_1}e_1, x_2=e_2, x_3=\frac{\alpha_5+\beta_2}{\alpha_3\gamma_1}(\alpha_3e_3+\alpha_4e_4+ \alpha_5e_5), x_4=\beta_3e_4+ \beta_4e_5, x_5=\frac{(\alpha_5+\beta_2)^2}{\alpha_3\gamma_1}e_5$ shows that $A$ is isomorphic to $\ca_{91}$.
\item If $\alpha_4+\beta_1\neq0$ and $\theta\neq0$ then the base change $x_1=\frac{\theta}{\alpha_3\beta_3\gamma_1}e_1-\frac{\theta(\alpha_4+\beta_1)}{\alpha_3\beta^2_3\gamma_1}e_2, x_2=-\frac{\theta(\alpha_4+\beta_1)}{\alpha_3\beta^2_3\gamma_1}e_2, x_3=-\frac{\alpha_3\theta^2(\alpha_4+\beta_1)}{\alpha^2_3\beta^3_3\gamma^2_1}e_3+\frac{\beta_1\theta^2(\alpha_4+\beta_1)}{\alpha^2_3\beta^3_3\gamma^2_1}e_4+\frac{(\beta_2\beta_3-\theta)\theta^2(\alpha_4+\beta_1)}{\alpha^2_3\beta^4_3\gamma^2_1}e_5, x_4=\frac{\theta^2(\alpha_4+\beta_1)^2}{(\alpha_3\gamma_1\beta^2_3)^2}(\beta_3e_4+ \beta_4e_5), x_5=-\frac{\theta^3(\alpha_4+\beta_1)}{\alpha^2_3\beta^4_3\gamma^2_1}e_5$ shows that $A$ is isomorphic to $\ca_{94}(1)$.
\end{itemize}
 
\indent {\bf Case 1.2.2:} Let $\alpha_2\neq0$. 
\begin{itemize}
\item If $\alpha_4+\beta_1=0=\alpha_5+\beta_2$ then the base change $x_1=e_1, x_2=\frac{\alpha_2}{\alpha_3\gamma_1}e_2, x_3=\frac{\alpha_2}{\alpha_3\gamma_1}(\alpha_3e_3+\alpha_4e_4+ \alpha_5e_5), x_4=(\frac{\alpha_2}{\alpha_3\gamma_1})^2(\beta_3e_4+ \beta_4e_5), x_5=\alpha_2e_5$ shows that $A$ is isomorphic to $\ca_{92}$.
\item If $\alpha_4+\beta_1=0$ and $\alpha_5+\beta_2\neq0$ then the base change $x_1=\frac{\alpha_5+\beta_2}{\alpha_3\gamma_1}e_1, x_2=\frac{\alpha_2}{\alpha_3\gamma_1}e_2, x_3=\frac{\alpha_2(\alpha_5+\beta_2)}{(\alpha_3\gamma_1)^2}(\alpha_3e_3+\alpha_4e_4+ \alpha_5e_5), x_4=(\frac{\alpha_2}{\alpha_3\gamma_1})^2(\beta_3e_4+ \beta_4e_5), x_5=\frac{\alpha_2(\alpha_5+\beta_2)^2}{(\alpha_3\gamma_1)^2}e_5$ shows that $A$ is isomorphic to $\ca_{93}$.
\item If $\alpha_4+\beta_1\neq0$ then the base change $x_1=\frac{\alpha_2\beta_3}{\alpha_3\gamma_1(\alpha_4+\beta_1)}e_1, x_2=\frac{\alpha_2}{\alpha_3\gamma_1}e_2, x_3=\frac{\alpha^2_3\beta_3}{(\alpha_3\gamma_1)^2(\alpha_4+\beta_1)}(\alpha_3e_3+\alpha_4e_4+ \alpha_5e_5), x_4=(\frac{\alpha_2}{\alpha_3\gamma_1})^2(\beta_3e_4+ \beta_4e_5), x_5=\frac{\alpha^3_2\beta^2_3}{(\alpha_3\gamma_1)^2(\alpha_4+\beta_1)^2}e_5$ shows that $A$ is isomorphic to $\ca_{94}(\alpha)$.
\end{itemize}

\indent {\bf Case 2:} Let $\alpha_1\neq0$. If $(\beta_3, \alpha_4+\beta_1)\neq(0, 0)$ then the base change $x_1=xe_1+e_2, x_2=e_2, x_3=e_3, x_4=e_4, x_5=e_5$(where $\alpha_1x^2+(\alpha_4+\beta_1)x+\beta_3=0$) shows that $A$ is isomorphic to an algebra with the nonzero products given by (\ref{eq312,4}). Hence $A$ is isomorphic to $\ca_{88}, \ca_{89}, \ca_{90}, \ca_{91}, \ca_{92}, \ca_{93}$ or $\ca_{94}(\alpha)$. So let $\beta_3=0=\alpha_4+\beta_1$.
\begin{itemize}
\item If $\beta_4=0$ then $\alpha_5+\beta_2\neq0$ since $\dim(Leib(A))=2$. Then the base change $x_1=\frac{\alpha_5+\beta_2}{\alpha_3\gamma_1}e_1, x_2=e_2, x_3=\frac{\alpha_5+\beta_2}{\alpha_3\gamma_1}(\alpha_3e_3+\alpha_4e_4+ \alpha_5e_5), x_4=(\frac{\alpha_5+\beta_2}{\alpha_3\gamma_1})^2(\alpha_1e_4+ \alpha_2e_5), x_5=\frac{(\alpha_5+\beta_2)^2}{\alpha_3\gamma_1}e_5$ shows that $A$ is isomorphic to $\ca_{95}$.
\item If $\beta_4\neq0$ then the base change $x_1=e_1-\frac{\alpha_5+\beta_2}{2\beta_4}e_2, x_2=\frac{\alpha_3\gamma_1}{\beta_4}e_2, x_3=\frac{\alpha^2_3\gamma_1}{\beta_4}e_3+\frac{\alpha_3\alpha_4\gamma_1}{\beta_4}e_4+(\frac{\alpha_3\alpha_5\gamma_1}{\beta_4}-\frac{\alpha_3(\alpha_5+\beta_2)\gamma_1}{2\beta_4})e_5, x_4=\alpha_1e_4+(\alpha_2-\frac{(\alpha_5+\beta_2)^2}{2\beta_4}+\frac{(\alpha_5+\beta_2)^2}{4\beta_4})e_5, x_5=\frac{(\alpha_3\gamma_1)^2}{\beta_4}e_5$ shows that $A$ is isomorphic to $\ca_{96}$.
\end{itemize}
\end{proof}

\begin{rmk}
\begin{scriptsize}
If $\alpha_1, \alpha_2\in\cc$ such that $\alpha_1\neq\alpha_2$, then $\ca_{94}(\alpha_1)$ and $\ca_{94}(\alpha_2)$ are isomorphic if and only if $\alpha_2=\frac{\alpha_1}{\alpha_1-1}$.
\end{scriptsize}
\end{rmk}

\begin{thm} Let $A$ be a $5-$dimensional non-split non-Lie nilpotent Leibniz algebra with $\dim(A^2)=3=\dim(Leib(A))$, $\dim(A^3)=1$ and $\dim(Z(A))=2$. Then $A$ is isomorphic to a Leibniz algebra spanned by $\{x_1, x_2, x_3, x_4, x_5\}$ with the nonzero products given by one of  the following:
\begin{scriptsize}
\begin{description}
\item[$\ca_{97}$] $[x_1, x_1]=x_3, [x_2, x_1]=x_4, [x_1, x_3]=x_5$.
\item[$\ca_{98}$] $[x_1, x_1]=x_3, [x_2, x_1]=x_4, [x_2, x_2]=x_5, [x_1, x_3]=x_5$.
\item[$\ca_{99}$] $[x_1, x_1]=x_3, [x_2, x_2]=x_4, [x_1, x_3]=x_5$.
\item[$\ca_{100}$] $[x_1, x_1]=x_3, [x_2, x_1]=x_5, [x_2, x_2]=x_4, [x_1, x_3]=x_5$.
\item[$\ca_{101}$] $[x_1, x_1]=x_3, [x_2, x_1]=x_4, [x_2, x_2]=x_4, [x_1, x_3]=x_5$.
\item[$\ca_{102}$] $[x_1, x_1]=x_3, [x_2, x_1]=x_4+x_5, [x_2, x_2]=x_4, [x_1, x_3]=x_5$.
\item[$\ca_{103}$] $[x_1, x_1]=x_3, [x_1, x_2]=x_4, [x_2, x_1]=x_5, [x_1, x_3]=x_5$.
\item[$\ca_{104}(\alpha)$] $[x_1, x_1]=x_3, [x_1, x_2]=x_4, [x_2, x_1]=\alpha x_4, [x_1, x_3]=x_5, \quad \alpha\in\cc\backslash\{-1\}$.
\item[$\ca_{105}(\alpha)$] $[x_1, x_1]=x_3, [x_1, x_2]=x_4, [x_2, x_1]=\alpha x_4, [x_2, x_2]=x_5, [x_1, x_3]=x_5, \quad \alpha\in\cc\backslash\{-1\}$.
\item[$\ca_{106}$] $[x_1, x_2]=x_3, [x_2, x_2]=x_4, [x_1, x_3]=x_5$.
\item[$\ca_{107}$] $[x_1, x_2]=x_3, [x_2, x_1]=x_5, [x_2, x_2]=x_4, [x_1, x_3]=x_5$.
\item[$\ca_{108}$] $[x_1, x_2]=x_3, [x_2, x_1]=x_4, [x_2, x_2]=x_4, [x_1, x_3]=x_5$.
\item[$\ca_{109}$] $[x_1, x_2]=x_3, [x_2, x_1]=x_4+x_5, [x_2, x_2]=x_4, [x_1, x_3]=x_5$.
\item[$\ca_{110}$] $[x_1, x_1]=x_4, [x_1, x_2]=x_3, [x_1, x_3]=x_5$.
\item[$\ca_{111}$] $[x_1, x_1]=x_4, [x_1, x_2]=x_3, [x_2, x_1]=x_5, [x_1, x_3]=x_5$.
\item[$\ca_{112}$] $[x_1, x_1]=x_4, [x_1, x_2]=x_3, [x_2, x_2]=x_5, [x_1, x_3]=x_5$.
\item[$\ca_{113}$] $[x_1, x_1]=x_4, [x_1, x_2]=x_3, [x_2, x_1]=x_5, [x_2, x_2]=x_5, [x_1, x_3]=x_5$.
\item[$\ca_{114}$] $[x_1, x_1]=x_4, [x_1, x_2]=x_3, [x_2, x_1]=x_4, [x_1, x_3]=x_5$.
\item[$\ca_{115}$] $[x_1, x_1]=x_4, [x_1, x_2]=x_3, [x_2, x_1]=x_4, [x_2, x_2]=x_5, [x_1, x_3]=x_5$.
\item[$\ca_{116}(\alpha)$] $[x_1, x_1]=x_4, [x_1, x_2]=x_3, [x_2, x_1]=\alpha x_4, [x_2, x_2]=x_4, [x_1, x_3]=x_5, \quad \alpha\in\cc$.
\item[$\ca_{117}(\alpha)$] $[x_1, x_1]=x_4, [x_1, x_2]=x_3, [x_2, x_1]=\alpha x_4+ x_5, [x_2, x_2]=x_4, [x_1, x_3]=x_5, \quad \alpha\in\cc$.
\end{description}
\end{scriptsize}
\end{thm}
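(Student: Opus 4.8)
The plan is to follow the template of the preceding proofs in this section: fix a basis adapted to the flag $A^{3}\subseteq Z(A)\subseteq Leib(A)=A^{2}\subseteq A$, write down the most general multiplication table compatible with that flag, read off the constraints imposed by the Leibniz identity, and then run a case analysis organized by a handful of discrete invariants. Once the invariants are fixed, a sequence of flag-preserving changes of basis --- rescalings together with shears $e_{i}\mapsto e_{i}+(\text{element of }A^{2})$ --- reduces each algebra to exactly one of $\ca_{97},\dots,\ca_{117}$.

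Concretely, I take $e_{5}$ spanning $A^{3}$, complete to a basis $\{e_{4},e_{5}\}$ of $Z(A)$ and then to a basis $\{e_{3},e_{4},e_{5}\}$ of $Leib(A)=A^{2}$, and choose $V=\mathrm{span}\{e_{1},e_{2}\}$ complementary to $A^{2}$. Since $[A,A^{2}]=A^{3}=\mathrm{span}\{e_{5}\}$, and since the Leibniz identity forces $[A^{2},A]\subseteq A^{3}$ and $[A^{2},A^{2}]\subseteq A^{4}=0$ (here $A^{4}=0$ because $A$ is nilpotent with $\dim A^{3}=1$), the only genuinely free data are the four products $[e_{i},e_{j}]$ with $i,j\in\{1,2\}$; the products $[e_{i},e_{3}]$, $[e_{3},e_{i}]$ are scalar multiples of $e_{5}$, $[e_{3},e_{3}]=0$, and everything involving $e_{4},e_{5}$ vanishes. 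Imposing the left Leibniz identity on triples from $\{e_{1},e_{2},e_{3}\}$ yields a small polynomial system in these scalars, analogous to the systems displayed in the proof of Theorem~\ref{leib3} and the theorems that follow it.

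The normalization step is where the flag first enters. Acting by $\mathrm{GL}(2)$ on $\{e_{1},e_{2}\}$ (and rescaling $e_{5}$, adjusting $e_{3}$ inside $A^{2}$), and using $\dim A^{3}=1$, I arrange $[e_{1},e_{3}]=e_{5}$ and $[e_{2},e_{3}]=0$. Feeding this back into the Leibniz relations, together with $\dim Leib(A)=3$, forces $[e_{3},e_{1}]=[e_{3},e_{2}]=0$ and also kills the $e_{3}$-components of $[e_{2},e_{1}]$ and of $[e_{2},e_{2}]$: any other solution would push $Leib(A)$ into $\mathrm{span}\{e_{4},e_{5}\}=Z(A)$, which is only two-dimensional. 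What remains is governed by the quadratic map $\overline{S}\colon V\to A^{2}/A^{3}\cong\cc^{2}$, $v\mapsto\overline{[v,v]}$, whose image must span $A^{2}/A^{3}$ since $Leib(A)=A^{2}$. The primary branch is on the $e_{3}$-component of $[e_{1},e_{1}]$: if it is nonzero one normalizes $[e_{1},e_{1}]\equiv e_{3}\pmod{A^{3}}$ and reaches $\ca_{97}$--$\ca_{105}$; if it vanishes, then either $\overline{[e_{1},e_{2}]}$ carries the $e_{3}$-direction while $[e_{2},e_{2}]$ supplies the $e_{4}$-direction ($\ca_{106}$--$\ca_{109}$), or $[e_{1},e_{1}]$ contributes $e_{4}$ and $[e_{1},e_{2}]$ contributes $e_{3}$ ($\ca_{110}$--$\ca_{117}$). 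Within each branch, the pattern of which $e_{4},e_{5}$-coefficients of $[e_{1},e_{2}],[e_{2},e_{1}],[e_{2},e_{2}],[e_{1},e_{1}]$ vanish, plus one surviving ratio (appearing as the parameter $\alpha$ in $\ca_{104},\ca_{105},\ca_{116},\ca_{117}$), dictates the remaining subcases; in each, an explicit change of basis is written down, and the non-degeneracy hypotheses reappear as restrictions such as $\alpha\neq-1$ in those families.

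The hard part is purely organizational. There are twenty-one normal forms, four of them one-parameter families, so the tree of nested subcases and the explicit base-change matrices attached to its leaves are lengthy; and in each leaf one must check both that $A$ reaches some $\ca_{j}$ and that the base change used preserves the whole flag $A^{3}\subset Z(A)\subset A^{2}\subset A$ --- automatic, since it is an algebra isomorphism, but a genuine constraint on which matrices are admissible. A secondary point, which I would record in a remark following the theorem as is done after the earlier ones, is that the parameter in $\ca_{104}(\alpha),\ca_{105}(\alpha),\ca_{116}(\alpha),\ca_{117}(\alpha)$ is a true modulus up to the stated identifications; this comes down to computing how $\alpha$ transforms under the residual automorphism group of each normal form, a short calculation case by case.
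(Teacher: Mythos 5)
Your proposal follows the paper's own proof essentially step for step: the same flag-adapted basis $A^{3}=\mathrm{span}\{e_5\}\subset Z(A)=\mathrm{span}\{e_4,e_5\}\subset Leib(A)=A^{2}=\mathrm{span}\{e_3,e_4,e_5\}$, the same reduction of the multiplication table (products out of $A^{2}$ vanish, $[e_i,e_3]\in\cc e_5$), the same two Leibniz relations and the normalization $[e_1,e_3]=e_5$, $[e_2,e_3]=0$ forcing the $e_3$-components of $[e_2,e_1]$ and $[e_2,e_2]$ to vanish, and the same case tree on the $e_3$- and $e_4$-components of $[e_1,e_1]$ and $[e_1,e_2]$ terminating in explicit base changes to $\ca_{97}$--$\ca_{117}$. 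The only difference is organizational (the paper branches first on the $e_3$-component of $[e_1,e_2]$ rather than of $[e_1,e_1]$), so nothing further is needed.
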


\begin{proof} Let $A^3=\rm span\{e_5\}$. Extend this to bases $\{e_4, e_5\}$, $\{e_3, e_4, e_5\}$ of $Z(A)$ and $Leib(A)=A^2$, respectively. Then the nonzero products in  $A=\rm span\{e_1, e_2, e_3, e_4, e_5\}$ are given by
\begin{align*}
[e_1, e_1]=\alpha_1e_3+ \alpha_2e_4+\alpha_3e_5, [e_1, e_2]=\alpha_4e_3+\alpha_5e_4+ \alpha_6e_5, [e_2, e_1]=\beta_1e_3+ \beta_2e_4+\beta_3e_5, \\ [e_2, e_2]=\beta_4e_3+\beta_5e_4+ \beta_6e_5, [e_1, e_3]=\gamma_1e_5, [e_2, e_3]=\gamma_2e_5.
\end{align*} 
\\ From the Leibniz identities we get the following equations:
\begin{equation}  \label{eq313,1}
\begin{cases}
\beta_1\gamma_1=\alpha_1\gamma_2 \\
\alpha_4\gamma_2=\beta_4\gamma_1
\end{cases}
\end{equation}

If $\gamma_2\neq0$ and $\gamma_1=0$(resp. $\gamma_1\neq0$) then with the base change $x_1=e_2, x_2=e_1, x_3=e_3, x_4=e_4, x_5=e_5$(resp. $x_1=e_1, x_2=\gamma_2e_1-\gamma_1e_2, x_3=e_3, x_4=e_4, x_5=e_5$) we can make $\gamma_2=0$. So we can assume $\gamma_2=0$. Then $\gamma_1\neq0$ since $A^3\neq0$. By (\ref{eq313,1}) we have $\beta_1=0=\beta_4$.
\\ \indent {\bf Case 1:} Let $\alpha_4=0$. Then $\alpha_1\neq0$ since $\dim(A^2)=3$.
\\ \indent {\bf Case 1.1:} Let $\alpha_5=0$. Then we have the following products in $A$:
 \begin{multline} \label{eq313,2}
[e_1, e_1]=\alpha_1e_3+ \alpha_2e_4+\alpha_3e_5, [e_1, e_2]=\alpha_6e_5, [e_2, e_1]=\beta_2e_4+\beta_3e_5, \\ [e_2, e_2]=\beta_5e_4+ \beta_6e_5, [e_1, e_3]=\gamma_1e_5.
\end{multline}
We can assume $\alpha_6=0$ because if $\alpha_6\neq0$ with the base change $x_1=e_1, x_2=\gamma_1e_2-\alpha_6e_3, x_3=e_3, x_4=e_4, x_5=e_5$ we can make $\alpha_6=0$. 
\\ \indent {\bf Case 1.1.1:} Let $\beta_5=0$. Then $\beta_2\neq0$ since $\dim(A^2)=3$. 
\begin{itemize}
\item If $\beta_6=0$ then the base change $x_1=e_1, x_2=e_2, x_3=\alpha_1e_3+ \alpha_2e_4+\alpha_3e_5, x_4=\beta_2e_4+\beta_3e_5, x_5=\alpha_1\gamma_1e_5$ shows that $A$ is isomorphic to $\ca_{97}$. 

\item If $\beta_6\neq0$ then the base change $x_1=e_1, x_2=\sqrt{\frac{\alpha_1\gamma_1}{\beta_6}}e_2, x_3=\alpha_1e_3+ \alpha_2e_4+\alpha_3e_5, x_4=\sqrt{\frac{\alpha_1\gamma_1}{\beta_6}}(\beta_2e_4+\beta_3e_5), x_5=\alpha_1\gamma_1e_5$ shows that $A$ is isomorphic to $\ca_{98}$.
\end{itemize}

\indent {\bf Case 1.1.2:} Let $\beta_5\neq0$. Take $\theta=\beta_3\beta_5-\beta_2\beta_6$. 
\begin{itemize}
\item If $\beta_2=0$ and $\theta=0$ then the base change $x_1=e_1, x_2=e_2, x_3=\alpha_1e_3+ \alpha_2e_4+\alpha_3e_5, x_4=\beta_5e_4+ \beta_6e_5, x_5=\alpha_1\gamma_1e_5$ shows that $A$ is isomorphic to $\ca_{99}$.
\item If $\beta_2=0$ and $\theta\neq0$ then the base change $x_1=e_1, x_2=\frac{\alpha_1\gamma_1}{\beta_3}e_2, x_3=\alpha_1e_3+ \alpha_2e_4+\alpha_3e_5, x_4=(\frac{\alpha_1\gamma_1}{\beta_3})^2(\beta_5e_4+ \beta_6e_5), x_5=\alpha_1\gamma_1e_5$ shows that $A$ is isomorphic to $\ca_{100}$.
\item If $\beta_2\neq0$ and $\theta=0$ then the base change $x_1=\beta_5e_1, x_2=\beta_2e_2, x_3=\beta^2_5(\alpha_1e_3+ \alpha_2e_4+\alpha_3e_5), x_4=\beta^2_2(\beta_5e_4+ \beta_6e_5), x_5=\beta^3_5\alpha_1\gamma_1e_5$ shows that $A$ is isomorphic to $\ca_{101}$.
\item If $\beta_2\neq0$ and $\theta\neq0$ then the base change $x_1=\frac{\beta_2\theta}{\alpha_1\beta^2_5\gamma_1}e_1, x_2=\frac{\beta^2_2\theta}{\alpha_1\beta^3_5\gamma_1}e_2, x_3=(\frac{\beta_2\theta}{\alpha_1\beta^2_5\gamma_1})^2(\alpha_1e_3+ \alpha_2e_4+\alpha_3e_5), x_4=(\frac{\beta^2_2\theta}{\alpha_1\beta^3_5\gamma_1})^2(\beta_5e_4+ \beta_6e_5), x_5=\frac{(\beta_2\theta)^3}{\beta^6_5(\alpha_1\gamma_1)^2}e_5$ shows that $A$ is isomorphic to $\ca_{102}$.
\end{itemize}

\indent {\bf Case 1.2:} Let $\alpha_5\neq0$. If $\beta_5\neq0$ then the base change $x_1=\beta_5e_1-\alpha_5e_2, x_2=e_2, x_3=e_3, x_4=e_4, x_5=e_5$ shows that $A$ is isomorphic to an algebra with the nonzero products given by (\ref{eq313,2}). Hence $A$ is isomorphic to $\ca_{97}, \ca_{98}, \ca_{99}, \ca_{100}, \ca_{101}$ or $\ca_{102}$.  Then let $\beta_5=0$ which implies $\alpha_5+\beta_2\neq0$. 

\indent {\bf Case 1.2.1:} Let $\beta_6=0$. 
\begin{itemize}
\item If $\beta_2=0=\beta_3$ then the base change $x_1=e_1, x_2=e_2, x_3=\alpha_1e_3+ \alpha_2e_4+\alpha_3e_5, x_4=\alpha_5e_4+ \alpha_6e_5, x_5=\alpha_1\gamma_1e_5$ shows that $A$ is isomorphic to $\ca_{104}(0)$.

\item If $\beta_2=0$ and $\beta_3\neq0$ then the base change $x_1=e_1, x_2=\frac{\alpha_1\gamma_1}{\beta_3}e_2, x_3=\alpha_1e_3+ \alpha_2e_4+\alpha_3e_5, x_4=\frac{\alpha_1\gamma_1}{\beta_3}(\alpha_5e_4+ \alpha_6e_5), x_5=\alpha_1\gamma_1e_5$ shows that $A$ is isomorphic to $\ca_{103}$.

\item If $\beta_2\neq0$ then the base change $x_1=e_1, x_2=e_2+\frac{\alpha_5\beta_3-\alpha_6\beta_2}{\beta_2\gamma_1}e_3, x_3=\alpha_1e_3+ \alpha_2e_4+\alpha_3e_5, x_4=\alpha_5e_4+\frac{\alpha_5\beta_3}{\beta_2}e_5, x_5=\alpha_1\gamma_1e_5$ shows that $A$ is isomorphic to $\ca_{104}(\alpha)(\alpha\in\cc\backslash\{-1, 0\})$.
\end{itemize}

\indent {\bf Case 1.2.2:} Let $\beta_6\neq0$. If $\beta_3\neq0$ then with the base change $x_1=\beta_6e_1-\beta_3e_2, x_2=e_2, x_3=e_3, x_4=e_4, x_5=e_5$ we can make $\beta_3=0$. So we can assume $\beta_3=0$. Without loss of generality, we can assume $\alpha_6=0$ because if $\alpha_6\neq0$ then with the base change $x_1=e_1, x_2=\gamma_1e_2-\alpha_6e_3, x_3=e_3, x_4=e_4, x_5=e_5$ we can make $\alpha_6=0$. Then the base change $x_1=e_1, x_2=\sqrt{\frac{\alpha_1\gamma_1}{\beta_6}}e_2, x_3=\alpha_1e_3+ \alpha_2e_4+\alpha_3e_5, x_4=\alpha_5\sqrt{\frac{\alpha_1\gamma_1}{\beta_6}}e_4, x_5=\alpha_1\gamma_1e_5$ shows that $A$ is isomorphic to $\ca_{105}(\alpha)$.

\indent {\bf Case 2:} Let $\alpha_4\neq0$.
If $\alpha_1\neq0$ then with the base change $x_1=\alpha_4e_1-\alpha_1e_2, x_2=e_2, x_3=e_3, x_4=e_4, x_5=e_5$ we can make $\alpha_1=0$. Then assume $\alpha_1=0$. 

\indent {\bf Case 2.1:} Let $\alpha_2=0$.
If $\alpha_3\neq0$ then with the base change $x_1=\gamma_1e_1-\alpha_3e_3, x_2=e_2, x_3=e_3, x_4=e_4, x_5=e_5$ we can make $\alpha_3=0$. So we can assume $\alpha_3=0$. Note that if $\beta_5=0$ then $\dim(Leib(A))=2$ which is a contradiction. Suppose $\beta_5\neq0$. Take $\theta=\beta_3\beta_5-\beta_2\beta_6$. 
\begin{itemize}
\item If $\beta_2=0=\theta$ then the base change $x_1=e_1, x_2=e_2, x_3=\alpha_4e_3+\alpha_5e_4+ \alpha_6e_5, x_4=\beta_5e_4+ \beta_6e_5, x_5=\alpha_4\gamma_1e_5$ shows that $A$ is isomorphic to $\ca_{106}$. 
\item If $\beta_2=0$ and $\theta\neq0$ then the base change $x_1=\frac{\beta_3}{\alpha_4\gamma_1}e_1, x_2=e_2, x_3=\frac{\beta_3}{\alpha_4\gamma_1}(\alpha_4e_3+\alpha_5e_4+ \alpha_6e_5), x_4=\beta_5e_4+ \beta_6e_5, x_5=\frac{\beta^2_3}{\alpha_4\gamma_1}e_5$ shows that $A$ is isomorphic to $\ca_{107}$.
\item If $\beta_2\neq0$ and $\theta=0$ then the base change $x_1=\beta_5e_1, x_2=\beta_2e_2, x_3=\beta_2\beta_5(\alpha_4e_3+\alpha_5e_4+ \alpha_6e_5), x_4=\beta^2_2(\beta_5e_4+ \beta_6e_5), x_5=\alpha_4\beta_2\beta^2_5\gamma_1e_5$ shows that $A$ is isomorphic to $\ca_{108}$.
\item If $\beta_2\neq0$ and $\theta\neq0$ then the base change $x_1=\frac{\theta}{\alpha_4\beta_5\gamma_1}e_1, x_2=\frac{\beta_2\theta}{\alpha_4\beta^2_5\gamma_1}e_2, x_3=\frac{\beta_2\theta^2}{\alpha^2_4\beta^3_5\gamma^2_1}(\alpha_4e_3+\alpha_5e_4+\alpha_6e_5), x_4=(\frac{\beta_2\theta}{\alpha_4\beta^2_5\gamma_1})^2(\beta_5e_4+\beta_6e_5), x_5=\frac{\beta_2\theta^3}{\alpha^2_4\beta^4_5\gamma^2_1}e_5$ shows that $A$ is isomorphic to $\ca_{109}$.
\end{itemize}

\indent {\bf Case 2.2:} Let $\alpha_2\neq0$.
\\ \indent {\bf Case 2.2.1:} Let $\beta_5=0$.
\\ \indent {\bf Case 2.2.1.1:} Let $\beta_2=0$.
\begin{itemize}
\item If $\beta_3=0=\beta_6$ then the base change $x_1=e_1, x_2=e_2, x_3=\alpha_4e_3+\alpha_5e_4+ \alpha_6e_5, x_4=\alpha_2e_4+\alpha_3e_5, x_5=\alpha_4\gamma_1e_5$ shows that $A$ is isomorphic to $\ca_{110}$.
\item If $\beta_6=0$ and $\beta_3\neq0$ then the base change $x_1=\frac{\beta_3}{\alpha_4\gamma_1}e_1, x_2=e_2, x_3=\frac{\beta_3}{\alpha_4\gamma_1}(\alpha_4e_3+\alpha_5e_4+ \alpha_6e_5), x_4=(\frac{\beta_3}{\alpha_4\gamma_1})^2(\alpha_2e_4+\alpha_3e_5), x_5=\frac{\beta^2_3}{\alpha_4\gamma_1}e_5$ shows that $A$ is isomorphic to $\ca_{111}$. 
\item If $\beta_6\neq0$ and $\beta_3=0$ then the base change $x_1=e_1, x_2=\frac{\alpha_4\gamma_1}{\beta_6}e_2, x_3=\frac{\alpha_4\gamma_1}{\beta_6}(\alpha_4e_3+\alpha_5e_4+ \alpha_6e_5), x_4=\alpha_2e_4+\alpha_3e_5, x_5=\frac{(\alpha_4\gamma_1)^2}{\beta_6}e_5$ shows that $A$ is isomorphic to $\ca_{112}$.
\item If $\beta_6\neq0$ and $\beta_3\neq0$ then the base change $x_1=\frac{\beta_3}{\alpha_4\gamma_1}e_1, x_2=\frac{\beta^2_3}{\alpha_4\beta_6\gamma_1}e_2, x_3=\frac{\beta^3_3}{\alpha^2_4\beta_6\gamma^2_1}(\alpha_4e_3+\alpha_5e_4+ \alpha_6e_5), x_4=(\frac{\beta_3}{\alpha_4\gamma_1})^2(\alpha_2e_4+\alpha_3e_5), x_5=\frac{\beta^4_3}{\alpha^2_4\beta_6\gamma^2_1}e_5$ shows that $A$ is isomorphic to $\ca_{113}$.
\end{itemize}

\indent {\bf Case 2.2.1.2:} Let $\beta_2\neq0$. 
\begin{itemize}
\item If $\beta_6=0$ then the base change $x_1=\frac{\beta_2}{\alpha_2}e_1+\frac{\alpha_2\beta_3-\alpha_3\beta_2}{\alpha_2\gamma_1}e_3, x_2=e_2, x_3=\frac{\beta_2}{\alpha_2}(\alpha_4e_3+\alpha_5e_4+\alpha_6e_5), x_4=\frac{\beta^2_2}{\alpha_2}e_4+\frac{\beta_2\beta_3}{\alpha_2}e_5, x_5=\frac{\alpha_4\beta^2_2\gamma_1}{\alpha^2_2}e_5$ shows that $A$ is isomorphic to $\ca_{114}$.

\item If $\beta_6\neq0$ then the base change $x_1=\frac{\alpha_2\beta_6}{\alpha_4\beta_2}e_1+\frac{(\alpha_2\beta_3-\alpha_3\beta_2)\alpha_2\beta_6}{\alpha_4\beta^2_2\gamma_1}e_3, x_2=\frac{\alpha^2_2\beta_6}{\alpha_4\beta^2_2}e_2, x_3=\frac{\alpha^3_2\beta^2_6}{\alpha^2_4\beta^3_2}(\alpha_4e_3+\alpha_5e_4+\alpha_6e_5), x_4=\frac{\alpha^3_2\beta^2_6}{\alpha^2_4\beta^2_2}e_4+\frac{\alpha^3_2\beta_3\beta^2_6}{\alpha^2_4\beta^3_2}e_5, x_5=\frac{\alpha^4_2\beta^3_6}{\alpha^2_4\beta^4_2}e_5$ shows that $A$ is isomorphic to $\ca_{115}$. 
\end{itemize}

\indent {\bf Case 2.2.2:} Let $\beta_5\neq0$. Take $\theta_1=\frac{\beta_2}{(\alpha_2\beta_5)^{1/2}}, \theta_2=\frac{\alpha_2\beta_3-\alpha_3\beta_2}{\alpha_2\alpha_4\gamma_1}, \theta_3=\frac{\alpha_2\beta_6-\alpha_3\beta_5}{\alpha_4\gamma_1(\alpha_2\beta_5)^{1/2}}$. Then the base change $y_1=e_1, y_2=(\frac{\alpha_2}{\beta_5})^{1/2}e_2, y_3=(\frac{\alpha_2}{\beta_5})^{1/2}(\alpha_4e_3+\alpha_5e_4+\alpha_6e_5), y_4=\alpha_2e_4+\alpha_3e_5, y_5=\alpha_4\gamma_1(\frac{\alpha_2}{\beta_5})^{1/2}e_5$ shows that $A$ is isomorphic to the following algebra:
\begin{align*}
[y_1, y_1]=y_4, [y_1, y_2]=y_3, [y_2, y_1]=\theta_1y_4+\theta_2y_5, [y_2, y_2]=y_4+\theta_3y_5, [y_1, y_3]=y_5.
\end{align*}
Without loss of generality, we can assume $\theta_3=0$, because if $\theta_3\neq0$ then with the base change $x_1=y_1+\theta_3y_3, x_2=y_2+y_3, x_3=y_3+y_5, x_4=y_4+\theta_3y_5, x_5=y_5$ we can make $\theta_3=0$.

\begin{itemize}
\item If $\theta_2=0$ then the base change $x_1=y_1, x_2=y_2, x_3=y_3, x_4=y_4, x_5=y_5$ shows that $A$ is isomorphic to $\ca_{116}(\alpha)$. 
\item If $\theta_2\neq0$ then the base change $x_1=\theta_2y_1, x_2=\theta_2y_2, x_3=\theta^2_2y_3, x_4=\theta^2_2y_4, x_5=\theta^3_2y_5$ shows that $A$ is isomorphic to $\ca_{117}(\alpha)$. 
\end{itemize}
\end{proof}

\begin{rmk}
\
\begin{scriptsize}
\begin{enumerate}
\item If $\alpha_1, \alpha_2\in\cc\backslash\{-1\}$ such that $\alpha_1\neq\alpha_2$, then $\ca_{104}(\alpha_1)$ and $\ca_{104}(\alpha_2)$ are not isomorphic.
\item If $\alpha_1, \alpha_2\in\cc\backslash\{-1\}$ such that $\alpha_1\neq\alpha_2$, then $\ca_{105}(\alpha_1)$ and $\ca_{105}(\alpha_2)$ are not isomorphic.

\item If $\alpha_1, \alpha_2\in\cc$ such that $\alpha_1\neq\alpha_2$, then $\ca_{116}(\alpha_1)$ and $\ca_{116}(\alpha_2)$ are isomorphic if and only if $\alpha_2=-\alpha_1$.
\item If $\alpha_1, \alpha_2\in\cc$ such that $\alpha_1\neq\alpha_2$, then $\ca_{117}(\alpha_1)$ and $\ca_{117}(\alpha_2)$ are isomorphic if and only if $\alpha_2=-\alpha_1$.
\end{enumerate}
\end{scriptsize}
\end{rmk}

\noindent
Now let $\dim(Z(A))=1$. Then $\dim(Leib(A))=2$ or $3$.

\begin{thm} Let $A$ be a $5-$dimensional non-split non-Lie nilpotent Leibniz algebra with $\dim(A^2)=3$, $\dim(A^3)=1=\dim(Z(A))$ and $\dim(Leib(A))=2$. Then $A$ is isomorphic to a Leibniz algebra spanned by $\{x_1, x_2, x_3, x_4, x_5\}$ with the nonzero products given by one of  the following:
\begin{scriptsize}
\begin{description}
\item[$\ca_{118}$] $[x_1, x_2]=-x_3+x_4, [x_2, x_1]=x_3, [x_2, x_3]=x_5=-[x_3, x_2], [x_1, x_4]=x_5$.
\item[$\ca_{119}$] $[x_1, x_2]=-x_3+x_4, [x_2, x_1]=x_3, [x_2, x_2]=x_5, [x_2, x_3]=x_5=-[x_3, x_2], [x_1, x_4]=x_5$.
\item[$\ca_{120}(\alpha)$] $[x_1, x_2]=-x_3=[x_2, x_1], [x_2, x_2]=x_4, [x_2, x_3]=-\alpha x_5, [x_3, x_2]=(\alpha-1)x_5, [x_1, x_4]=x_5, \quad \alpha\in\cc$.
\item[$\ca_{121}(\alpha)$] $[x_1, x_2]=-x_3+x_4, [x_2, x_1]=x_3, [x_2, x_2]=x_4, [x_2, x_3]=-\alpha x_5, [x_3, x_2]=(\alpha-1)x_5, [x_1, x_4]=x_5,  \quad \alpha\in\cc$.
\item[$\ca_{122}$] $[x_1, x_2]=x_3, [x_2, x_1]=-x_3+x_4, [x_3, x_1]=x_5, [x_1, x_4]=x_5$.
\item[$\ca_{123}$] $[x_1, x_2]=x_3, [x_2, x_1]=-x_3+x_4, [x_2, x_2]=x_5, [x_3, x_1]=x_5, [x_1, x_4]=x_5$.
\item[$\ca_{124}$] $[x_1, x_2]=x_3, [x_2, x_1]=-x_3+x_4, [x_3, x_1]=x_5, [x_2, x_3]=x_5=-[x_3, x_2], [x_1, x_4]=x_5$.
\item[$\ca_{125}$] $[x_1, x_2]=x_3, [x_2, x_1]=-x_3+ x_4, [x_2, x_2]=x_5, [x_3, x_1]=x_5, [x_2, x_3]=x_5=-[x_3, x_2], [x_1, x_4]=x_5$.
\item[$\ca_{126}(\alpha)$] $[x_1, x_2]=x_3, [x_2, x_1]=-x_3+x_4, [x_2, x_2]=x_4, [x_3, x_1]=x_5, [x_2, x_3]=\alpha x_5, [x_3, x_2]=(1-\alpha)x_5, [x_1, x_4]=x_5, \quad \alpha\in\cc\backslash\{0\}$.
\item[$\ca_{127}$] $[x_1, x_1]=x_4, [x_1, x_2]=x_3=-[x_2, x_1], [x_2, x_3]=x_5=-[x_3, x_2], [x_1, x_4]=x_5$.
\item[$\ca_{128}$] $[x_1, x_1]=x_4, [x_1, x_2]=x_3=-[x_2, x_1], [x_2, x_2]=x_5, [x_2, x_3]=x_5=-[x_3, x_2], [x_1, x_4]=x_5$.
\end{description}
\end{scriptsize}
\end{thm}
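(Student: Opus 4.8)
The plan is to proceed exactly as in the earlier theorems of this section: fix a basis adapted to the chain $A^3\subseteq Leib(A)\subseteq A^2$, write down the most general multiplication compatible with it, impose the Leibniz identity to pare down the structure constants, and then reduce to the eleven normal forms $\ca_{118}$--$\ca_{128}$ by an organised sequence of explicit base changes.

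First I would fix the ambient structure. Since $\dim(A^3)=1$ and $A$ is nilpotent, $A^4=[A,A^3]\subseteq A^3$ forces $A^4=0$, so $A^3\subseteq Z(A)$ and, as $\dim(Z(A))=1$, $A^3=Z(A)$. In a left Leibniz algebra the rearranged identity $[[x,y],z]=[x,[y,z]]-[y,[x,z]]$ gives $[[x,x],z]=0$, hence $[Leib(A),A]=0$, and also $[A^2,A]\subseteq A^3=[A,A^2]$, so $A^2/A^3$ is central in $A/A^3$. Passing to $A/Leib(A)$, which is a $3$-dimensional nilpotent Lie algebra with $1$-dimensional derived subalgebra $A^2/Leib(A)$, the quotient must be the Heisenberg algebra, whose derived subalgebra equals its centre; hence $[A,A^2]\subseteq Leib(A)$, i.e.\ $A^3\subseteq Leib(A)$. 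I may therefore choose $Z(A)=A^3={\rm span}\{e_5\}$, extend to a basis $\{e_4,e_5\}$ of $Leib(A)$, then to a basis $\{e_3,e_4,e_5\}$ of $A^2$, and pick $e_1,e_2$ completing a basis of $A$.

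With this basis the only possibly nonzero products are $[e_i,e_j]\in A^2$ for $i,j\in\{1,2\}$ --- with the $e_3$-coefficients of $[e_1,e_1]$ and $[e_2,e_2]$ vanishing and those of $[e_1,e_2],[e_2,e_1]$ opposite, because squares lie in $Leib(A)$ --- together with $[e_i,e_3],[e_i,e_4],[e_3,e_j],[e_3,e_3],[e_3,e_4]$, each in $A^3={\rm span}\{e_5\}$; every bracket with $e_4$ or $e_5$ first, or $e_5$ second, vanishes. Imposing the Leibniz identity on triples from $\{e_1,e_2,e_3\}$ produces a small system in the remaining constants, of the same kind as those in the earlier proofs (cf.\ the systems \eqref{eq312,3}, \eqref{eq313,1}): it expresses the coefficients of $[e_i,e_4]$ through the brackets among $e_1,e_2,e_3$ and forces $[e_3,e_3]$ and $[e_3,e_4]$ to vanish whenever the leading squares or the mixed $e_3$-terms are nonzero. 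I then read off from $\dim(Leib(A))=2$, $\dim(A^3)=1$, $\dim(Z(A))=1$ and non-splitness which constants cannot vanish: one of $[e_1,e_1],[e_2,e_2],[e_1,e_2]+[e_2,e_1]$ is $\notin A^3$ (so the image of the squaring map in $Leib(A)/A^3$ is all of it), and $A^3$ is hit by the action of $e_3$ or $e_4$ on ${\rm span}\{e_1,e_2\}$. After linear changes in $\{e_1,e_2\}$, in $e_3$, in $e_4$, and scalings, I normalise the $e_3$- and $e_4$-action (aiming at a single surviving relation like $[e_1,e_4]=e_5$, present in all eleven targets) and then clear the inessential parameters from $[e_1,e_2],[e_2,e_1],[e_1,e_1],[e_2,e_2]$. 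The case split is on: (i) whether the symmetric part $[e_1,e_2]+[e_2,e_1]$ is, modulo $A^3$, linearly independent of $[e_1,e_1]$ in $Leib(A)$ --- this isolates the branch $\ca_{127},\ca_{128}$; (ii) whether $[e_2,e_2]$ (resp.\ $[e_1,e_1]$) contributes a genuinely new direction of $Leib(A)$ or only a multiple of $e_5$; and (iii) the behaviour of $[e_3,e_1]$ versus $[e_3,e_2]$ and of $[e_2,e_3]$ versus $[e_3,e_2]$, which produces the ``antisymmetric'' members $\ca_{118},\ca_{119},\ca_{122},\dots,\ca_{125}$ and the parametrised families $\ca_{120}(\alpha),\ca_{121}(\alpha),\ca_{126}(\alpha)$, where $\alpha$ is the no-longer-normalisable ratio governing the $e_3$-action. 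In each terminal branch I exhibit an explicit $x_i=\sum_j c_{ij}e_j$ realising one of $\ca_{118}$--$\ca_{128}$, just as in the preceding proofs.

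The hard part will not be any single step but the combinatorics of the case tree together with producing the correct explicit base changes: one must check that every branch terminates at one of the eleven algebras, that the sub-branches forced by $\dim(Z(A))=1$ and non-splitness are non-empty (e.g.\ that the ``$e_3$-action trivial'' branch is indeed exhausted by $[e_1,e_4]=e_5$ alone), and that the parameter in $\ca_{120}(\alpha),\ca_{121}(\alpha),\ca_{126}(\alpha)$ is essential --- the latter settled by a scaling/invariant argument, with the isomorphism criteria for those families recorded in an accompanying remark.
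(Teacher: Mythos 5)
Your setup is sound and coincides with the paper's: you fix $A^3=Z(A)={\rm span}\{e_5\}$, nest it inside bases of $Leib(A)$ and $A^2$, write the general table with squares confined to $Leib(A)$ and all brackets involving $e_3$ or $e_4$ landing in ${\rm span}\{e_5\}$, impose the Leibniz identities (the paper's system (\ref{eq3121,1})), normalise the $e_4$-action to $[e_1,e_4]=e_5$, and then branch. In fact your observation that $A/Leib(A)$ must be Heisenberg, hence $A^3\subseteq Leib(A)$, supplies a justification that the paper silently assumes when it "extends" $\{e_5\}$ to a basis of $Leib(A)$; likewise $[Leib(A),A]=0$ and $[A^2,A]\subseteq A^3$ are exactly the facts underlying the paper's multiplication table. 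One small imprecision: the dichotomy you call "(i)" cannot literally be linear independence of $[e_1,e_2]+[e_2,e_1]$ and $[e_1,e_1]$ in $Leib(A)$ modulo $A^3$, since $Leib(A)/A^3$ is one-dimensional; in the paper the branch leading to $\ca_{127},\ca_{128}$ is instead the degenerate case $\alpha_4+\beta_1=0=\beta_3$ with $\alpha_1\neq0$, i.e.\ the case where the quadratic $\alpha_1x^2+(\alpha_4+\beta_1)x+\beta_3$ used to kill $[e_1,e_1]$ has no useful root.

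The genuine gap is that everything after the setup is deferred. The substance of this theorem is the case tree (the paper's Cases 1.1.1, 1.1.2, 1.2.1, 1.2.2, 2, each with several sub-branches) together with the explicit base changes realising $\ca_{118}$--$\ca_{128}$ and the verification that the auxiliary reductions (e.g.\ making $\theta_1=0$ by absorbing a multiple of $e_4$ into $e_2$, or using the secondary Leibniz identity $[y_1,[y_2,y_1]]=[[y_1,y_2],y_1]+[y_2,[y_1,y_1]]$ to force $\alpha_3\gamma_2=(\alpha_4+\beta_1)\gamma_6$ in Case 1.2) are legitimate. None of this is exhibited, and you acknowledge as much. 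As written, the proposal establishes the correct normal form for the multiplication table and the correct shape of the argument, but it does not prove that the list $\ca_{118}$--$\ca_{128}$ is exhaustive, nor that the parameters in $\ca_{120}(\alpha),\ca_{121}(\alpha),\ca_{126}(\alpha)$ are essential; to count as a proof the case analysis must actually be carried out.
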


\begin{proof} Let $A^3=Z(A)=\rm span\{e_5\}$. Extend this to bases $\{e_4, e_5\}$, $\{e_3, e_4, e_5\}$ of $Leib(A)$ and $A^2$, respectively. Then the nonzero products in $A=\rm span\{e_1, e_2, e_3, e_4, e_5\}$ are given by
\begin{align*}
[e_1, e_1]=\alpha_1e_4+ \alpha_2e_5, [e_1, e_2]=\alpha_3e_3+\alpha_4e_4+ \alpha_5e_5, [e_2, e_1]=-\alpha_3e_3+ \beta_1e_4+ \beta_2e_5, \\ [e_2, e_2]=\beta_3e_4+ \beta_4e_5, [e_1, e_3]=\gamma_1e_5, [e_3, e_1]=\gamma_2e_5, [e_2, e_3]=\gamma_3e_5, [e_3, e_2]=\gamma_4e_5, \\ [e_3, e_3]=\gamma_5e_5, [e_1, e_4]=\gamma_6e_5, [e_2, e_4]=\gamma_7e_5, [e_3, e_4]=\gamma_8e_5.
\end{align*} 
From the Leibniz identities we get the following equations:
\begin{equation} \label{eq3121,1}
\begin{cases}
\alpha_3(\gamma_1+ \gamma_2)+ \alpha_1\gamma_7-\beta_1\gamma_6=0 \qquad
\gamma_5=0 \\
\alpha_3(\gamma_3+ \gamma_4)+ \alpha_4\gamma_7-\beta_3\gamma_6=0 \qquad
\gamma_6=0
\end{cases}
\end{equation}
Without loss of generality, we can assume $\gamma_7=0$. This is because if $\gamma_7\neq0$ and $\gamma_6=0$(resp. $\gamma_6\neq0$) then with the base change $x_1=e_2, x_2=e_1, x_3=e_3, x_4=e_4, x_5=e_5$(resp. $x_1=e_1, x_2=\gamma_7e_1-\gamma_6e_2, x_3=e_3, x_4=e_4, x_5=e_5$) we can make $\gamma_7=0$. Then $\gamma_6\neq0$ since $\dim(Z(A))=1$. Note that if $\gamma_1\neq0$ then with the base change $x_1=e_1, x_2=e_2, x_3=\gamma_6e_3-\gamma_1e_4, x_4=e_4, x_5=e_5$ we can make $\gamma_1=0$. So let $\gamma_1=0$. 
\\ \indent {\bf Case 1:} Let $\alpha_1=0$. Then the products in $A$ are the following:
\begin{multline} \label{eq3121,3}
[e_1, e_1]=\alpha_2e_5, [e_1, e_2]=\alpha_3e_3+\alpha_4e_4+ \alpha_5e_5, [e_2, e_1]=-\alpha_3e_3+ \beta_1e_4+ \beta_2e_5, [e_2, e_2]=\beta_3e_4+ \beta_4e_5, \\ [e_3, e_1]=\gamma_2e_5, [e_2, e_3]=\gamma_3e_5, [e_3, e_2]=\gamma_4e_5, [e_1, e_4]=\gamma_6e_5.
\end{multline}
We can assume $\alpha_2=0$, because otherwise with the base change $x_1=\gamma_6e_1-\alpha_2e_4, x_2=e_2, x_3=e_3, x_4=e_4, x_5=e_5$ we can make $\alpha_2=0$. 
\\ \indent {\bf Case 1.1:} Let $\gamma_2=0$. Then by (\ref{eq3121,1}) we get $\beta_1=0$. Then we have the following products in $A$:
\begin{multline} \label{eq3121,4}
[e_1, e_2]=\alpha_3e_3+\alpha_4e_4+ \alpha_5e_5, [e_2, e_1]=-\alpha_3e_3+\beta_2e_5, [e_2, e_2]=\beta_3e_4+ \beta_4e_5, \\ [e_2, e_3]=\gamma_3e_5, [e_3, e_2]=\gamma_4e_5, [e_1, e_4]=\gamma_6e_5.
\end{multline}

\indent {\bf Case 1.1.1:} Let $\beta_3=0$. Then by (\ref{eq3121,1}) we get $\gamma_4=-\gamma_3$. Also we have $\alpha_4\neq0$ since $\dim(A^2)=3$. Note that $\gamma_3\neq0$ since $\dim(Z(A))=1$.
\begin{itemize}
\item If $\beta_4=0$ then the base change $x_1=-\alpha_3\gamma_3e_1, x_2=\alpha_4\gamma_6e_2, x_3=-\alpha_3\alpha_4\gamma_3\gamma_6(-\alpha_3e_3+\beta_2e_5), x_4=-\alpha_3\alpha_4\gamma_3\gamma_6[\alpha_4e_4+(\alpha_5+\beta_2)e_5], x_5=(\alpha_3\alpha_4\gamma_3\gamma_6)^2e_5$ shows that $A$ is isomorphic to $\ca_{118}$.
\item If $\beta_4\neq0$ then the base change $x_1=-\frac{\beta_4}{\alpha_3\gamma_3}e_1, x_2=\frac{\alpha_4\beta_4\gamma_6}{\alpha^2_3\gamma^2_3}e_2, x_3=-\frac{\alpha_4\beta^2_4\gamma_6}{\alpha^3_3\gamma^3_3}(-\alpha_3e_3+\beta_2e_5), x_4=-\frac{\alpha_4\beta^2_4\gamma_6}{\alpha^3_3\gamma^3_3}[\alpha_4e_4+(\alpha_5+\beta_2)e_5], x_5=\frac{\alpha^2_4\beta^3_4\gamma^2_6}{\alpha^4_3\gamma^4_3}e_5$ shows that $A$ is isomorphic to $\ca_{119}$.
\end{itemize}

\indent {\bf Case 1.1.2:} Let $\beta_3\neq0$. Let $\theta_1=\frac{(\alpha_5+\beta_2)\beta_3-\alpha_4\beta_4}{\beta^2_3\gamma_6}$ and $\theta_2=\frac{\alpha_3\gamma_3}{\beta_3\gamma_6}$. The base change $y_1=e_1, y_2=e_2, y_3=-\alpha_3e_3+\beta_2e_5, y_4=\beta_3e_4+\beta_4e_5, y_5=\beta_3\gamma_6e_5$ shows that $A$ is isomorphic to the following algebra:
\begin{align*}
[y_1, y_2]=-y_3+\frac{\alpha_4}{\beta_3}y_4+\theta_1y_5, [y_2, y_1]=y_3, [y_2, y_2]=y_4, [y_2, y_3]=-\theta_2y_5, [y_3, y_2]=(\theta_2-1)y_5, [y_1, y_4]=y_5.
\end{align*}
If $\theta_1\neq0$ then with the base change $x_1=y_1, x_2=y_2-\theta_1y_4, x_3=y_3, x_4=y_4, x_5=y_5$ we can make $\theta_1=0$. So we can assume $\theta_1=0$. 
 
\begin{itemize}
\item If $\alpha_4=0$ then the base change $x_1=y_1, x_2=y_2, x_3=y_3, x_4=y_4, x_5=y_5$ shows that $A$ is isomorphic to $\ca_{120}(\alpha)$.
\item If $\alpha_4\neq0$ then the base change $x_1=y_1, x_2=\frac{\alpha_4}{\beta_3}y_2, x_3=\frac{\alpha_4}{\beta_3}y_3, x_4=(\frac{\alpha_4}{\beta_3})^2y_4, x_5=(\frac{\alpha_4}{\beta_3})^2y_5$ shows that $A$ is isomorphic to $\ca_{121}(\alpha)$.
\end{itemize}

\indent {\bf Case 1.2:} Let $\gamma_2\neq0$.
\\ \indent {\bf Case 1.2.1:} Let $\beta_3=0$. Then by (\ref{eq3121,1}) we get $\gamma_4=-\gamma_3$. Also $\alpha_4+\beta_1\neq0$ since $\dim(A^2)=3$. The base change $y_1=e_1, y_2=e_2, y_3=\alpha_3e_3+\alpha_4e_4+ \alpha_5e_5, y_4=(\alpha_4+\beta_1)e_4+(\alpha_5+\beta_2)e_5, y_5=e_5$ shows that $A$ is isomorphic to the following algebra:
\begin{align*} 
[y_1, y_2]=y_3, [y_2, y_1]=-y_3+y_4, [y_2, y_2]=\beta_4y_5, [y_3, y_1]=\alpha_3\gamma_2y_5, [y_2, y_3]=\alpha_3\gamma_3y_5=-[y_3, y_2], \\ [y_1, y_4]=(\alpha_4+\beta_1)\gamma_6y_5.
\end{align*}
Then the Leibniz identity $[y_1, [y_2, y_1]]=[[y_1, y_2], y_1]+ [y_2, [y_1, y_1]]$ gives $\alpha_3\gamma_2=(\alpha_4+\beta_1)\gamma_6$.
\begin{itemize}
\item If $\gamma_3=0=\beta_4$ then the base change $x_1=y_1, x_2=y_2, x_3=y_3, x_4=y_4, x_5=(\alpha_4+\beta_1)\gamma_6y_5$ shows that $A$ is isomorphic to $\ca_{122}$. 
\item If $\gamma_3=0$ and $\beta_4\neq0$ then the base change $x_1=y_1, x_2=\frac{(\alpha_4+\beta_1)\gamma_6}{\beta_4}y_2, x_3=\frac{(\alpha_4+\beta_1)\gamma_6}{\beta_4}y_3, x_4=\frac{(\alpha_4+\beta_1)\gamma_6}{\beta_4}y_4, x_5=\frac{(\alpha_4+\beta_1)^2\gamma^2_6}{\beta_4}y_5$ shows that $A$ is isomorphic to $\ca_{123}$.
\item If $\gamma_3\neq0$ and $\beta_4=0$ then the base change $x_1=y_1, x_2=\frac{(\alpha_4+\beta_1)\gamma_6}{\alpha_3\gamma_3}y_2, x_3=\frac{(\alpha_4+\beta_1)\gamma_6}{\alpha_3\gamma_3}y_3, x_4=\frac{(\alpha_4+\beta_1)\gamma_6}{\alpha_3\gamma_3}y_4, x_5=\frac{(\alpha_4+\beta_1)^2\gamma^2_6}{\alpha_3\gamma_3}y_5$ shows that $A$ is isomorphic to $\ca_{124}$.
\item If $\gamma_3\neq0$ and $\beta_4\neq0$ then the base change $x_1=\frac{\beta_4}{\alpha_3\gamma_3}y_1, x_2=\frac{(\alpha_4+\beta_1)\beta_4\gamma_6}{(\alpha_3\gamma_3)^2}y_2, x_3=\frac{(\alpha_4+\beta_1)\beta^2_4\gamma_6}{(\alpha_3\gamma_3)^3}y_3, x_4=\frac{(\alpha_4+\beta_1)\beta^2_4\gamma_6}{(\alpha_3\gamma_3)^3}y_4, x_5=\frac{(\alpha_4+\beta_1)^2\beta^3_4\gamma^2_6}{(\alpha_3\gamma_3)^4}y_5$ shows that $A$ is isomorphic to $\ca_{125}$. 
\end{itemize}

\indent {\bf Case 1.2.2:} Let $\beta_3\neq0$. Then the base change $y_1=e_1, y_2=e_2, y_3=\alpha_3e_3+\alpha_4e_4+ \alpha_5e_5, y_4=\beta_3e_4+\beta_4e_5, y_5=e_5$ shows that $A$ is isomorphic to the following algebra:
\begin{align*} 
[y_1, y_2]=y_3, [y_2, y_1]=-y_3+\frac{\alpha_4+\beta_1}{\beta_3}y_4+\frac{(\alpha_5+\beta_2)\beta_3-(\alpha_4+\beta_1)\beta_4}{\beta_3}y_5, [y_2, y_2]=y_4, \\ [y_3, y_1]=\alpha_3\gamma_2y_5, [y_2, y_3]=\alpha_3\gamma_3y_5, [y_3, y_2]=\alpha_3\gamma_4y_5, [y_1, y_4]=\beta_3\gamma_6y_5.
\end{align*}
Then the Leibniz identity $[y_1, [y_2, y_1]]=[[y_1, y_2], y_1]+ [y_2, [y_1, y_1]]$ gives $\alpha_3\gamma_2=(\alpha_4+\beta_1)\gamma_6$. This implies that $\alpha_4+\beta_1\neq0$. 
\begin{itemize}
\item If $\gamma_3=0$ then the Leibniz identity $[y_2, [y_1, y_2]]=[[y_2, y_1], y_2]+ [y_1, [y_2, y_2]]$ gives the equation $\alpha_3\gamma_4=\beta_3\gamma_6$, and so $\gamma_4\neq0$. Then the base change $x_1=\gamma_4y_1-\gamma_2y_2+\frac{((\alpha_5+\beta_2)\beta_3-(\alpha_4+\beta_1)\beta_4)\gamma_2}{\beta^2_3\gamma_6}y_4, x_2=y_2, x_3=y_3, x_4=y_4, x_5=y_5$ shows that $A$ is isomorphic to an algebra with the nonzero products given by (\ref{eq3121,4}). Hence $A$ is isomorphic to $\ca_{118}, \ca_{119}, \ca_{120}(\alpha)$ or $\ca_{121}(\alpha)$. 
\item If $\gamma_3\neq0$ then w.s.c.o.b. $A$ is isomorphic to $\ca_{126}(\alpha)$.
\end{itemize}

\indent {\bf Case 2:} Let $\alpha_1\neq0$. First suppose $(\alpha_4+\beta_1, \beta_3)\neq(0, 0)$. Take $x\in\cc$ such that $\alpha_1x^2+(\alpha_4+\beta_1)x+\beta_3=0$. Then if $\gamma_3=0$(resp. $\gamma_3\neq0$) the base change $x_1=xe_1+e_2, x_2=e_2, x_3=e_3, x_4=e_4, x_5=e_5$(resp. $x_1=xe_1+e_2, x_2=e_2, x_3=-\frac{\gamma_6}{\gamma_3}e_3+\frac{1}{x}e_4, x_4=e_4, x_5=e_5$) shows that $A$ is isomorphic to an algebra with the nonzero products given by (\ref{eq3121,3}). Hence $A$ is isomorphic to $\ca_{118}, \ca_{119}, \ldots, \ca_{125}$ or $\ca_{126}(\alpha)$. Now suppose $\alpha_4+\beta_1=0=\beta_3$. Then by (\ref{eq3121,1}) we have $\gamma_4=-\gamma_3$. The base change $y_1=e_1, y_2=e_2, y_3=\alpha_3e_3+\alpha_4e_4+ \alpha_5e_5, y_4=\alpha_1e_4+\alpha_2e_5, y_5=e_5$ shows that $A$ is isomorphic to the following algebra:
\begin{align*} 
[y_1, y_1]=y_4, [y_1, y_2]=y_3, [y_2, y_1]=-y_3+(\alpha_5+\beta_2)y_5, [y_2, y_2]=\beta_4y_5, \\ [y_3, y_1]=\alpha_3\gamma_2y_5, [y_2, y_3]=\alpha_3\gamma_3y_5=-[y_3, y_2], [y_1, y_4]=\alpha_1\gamma_6y_5.
\end{align*}
Note that from the Leibniz identity $[y_1, [y_2, y_1]]=[[y_1, y_2], y_1]+ [y_2, [y_1, y_1]]$ we get $\gamma_2=0$. So $\gamma_3\neq0$ since $\dim(Z(A))=1$. 
\begin{itemize}
\item If $\beta_4=0$ then w.s.c.o.b. $A$ is isomorphic to $\ca_{127}$.
\item If $\beta_4\neq0$ then w.s.c.o.b. $A$ is isomorphic to $\ca_{128}$.
\end{itemize}
\end{proof}

\begin{rmk}
\
\begin{scriptsize}
\begin{enumerate}
\item If $\alpha_1, \alpha_2\in\cc$ such that $\alpha_1\neq\alpha_2$, then $\ca_{120}(\alpha_1)$ and $\ca_{120}(\alpha_2)$ are not isomorphic.
\item If $\alpha_1, \alpha_2\in\cc$ such that $\alpha_1\neq\alpha_2$, then $\ca_{121}(\alpha_1)$ and $\ca_{121}(\alpha_2)$ are not isomorphic.
\item If $\alpha_1, \alpha_2\in\cc\backslash\{0\}$ such that $\alpha_1\neq\alpha_2$, then $\ca_{126}(\alpha_1)$ and $\ca_{126}(\alpha_2)$ are not isomorphic.
\end{enumerate}
\end{scriptsize}
\end{rmk}

\begin{thm} Let $A$ be a $5-$dimensional non-split non-Lie nilpotent Leibniz algebra with $\dim(A^2)=3=\dim(Leib(A))$ and $\dim(A^3)=1=\dim(Z(A))$. Then $A$ is isomorphic to a Leibniz algebra spanned by $\{x_1, x_2, x_3, x_4, x_5\}$ with the nonzero products given by one of  the following:
\begin{scriptsize}
\begin{description}
\item[$\ca_{129}(\alpha)$] $[x_1, x_1]=x_4, [x_1, x_2]=\alpha x_4, [x_2, x_1]=x_3, [x_2, x_3]=x_5, [x_1, x_4]=x_5, \quad \alpha\in\cc$.
\item[$\ca_{130}$] $[x_1, x_1]=x_4, [x_1, x_2]=-x_4, [x_2, x_1]=-x_3, [x_2, x_2]=x_3, [x_2, x_3]=x_5, [x_1, x_4]=x_5$.
\item[$\ca_{131}(\alpha, \beta)$] $[x_1, x_1]=x_4, [x_1, x_2]=\alpha x_4, [x_2, x_1]=\beta x_3, [x_2, x_2]=x_3, [x_2, x_3]=x_5, [x_1, x_4]=x_5, \quad \alpha, \beta\in\cc, \alpha\beta\neq1$.
\item[$\ca_{132}(\alpha)$] $[x_1, x_2]=x_3, [x_2, x_1]=\alpha x_3, [x_2, x_2]=x_4, [x_2, x_3]=x_5, [x_1, x_4]=x_5, \quad \alpha\in\cc\backslash\{-1, 0\}$.
\item[$\ca_{133}(\alpha, \beta)$] $[x_1, x_1]=x_4, [x_1, x_2]=x_3+\alpha x_4, [x_2, x_1]=\beta x_3, [x_2, x_2]=x_4, [x_2, x_3]=x_5, [x_1, x_4]=x_5, \quad \alpha\in\cc, \beta\in\cc\backslash\{-1\}$.
\item[$\ca_{134}(\alpha, \beta, \gamma)$] $[x_1, x_1]=\alpha x_4, [x_1, x_2]=x_3+\beta x_4, [x_2, x_1]=\gamma x_3, [x_2, x_2]=x_4, [x_2, x_3]=x_5, [x_1, x_4]=x_5, \quad \alpha, \beta, \gamma\in\cc$.
\item[$\ca_{135}(\alpha, \beta)$] $[x_1, x_1]=x_3+\alpha x_4, [x_1, x_2]=x_3+\beta x_4, [x_2, x_1]=-x_3+x_4, [x_2, x_2]=x_4, [x_2, x_3]=x_5, [x_1, x_4]=x_5, \quad \alpha, \beta\in\cc$.
\end{description}
\end{scriptsize}
\end{thm}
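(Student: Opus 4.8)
The plan is to follow the strategy of the preceding theorems: fix a basis adapted to the ideal filtration, reduce the multiplication table using the Leibniz identity, and then run a case analysis steered by changes of basis. Since $A^3\subseteq Z(A)$ and both are $1$-dimensional, $A^3=Z(A)$; write $A^3=Z(A)=\mathrm{span}\{e_5\}$. Because $Leib(A)=A^2$ is $3$-dimensional, extend $\{e_5\}$ to a basis $\{e_3,e_4,e_5\}$ of $A^2=Leib(A)$ and then to a basis $\{e_1,e_2,e_3,e_4,e_5\}$ of $A$. Using $[A^2,A^2]\subseteq A^4=0$ and $e_5\in Z(A)$, the only possibly nonzero products are the four products $[e_i,e_j]$ with $i,j\in\{1,2\}$ (which lie in $A^2$) together with the eight products $[e_i,e_3],[e_3,e_i],[e_i,e_4],[e_4,e_i]$ with $i\in\{1,2\}$ (each a scalar multiple of $e_5$). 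I would record this twelve-parameter table and impose the Leibniz identity on all triples; the nontrivial ones are those in which some entry is $e_1$ or $e_2$ and the inner product $[e_j,e_k]$ lands in $A^2$, yielding a small linear system that ties the $e_5$-coefficients of the "depth-$1$" products to those of the "depth-$2$" products.

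Next come two structural normalizations. Since $A^3=[A,A^2]\neq0$ and $[A,A^2]=\mathrm{span}\{[e_1,e_3],[e_1,e_4],[e_2,e_3],[e_2,e_4]\}$, at least one of these four left products is nonzero; acting by $GL_2$ on $\mathrm{span}\{e_1,e_2\}$ (including reindexing), by $GL_2$ on $\mathrm{span}\{e_3,e_4\}$ modulo $e_5$, by shears $e_j\mapsto e_j+c\,e_5$ and $e_i\mapsto e_i+(\text{element of }A^2)$, and by rescaling $e_5$, I would arrange the distinguished product to be $[e_1,e_4]=e_5$ (matching the common feature $[x_1,x_4]=x_5$ of $\ca_{129}$–$\ca_{135}$), and where possible also $[e_2,e_3]=e_5$. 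The second normalization uses $\dim Leib(A)=3$: since $Leib(A)\subseteq\mathrm{span}\{[e_1,e_1],\,[e_1,e_2]+[e_2,e_1],\,[e_2,e_2],\,e_5\}$, equality with $A^2$ forces the three symmetric products to span $A^2$ modulo $A^3$, i.e.\ the quadratic map $v\mapsto[v,v]$ from $\mathrm{span}\{e_1,e_2\}$ to $A^2/A^3$ has image spanning the target. This is essentially the classification of a triple of vectors in a $2$-dimensional space (a pencil of binary quadratic forms), and it is what produces the seven families: one distinguishes cases by the rank configuration of $\{[e_1,e_1],[e_2,e_2]\}$ modulo $A^3$, the position of $[e_1,e_2]+[e_2,e_1]$ relative to them, and the antisymmetric part $[e_1,e_2]-[e_2,e_1]$.

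In each case I would solve the Leibniz relations for the remaining constants, use the residual freedom (scalings preserving the fixed normalizations, further shears, and replacements $e_i\mapsto e_i+A^2$) to eliminate or rescale the surviving parameters, and read off the representative as one of $\ca_{129}(\alpha),\dots,\ca_{135}(\alpha,\beta,\gamma)$; finally I would verify the Leibniz identity on each listed algebra and check that it is non-split, non-Lie, nilpotent with the stated invariants. The main obstacle is the bookkeeping of this case analysis: keeping the normalization $[e_1,e_4]=e_5$ (and, when present, $[e_2,e_3]=e_5$) stable throughout sharply restricts the admissible changes of basis, so the usual "scale everything away" moves are only partially available, and several genuine parameters ($\alpha$ in $\ca_{129},\ca_{132}$; $(\alpha,\beta)$ in $\ca_{131},\ca_{133},\ca_{135}$; $(\alpha,\beta,\gamma)$ in $\ca_{134}$) survive and must be tracked through the reductions. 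I expect no conceptual difficulty beyond this, and the isomorphism relations among the parametrized families would be treated separately in an accompanying remark.
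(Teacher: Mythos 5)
Your plan coincides with the paper's proof in all essentials: it fixes a basis adapted to $A^3=Z(A)\subset Leib(A)=A^2$, imposes the Leibniz identity, normalizes to $[x_2,x_3]=x_5=[x_1,x_4]$ (both normalizations are in fact always available here, since $\dim Z(A)=1$ forces the relevant structure constants $\gamma_2,\gamma_3$ to be nonzero), and then sorts the algebras by the images of $[e_1,e_1]$, $[e_1,e_2]+[e_2,e_1]$, $[e_2,e_2]$ in $A^2/A^3$, which is exactly how the paper's Cases 1 and 2 and their subcases are organized. The one simplification you overlook is that the products $[e_3,e_i]$ and $[e_4,e_i]$ vanish outright because $Leib(A)$ annihilates $A$ on the left (polarize $[[a,a],b]=0$), so those extra parameters never enter the table; beyond that, what remains is the explicit case-by-case reduction that you correctly outline but do not carry out.
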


\begin{proof} Let $A^3=Z(A)=\rm span\{e_5\}$. Extend this to a basis $\{e_3, e_4, e_5\}$ of $Leib(A)=A^2$. Then the nonzero products in  $A=\rm span\{e_1, e_2, e_3, e_4, e_5\}$ are given by
\begin{align*}
[e_1, e_1]=\alpha_1e_3+ \alpha_2e_4+ \alpha_3e_5, [e_1, e_2]=\alpha_4e_3+\alpha_5e_4+ \alpha_6e_5, [e_2, e_1]=\beta_1e_3+ \beta_2e_4+ \beta_3e_5, \\ [e_2, e_2]=\beta_4e_3+ \beta_5e_4+ \beta_6e_5, [e_1, e_3]=\gamma_1e_5, [e_2, e_3]=\gamma_2e_5, [e_1, e_4]=\gamma_3e_5, [e_2, e_4]=\gamma_4e_5.
\end{align*} 
From the Leibniz identities we get the following equations:
\begin{equation} \label{eq3131,1}
\begin{cases}
\beta_1\gamma_1+\beta_2\gamma_3=\alpha_1\gamma_2+\alpha_2\gamma_4 \\
\beta_4\gamma_1+\beta_5\gamma_3=\alpha_4\gamma_2+\alpha_5\gamma_4
\end{cases}
\end{equation}

Note that if $\gamma_4\neq0$ and $\gamma_3=0$(resp. $\gamma_3\neq0$) then with the base change $x_1=e_2, x_2=e_1, x_3=e_3, x_4=e_4, x_5=e_5$(resp. $x_1=e_1, x_2=\gamma_4e_1-\gamma_3e_2, x_3=e_3, x_4=e_4, x_5=e_5$) we can make $\gamma_4=0$. So we can assume $\gamma_4=0$. Then $\gamma_2, \gamma_3\neq0$ since $\dim(Z(A))=1$. We can assume $\alpha_3=0$, because if $\alpha_3\neq0$ then with the base change $x_1=\gamma_3e_1-\alpha_3e_4, x_2=e_2, x_3=e_3, x_4=e_4, x_5=e_5$ we can make $\alpha_3=0$. If $\alpha_6\neq0$ then with the base change $x_1=e_1, x_2=\gamma_3e_2-\alpha_6e_4, x_3=e_3, x_4=e_4, x_5=e_5$ we can make $\alpha_6=0$. So let $\alpha_6=0$. Furthermore we can assume $\beta_3=0$, because if $\beta_3\neq0$ then with the base change $x_1=\gamma_2e_1-\beta_3e_3, x_2=e_2, x_3=e_3, x_4=e_4, x_5=e_5$ we can make $\beta_3=0$. If $\beta_6\neq0$ then with the base change $x_1=e_1, x_2=\gamma_2e_2-\beta_6e_3, x_3=e_3, x_4=e_4, x_5=e_5$ we can make $\beta_6=0$. So we can assume $\beta_6=0$. 

\indent {\bf Case 1:} Let $\alpha_1=0$. Then we have the following products in $A$:
\begin{multline} \label{eq3131,3}
[e_1, e_1]=\alpha_2e_4, [e_1, e_2]=\alpha_4e_3+\alpha_5e_4, [e_2, e_1]=\beta_1e_3+ \beta_2e_4, [e_2, e_2]=\beta_4e_3+ \beta_5e_4, \\ [e_1, e_3]=\gamma_1e_5, [e_2, e_3]=\gamma_2e_5, [e_1, e_4]=\gamma_3e_5.
\end{multline}
Note that if $\gamma_1\neq0$ then with the base change $x_1=e_1, x_2=e_2, x_3=\gamma_3e_3-\gamma_1e_4, x_4=e_4, x_5=e_5$ we can make $\gamma_1=0$. So let $\gamma_1=0$. Then by (\ref{eq3131,1}) we have $\beta_2=0$. 

\indent {\bf Case 1.1:} Let $\alpha_4=0$. Then by (\ref{eq3131,1}) we have $\beta_5=0$. 

\indent {\bf Case 1.1.1:} Let $\beta_4=0$. Then $\beta_1\neq0$ since $\dim(A^2)=3$. Also $\alpha_2\neq0$ since $\dim(Leib(A))=3$. Then the products in $A$ are given by
\begin{equation} \label{eq3131,4}
[e_1, e_1]=\alpha_2e_4, [e_1, e_2]=\alpha_5e_4, [e_2, e_1]=\beta_1e_3, [e_2, e_3]=\gamma_2e_5, [e_1, e_4]=\gamma_3e_5.
\end{equation}

The base change $x_1=e_1, x_2=\sqrt{\frac{\alpha_2\gamma_3}{\beta_1\gamma_2}}e_2, x_3=\sqrt{\frac{\alpha_2\beta_1\gamma_3}{\gamma_2}}e_3, x_4=\alpha_2e_4, x_5=\alpha_2\gamma_3e_5$ shows that $A$ is isomorphic to $\ca_{129}(\alpha)$.
\\ \indent {\bf Case 1.1.2:} Let $\beta_4\neq0$. If $\alpha_2=0$ then the base change $x_1=e_2, x_2=e_1, x_3=e_4, x_4=e_3, x_5=e_5$ shows that $A$ is isomorphic to an algebra with the nonzero products given by (\ref{eq3131,4}). Hence $A$ is isomorphic to $\ca_{129}(\alpha)$. Now let $\alpha_2\neq0$. Take $\theta=(\frac{\alpha_2\gamma_3}{\beta_4\gamma_2})^{1/3}$. The base change $y_1=e_1, y_2=\theta e_2, y_3=\beta_4\theta^2e_3, y_4=\alpha_2e_4, y_5=\alpha_2\gamma_3e_5$ shows that $A$ is isomorphic to the following algebra:
\begin{align*}
[y_1, y_1]=y_4, [y_1, y_2]=\frac{\alpha_5\theta}{\alpha_2}y_4, [y_2, y_1]=\frac{\beta_1}{\beta_4\theta}y_3, [y_2, y_2]=y_3, [y_2, y_3]=y_5, [y_1, y_4]=y_5.
\end{align*}

\begin{itemize}
\item If $\alpha_5\beta_1=\alpha_2\beta_4$ and $(\frac{\alpha_5\theta}{\alpha_2})^3+1=0$ then w.s.c.o.b. $A$ is isomorphic to $\ca_{130}$.
\item If $\alpha_5\beta_1=\alpha_2\beta_4$ and $(\frac{\alpha_5\theta}{\alpha_2})^3+1\neq0$ then w.s.c.o.b. $A$ is isomorphic to $\ca_{129}(0)$.
\item If $\alpha_5\beta_1\neq\alpha_2\beta_4$ then w.s.c.o.b. $A$ is isomorphic to $\ca_{131}(\alpha, \beta)$.
\end{itemize}

\indent {\bf Case 1.2:} Let $\alpha_4\neq0$. Then by (\ref{eq3131,1}) we have $\beta_5=\frac{\alpha_4\gamma_2}{\gamma_3}$. The base change $y_1=e_1, y_2=e_2, y_3=\alpha_4e_3, y_4=\beta_5e_4, y_5=\beta_5\gamma_3e_5$ shows that $A$ is isomorphic to the following algebra:
\begin{align*}
[y_1, y_1]=\frac{\alpha_2}{\beta_5}y_4, [y_1, y_2]=y_3+\frac{\alpha_5}{\beta_5}y_4, [y_2, y_1]=\frac{\beta_1}{\alpha_4}y_3, [y_2, y_2]=\frac{\beta_4}{\alpha_4}y_3+y_4, [y_2, y_3]=y_5, [y_1, y_4]=y_5. 
\end{align*}
Note that if $\beta_4=0$ then $\alpha_4+\beta_1\neq0$ since $\dim(Leib(A))=3$.
\begin{itemize}
\item If $\beta_4=0, \alpha_2=0, \alpha_5=0$ and $\frac{\beta_1}{\alpha_4}=0$ then w.s.c.o.b. $A$ is isomorphic to $\ca_{130}$.
\item If $\beta_4=0, \alpha_2=0, \alpha_5=0$ and $\frac{\beta_1}{\alpha_4}\in\cc\backslash\{-1, 0\}$ then w.s.c.o.b. $A$ is isomorphic to $\ca_{132}(\alpha)$.
\item If $\beta_4=0, \alpha_2=0$ and $\alpha_5\neq0$ then w.s.c.o.b. $A$ is isomorphic to $\ca_{129}(\alpha)$.
\item If $\beta_4=0$ and $\alpha_2\neq0$ then w.s.c.o.b. $A$ is isomorphic to $\ca_{133}(\alpha, \beta)$.
\item If $\beta_4\neq0$ then w.s.c.o.b. $A$ is isomorphic to $\ca_{134}(\alpha, \beta, \gamma)$.
\end{itemize}

\indent {\bf Case 2:} Let $\alpha_1\neq0$. If $(\alpha_4+\beta_1, \beta_4)\neq(0, 0)$ then the base change $x_1=xe_1+e_2, x_2=e_2, x_3=e_3, x_4=e_4, x_5=e_5$ (where $\alpha_1x^2+ (\alpha_4+\beta_1)x+\beta_4=0$) shows that $A$ is isomorphic to an algebra with the nonzero products given by (\ref{eq3131,3}). Hence $A$ is isomorphic to $\ca_{129}(\alpha), \ca_{130}, \ldots, \ca_{133}(\alpha, \beta)$ or $\ca_{134}(\alpha, \beta, \gamma)$. Then we can assume $\alpha_4+\beta_1=0=\beta_4$. Note that if $\gamma_1\neq0$ then with the base change $x_1=e_1, x_2=e_2, x_3=\gamma_3e_3-\gamma_1e_4, x_4=e_4, x_5=e_5$ we can make $\gamma_1=0$. So let $\gamma_1=0$. 

\indent {\bf Case 2.1:} Let $\alpha_4=0$. Then by (\ref{eq3131,1}) we have $\beta_5=0$. Note that if $\alpha_2=0$ then $\alpha_5+\beta_2\neq0$ since $\dim(Leib(A))=3$.

\begin{itemize}
\item If $\frac{\alpha_5}{\beta_2}=0$ then w.s.c.o.b. $A$ is isomorphic to $\ca_{130}$.
\item If $\frac{\alpha_5}{\beta_2}\in\cc\backslash\{-1, 0\}$ then w.s.c.o.b. $A$ is isomorphic to $\ca_{132}(\alpha)$.
\end{itemize}

\indent {\bf Case 2.2:} Let $\alpha_4\neq0$. Then by (\ref{eq3131,1}) we get $\beta_2=\frac{\alpha_1\gamma_2}{\gamma_3}$ and $\beta_5=\frac{\alpha_4\gamma_2}{\gamma_3}$. Then w.s.c.o.b. $A$ is isomorphic to $\ca_{135}(\alpha, \beta)$.
\end{proof}

\begin{rmk}
\
\begin{scriptsize}
\begin{enumerate}
\item If $\alpha_1, \alpha_2\in\cc$ such that $\alpha_1\neq\alpha_2$, then $\ca_{129}(\alpha_1)$ and $\ca_{129}(\alpha_2)$ are isomorphic if and only if $\alpha_2=-\alpha_1$.
\item If $\alpha_1, \alpha_2\in\cc\backslash\{-1, 0\}$ such that $\alpha_1\neq\alpha_2$, then $\ca_{132}(\alpha_1)$ and $\ca_{132}(\alpha_2)$ are not isomorphic.
\item Isomorphism conditions for the families  $\ca_{131}(\alpha, \beta), \ca_{133}(\alpha, \beta), \ca_{134}(\alpha, \beta, \gamma)$ and $\ca_{135}(\alpha, \beta)$ are hard to compute.
\end{enumerate}
\end{scriptsize}
\end{rmk}

Let $\dim(A^2)=3$ and $A^3=0$. Then we have $A^2=Z(A)$. Then since $Leib(A)\subseteq A^2$ we have $2\le \dim(Leib(A))\le3$.

\begin{thm} Let $A$ be a $5-$dimensional non-split non-Lie nilpotent Leibniz algebra with $\dim(A^2)=3$, $\dim(A^3)=0$ and $\dim(Leib(A))=2$. Then $A$ is isomorphic to a Leibniz algebra spanned by $\{x_1, x_2, x_3, x_4, x_5\}$ with the nonzero products given by one of  the following:
\begin{scriptsize}
\begin{description}
\item[$\ca_{136}$] $[x_1, x_1]=x_4, [x_1, x_2]=x_3, [x_2, x_1]=-x_3+ x_4+ x_5, [x_2, x_2]=x_5$.
\item[$\ca_{137}$] $[x_1, x_1]=x_4, [x_1, x_2]=x_3=-[x_2, x_1], [x_2, x_2]=x_5$.
\end{description}
\end{scriptsize}
\end{thm}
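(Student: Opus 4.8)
The plan is to exploit that $A^3=0$ makes all Leibniz identities vacuous, so that the classification reduces to a purely linear normalization which is exactly the classification of pencils of binary quadratic forms over $\cc$.

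First I record the skeleton. From $A^3=[A,A^2]=0$ we get $A^2\subseteq Z(A)$, and since $A$ is non-split $Z(A)\subseteq A^2$; hence $A^2=Z(A)$, which is $3$-dimensional. Put $Leib(A)=\mathrm{span}\{e_4,e_5\}$, extend to a basis $\{e_3,e_4,e_5\}$ of $A^2$, and pick $e_1,e_2$ spanning a complement $V$ of $A^2$. As $e_3,e_4,e_5$ are central, the only possibly nonzero brackets are among $e_1,e_2$ and they lie in $A^2$; moreover $[e_i,e_i]\in Leib(A)$ and $[e_1,e_2]+[e_2,e_1]\in Leib(A)$, so
\begin{align*}
[e_1,e_1]&=\alpha_1e_4+\alpha_2e_5, & [e_1,e_2]&=\alpha_3e_3+\alpha_4e_4+\alpha_5e_5,\\
[e_2,e_1]&=-\alpha_3e_3+\beta_1e_4+\beta_2e_5, & [e_2,e_2]&=\beta_3e_4+\beta_4e_5,
\end{align*}
with $\alpha_3\neq0$ (forced by $\dim A^2=3$). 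Now every Leibniz identity reads $0=0$: for $x,y,z\in A$ one has $[x,[y,z]]\in[A,A^2]=0$, while $[[x,y],z]=[y,[x,z]]=0$ because $[x,y],[x,z]\in A^2\subseteq Z(A)$. So nothing is imposed, and what remains is base change.

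Next I normalize. Replacing $e_3$ by $\alpha_3e_3+\alpha_4e_4+\alpha_5e_5$ gives $[e_1,e_2]=e_3$, $[e_2,e_1]=-e_3+\gamma_1e_4+\gamma_2e_5$; then replacing $e_3$ by $e_3-\tfrac12(\gamma_1e_4+\gamma_2e_5)$ makes $[e_1,e_2]$ and $[e_2,e_1]$ share the same $Leib(A)$-part $s$. The algebra is then encoded by the symmetric bilinear map $q\colon V\times V\to Leib(A)$ with $q(e_1,e_1)=[e_1,e_1]$, $q(e_2,e_2)=[e_2,e_2]$, $q(e_1,e_2)=s$, together with the alternating form on $V$ recording the $e_3$-coefficient (nondegenerate because $\alpha_3\neq0$, hence symplectic, hence unique up to scalar and harmlessly rescaled along with $e_3$ under any $GL(V)$-change of $e_1,e_2$). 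The residual freedom is $GL(V)$ acting on $q$ by congruence and $GL_2$ acting on $Leib(A)=\mathrm{span}\{e_4,e_5\}$; the hypothesis $\dim Leib(A)=2$ says exactly that the two coordinate quadratic forms of $q$ are linearly independent.

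Encoding $q$ by a pair $(M_4,M_5)$ of symmetric $2\times2$ matrices, we must classify linearly independent pencils of binary quadratic forms up to $GL_2\times GL_2$. The discriminant $\det(\lambda M_4+\mu M_5)$ is a nonzero binary quadratic in $(\lambda,\mu)$ — it cannot vanish identically since the pencil, a line in $\pp(\mathrm{Sym}^2V^*)$, is not contained in the smooth discriminant conic — so over $\cc$ it has either two distinct roots or one double root. In the first case the two singular members of the pencil have rank $1$, hence are $uu^{\top}$ and $vv^{\top}$ with $\{u,v\}$ a basis of $V$; diagonalizing by $GL(V)$ and reading off the brackets yields precisely $\ca_{137}$. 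In the second case there is a unique singular member, of rank $1$; normalizing it and a complementary nonsingular member gives $M_4=\left(\begin{smallmatrix}0&1\\1&0\end{smallmatrix}\right)$, $M_5=\left(\begin{smallmatrix}0&0\\0&1\end{smallmatrix}\right)$, and the base change $x_1=e_1+e_2$, $x_2=e_2$, $x_3=e_3+e_4+e_5$, $x_4=2e_4+e_5$, $x_5=e_5$ identifies the algebra with $\ca_{136}$. Non-isomorphism is immediate: the quadratic map $x\mapsto[x,x]$ on $A/A^2$ (with image spanning $Leib(A)$) is an isomorphism invariant, hence so is the associated pencil, and its number of singular members ($2$ versus $1$) separates $\ca_{137}$ from $\ca_{136}$. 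I expect the only real work to be the two explicit pencil normalizations and, especially, recognizing that the given presentation of $\ca_{136}$ is the double-root case — which only becomes visible after the symmetrizing change of $e_3$; beyond that, everything is forced because the Leibniz identities contribute nothing.
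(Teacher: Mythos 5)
Your proof is correct, but it is organized quite differently from the paper's, so a comparison is worth recording. The paper first normalizes two square elements directly, choosing $e_1,e_2$ with $[e_1,e_1]=e_4$, $[e_2,e_2]=e_5$, and then splits on the single scalar $\theta=(\alpha_2+\beta_1)(\alpha_3+\beta_2)-1$, producing $\ca_{136}$ when $\theta=0$ and $\ca_{137}$ when $\theta\neq0$ via explicit base changes (the last of which, involving $(-\theta^3)^{5/8}$ and friends, is genuinely unpleasant). You instead package the invariant data as the pencil $(M_4,M_5)$ of binary quadratic forms attached to $x\mapsto[x,x]$ on $A/A^2$, observe that the symplectic ($e_3$-) component carries no information, and split on whether $\det(\lambda M_4+\mu M_5)$ has two distinct roots or a double root. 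These are literally the same dichotomy: in the paper's normalization one computes $\det(\lambda M_4+\mu M_5)=-\tfrac{(\alpha_2+\beta_1)^2}{4}\lambda^2+\bigl(1-\tfrac{(\alpha_2+\beta_1)(\alpha_3+\beta_2)}{2}\bigr)\lambda\mu-\tfrac{(\alpha_3+\beta_2)^2}{4}\mu^2$, whose discriminant is exactly $-\theta$. What your route buys is (i) a conceptual reason that there are precisely two classes (the two $GL(V)\times GL_2$-orbits of nondegenerate pencils of binary quadrics over $\cc$), (ii) much lighter normal-form computations, since the rank-one normalizations $uu^{\top},vv^{\top}$ versus $\left(\begin{smallmatrix}0&1\\1&0\end{smallmatrix}\right),\left(\begin{smallmatrix}0&0\\0&1\end{smallmatrix}\right)$ replace the paper's heavy base changes (I checked your explicit change of basis into $\ca_{136}$; it is correct), and (iii) a clean non-isomorphism argument via the number of singular members of the pencil, which the paper leaves implicit. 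The only places where you are terse are routine: that $\dim Leib(A)=2$ is equivalent to linear independence of $M_4,M_5$, and that the stated rank-one normal forms are actually reachable in each case (in the double-root case one must note that after putting the unique singular member into the form $\left(\begin{smallmatrix}0&0\\0&1\end{smallmatrix}\right)$ and clearing the $(2,2)$-entry of the complementary member, its $(1,1)$-entry is forced to vanish precisely because there is no second singular member); both are standard and your sketches suffice.
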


\begin{proof} Let $Leib(A)=\rm span\{e_4, e_5\}$. Extend this to a basis $\{e_3, e_4, e_5\}$ for $A^2=Z(A)$. Choose $e_1, e_2\in A$ such that $[e_1, e_1]=e_4, [e_2, e_2]=e_5$. Then the nonzero products in $A=\rm span\{e_1, e_2, e_3, e_4, e_5\}$ are given by
\begin{align*}
[e_1, e_1]=e_4, [e_1, e_2]=\alpha_1e_3+ \alpha_2e_4+ \alpha_3e_5, [e_2, e_1]=-\alpha_1e_3+ \beta_1e_4+ \beta_2 e_5, [e_2, e_2]=e_5.
\end{align*}
Take $\theta=(\alpha_2+\beta_1)(\alpha_3+\beta_2)-1$.
\begin{itemize}
\item If $\theta=0$ then the base change $x_1=(\alpha_2+\beta_1)e_1, x_2=e_2, x_3=(\alpha_2+\beta_1)(\alpha_1e_3+\alpha_2e_4+\alpha_3e_5), x_4=(\alpha_2+\beta_1)^2e_4, x_5=e_5$ shows that $A$ is isomorphic to $\ca_{136}$.

\item If $\theta\neq 0, \alpha_3+\beta_2=0$ and $\alpha_2+\beta_1=0$ then the base change $x_1=e_1, x_2=e_2, x_3=\alpha_1e_3+ \alpha_2e_4+ \alpha_3e_5, x_4=e_4, x_5=e_5$ shows that $A$ is isomorphic to $\ca_{137}$.

\item If $\theta\neq 0, \alpha_3+\beta_2=0$ and $\alpha_2+\beta_1\neq0$ then the base change $x_1=\frac{i(\alpha_2+\beta_1)}{2}e_1, x_2=ie_1-\frac{2i}{\alpha_2+\beta_1}e_2, x_3=\alpha_1e_3+\frac{\alpha_2-\beta_1}{2}e_4+\alpha_3e_5, x_4=-\frac{(\alpha_2+\beta_1)^2}{4}e_4, x_5=e_4-\frac{4}{(\alpha_2+\beta_1)^2}e_5$ shows that $A$ is isomorphic to $\ca_{137}$.

\item $\theta\neq 0$ and $\alpha_3+\beta_2\neq0$ then the base change $x_1=-\frac{\theta^2+\sqrt{-\theta^3}}{(\alpha_3+\beta_2)\theta^2\sqrt{\frac{\theta}{\sqrt{-\theta^3}}}}e_1-\sqrt{\frac{\theta}{\sqrt{-\theta^3}}}e_2, x_2=\frac{-\theta^2+\sqrt{-\theta^3}}{2(-\theta^3)^{5/8}}e_1-\frac{\alpha_3+\beta_2}{2(-\theta^3)^{1/8}}e_2, x_3=-\alpha_1(-\theta^3)^{3/8}(\frac{\theta}{\sqrt{-\theta^3}})^{5/2}e_3-(\frac{\alpha_2-\beta_1}{2})(-\theta^3)^{3/8}(\frac{\theta}{\sqrt{-\theta^3}})^{5/2}e_4-(\frac{\alpha_3-\beta_2}{2})(-\theta^3)^{3/8}(\frac{\theta}{\sqrt{-\theta^3}})^{5/2}e_5,$  $x_4=\frac{-2\theta^2+(\theta-1)\sqrt{-\theta^3}}{(\alpha_3+\beta_2)^2\sqrt{-\theta^3}}e_4+e_5, x_5=\frac{\theta(\theta^2-\theta-2\sqrt{-\theta^3})}{4(-\theta^3)^{3/4}}e_4+\frac{(\alpha_3+\beta_2)^2\theta^2}{4(-\theta^3)^{3/4}}e_5$ shows that $A$ is isomorphic to $\ca_{137}$. 
\end{itemize}
\end{proof}

\begin{thm} Let $A$ be a $5-$dimensional non-split non-Lie nilpotent Leibniz algebra with $\dim(A^2)=3$, $\dim(A^3)=0$ and $\dim(Leib(A))=3$. Then $A$ is isomorphic to a Leibniz algebra spanned by $\{x_1, x_2, x_3, x_4, x_5\}$ with the nonzero products given by one of  the following:
\begin{scriptsize}
\begin{description}
\item[$\ca_{138}(\alpha)$] $[x_1, x_1]=x_4, [x_1, x_2]=\alpha x_4+x_5, [x_2, x_1]=x_3, [x_2, x_2]=x_5, \quad \alpha\in\cc$.
\item[$\ca_{139}$] $[x_1, x_1]=x_4, [x_1, x_2]=x_3, [x_2, x_1]=x_3, [x_2, x_2]=x_5$.
\end{description}
\end{scriptsize}
\end{thm}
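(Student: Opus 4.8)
The plan is to strip $A$ down to a single bilinear map, exploit $\dim(Leib(A))=3$ to normalize its symmetric part, and then recognize what is left as an $\mathrm{SL}_2(\cc)$-orbit of a binary quadratic form. Since $A^3=0$ we have $A^2\subseteq Z(A)$, and non-splitness gives $Z(A)\subseteq A^2$; with $\dim(A^2)=3=\dim(Leib(A))$ this forces $Leib(A)=Z(A)=A^2=:W$, a $3$-dimensional central ideal with $[A,W]=0=[W,A]$ (non-Lie is then automatic since $Leib(A)\neq0$). Fix a complement $V=\mathrm{span}\{e_1,e_2\}$; the algebra is encoded entirely by the bilinear map $\mu\colon V\times V\to W$, $\mu(u,v)=[u,v]$. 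Every triple product vanishes, so the Leibniz identity holds automatically, and (since $W$ is central, adding elements of $W$ to $e_1,e_2$ changes nothing) two such algebras are isomorphic precisely when the corresponding maps lie in one orbit of $\mathrm{GL}(V)\times\mathrm{GL}(W)$ acting on $\mathrm{Hom}(V\otimes V,W)$ by $(g,h)\cdot\mu=h\circ\mu\circ(g^{-1}\times g^{-1})$. The hypothesis $\dim(Leib(A))=3$ is exactly the statement that the symmetric part $\mu_s\in\mathrm{Hom}(S^2V,W)$, $v\mapsto\mu(v,v)$, has rank $3$, i.e.\ is an isomorphism (and this already forces $\dim(A^2)=3$).

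Next I would use $\mathrm{GL}(W)$ to identify $W$ with $S^2V$ so that $\mu_s=\mathrm{id}_{S^2V}$. The subgroup of $\mathrm{GL}(V)\times\mathrm{GL}(W)$ preserving this normal form is exactly $\{(g,S^2g):g\in\mathrm{GL}(V)\}$, and it acts on the residual datum $\mu_a\in\mathrm{Hom}(\Lambda^2V,W)$. As $\Lambda^2V$ is a line, the vector $w:=\mu_a(e_1\wedge e_2)\in W=S^2V$ captures $\mu_a$; from $g^{-1}e_1\wedge g^{-1}e_2=(\det g)^{-1}\,e_1\wedge e_2$ one gets $w\mapsto(\det g)^{-1}S^2(g)\,w$, and writing $g=(\det g)^{1/2}g_0$ with $g_0\in\mathrm{SL}_2(\cc)$ this becomes $w\mapsto S^2(g_0)w$, the standard action of $\mathrm{SL}_2(\cc)$ on the space of binary quadratic forms. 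Thus the isomorphism classes are precisely the $\mathrm{SL}_2(\cc)$-orbits on $S^2V\cong\cc^3$.

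Over $\cc$ these orbits are $\{0\}$ together with, among the nonzero forms, one orbit for each value of the discriminant (the discriminant-zero orbit being the nonzero squares, and $e_1e_2$, $-e_1e_2$ lying in one orbit, so that the discriminant is a complete invariant). The orbit $w=0$ yields the symmetric multiplication $[e_1,e_1]=e_1^2$, $[e_1,e_2]=[e_2,e_1]=e_1e_2$, $[e_2,e_2]=e_2^2$, which after relabelling is $\ca_{139}$. For $w\neq0$, every orbit contains a form $\alpha e_1^2-e_1e_2+e_2^2$ whose discriminant $1-4\alpha$ sweeps all of $\cc$ bijectively in $\alpha$; substituting $w=\alpha e_1^2-e_1e_2+e_2^2$ into $[e_1,e_2]=e_1e_2+w$, $[e_2,e_1]=e_1e_2-w$ and setting $x_4=e_1^2$, $x_5=e_2^2$, $x_3=[e_2,e_1]$ gives precisely $\ca_{138}(\alpha)$; bijectivity of $\alpha\mapsto1-4\alpha$ shows simultaneously that $\ca_{138}(\alpha)\cong\ca_{138}(\beta)$ only for $\alpha=\beta$. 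Non-splitness of the listed algebras is then immediate: any decomposition $A=I\oplus J$ into nonzero ideals would force $I^2=Z(I)$ and $J^2=Z(J)$, but a nilpotent Leibniz algebra $L$ with $L^2=Z(L)$ is either abelian or satisfies $\dim L^2\le\dim L-2$, so $\dim I^2+\dim J^2<3$, contradicting $\dim(A^2)=3$.

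The argument has no single deep step; the point that demands care is making the reduction airtight — checking that after fixing $\mu_s=\mathrm{id}$ the residual group is exactly $\{(g,S^2g)\}$, and that its induced action on $w$ genuinely is the $\mathrm{SL}_2(\cc)$-action on binary quadratics, including that over $\cc$ the discriminant separates all orbits (which relies on $e_1e_2$ and $-e_1e_2$ being in one orbit). A fully elementary alternative also works: take $e_4=[e_1,e_1]$ (possible since some square is nonzero) and then split into cases by the position of $[e_1,e_2]-[e_2,e_1]$ relative to $\mathrm{span}\{[e_1,e_1],[e_2,e_2]\}$; this reaches the same two families, with the explicit base changes being the only laborious part.
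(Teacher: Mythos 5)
Your argument is correct in its essential line and takes a genuinely different, more conceptual route than the paper. The paper fixes $e_4=[e_1,e_1]$, $e_5=[e_2,e_2]$ and then works through cases on $\alpha_1,\beta_1,\theta_1,\theta_2$ with explicit base changes (some involving expressions like $\sqrt[4]{(k-1)^4(k+1)^5}$), whereas you reduce the whole classification to the orbit space of $\mathrm{SL}_2(\cc)$ acting on binary quadratic forms: the normalization $\mu_s=\mathrm{id}_{S^2V}$ via $\mathrm{GL}(W)$, the identification of the residual stabilizer as $\{(g,S^2g)\}$, the computation of its action $w\mapsto S^2(g_0)w$ on the antisymmetric datum, and the fact that over $\cc$ the discriminant is a complete invariant on nonzero forms are all correct; your representative $w=\alpha e_1^2-e_1e_2+e_2^2$ does land exactly on $\ca_{138}(\alpha)$, and $w=0$ gives $\ca_{139}$. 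What your approach buys is that the isomorphism criterion $\ca_{138}(\alpha_1)\cong\ca_{138}(\alpha_2)$ iff $\alpha_1=\alpha_2$ --- stated only as a separate remark in the paper --- falls out of the injectivity of $\alpha\mapsto1-4\alpha$, and the method is the ``congruence classes of bilinear forms'' philosophy of Section 2 pushed to a $3$-dimensional target, so it visibly generalizes.

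One small defect, in a step not needed for the forward implication: the auxiliary claim that a non-abelian nilpotent Leibniz algebra $L$ with $L^2=Z(L)$ satisfies $\dim L^2\le\dim L-2$ is false; $L=\mathrm{span}\{x,y\}$ with sole product $[x,x]=y$ has $L^2=Z(L)$ of dimension $1=\dim L-1$. The non-splitness of the listed algebras still holds, but the bound to invoke is $\dim L^2\le(\dim L-\dim L^2)^2$ when $L^3=0$ (as $L^2$ is spanned by products of coset representatives of $L/L^2$): a splitting $A=I\oplus J$ with $\dim I=1$ is impossible outright (a one-dimensional ideal is abelian, so $I^2=0\ne I=Z(I)$), and with $\dim I=2$, $\dim J=3$ one gets $\dim I^2\le1$ and $\dim J^2\le1$, so $\dim A^2\le2<3$.
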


\begin{proof} Let $Leib(A)=A^2=Z(A)=\rm span\{e_3, e_4, e_5\}$. Choose $e_1, e_2\in A$ such that $[e_1, e_1]=e_4, [e_2, e_2]=e_5$. Then the nonzero products in $A=\rm span\{e_1, e_2, e_3, e_4, e_5\}$ are given by
\begin{align*}
[e_1, e_1]=e_4, [e_1, e_2]=\alpha_1e_3+ \alpha_2e_4+ \alpha_3e_5, [e_2, e_1]=\beta_1e_3+ \beta_2e_4+ \beta_3 e_5, [e_2, e_2]=e_5.
\end{align*}
\indent {\bf Case 1:} Let $\alpha_1=0$. Then $\beta_1\neq0$ since $\dim(A^2)=3$. 
\begin{equation} \label{eq303,1}
[e_1, e_1]=e_4, [e_1, e_2]= \alpha_2e_4+ \alpha_3e_5, [e_2, e_1]=\beta_1e_3+ \beta_2e_4+ \beta_3 e_5, [e_2, e_2]=e_5.
\end{equation}
\begin{itemize}
\item If $\alpha_3=0$ then the base change $x_1=(\alpha_2-1)e_1-e_2, x_2=\alpha_2e_1-e_2, x_3=(1-\alpha_2)\beta_1e_3+((1-\alpha_2)\beta_2-\alpha_2)e_4+((1-\alpha_2)\beta_3+1)e_5, x_4=(1-\alpha_2)\beta_1e_3+(1-\alpha_2)(\beta_2+1)e_4+((1-\alpha_2)\beta_3+1)e_5, x_5=-\alpha_2\beta_1e_3-\alpha_2\beta_2e_4+(1-\alpha_2\beta_3)e_5$ shows that $A$ is isomorphic to $\ca_{138}(0)$. 

\item If $\alpha_3\neq0$ then the base change $x_1=e_1, x_2=\alpha_3e_2, x_3=\alpha_3(\beta_1e_3+\beta_2e_4+\beta_3e_5), x_4=e_4, x_5=\alpha^2_3e_5$ shows that $A$ is isomorphic to $\ca_{138}(\alpha)$.
\end{itemize}

\indent {\bf Case 2:} Let $\alpha_1\neq0$. 
\\ \indent {\bf Case 2.1:} Let $\beta_1=0$. Then the base change $x_1=e_2, x_2=e_1, x_3=e_3, x_4=e_5, x_5=e_4$  shows that $A$ is isomorphic to an algebra with nonzero products given by (\ref{eq303,1}). Hence $A$ is isomorphic to $\ca_{138}(\alpha)$. 
\\ \indent {\bf Case 2.2:} Let $\beta_1\neq0$. Take $\theta_1=\beta_1\alpha_2-\beta_2\alpha_1$ and $\theta_2=\beta_1\alpha_3-\beta_3\alpha_1$. Note that $\alpha_1+\beta_1\neq0$ since $\dim(Leib(A))=3$.
\begin{itemize}
\item $\theta_1=0=\theta_2$ and $\alpha_1=\beta_1$ then the base change $x_1=e_1, x_2=e_2, x_3=\alpha_1e_3+\alpha_2e_4+\alpha_3e_5, x_4=e_4, x_5=e_5$ shows that $A$ is isomorphic to $\ca_{139}$.

\item If $\theta_1=0=\theta_2$ and $\alpha_1\neq\beta_1$(taking $k=\frac{\alpha_1}{\beta_1}$) then the base change 
\\ $x_1=-\frac{i \sqrt[4]{(k-1)^4 (k+1)^5} \sqrt{-\frac{k^2 (k+1)^2}{\sqrt{(k-1)^4 (k+1)^5}}}}{k^2}e_1-\frac{i k (k+1)}{\sqrt[4]{(k-1)^4 (k+1)^5}}e_2, x_2=\frac{i k (k+1)}{\sqrt[4]{(k-1)^4 (k+1)^5} \sqrt{-\frac{k^2 (k+1)^2}{\sqrt{(k-1)^4 (k+1)^5}}}}e_1-\frac{i k}{\sqrt[4]{(k-1)^4 (k+1)^5}}e_2, $ $x_3=-\beta_1(\frac{\sqrt{-\frac{k^2 (k+1)^2}{\sqrt{(k-1)^4 (k+1)^5}}} \left(k^2+1\right)}{k})e_3+(-\beta_2(\frac{\sqrt{-\frac{k^2 (k+1)^2}{\sqrt{(k-1)^4 (k+1)^5}}} \left(k^2+1\right)}{k})+\frac{1}{k}+1)e_4+ (-\beta_3(\frac{\sqrt{-\frac{k^2 (k+1)^2}{\sqrt{(k-1)^4 (k+1)^5}}} \left(k^2+1\right)}{k})-\frac{k^2 (k+1)}{\sqrt{(k-1)^4 (k+1)^5}})e_5, x_4=-\frac{\beta_1(k+1)^2 \sqrt{-\frac{k^2 (k+1)^2}{\sqrt{(k-1)^4 (k+1)^5}}}}{k}e_3+(-\frac{\beta_2(k+1)^2 \sqrt{-\frac{k^2 (k+1)^2}{\sqrt{(k-1)^4 (k+1)^5}}}}{k}+\frac{(k+1)^2}{k^2})e_4+(-\frac{\beta_3(k+1)^2 \sqrt{-\frac{k^2 (k+1)^2}{\sqrt{(k-1)^4 (k+1)^5}}}}{k}-\frac{k^2 (k+1)^2}{\sqrt{(k-1)^4 (k+1)^5}})e_5, x_5=-\beta_1\sqrt{-\frac{k^2 (k+1)^2}{\sqrt{(k-1)^4 (k+1)^5}}}e_3+(1-\beta_2\sqrt{-\frac{k^2 (k+1)^2}{\sqrt{(k-1)^4 (k+1)^5}}})e_4+(-\beta_1\sqrt{-\frac{k^2 (k+1)^2}{\sqrt{(k-1)^4 (k+1)^5}}}-\frac{k^2}{\sqrt{(k-1)^4 (k+1)^5}})e_5$ shows that $A$ is isomorphic to an algebra with nonzero products given by (\ref{eq303,1}). Hence $A$ is isomorphic to $\ca_{138}(\alpha)$.  

\item If $\theta_1=0$ and $\theta_2\neq0$(taking $k=\frac{\alpha_1}{\beta_1}$) then the base change $x_1=\frac{(k+1)^{3/8}}{\sqrt{k}}e_1-\frac{(k+1)^{3/8}\theta_2}{\sqrt{k}\beta_1}e_2, x_2=\frac{\sqrt{k}}{(k+1)^{5/8}}e_1, x_3=-\frac{k\theta_2}{\sqrt[4]{k+1}}e_3+(-\frac{k\theta_2\beta_2}{\sqrt[4]{k+1}\beta_1}+\frac{1}{\sqrt[4]{k+1}})e_4+(-\frac{k\theta_2\beta_3}{\sqrt[4]{k+1}\beta_1}-\frac{\theta^2_2}{\sqrt[4]{k+1}\beta^2_1})e_5, x_4=-\frac{(k+1)^{7/4}\theta_2}{k}e_3+(-\frac{(k+1)^{7/4}\theta_2\beta_2}{k\beta_1}+\frac{(k+1)^{3/4}}{k})e_4-\frac{(k+1)^{7/4}\theta_2\beta_3}{k\beta_1}e_5, x_5=\frac{k}{(k+1)^{5/4}}e_4+(\frac{\theta_2}{\beta_1})^2e_5$ isomorphic to an algebra with nonzero products given by (\ref{eq303,1}). Hence $A$ is isomorphic to $\ca_{138}(\alpha)$. 

\item If $\theta_1\neq0$ then the base change $x_1=\frac{\theta_1}{\alpha_1}e_1+e_2, x_2=\frac{\alpha_1+\beta_1}{\alpha_1}e_2, x_3=\frac{(\alpha_1+\beta_1)\theta_1}{\alpha^2_1}(\beta_1e_3+\beta_2e_4)+(\frac{(\alpha_1+\beta_1)\theta_1\beta_3}{\alpha^2_1}+\frac{\alpha_1+\beta_1}{\alpha_1})e_5, x_4=\frac{\theta_1}{\alpha_1}(\alpha_1+\beta_1)e_3+(\frac{\theta_1(\alpha_2+\beta_2)}{\alpha_1}+(\frac{\theta_1}{\alpha_1})^2)e_4+(1+\frac{\theta_1(\alpha_3+\beta_3)}{\alpha_1})e_5, x_5=(\frac{\alpha_1+\beta_1}{\alpha_1})^2e_5$ shows that $A$ is isomorphic to an algebra with nonzero products given by (\ref{eq303,1}). Hence $A$ is isomorphic to $\ca_{138}(\alpha)$. 
\end{itemize}
\end{proof}

\begin{rmk}
\begin{scriptsize}
If $\alpha_1, \alpha_2\in\cc$ such that $\alpha_1\neq\alpha_2$, then $\ca_{138}(\alpha_1)$ and $\ca_{138}(\alpha_2)$ are not isomorphic.
\end{scriptsize}
\end{rmk}

Let $\dim(A^2)=2$ and $\dim(A^3)=1$. Then we have $A^3=Z(A).$

\begin{thm} Let $A$ be a $5-$dimensional non-split non-Lie nilpotent Leibniz algebra with $\dim(A^2)=2=\dim(Leib(A))$ and $\dim(A^3)=1$. Then $A$ is isomorphic to a Leibniz algebra spanned by $\{x_1, x_2, x_3, x_4, x_5\}$ with the nonzero products given by one of the following:
\begin{scriptsize}
\begin{description}
\item[$\ca_{140}$] $[x_1, x_2]=x_4, [x_3, x_1]=x_5, [x_1, x_4]=x_5$.
\item[$\ca_{141}$] $[x_1, x_2]=x_4, [x_2, x_2]=x_5, [x_3, x_1]=x_5, [x_1, x_4]=x_5$.
\item[$\ca_{142}$] $[x_1, x_2]=x_4, [x_2, x_3]=x_5, [x_1, x_4]=x_5$.
\item[$\ca_{143}$] $[x_1, x_2]=x_4, [x_2, x_3]=x_5, [x_3, x_1]=x_5, [x_1, x_4]=x_5$.
\item[$\ca_{144}$] $[x_1, x_1]=x_4, [x_2, x_2]=x_5, [x_3, x_1]=x_5, [x_1, x_4]=x_5$.
\item[$\ca_{145}$] $[x_1, x_1]=x_4, [x_2, x_3]=x_5, [x_1, x_4]=x_5$.
\item[$\ca_{146}$] $[x_1, x_1]=x_4, [x_2, x_3]=x_5, [x_3, x_1]=x_5, [x_1, x_4]=x_5$.
\item[$\ca_{147}(\alpha)$] $[x_1, x_1]=x_4, [x_2, x_3]=\alpha x_5, [x_3, x_2]=x_5, [x_1, x_4]=x_5, \quad \alpha\in\cc\backslash\{0\}$.
\item[$\ca_{148}$] $[x_1, x_2]=x_4, [x_2, x_1]=x_5, [x_3, x_2]=x_5, [x_1, x_4]=x_5$.
\item[$\ca_{149}(\alpha)$] $[x_1, x_2]=x_4, [x_2, x_3]=\alpha x_5, [x_3, x_2]=x_5, [x_1, x_4]=x_5, \quad \alpha\in\cc$.
\item[$\ca_{150}(\alpha)$] $[x_1, x_2]=x_4, [x_3, x_1]=x_5, [x_2, x_3]=\alpha x_5, [x_3, x_2]=x_5, [x_1, x_4]=x_5, \quad \alpha\in\cc$.
\item[$\ca_{151}$] $[x_1, x_2]=x_4, [x_2, x_1]=x_5, [x_3, x_1]=x_5, [x_3, x_2]=x_5, [x_1, x_4]=x_5$.
\item[$\ca_{152}$] $[x_1, x_1]=x_4, [x_2, x_2]=x_5, [x_2, x_3]=x_5=-[x_3, x_2], [x_1, x_4]=x_5$.
\item[$\ca_{153}$] $[x_1, x_2]=x_4, [x_2, x_2]=x_5, [x_2, x_3]=x_5=-[x_3, x_2], [x_1, x_4]=x_5$.
\item[$\ca_{154}$] $[x_1, x_1]=x_4, [x_1, x_2]=x_4,  [x_2, x_2]=x_5, [x_2, x_3]=x_5=-[x_3, x_2], [x_1, x_4]=x_5$.
\item[$\ca_{155}$] $[x_1, x_2]=x_4, [x_3, x_3]=x_5, [x_1, x_4]=x_5$.
\item[$\ca_{156}$] $[x_1, x_2]=x_4, [x_2, x_3]=x_5, [x_3, x_3]=x_5, [x_1, x_4]=x_5$.
\item[$\ca_{157}(\alpha)$] $[x_1, x_2]=x_4, [x_2, x_2]=x_5, [x_2, x_3]=\alpha x_5, [x_3, x_3]=x_5, [x_1, x_4]=x_5, \quad \alpha\in\cc$.
\item[$\ca_{158}$] $[x_1, x_2]=x_4, [x_2, x_1]=x_5, [x_3, x_3]=x_5, [x_1, x_4]=x_5$.
\item[$\ca_{159}$] $[x_1, x_2]=x_4, [x_2, x_1]=x_5, [x_2, x_3]=x_5, [x_3, x_3]=x_5, [x_1, x_4]=x_5$.
\item[$\ca_{160}(\alpha)$] $[x_1, x_2]=x_4, [x_2, x_1]=x_5, [x_2, x_2]=x_5, [x_2, x_3]=\alpha x_5, [x_3, x_3]=x_5, [x_1, x_4]=x_5, \quad \alpha\in\cc$.
\end{description}
\end{scriptsize}
\end{thm}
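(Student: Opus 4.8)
The plan is to follow the direct method used for the adjacent cases above. Recall from the discussion preceding the theorem that nilpotency with $\dim(A^3)=1$ forces $A^4=0$, hence $A^3=Z(A)$ is one-dimensional, and since $Leib(A)\subseteq A^2$ with both of dimension $2$ we have $Leib(A)=A^2$. First I would fix $A^3=Z(A)=\operatorname{span}\{e_5\}$, extend to a basis $\{e_4,e_5\}$ of $Leib(A)=A^2$, and complete to a basis $\{e_1,e_2,e_3,e_4,e_5\}$ of $A$.

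The crucial simplification from $Leib(A)=A^2$ is that left multiplication by $A^2$ is zero: the Leibniz identity gives $[[a,a],b]=0$ for all $a,b$, so $[e_4,x]=[e_5,x]=0$ for every $x$, while $[e_i,e_5]=0$ and $[e_i,e_4]\in[A,A^2]=A^3=\operatorname{span}\{e_5\}$. Hence the only possibly nonzero products are the $[e_i,e_j]$ with $1\le i,j\le3$ (lying in $\operatorname{span}\{e_4,e_5\}$) and the three scalars $[e_i,e_4]$. Since $\dim(Z(A))=1$ we have $e_4\notin Z(A)$, so $x\mapsto[x,e_4]$ is a nonzero functional on $\operatorname{span}\{e_1,e_2,e_3\}$; after a linear change there I may assume $[e_1,e_4]=e_5$ and $[e_2,e_4]=[e_3,e_4]=0$. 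Feeding the triples into the (left) Leibniz identity — notably the identities $[e_1,[e_j,e_i]]=[[e_1,e_j],e_i]+[e_j,[e_1,e_i]]$ — shows that only $[e_1,e_1],[e_1,e_2],[e_1,e_3]$ may carry a nonzero $e_4$-component and that no further relations survive, so $A$ is described by
\begin{align*}
[e_1,e_1]&=\alpha_1e_4+\alpha_2e_5, & [e_1,e_2]&=\alpha_3e_4+\alpha_4e_5, & [e_1,e_3]&=\alpha_5e_4+\alpha_6e_5,\\
[e_2,e_1]&=\beta_1e_5, & [e_3,e_1]&=\beta_2e_5, & [e_2,e_2]&=\beta_3e_5,\\
[e_2,e_3]&=\beta_4e_5, & [e_3,e_2]&=\beta_5e_5, & [e_3,e_3]&=\beta_6e_5,
\end{align*}
together with $[e_1,e_4]=e_5$; here $\dim(A^2)=2$ amounts to $(\alpha_1,\alpha_3,\alpha_5)\ne(0,0,0)$ (while $\dim(A^3)=1$ and $\dim(Leib(A))=2$ are automatic), and non-splitness, equivalently $\dim(Z(A))=1$, is the remaining constraint, used repeatedly below to discard degenerate sub-cases.

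From here I would run the usual nested case analysis. Replacing $e_1$ by $e_1-\alpha_2e_4$ kills $\alpha_2$, and $e_2,e_3$ by $e_2-\alpha_4e_4,e_3-\alpha_6e_4$ kills $\alpha_4,\alpha_6$; replacing $e_2,e_3$ by suitable combinations with $e_1$, and (when $\alpha_1=0$) $e_1$ by a combination with $e_2,e_3$, normalizes the triple $(\alpha_1,\alpha_3,\alpha_5)$; and a $GL_2$-change on $\operatorname{span}\{e_2,e_3\}$ together with rescalings of $e_1,e_4,e_5$ (constrained only by $[e_1,e_4]=e_5$) reduces the $\beta$-block — which behaves like a bilinear form plus a linear functional on $\operatorname{span}\{e_2,e_3\}$ — to standard shape. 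Branching on which normalized $e_4$-coefficient survives and on the rank and degeneracies of the $\beta$-block produces the algebras $\ca_{140},\dots,\ca_{160}$, each terminal branch coming with an explicit change of basis; finally one resolves the parameter redundancy in the families $\ca_{147}(\alpha),\ca_{149}(\alpha),\ca_{150}(\alpha),\ca_{157}(\alpha),\ca_{160}(\alpha)$ and verifies mutual non-isomorphism.

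The main obstacle is the combinatorial size: $21$ isomorphism types, five of them one-parameter families, make the case tree deep, and the real work lies in making the reduction of the $\beta$-block exhaustive and non-overlapping, and then checking that the resulting normal forms are pairwise non-isomorphic — the delicate points being coincidences at small parameter values and keeping every base change compatible with the fixed flag $A\supset A^2\supset A^3$ and with the normalization $[e_1,e_4]=e_5$.
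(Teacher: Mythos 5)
Your setup is correct and is essentially the paper's own argument: the paper writes the same general multiplication table, extracts from the Leibniz identities the system of relations $\alpha_5\gamma_3=\alpha_1\gamma_4$, $\beta_3\gamma_3=\alpha_1\gamma_5$, etc., and then normalizes $[e_2,e_4]=[e_3,e_4]=0$ with $[e_1,e_4]=\gamma_3e_5\neq0$, which forces exactly your normal form in which only $[e_1,e_1],[e_1,e_2],[e_1,e_3]$ carry an $e_4$-component and no further relations remain. Your shortcut — using $[[a,a],b]=0$ to conclude that $Leib(A)=A^2$ acts trivially on the left, so the Leibniz identity collapses to $a_{jk}\delta_{i1}=a_{ik}\delta_{j1}$ — is a clean and correct replacement for the paper's explicit computation, and your observation that $\dim(A^2)=2$, $\dim(A^3)=1$, $\dim(Leib(A))=2$ all reduce to $(\alpha_1,\alpha_3,\alpha_5)\neq(0,0,0)$ also checks out. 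The one caveat is that everything past this point — the nested case analysis on the $\beta$-block that actually yields the twenty-one classes $\ca_{140},\dots,\ca_{160}(\alpha)$ with explicit base changes, plus the isomorphism conditions within the one-parameter families — is asserted rather than executed, and that is where essentially all of the paper's proof resides.
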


\begin{proof} Let $A^3=Z(A)=\rm span\{e_5\}$. Extend this to a basis $\{e_4, e_5\}$ of $Leib(A)=A^2$. Then the nonzero products in $A=\rm span\{e_1, e_2, e_3, e_4, e_5\}$ are given by
\begin{align*}
[e_1, e_1]=\alpha_1e_4+\alpha_2e_5, [e_1, e_2]=\alpha_3e_4+ \alpha_4e_5, [e_2, e_1]=\alpha_5e_4+\alpha_6e_5, [e_2, e_2]=\alpha_7e_4+\alpha_8e_5, \\ [e_1, e_3]=\beta_1e_4+\beta_2e_5, [e_3, e_1]=\beta_3e_4+\beta_4e_5, [e_2, e_3]=\beta_5e_4+\beta_6e_5, [e_3, e_2]=\beta_7e_4+\beta_8e_5, \\ [e_3, e_3]=\gamma_1e_4+\gamma_2e_5, [e_1, e_4]=\gamma_3e_5, [e_2, e_4]=\gamma_4e_5, [e_3, e_4]=\gamma_5e_5.
\end{align*}
\\ Leibniz identities give the following equations: 
\begin{equation}  \label{eq212,1}
\begin{cases}
\alpha_5\gamma_3=\alpha_1\gamma_4 \qquad
\alpha_7\gamma_3=\alpha_3\gamma_4 \\
\beta_5\gamma_3=\beta_1\gamma_4 \qquad
\beta_3\gamma_3=\alpha_1\gamma_5 \\
\beta_7\gamma_3=\alpha_3\gamma_5 \qquad
\gamma_1\gamma_3=\beta_1\gamma_5 \\
\beta_3\gamma_4=\alpha_5\gamma_5 \qquad
\beta_7\gamma_4=\alpha_7\gamma_5 \\
\gamma_1\gamma_4=\beta_5\gamma_5
\end{cases}
\end{equation}
Note that if $\gamma_5\neq0$ and $\gamma_4=0$(resp. $\gamma_4\neq0$) then with the base change $x_1=e_1, x_2=e_3, x_3=e_2, x_4=e_4, x_5=e_5$(resp. $x_1=e_1, x_2=e_2, x_3=\gamma_5e_2-\gamma_4e_3, x_4=e_4, x_5=e_5$) we can make $\gamma_5=0$. So let $\gamma_5=0$. Note that if $\gamma_4\neq0$ and $\gamma_3=0$(resp. $\gamma_3\neq0$) then with the base change $x_1=e_2, x_2=e_1, x_3=e_3, x_4=e_4, x_5=e_5$(resp. $x_1=e_1, x_2=\gamma_4e_1-\gamma_3e_2, x_3=e_3, x_4=e_4, x_5=e_5$) we can make $\gamma_4=0$. So let $\gamma_4=0$. Then $\gamma_3\neq0$ since $\dim(A^3)=1$. So by (\ref{eq212,1}) we have $\alpha_5=0=\alpha_7=\beta_5=\beta_3=\beta_7=\gamma_1$. If $\beta_1\neq0$ and $\alpha_3=0$(resp. $\alpha_3\neq0$) then with the base change $x_1=e_1, x_2=e_3, x_3=e_2, x_4=e_4, x_5=e_5$(resp. $x_1=e_1, x_2=e_2, x_3=\beta_1e_2-\alpha_3e_3, x_4=e_4, x_5=e_5$) we can make $\beta_1=0$. So we can assume $\beta_1=0$. Without loss of generality we can assume $\beta_2=0$, because otherwise with the base change $x_1=e_1, x_2=e_2, x_3=\gamma_3e_3-\beta_2e_4, x_4=e_4, x_5=e_5$ we can make $\beta_2=0$. 
\\ \indent {\bf Case 1:} Let $\gamma_2=0$. 
\begin{multline} \label{eq212,2}
[e_1, e_1]=\alpha_1e_4+\alpha_2e_5, [e_1, e_2]=\alpha_3e_4+ \alpha_4e_5, [e_2, e_1]=\alpha_6e_5, [e_2, e_2]=\alpha_8e_5, \\ [e_3, e_1]=\beta_4e_5, [e_2, e_3]=\beta_6e_5, [e_3, e_2]=\beta_8e_5, [e_1, e_4]=\gamma_3e_5.
\end{multline}
\\ \indent {\bf Case 1.1:} Let $\beta_8=0$. Then we have the following products in $A$:
\begin{multline} \label{eq212,3}
[e_1, e_1]=\alpha_1e_4+\alpha_2e_5, [e_1, e_2]=\alpha_3e_4+ \alpha_4e_5, [e_2, e_1]=\alpha_6e_5, [e_2, e_2]=\alpha_8e_5, \\ [e_3, e_1]=\beta_4e_5, [e_2, e_3]=\beta_6e_5, [e_1, e_4]=\gamma_3e_5.
\end{multline}
\\ \indent {\bf Case 1.1.1:} Let $\alpha_1=0$. Then $\alpha_3\neq0$ since $\dim(A^2)=2$. Hence we have the following products in $A$:
\begin{multline} \label{eq212,4}
[e_1, e_1]=\alpha_2e_5, [e_1, e_2]=\alpha_3e_4+ \alpha_4e_5, [e_2, e_1]=\alpha_6e_5, [e_2, e_2]=\alpha_8e_5, \\ [e_3, e_1]=\beta_4e_5, [e_2, e_3]=\beta_6e_5, [e_1, e_4]=\gamma_3e_5.
\end{multline}
We can assume $\alpha_2=0$, because if $\alpha_2\neq0$ then with the base change $x_1=\gamma_3e_1-\alpha_2e_4, x_2=e_2, x_3=e_3, x_4=e_4, x_5=e_5$ we can make $\alpha_2=0$. 
\\ \indent {\bf Case 1.1.1.1:} Let $\beta_6=0$. Then $\beta_4\neq0$ since $\dim(Z(A))=1$. Without loss of generality assume $\alpha_6=0$, because if $\alpha_6\neq0$ then with the base change $x_1=e_1, x_2=\beta_4e_2-\alpha_6e_3, x_3=e_3, x_4=e_4, x_5=e_5$ we can make $\alpha_6=0$. \begin{itemize}
\item If $\alpha_8=0$ then the base change $x_1=e_1, x_2=e_2, x_3=\frac{\alpha_3\gamma_3}{\beta_4}e_3, x_4=\alpha_3e_4+ \alpha_4e_5, x_5=\alpha_3\gamma_3e_5$ shows that $A$ is isomorphic to $\ca_{140}$.
\item If $\alpha_8\neq0$ then the base change $x_1=e_1, x_2=\frac{\alpha_3\gamma_3}{\alpha_8}e_2, x_3=\frac{(\alpha_3\gamma_3)^2}{\alpha_8\beta_4}e_3, x_4=\frac{\alpha_3\gamma_3}{\alpha_8}(\alpha_3e_4+ \alpha_4e_5), x_5=\frac{(\alpha_3\gamma_3)^2}{\alpha_8}$ shows that $A$ is isomorphic to $\ca_{141}$.
\end{itemize}

\indent {\bf Case 1.1.1.2:} Let $\beta_6\neq0$. If $\alpha_8\neq0$ then with the base change $x_1=e_1, x_2=\beta_6e_2-\alpha_8e_3, x_3=e_3, x_4=e_4, x_5=e_5$ we can make $\alpha_8=0$. So let $\alpha_8=0$. Furthermore if $\alpha_6\neq0$ then with the base change $x_1=\beta_6e_1-\alpha_6e_3, x_2=e_2, x_3=e_3, x_4=e_4, x_5=e_5$ we can make $\alpha_6=0$. So we can assume $\alpha_6=0$. 

\begin{itemize}
\item If $\beta_4=0$ then the base change $x_1=e_1, x_2=e_2, x_3=\frac{\alpha_3\gamma_3}{\beta_6}e_3, x_4=\alpha_3e_4+ \alpha_4e_5, x_5=\alpha_3\gamma_3e_5$ shows that $A$ is isomorphic to $\ca_{142}$.
\item If $\beta_4\neq0$ then the base change $x_1=\beta_6e_1, x_2=\beta_4e_2, x_3=\alpha_3\beta_6\gamma_3e_3, x_4=\beta_4\beta_6(\alpha_3e_4+ \alpha_4e_5), x_5=\alpha_3\beta_4\beta^2_6\gamma_3e_5$ shows that $A$ is isomorphic to $\ca_{143}$.
\end{itemize}

\indent {\bf Case 1.1.2:} Let $\alpha_1\neq0$. If $\alpha_3\neq0$ then the base change $x_1=\alpha_3e_1-\alpha_1e_2, x_2=e_2, x_3=\alpha_3\gamma_3e_3+\alpha_1\beta_6e_4, x_4=e_4, x_5=e_5$ shows that $A$ is isomorphic to an algebra with nonzero products given by (\ref{eq212,4}). Hence $A$ is isomorphic to $\ca_{140}, \ca_{141}, \ca_{142}$ or $\ca_{143}$. So let $\alpha_3=0$. 
Without loss of generality we can assume $\alpha_4=0$ because if $\alpha_4\neq0$ then with the base change $x_1=e_1, x_2=\gamma_3e_2-\alpha_4e_4, x_3=e_3, x_4=e_4, x_5=e_5$ we can make $\alpha_4=0$. 
\\ \indent {\bf Case 1.1.2.1:} Let $\beta_6=0$. Then $\beta_4\neq0$ since $\dim(Z(A))=1$. Without loss of generality we can assume $\alpha_6=0$ because if $\alpha_6\neq0$ then with the base change $x_1=e_1, x_2=\beta_4e_2-\alpha_6e_3, x_3=e_3, x_4=e_4, x_5=e_5$ we can make $\alpha_6=0$. Furthermore $\alpha_8\neq0$ since $A$ is non-split. Then the base change $x_1=e_1, x_2=\sqrt{\frac{\alpha_1\gamma_3}{\alpha_8}}e_2, x_3=\frac{\alpha_1\gamma_3}{\beta_4}e_3, x_4=\alpha_1e_4+\alpha_2e_5, x_5=\alpha_1\gamma_3e_5$ shows that $A$ is isomorphic to $\ca_{144}$.
\\ \indent {\bf Case 1.1.2.2:} Let $\beta_6\neq0$. Without loss of generality we can assume $\alpha_8=0$ because if $\alpha_8\neq0$ then with the base change $x_1=e_1, x_2=\beta_6e_2-\alpha_8e_3, x_3=e_3, x_4=e_4, x_5=e_5$ we can make $\alpha_8=0$. Furthermore if $\alpha_6\neq0$ then with the base change $x_1=\beta_6e_1-\alpha_6e_3, x_2=e_2, x_3=e_3, x_4=e_4, x_5=e_5$ we can make $\alpha_6=0$. So let $\alpha_6=0$. 
\begin{itemize}
\item If $\beta_4=0$ then the base change $x_1=e_1, x_2=e_2, x_3=\frac{\alpha_1\gamma_3}{\beta_6}e_3, x_4=\alpha_1e_4+\alpha_2e_5, x_5=\alpha_1\gamma_3e_5$ shows that $A$ is isomorphic to $\ca_{145}$.
\item If $\beta_4\neq0$ then the base change $x_1=e_1, x_2=\frac{\beta_4}{\beta_6}e_2, x_3=\frac{\alpha_1\gamma_3}{\beta_4}e_3, x_4=\alpha_1e_4+\alpha_2e_5, x_5=\alpha_1\gamma_3e_5$ shows that $A$ is isomorphic to $\ca_{146}$.
\end{itemize}

\indent {\bf Case 1.2:} Let $\beta_8\neq0$.
\\ \indent {\bf Case 1.2.1:} Let $\alpha_8=0$. Then we have the following products in $A$:
\begin{multline} \label{eq212,5}
[e_1, e_1]=\alpha_1e_4+\alpha_2e_5, [e_1, e_2]=\alpha_3e_4+ \alpha_4e_5, [e_2, e_1]=\alpha_6e_5, \\ [e_3, e_1]=\beta_4e_5, [e_2, e_3]=\beta_6e_5, [e_3, e_2]=\beta_8e_5, [e_1, e_4]=\gamma_3e_5.
\end{multline}
\\ \indent {\bf Case 1.2.1.1:} Let $\alpha_3=0$. Then $\alpha_1\neq0$ since $\dim(A^2)=2$. If $\beta_4\neq0$ then with the base change $x_1=\beta_8e_1-\beta_4e_2, x_2=e_2, x_3=\beta_8\gamma_3e_3+\beta_4\beta_6e_4, x_4=e_4, x_5=e_5$ we can make $\beta_4=0$. So we can assume $\beta_4=0$. 
\\ \indent {\bf Case 1.2.1.1.1:} Let $\alpha_6=0$. Then we have the following products in $A$:
\begin{equation} \label{eq212,6}
[e_1, e_1]=\alpha_1e_4+\alpha_2e_5, [e_1, e_2]=\alpha_4e_5, [e_2, e_3]=\beta_6e_5, [e_3, e_2]=\beta_8e_5, [e_1, e_4]=\gamma_3e_5.
\end{equation}
Note that if $\alpha_4\neq0$ then with the base change $x_1=e_1, x_2=\gamma_3e_2-\alpha_4e_4, x_3=e_3, x_4=e_4, x_5=e_5$ we can make $\alpha_4=0$. So let $\alpha_4=0$. Note that if $\beta_6=0$ then the base change $x_1=e_1, x_2=e_3, x_3=e_2, x_4=e_4, x_5=e_5$ shows that $A$ is isomorphic to an algebra with nonzero products given by (\ref{eq212,3}). Hence $A$ is isomorphic to $\ca_{140}, \ca_{141}, \ldots, \ca_{145}$ or $\ca_{146}$. So let $\beta_6\neq0$. The base change $x_1=e_1, x_2=e_2, x_3=\frac{\alpha_1\gamma_3}{\beta_8}e_3, x_4=\alpha_1e_4+\alpha_2e_5, x_5=\alpha_1\gamma_3e_5$ shows that $A$ is isomorphic to $\ca_{147}(\alpha)$.

\indent {\bf Case 1.2.1.1.2:} Let $\alpha_6\neq0$. If $\beta_6\neq0$ then the base change $x_1=\beta_6e_1-\alpha_6e_3, x_2=e_2, x_3=e_3, x_4=e_4, x_5=e_5$ shows that $A$ is isomorphic to an algebra with nonzero products given by (\ref{eq212,6}). Hence $A$ is isomorphic to $\ca_{140}, \ca_{141}, \ldots, \ca_{146}$ or $\ca_{147}(\alpha)$. Now let $\beta_6=0$. Then the base change $x_1=e_1, x_2=e_3, x_3=\gamma_3e_2-\alpha_4e_4, x_4=e_4, x_5=e_5$ shows that $A$ is isomorphic to an algebra with nonzero products given by (\ref{eq212,3}). Hence $A$ is isomorphic to $\ca_{140}, \ca_{141}, \ldots, \ca_{145}$ or $\ca_{146}$.
\\ \indent {\bf Case 1.2.1.2:} Let $\alpha_3\neq0$. If $\alpha_1\neq0$ then with the base change $x_1=\alpha_3e_1-\alpha_1e_2, x_2=e_2, x_3=\alpha_3\gamma_3e_3+\alpha_1\beta_6e_4, x_4=e_4, x_5=e_5$ we can make $\alpha_1=0$. So let $\alpha_1=0$. 

\indent {\bf Case 1.2.1.2.1:} Let $\beta_6=0$. Note that if $\alpha_2\neq0$ then with the base change $x_1=\gamma_3e_1-\alpha_2e_4, x_2=e_2, x_3=e_3, x_4=e_4, x_5=e_5$ we can make $\alpha_2=0$. So we can assume $\alpha_2=0$. 
\begin{itemize}
\item If $\beta_4=0$ and $\alpha_6=0$ then the base change $x_1=e_1, x_2=e_2, x_3=\frac{\alpha_3\gamma_3}{\beta_8}e_3, x_4=\alpha_3e_4+ \alpha_4e_5, x_5=\alpha_3\gamma_3e_5$ shows that $A$ is isomorphic to $\ca_{149}(0)$.
\item If $\beta_4=0$ and $\alpha_6\neq0$ then the base change $x_1=\frac{\alpha_6}{\alpha_3\gamma_3}e_1, x_2=e_2, x_3=\frac{\alpha^2_6}{\alpha_3\beta_8\gamma_3}e_3, x_4=\frac{\alpha_6}{\alpha_3\gamma_3}(\alpha_3e_4+ \alpha_4e_5), x_5=\frac{\alpha^2_6}{\alpha_3\gamma_3}e_5$ shows that $A$ is isomorphic to $\ca_{148}$.
\item If $\beta_4\neq0$ and $\alpha_6=0$ then the base change $x_1=\beta_8e_1, x_2=\beta_4e_2, x_3=\alpha_3\beta_8\gamma_3e_3, x_4=\beta_4\beta_8(\alpha_3e_4+ \alpha_4e_5), x_5=\alpha_3\beta_4\beta^2_8\gamma_3e_5$ shows that $A$ is isomorphic to $\ca_{150}(0)$.
\item If $\beta_4\neq0$ and $\alpha_6\neq0$ then the base change $x_1=\frac{\alpha_6}{\alpha_3\gamma_3}e_1, x_2=\frac{\alpha_6\beta_4}{\alpha_3\beta_8\gamma_3}e_2, x_3=\frac{\alpha^2_6}{\alpha_3\beta_8\gamma_3}e_3, x_4=\frac{\alpha^2_6\beta_4}{\alpha^2_3\beta_8\gamma^2_3}(\alpha_3e_4+ \alpha_4e_5), x_5=\frac{\alpha^3_6\beta_4}{\alpha^2_3\beta_8\gamma^2_3}e_5$ shows that $A$ is isomorphic to $\ca_{151}$.
\end{itemize}

\indent {\bf Case 1.2.1.2.2:} Let $\beta_6\neq0$. Without loss of generality we can assume $\alpha_6=0$ because if $\alpha_6\neq0$ then with the base change $x_1=\beta_6e_1-\alpha_6e_3, x_2=e_2, x_3=e_3, x_4=e_4, x_5=e_5$ we can make $\alpha_6=0$. Note that if $\alpha_2\neq0$ then with the base change $x_1=\gamma_3e_1-\alpha_2e_4, x_2=e_2, x_3=e_3, x_4=e_4, x_5=e_5$ we can make $\alpha_2=0$. So we can assume $\alpha_2=0$. 
\begin{itemize}
\item If $\beta_4=0$ then the base change $x_1=e_1, x_2=e_2, x_3=\frac{\alpha_3\gamma_3}{\beta_8}e_3, x_4=\alpha_3e_4+ \alpha_4e_5, x_5=\alpha_3\gamma_3e_5$ shows that $A$ is isomorphic to $\ca_{149}(\alpha)(\alpha\in\cc\backslash\{0\})$.

\item If $\beta_4\neq0$ then the base change $x_1=\beta_8e_1, x_2=\beta_4e_2, x_3=\alpha_3\beta_8\gamma_3e_3, x_4=\beta_4\beta_8(\alpha_3e_4+ \alpha_4e_5), x_5=\alpha_3\beta_4\beta^2_8\gamma_3e_5$ shows that $A$ is isomorphic to $\ca_{150}(\alpha)(\alpha\in\cc\backslash\{0\})$.

\end{itemize}

\indent {\bf Case 1.2.2:} Let $\alpha_8\neq0$. If $\beta_6+\beta_8\neq0$ then the base change $x_1=e_1, x_2=(\beta_6+\beta_8)e_2-\alpha_8e_3, x_3=e_3, x_4=e_4, x_5=e_5$ shows that $A$ is isomorphic to an algebra with nonzero products given by (\ref{eq212,5}). Hence $A$ is isomorphic to $\ca_{140}, \ca_{141}, \ldots, \ca_{150}(\alpha)$ or $\ca_{151}$. So let $\beta_6+\beta_8=0$. Note that if $\beta_4\neq0$ then with the base change $x_1=-\beta_6e_1-\beta_4e_2, x_2=e_2, x_3=\gamma_3e_3-\beta_4e_4, x_4=e_4, x_5=e_5$ we can make $\beta_4=0$. So let $\beta_4=0$. 
We can assume $\alpha_6=0$, because if $\alpha_6\neq0$ then with the base change $x_1=\beta_6e_1-\alpha_6e_3, x_2=e_2, x_3=e_3, x_4=e_4, x_5=e_5$ we can make $\alpha_6=0$. 

\indent {\bf Case 1.2.2.1:} Let $\alpha_3=0$. If $\alpha_4\neq0$ then with the base change $x_1=e_1, x_2=\gamma_3e_2-\alpha_4e_4, x_3=e_3, x_4=e_4, x_5=e_5$ we can make $\alpha_4=0$. So let $\alpha_4=0$. The base change $x_1=e_1, x_2=\sqrt{\frac{\alpha_1\gamma_3}{\alpha_8}}e_2, x_3=\frac{\sqrt{\alpha_1\alpha_8\gamma_3}}{\beta_6}e_3, x_4=\alpha_1e_4+\alpha_2e_5, x_5=\alpha_1\gamma_3e_5$ shows that $A$ is isomorphic to $\ca_{152}$.
\\ \indent {\bf Case 1.2.2.2:} Let $\alpha_3\neq0$. Without loss of generality we can assume $\alpha_4=0$ because if $\alpha_4\neq0$ then with the base change $x_1=e_1, x_2=\gamma_3e_2-\alpha_4e_4, x_3=e_3, x_4=e_4, x_5=e_5$ we can make $\alpha_4=0$. Similarly if $\alpha_2\neq0$ then with the base change $x_1=\gamma_3e_1-\alpha_2e_4, x_2=e_2, x_3=e_3, x_4=e_4, x_5=e_5$ we can make $\alpha_2=0$. So we can assume $\alpha_2=0$. 
\begin{itemize}
\item If  $\alpha_1=0$ then the base change $x_1=\sqrt{\frac{\alpha_8\beta_6}{\alpha_3\gamma_3}}e_1, x_2=\beta_6e_2, x_3=\alpha_8e_3, x_4=\sqrt{\frac{\alpha_8\beta_6}{\alpha_3\gamma_3}}\alpha_3\beta_6e_4, x_5=\alpha_8\beta^2_6e_5$ shows that $A$ is isomorphic to $\ca_{153}$.
\item If $\alpha_1\neq0$ then the base change $x_1=\frac{\alpha_1\alpha_8}{\alpha^2_3\gamma_3}e_1, x_2=\frac{\alpha^2_1\alpha_8}{\alpha^3_3\gamma_3}e_2, x_3=\frac{\alpha^2_1\alpha^2_8}{\alpha^3_3\beta_6\gamma_3}e_3, x_4=\frac{\alpha^3_1\alpha^2_8}{\alpha^4_3\gamma^2_3}e_4, x_5=\frac{\alpha^4_1\alpha^3_8}{\alpha^6_3\gamma^2_3}e_5$ shows that $A$ is isomorphic to $\ca_{154}$.
\end{itemize}

\indent {\bf Case 2:} Let $\gamma_2\neq0$.
\\ \indent {\bf Case 2.1:} Let $\alpha_3=0$. If $\alpha_4\neq0$ then with the base change $x_1=e_1, x_2=\gamma_3e_2-\alpha_4e_4, x_3=e_3, x_4=e_4, x_5=e_5$ we can make $\alpha_4=0$. So let $\alpha_4=0$. If $\alpha_8=0$[resp. $\alpha_8\neq0$] then the base change $x_1=e_1, x_2=e_3, x_3=e_2, x_4=e_4, x_5=e_5$[resp. $x_1=e_1, x_2=e_2, x_3=e_2+xe_3, x_4=e_4, x_5=e_5$ (where $x^2\gamma_2+x(\beta_6+\beta_8)+\alpha_8=0$)] shows that $A$ is isomorphic to an algebra with nonzero products given by (\ref{eq212,2}). Hence $A$ is isomorphic to $\ca_{140}, \ca_{141}, \ldots ,\ca_{153}$ or $\ca_{154}$.

\indent {\bf Case 2.2:} Let $\alpha_3\neq0$. If $\alpha_1\neq0$ then with the base change $x_1=\alpha_3e_1-\alpha_1e_2, x_2=e_2, x_3=\gamma_3e_3+\beta_6e_4, x_4=e_4, x_5=e_5$ we can make $\alpha_1=0$. So let $\alpha_1=0$. Note that if $\alpha_2\neq0$ then with the base change $x_1=\gamma_3e_1-\alpha_2e_4, x_2=e_2, x_3=e_3, x_4=e_4, x_5=e_5$ we can make $\alpha_2=0$. So let $\alpha_2=0$. If $\beta_4\neq0$ then with the base change $x_1=\gamma_2e_1-\beta_4e_3, x_2=e_2, x_3=\gamma_3e_3+\beta_4e_4, x_4=e_4, x_5=e_5$ we can make $\beta_4=0$. Then assume $\beta_4=0$.
\begin{itemize}
\item If $\alpha_6=0, \alpha_8=0$ and $\beta_6=0$ then the base change $x_1=e_1, x_2=\frac{\gamma_2}{\alpha_3\gamma_3}e_2, x_3=e_3, x_4=\frac{\gamma_2}{\alpha_3\gamma_3}(\alpha_3e_4+ \alpha_4e_5), x_5=\gamma_2e_5$ shows that $A$ is isomorphic to $\ca_{155}$.
\item If $\alpha_6=0, \alpha_8=0$ and $\beta_6\neq0$ then the base change $x_1=\frac{\beta_6}{\sqrt{\alpha_3\gamma_3}}e_1, x_2=\gamma_2e_2, x_3=\beta_6e_3, x_4=\frac{\beta_6\gamma_2}{\sqrt{\alpha_3\gamma_3}}(\alpha_3e_4+ \alpha_4e_5), x_5=\beta^2_6\gamma_2e_5$ shows that $A$ is isomorphic to $\ca_{156}$.
\item If $\alpha_6=0$ and $\alpha_8\neq0$ then the base change $x_1=e_1, x_2=\frac{\alpha_3\gamma_3}{\alpha_8}e_2, x_3=\frac{\alpha_3\gamma_3}{\sqrt{\alpha_8\gamma_2}}e_3, x_4=\frac{\alpha_3\gamma_3}{\alpha_8}(\alpha_3e_4+ \alpha_4e_5), x_5=\frac{\alpha^2_3\gamma^2_3}{\alpha_8}e_5$ shows that $A$ is isomorphic to $\ca_{157}(\alpha)$.
\item If $\alpha_6\neq0, \alpha_8=0$ and $\beta_6=0$ then the base change $x_1=\frac{\alpha_6}{\alpha_3\gamma_3}e_1, x_2=\frac{\alpha_3\gamma_2\gamma_3}{\alpha^2_6}e_2, x_3=e_3, x_4=\frac{\gamma_2}{\alpha_6}(\alpha_3e_4+ \alpha_4e_5), x_5=\gamma_2e_5$ shows that $A$ is isomorphic to $\ca_{158}$.
\item If $\alpha_6\neq0, \alpha_8=0$ and $\beta_6\neq0$ then the base change $x_1=\frac{\alpha_6}{\alpha_3\gamma_3}e_1, x_2=\frac{\alpha^2_6\gamma_2}{\alpha_3\beta^2_6\gamma_3}e_2, x_3=\frac{\alpha^2_6}{\alpha_3\beta_6\gamma_3}e_3, x_4=\frac{\alpha^3_6\gamma_2}{(\alpha_3\beta_6\gamma_3)^2}(\alpha_3e_4+ \alpha_4e_5), x_5=\frac{\alpha^4_6\gamma_2}{(\alpha_3\beta_6\gamma_3)^2}e_5$ shows that $A$ is isomorphic to $\ca_{159}$.
\item If $\alpha_6\neq0$ and $\alpha_8\neq0$ then the base change $x_1=\frac{\alpha_6}{\alpha_3\gamma_3}e_1, x_2=\frac{\alpha^2_6}{\alpha_3\alpha_8\gamma_3}e_2, x_3=\frac{\alpha^2_6}{\alpha_3\gamma_3\sqrt{\alpha_8\gamma_2}}e_3, x_4=\frac{\alpha^3_6}{\alpha^2_3\alpha_8\gamma^2_3}(\alpha_3e_4+ \alpha_4e_5), x_5=\frac{\alpha^4_6}{\alpha^2_3\alpha_8\gamma^2_3}e_5$ shows that $A$ is isomorphic to $\ca_{160}(\alpha)$.
\end{itemize}
\end{proof}

\begin{rmk}
\
\begin{scriptsize}
\begin{enumerate}
\item If $\alpha_1, \alpha_2\in\cc\backslash\{0\}$ such that $\alpha_1\neq\alpha_2$, then $\ca_{147}(\alpha_1)$ and $\ca_{147}(\alpha_2)$ are isomorphic if and only if $\alpha_2=\frac{1}{\alpha_1}$.
\item If $\alpha_1, \alpha_2\in\cc$ such that $\alpha_1\neq\alpha_2$, then $\ca_{149}(\alpha_1)$ and $\ca_{149}(\alpha_2)$ are not isomorphic. 
\item If $\alpha_1, \alpha_2\in\cc$ such that $\alpha_1\neq\alpha_2$, then $\ca_{150}(\alpha_1)$ and $\ca_{150}(\alpha_2)$ are not isomorphic. 
\item If $\alpha_1, \alpha_2\in\cc$ such that $\alpha_1\neq\alpha_2$, then $\ca_{157}(\alpha_1)$ and $\ca_{157}(\alpha_2)$ are isomorphic if and only if $\alpha_2=-\alpha_1$.
\item If $\alpha_1, \alpha_2\in\cc$ such that $\alpha_1\neq\alpha_2$, then $\ca_{160}(\alpha_1)$ and $\ca_{160}(\alpha_2)$ are isomorphic if and only if $\alpha_2=-\alpha_1$.
\end{enumerate}
\end{scriptsize}
\end{rmk}

Let $\dim(A^2)=2$ and $A^3=0$. Then we have $A^2=Z(A)$.

\begin{thm}   \label{thm202} Let $A$ be a $5-$dimensional non-split non-Lie nilpotent Leibniz algebra with $\dim(A^2)=2=\dim(Leib(A))$ and $A^3=0$. Then $A$ is isomorphic to a Leibniz algebra spanned by $\{x_1, x_2, x_3, x_4, x_5\}$ with the nonzero products given by one of  the following:
\begin{scriptsize}
\begin{description}
\item[$\ca_{161}$] $[x_1, x_1]=x_5, [x_1, x_2]=x_4, [x_3, x_3]=x_5$.
\item[$\ca_{162}$] $[x_1, x_1]=x_5, [x_1, x_2]=x_4, [x_2, x_1]=x_5, [x_3, x_3]=x_5$.
\item[$\ca_{163}(\alpha)$]  $[x_1, x_1]=x_5, [x_1, x_2]=x_4, [x_2, x_1]=\alpha x_5, [x_2, x_2]=x_5, [x_3, x_3]=x_5, \quad \alpha\in\cc$.
\item[$\ca_{164}(\alpha)$]  $[x_1, x_2]=x_4, [x_2, x_1]=\alpha x_4+x_5, [x_3, x_3]=x_5, \quad \alpha\in\cc\backslash\{-1\}$.
\item[$\ca_{165}(\alpha)$]  $[x_1, x_2]=x_4, [x_2, x_1]=\alpha x_4, [x_2, x_2]=x_5, [x_3, x_3]=x_5, \quad \alpha\in\cc\backslash\{-1\}$.
\item[$\ca_{166}(\alpha)$]  $[x_1, x_2]=x_4, [x_2, x_1]=\alpha x_4+x_5, [x_2, x_2]=x_5, [x_3, x_3]=x_5, \quad \alpha\in\cc\backslash\{-1\}$.
\item[$\ca_{167}(\alpha, \beta)$]  $[x_1, x_1]=x_5, [x_1, x_2]=x_4, [x_2, x_1]=\alpha x_4+\beta x_5, [x_2, x_2]=x_5, [x_3, x_3]=x_5, \quad \alpha\in\cc\backslash\{-1,0\}, \beta\in\cc$.
\item[$\ca_{168}$] $[x_1, x_2]=x_5, [x_2, x_2]=x_4, [x_3, x_3]=x_5$.
\item[$\ca_{169}(\alpha)$] $[x_1, x_2]=\alpha x_5, [x_2, x_1]=x_5, [x_2, x_2]=x_4, [x_3, x_3]=x_5, \quad \alpha\in\cc$.
\item[$\ca_{170}$] $[x_1, x_1]=x_5, [x_1, x_2]=x_5, [x_2, x_2]=x_4, [x_3, x_3]=x_5$.
\item[$\ca_{171}$] $[x_1, x_2]=x_4+x_5=-[x_2, x_1], [x_2, x_2]=x_4, [x_3, x_3]=x_5$.
\item[$\ca_{172}$] $[x_1, x_2]=x_4+x_5, [x_2, x_1]=-x_4, [x_2, x_2]=x_4, [x_3, x_3]=x_5$.
\item[$\ca_{173}(\alpha, \beta)$] $[x_1, x_2]=x_4+\alpha x_5, [x_2, x_1]=-x_4+\beta x_5, [x_2, x_2]=x_4, [x_3, x_3]=x_5, \quad \alpha, \beta\in\cc$.
\item[$\ca_{174}$] $[x_1, x_2]=x_4, [x_2, x_3]=x_5$.
\item[$\ca_{175}$] $[x_1, x_1]=x_5, [x_1, x_2]=x_4, [x_2, x_3]=x_5$.
\item[$\ca_{176}(\alpha)$] $[x_1, x_2]=\alpha x_4, [x_2, x_1]=x_4, [x_2, x_3]=x_5, \quad \alpha\in\cc\backslash\{-1\}$.
\item[$\ca_{177}(\alpha)$] $[x_1, x_1]=x_5, [x_1, x_2]=\alpha x_4, [x_2, x_1]=x_4, [x_2, x_3]=x_5, \quad \alpha\in\cc\backslash\{-1\}$.
\item[$\ca_{178}$] $[x_1, x_1]=x_4, [x_1, x_2]=x_5, [x_2, x_3]=x_5$.
\item[$\ca_{179}$] $[x_1, x_1]=x_4, [x_1, x_2]=x_4, [x_2, x_3]=x_5$.
\item[$\ca_{180}$] $[x_1, x_1]=x_4, [x_1, x_2]=x_4+x_5, [x_2, x_3]=x_5$.
\item[$\ca_{181}$] $[x_1, x_1]=x_4, [x_1, x_2]=x_4+x_5, [x_2, x_1]=x_4, [x_2, x_3]=x_5$.
\item[$\ca_{182}(\alpha)$] $[x_1, x_1]=x_4, [x_1, x_2]=\alpha x_4, [x_2, x_1]=x_4, [x_2, x_3]=x_5, \quad \alpha\in\cc\backslash\{0, 1\}$.
\item[$\ca_{183}(\alpha)$] $[x_1, x_1]=x_4, [x_1, x_2]=\alpha x_4+x_5, [x_2, x_1]=x_4, [x_2, x_3]=x_5, \quad \alpha\in\cc\backslash\{0, 1\}$.
\item[$\ca_{184}(\alpha)$] $[x_1, x_2]=x_4, [x_2, x_2]=\alpha x_5, [x_2, x_3]=x_5, [x_3, x_3]=x_5, \quad \alpha\in\cc$.
\item[$\ca_{185}(\alpha)$] $[x_1, x_2]=x_4, [x_2, x_1]=x_5, [x_2, x_2]=\alpha x_5, [x_2, x_3]=x_5, [x_3, x_3]=x_5, \quad \alpha\in\cc$.
\item[$\ca_{186}(\alpha, \beta)$] $[x_1, x_1]=x_5, [x_1, x_2]=x_4, [x_2, x_1]=\alpha x_5, [x_2, x_2]=\beta x_5, [x_2, x_3]=x_5, [x_3, x_3]=x_5, \quad \alpha, \beta\in\cc$.
\item[$\ca_{187}(\alpha, \beta)$] $[x_1, x_2]=\alpha x_4, [x_2, x_1]=x_4, [x_2, x_2]=\beta x_5, [x_2, x_3]=x_5, [x_3, x_3]=x_5, \quad \alpha\in\cc\backslash\{-1, 0\}, \beta\in\cc$.
\item[$\ca_{188}(\alpha, \beta)$] $[x_1, x_2]=\alpha x_4+x_5, [x_2, x_1]=x_4, [x_2, x_2]=\beta x_5, [x_2, x_3]=x_5, [x_3, x_3]=x_5, \quad  \alpha\in\cc\backslash\{-1\}, \beta\in\cc$.
\item[$\ca_{189}(\alpha, \beta, \gamma)$] $[x_1, x_1]=x_5, [x_1, x_2]=\alpha x_4+\beta x_5, [x_2, x_1]=x_4, [x_2, x_2]=\gamma x_5, [x_2, x_3]=x_5, [x_3, x_3]=x_5, \quad  \alpha\in\cc\backslash\{-1\}, \beta, \gamma\in\cc$.
\item[$\ca_{190}(\alpha)$] $[x_1, x_1]=x_4, [x_1, x_2]=x_5, [x_2, x_2]=\alpha x_5, [x_2, x_3]=x_5, [x_3, x_3]=x_5, \quad \alpha\in\cc\backslash\{0\}$.
\item[$\ca_{191}$] $[x_1, x_1]=x_4, [x_2, x_1]=x_5, [x_2, x_3]=x_5, [x_3, x_3]=x_5$.
\item[$\ca_{192}(\alpha, \beta, \gamma)$] $[x_1, x_1]=x_4, [x_1, x_2]=x_4+\alpha x_5, [x_2, x_1]=\beta x_5, [x_2, x_2]=\gamma x_5, [x_2, x_3]=x_5, [x_3, x_3]=x_5, \quad  \alpha, \beta, \gamma\in\cc$.
\item[$\ca_{193}(\alpha, \beta, \gamma)$] $[x_1, x_1]=x_4, [x_1, x_2]=x_4+\alpha x_5, [x_2, x_1]=x_4+\beta x_5, [x_2, x_2]=\gamma x_5, [x_2, x_3]=x_5, [x_3, x_3]=x_5, \quad  \alpha, \beta, \gamma\in\cc, (\alpha, \beta)\neq(0, 0)$.
\item[$\ca_{194}(\alpha, \beta, \gamma, \theta)$] $[x_1, x_1]=x_4, [x_1, x_2]=\alpha x_4+\beta x_5, [x_2, x_1]=x_4+\gamma x_5, [x_2, x_2]=\theta x_5, [x_2, x_3]=x_5, [x_3, x_3]=x_5, \quad  \alpha\in\cc\backslash\{0, 1\}, \beta, \gamma, \theta\in\cc, (\beta, \gamma, \theta)\neq(0, 0, 0)$.
\item[$\ca_{195}$] $[x_1, x_2]=x_5, [x_2, x_2]=x_4, [x_2, x_3]=x_5$.
\item[$\ca_{196}$] $[x_1, x_2]=x_4=-[x_2, x_1], [x_2, x_2]=x_4, [x_2, x_3]=x_5$.
\item[$\ca_{197}$] $[x_1, x_1]=x_5, [x_1, x_2]=x_4=-[x_2, x_1], [x_2, x_2]=x_4, [x_2, x_3]=x_5$.
\item[$\ca_{198}$] $[x_1, x_1]=x_5, [x_2, x_1]=ix_5, [x_2, x_2]=x_4, [x_2, x_3]=x_5, [x_3, x_3]=x_5$.
\item[$\ca_{199}(\alpha, \beta)$] $[x_1, x_1]=\alpha x_5, [x_1, x_2]=x_4, [x_2, x_1]=-x_4+\beta x_5, [x_2, x_2]=x_4, [x_2, x_3]=x_5, [x_3, x_3]=x_5, \quad \alpha, \beta\in\cc$.
\item[$\ca_{200}(\alpha)$] $[x_1, x_1]=x_5, [x_2, x_1]=x_4, [x_2, x_3]=\alpha x_5, [x_3, x_2]=x_5, \quad \alpha\in\cc$.
\item[$\ca_{201}$] $[x_1, x_1]=x_5, [x_2, x_1]=x_4, [x_2, x_2]=x_5, [x_2, x_3]=x_5=-[x_3, x_2]$.
\item[$\ca_{202}(\alpha)$] $[x_2, x_1]=x_4, [x_1, x_3]=x_5, [x_2, x_3]=\alpha x_5, [x_3, x_2]=x_5, \quad \alpha\in\cc$.
\item[$\ca_{203}(\alpha)$] $[x_2, x_1]=x_4, [x_2, x_2]=x_5, [x_1, x_3]=x_5, [x_2, x_3]=\alpha x_5, [x_3, x_2]=x_5, \quad \alpha\in\cc$.
\item[$\ca_{204}(\alpha, \beta)$] $[x_1, x_2]=x_4, [x_2, x_1]=\alpha x_4, [x_2, x_3]=\beta x_5, [x_3, x_2]=x_5, \quad \alpha, \beta\in\cc\backslash\{-1\}$.
\item[$\ca_{205}(\alpha, \beta)$] $[x_1, x_1]=x_5, [x_1, x_2]=x_4, [x_2, x_1]=\alpha x_4, [x_2, x_3]=\beta x_5, [x_3, x_2]=x_5, \quad \alpha\in\cc\backslash\{-1\}, \beta\in\cc$.
\item[$\ca_{206}(\alpha)$] $[x_1, x_2]=x_4, [x_2, x_1]=\alpha x_4+x_5, [x_2, x_3]=\alpha x_5, [x_3, x_2]=x_5, \quad \alpha\in\cc\backslash\{-1\}$.
\item[$\ca_{207}(\alpha)$] $[x_1, x_1]=x_5, [x_1, x_2]=x_4, [x_2, x_1]=\alpha x_4, [x_2, x_3]=\alpha x_5, [x_3, x_2]=x_5, \quad \alpha\in\cc\backslash\{-1\}$.

\item[$\ca_{208}(\alpha, \beta)$] $[x_1, x_2]=x_4, [x_2, x_1]=\alpha x_4, [x_1, x_3]=x_5, [x_2, x_3]=\beta x_5, [x_3, x_2]=x_5, \quad \alpha\in\cc\backslash\{-1\}, \beta\in\cc$.
\item[$\ca_{209}(\alpha, \beta)$] $[x_1, x_2]=x_4, [x_2, x_1]=\alpha x_4+x_5, [x_1, x_3]=x_5, [x_2, x_3]=\beta x_5, [x_3, x_2]=x_5, \quad \alpha, \beta\in\cc\backslash\{-1\}$.
\item[$\ca_{210}(\alpha)$] $[x_1, x_2]=x_4, [x_2, x_1]=\alpha x_4, [x_2, x_2]=x_5, [x_2, x_3]=x_5=-[x_3, x_2], \quad \alpha\in\cc\backslash\{-1\}$.
\item[$\ca_{211}(\alpha)$] $[x_1, x_1]=x_5, [x_1, x_2]=x_4, [x_2, x_1]=\alpha x_4, [x_2, x_2]=x_5, [x_2, x_3]=x_5=-[x_3, x_2], \quad \alpha\in\cc\backslash\{-1\}$.
\item[$\ca_{212}(\alpha)$] $[x_1, x_2]=x_4, [x_2, x_1]=\alpha x_4, [x_2, x_2]=x_5, [x_1, x_3]=x_5, [x_2, x_3]=x_5=-[x_3, x_2], \quad \alpha\in\cc\backslash\{-1\}$.
\item[$\ca_{213}(\alpha)$] $[x_1, x_1]=x_4, [x_1, x_3]=x_5, [x_2, x_3]=\alpha x_5, [x_3, x_2]=x_5, \quad \alpha\in\cc\backslash\{0, 1\}$.
\item[$\ca_{214}$] $[x_1, x_1]=x_4, [x_2, x_1]=x_5, [x_1, x_3]=x_5, [x_3, x_2]=x_5$.
\item[$\ca_{215}(\alpha, \beta)$] $[x_1, x_1]=x_4, [x_2, x_1]=x_4, [x_1, x_3]=\alpha x_5, [x_2, x_3]=\beta x_5, [x_3, x_2]=x_5, \quad \alpha\in\cc\backslash\{0\}, \beta\in\cc$.
\item[$\ca_{216}$] $[x_1, x_1]=x_4, [x_2, x_1]=x_4+x_5, [x_2, x_3]=x_5=-[x_3, x_2]$.
\item[$\ca_{217}(\alpha, \beta)$] $[x_1, x_1]=x_4, [x_2, x_1]=x_4+x_5, [x_1, x_3]=\alpha x_5, [x_2, x_3]=\beta x_5, [x_3, x_2]=x_5, \quad \alpha\in\cc,\beta\in\cc\backslash\{-1\}$.

\item[$\ca_{218}(\alpha, \beta, \gamma)$] $[x_1, x_1]=x_4, [x_1, x_2]=x_4, [x_2, x_1]=\alpha x_4, [x_1, x_3]=\beta x_5, [x_2, x_3]=\gamma x_5, [x_3, x_2]=x_5, \quad \alpha, \beta\in\cc\backslash\{0\}, \gamma\in\cc$.
\item[$\ca_{219}$] $[x_1, x_1]=x_4, [x_1, x_2]=x_4+x_5, [x_1, x_3]=x_5, [x_3, x_2]=x_5$.
\item[$\ca_{220}(\alpha, \beta)$] $[x_1, x_1]=x_4, [x_1, x_2]=x_4+x_5, [x_2, x_1]=\alpha x_4, [x_1, x_3]=(1+\beta) x_5, [x_2, x_3]=\beta x_5, [x_3, x_2]=x_5, \quad \alpha, \beta\in\cc, \alpha(1+\beta)\neq\beta$.
\item[$\ca_{221}(\alpha, \beta)$] $[x_1, x_1]=x_4, [x_1, x_2]=x_4+x_5, [x_2, x_1]=x_4, [x_1, x_3]=\alpha x_5, [x_2, x_3]=\beta x_5, [x_3, x_2]=x_5, \quad \alpha, \beta\in\cc, \alpha\neq1+\beta, \alpha\neq\beta$.
\item[$\ca_{222}(\alpha, \beta)$] $[x_1, x_1]=x_4, [x_1, x_2]=x_4+x_5, [x_2, x_1]=\alpha x_4, [x_1, x_3]=\beta x_5, [x_3, x_2]=x_5, \quad \alpha, \beta\in\cc\backslash\{0 ,1\}$.
\item[$\ca_{223}(\alpha, \beta, \gamma)$] $[x_1, x_1]=x_4, [x_1, x_2]=x_4+x_5, [x_2, x_1]=\alpha x_4, [x_1, x_3]=\beta x_5, [x_2, x_3]=\gamma x_5, [x_3, x_2]=x_5, \quad \alpha\in\cc\backslash\{0 ,1\}, \beta\in\cc, \gamma\in\cc\backslash\{0\}, \beta\neq1+\gamma, \alpha\beta\neq\gamma, (\alpha+1)\beta\neq\gamma$.

\item[$\ca_{224}(\alpha)$] $[x_1, x_1]=x_4, [x_1, x_2]=x_4+\frac{1}{\alpha}x_5, [x_2, x_1]=x_5, [x_2, x_3]=\alpha x_5, [x_3, x_2]=x_5, \quad \alpha\in\cc\backslash\{-1, 0\}$.
\item[$\ca_{225}$] $[x_1, x_1]=x_4, [x_1, x_2]=x_4, [x_2, x_1]=x_4+x_5, [x_3, x_2]=x_5$.
\item[$\ca_{226}(\alpha)$] $[x_1, x_1]=x_4, [x_1, x_2]=x_4, [x_2, x_1]=x_4+x_5, [x_1, x_3]=\alpha x_5, [x_2, x_3]=\alpha x_5, [x_3, x_2]=x_5, \quad \alpha\in\cc\backslash\{0\}$.
\item[$\ca_{227}(\alpha)$] $[x_1, x_1]=x_4, [x_1, x_2]=x_4, [x_2, x_1]=\alpha x_4+x_5, [x_3, x_2]=x_5, \quad \alpha\in\cc\backslash\{0, 1\}$.
\item[$\ca_{228}(\alpha)$] $[x_1, x_1]=x_4, [x_1, x_2]=x_4, [x_2, x_1]=\frac{\alpha}{1+\alpha} x_4+x_5, [x_1, x_3]=(1+\alpha)x_5, [x_2, x_3]=\alpha x_5, [x_3, x_2]=x_5, \quad \alpha\in\cc\backslash\{-1, 0, 1\}$.

\item[$\ca_{229}$] $[x_1, x_1]=x_4, [x_2, x_1]=x_5, [x_2, x_2]=x_5, [x_2, x_3]=x_5=-[x_3, x_2]$.
\item[$\ca_{230}(\alpha)$] $[x_1, x_1]=x_4, [x_2, x_1]=x_4+\alpha x_5, [x_2, x_2]=x_5, [x_2, x_3]=x_5=-[x_3, x_2], \quad \alpha\in\cc\backslash\{0, 1\}$.
\item[$\ca_{231}(\alpha)$] $[x_1, x_1]=x_4, [x_2, x_1]=x_4+\alpha x_5, [x_2, x_2]=x_5, [x_1, x_3]=x_5, [x_2, x_3]=x_5=-[x_3, x_2], \quad \alpha\in\cc\backslash\{0\}$.
\item[$\ca_{232}(\alpha)$] $[x_1, x_1]=x_4, [x_1, x_2]=x_4, [x_2, x_1]=x_4+\alpha x_5, [x_2, x_2]=x_5, [x_2, x_3]=x_5=-[x_3, x_2], \quad \alpha\in\cc\backslash\{0, \frac{1}{2}\}$.
\item[$\ca_{233}(\alpha)$] $[x_1, x_1]=x_4, [x_1, x_2]=x_4, [x_2, x_1]=x_4+\alpha x_5, [x_2, x_2]=x_5, [x_1, x_3]=x_5, [x_2, x_3]=x_5=-[x_3, x_2], \quad \alpha\in\cc\backslash\{\frac{1}{2}\}$.
\item[$\ca_{234}(\alpha, \beta)$] $[x_1, x_1]=x_4, [x_1, x_2]=x_4, [x_2, x_1]=\alpha x_4+\beta x_5, [x_2, x_2]=x_5, [x_2, x_3]=x_5=-[x_3, x_2], \quad \alpha\in\cc\backslash\{0, 1\}, \beta\in\cc$.
\item[$\ca_{235}(\alpha)$] $[x_1, x_1]=x_4, [x_1, x_2]=x_4, [x_2, x_1]=\frac{\alpha-1}{\alpha}x_4, [x_2, x_2]=x_5, [x_1, x_3]=\alpha x_5, [x_2, x_3]=x_5=-[x_3, x_2], \quad \alpha\in\cc\backslash\{0, 1\}$.

\item[$\ca_{236}(\alpha)$] $[x_1, x_1]=x_5, [x_1, x_2]=x_4=-[x_2, x_1], [x_2, x_2]=x_4, [x_2, x_3]=\alpha x_5, [x_3, x_2]=x_5, \quad \alpha\in\cc$.
\item[$\ca_{237}(\alpha)$] $[x_1, x_2]=x_4=-[x_2, x_1], [x_2, x_2]=x_4, [x_1, x_3]=\alpha x_5, [x_3, x_2]=x_5, \quad \alpha\in\cc\backslash\{0\}$.
\item[$\ca_{238}(\alpha)$] $[x_1, x_2]=x_4, [x_2, x_1]=-x_4+x_5, [x_2, x_2]=x_4, [x_1, x_3]=\alpha x_5, [x_3, x_2]=x_5, \quad \alpha\in\cc\backslash\{0, 1\}$.
\item[$\ca_{239}$] $[x_1, x_1]=x_5, [x_1, x_2]=x_4=-[x_2, x_1], [x_2, x_2]=x_4, [x_1, x_3]=x_5, [x_3, x_2]=x_5$.
\item[$\ca_{240}$] $[x_1, x_2]=x_4, [x_2, x_1]=x_5, [x_2, x_3]=x_4$.
\item[$\ca_{241}(\alpha, \beta)$] $[x_1, x_2]=x_4+\alpha x_5, [x_2, x_1]=\beta x_4, [x_2, x_2]=x_5, [x_1, x_3]=x_5, [x_2, x_3]=x_4,  \quad \alpha\in\cc, \beta\in\cc\backslash\{0\}$.
\item[$\ca_{242}(\alpha)$] $[x_1, x_2]=x_4, [x_2, x_1]=\alpha x_4+(\alpha+1)x_5, [x_2, x_3]=x_4, [x_3, x_2]=x_5, \quad \alpha\in\cc$.
\item[$\ca_{243}$] $[x_1, x_2]=x_4+2x_5, [x_2, x_1]=-x_5, [x_2, x_2]=x_5, [x_2, x_3]=x_4, [x_3, x_2]=x_5$.
\item[$\ca_{244}(\alpha, \beta)$] $[x_1, x_2]=x_4, [x_2, x_1]=\alpha x_5, [x_2, x_2]=\beta x_5, [x_1, x_3]=x_5, [x_2, x_3]=x_4, [x_3, x_2]=x_5, \quad \alpha, \beta\in\cc$.
\item[$\ca_{245}(\alpha, \beta, \gamma)$] $[x_1, x_2]=x_4+\alpha x_5, [x_2, x_1]=\beta x_4, [x_2, x_2]=\gamma x_5, [x_1, x_3]=x_5, [x_2, x_3]=x_4, [x_3, x_2]=x_5, \quad \alpha, \gamma\in\cc, \beta\in\cc\backslash\{0\}$.
\item[$\ca_{246}$] $[x_1, x_2]=x_4, [x_1, x_3]=x_5, [x_3, x_1]=x_5, [x_2, x_3]=x_4$.
\item[$\ca_{246}$] $[x_1, x_2]=x_4+x_5, [x_1, x_3]=-x_5, [x_3, x_1]=x_5, [x_2, x_3]=x_4$.
\item[$\ca_{247}(\alpha, \beta)$] $[x_1, x_2]=x_4+x_5, [x_2, x_1]=\alpha x_4, [x_1, x_3]=\beta x_5, [x_3, x_1]=x_5, [x_2, x_3]=x_4, \quad \alpha\in\cc, \beta\in\cc\backslash\{-1\}$.
\item[$\ca_{248}(\alpha, \beta)$] $[x_1, x_2]=x_4+\alpha x_5, [x_2, x_1]=\beta x_4+x_5, [x_3, x_1]=x_5, [x_2, x_3]=x_4, \quad \alpha\in\cc\backslash\{0\}, \beta\in\cc, \alpha\beta\neq1$.
\item[$\ca_{249}(\alpha, \beta, \gamma)$] $[x_1, x_2]=x_4+\alpha x_5, [x_2, x_1]=\beta x_4+x_5, [x_1, x_3]=\gamma x_5, [x_3, x_1]=x_5, [x_2, x_3]=x_4, \quad \alpha, \beta\in\cc, \gamma\in\cc\backslash\{0\}, \alpha\neq\gamma$.
\item[$\ca_{250}(\alpha)$] $[x_1, x_2]=x_4+\alpha x_5, [x_2, x_1]=-x_4+\frac{-1-\alpha^2}{\alpha}x_5, [x_2, x_2]=x_5, [x_3, x_1]=x_5, [x_2, x_3]=x_4, \quad \alpha\in\cc\backslash\{0\}$.
\item[$\ca_{251}(\alpha, \beta, \gamma)$] $[x_1, x_2]=x_4+\alpha x_5, [x_2, x_1]=\beta x_4+\gamma x_5, [x_2, x_2]=x_5, [x_3, x_1]=x_5, [x_2, x_3]=x_4, \quad \alpha, \beta, \gamma\in\cc, \alpha\gamma-\alpha^2\beta+1\neq0$.
\item[$\ca_{252}(\alpha, \beta, \gamma)$] $[x_1, x_2]=x_4+\alpha x_5, [x_2, x_1]=\beta x_4, [x_2, x_2]=x_5, [x_1, x_3]=\gamma x_5, [x_3, x_1]=x_5, [x_2, x_3]=x_4, \quad \alpha, \beta\in\cc, \gamma\in\cc\backslash\{0\}$.
\item[$\ca_{253}(\alpha, \beta)$] $[x_1, x_2]=x_4-\frac{2}{\alpha} x_5, [x_2, x_1]=\beta x_4+\alpha x_5, [x_2, x_2]=x_5, [x_1, x_3]=-\frac{1}{\alpha^2}x_5, [x_3, x_1]=x_5, [x_2, x_3]=x_4, \quad \alpha\in\cc\backslash\{-1, 0, 1\}, \beta\in\cc\backslash\{0\}$.
\item[$\ca_{254}(\alpha, \beta, \gamma)$] $[x_1, x_2]=x_4+2\beta\gamma x_5, [x_2, x_1]=\alpha x_4+\beta x_5, [x_2, x_2]=x_5, [x_1, x_3]=\gamma x_5, [x_3, x_1]=x_5, [x_2, x_3]=x_4, \quad \alpha\in\cc, \beta, \gamma\in\cc\backslash\{0\}, \beta^2\gamma\neq-1$.
\item[$\ca_{255}(\alpha, \beta)$] $[x_1, x_2]=x_4+\alpha x_5, [x_2, x_1]=\beta x_5, [x_2, x_2]=x_5, [x_1, x_3]=-x_5, [x_3, x_1]=x_5, [x_2, x_3]=x_4, \quad \alpha\in\cc, \beta\in\cc\backslash\{0\}, \alpha\neq-2\beta, \beta^2+\alpha\beta+1\neq0$.
\item[$\ca_{256}(\alpha, \beta, \gamma, \theta)$] $[x_1, x_2]=x_4+\alpha x_5, [x_2, x_1]=\beta x_4+\gamma x_5, [x_2, x_2]=x_5, [x_1, x_3]=\theta x_5, [x_3, x_1]=x_5, [x_2, x_3]=x_4, \quad \alpha, \beta\in\cc, \gamma\in\cc\backslash\{0\}, \theta\in\cc\backslash\{-1, 0\}, \alpha\neq2\gamma\theta$.
\item[$\ca_{257}(\alpha)$] $[x_1, x_2]=x_4, [x_1, x_3]=\alpha x_5, [x_2, x_3]=x_4, [x_3, x_3]=x_5, \quad \alpha\in\cc\backslash\{0\}$.
\item[$\ca_{258}(\alpha)$] $[x_1, x_2]=x_4+x_5, [x_2, x_1]=\alpha x_4, [x_1, x_3]=-x_5, [x_3, x_1]=\alpha x_5, [x_2, x_3]=x_4, [x_3, x_3]=x_5, \quad \alpha\in\cc\backslash\{0\}$.
\item[$\ca_{259}(\alpha, \beta)$] $[x_1, x_2]=x_4+x_5, [x_2, x_1]=\alpha x_4, [x_1, x_3]=\beta x_5, [x_3, x_1]=\alpha x_5, [x_2, x_3]=x_4, [x_3, x_3]=x_5, \quad \alpha\in\cc\backslash\{0, 1\}, \beta\in\cc\backslash\{-1, 0\}, \alpha\neq\beta$.
\item[$\ca_{260}(\alpha, \beta)$] $[x_1, x_2]=x_4+x_5, [x_2, x_1]=\alpha x_4, [x_1, x_3]=\alpha x_5, [x_3, x_1]=\beta x_5, [x_2, x_3]=x_4, [x_3, x_3]=x_5, \quad \alpha\in\cc\backslash\{0\}, \alpha\neq\beta$.
\item[$\ca_{261}(\alpha, \beta, \gamma)$] $[x_1, x_2]=x_4+x_5, [x_2, x_1]=\alpha x_4, [x_1, x_3]=\beta x_5, [x_3, x_1]=\gamma x_5, [x_2, x_3]=x_4, [x_3, x_3]=x_5, \quad \alpha, \beta, \gamma\in\cc, \alpha\neq\beta, \alpha\neq\gamma$.
\item[$\clr_{1}$] $[x_1, x_2]=x_4+\alpha x_5, [x_2, x_1]=\beta x_4+x_5, [x_1, x_3]=\gamma x_5, [x_3, x_1]=\theta x_5, [x_2, x_3]=x_4, [x_3, x_3]=x_5, \quad \alpha, \beta, \gamma, \theta\in\cc$.

\item[$\clr_{2}$] $[x_1, x_2]=x_4+\alpha x_5, [x_2, x_1]=\beta x_4+\gamma x_5, [x_1, x_3]=\theta x_5, [x_3, x_1]=\delta x_5, [x_2, x_3]=x_4, [x_3, x_3]=x_5, \quad \alpha, \beta, \gamma, \theta, \delta\in\cc$.

\item[$\clr_{3}$] $[x_1, x_2]=x_4+\alpha x_5, [x_2, x_1]=\beta x_4+\gamma x_5, [x_2, x_2]=\theta x_5, [x_1, x_3]=\delta x_5, [x_3, x_1]=\lambda x_5, [x_2, x_3]=x_4, [x_3, x_3]=x_5, \quad \alpha, \beta, \gamma, \theta, \delta, \lambda \in\cc$.

\item[$\clr_{4}$] $[x_1, x_1]=x_5, [x_1, x_2]=x_4+\alpha x_5, [x_2, x_1]=\beta x_5, [x_2, x_2]=\gamma x_5, [x_1, x_3]=\theta x_5=-[x_3, x_1], [x_2, x_3]=x_4, [x_3, x_2]=\delta x_5, \quad \alpha, \beta, \gamma, \theta, \delta \in\cc$.

\item[$\clr_{5}$] $[x_1, x_1]=x_4, [x_1, x_2]=\alpha x_5, [x_2, x_1]=\beta x_5, [x_2, x_2]=\gamma x_5, [x_1, x_3]=\theta x_5, [x_3, x_1]=\delta x_5, [x_2, x_3]=x_4, [x_3, x_2]=\lambda x_5, [x_3, x_3]=\mu x_5, \quad \alpha, \beta, \gamma, \theta, \delta, \lambda, \mu \in\cc$.

\item[$\clr_{6}$] $[x_1, x_1]=x_4+x_5, [x_1, x_2]=\alpha x_5, [x_2, x_1]=\beta x_5, [x_2, x_2]=\gamma x_5, [x_1, x_3]=\theta x_5=-[x_3, x_1], [x_2, x_3]=x_4, [x_3, x_2]=\delta x_5, \quad \alpha, \beta, \gamma, \theta, \delta \in\cc$.

\item[$\clr_{7}$] $[x_1, x_1]=x_4+\alpha x_5, [x_1, x_2]=\beta x_4+\gamma x_5, [x_2, x_1]=\theta x_5, [x_2, x_2]=\delta x_5, [x_1, x_3]=\lambda x_5, [x_2, x_3]=x_4=-[x_3, x_2], \quad \alpha, \beta, \gamma, \theta, \delta, \lambda \in\cc$.

\item[$\clr_{8}$] $[x_1, x_1]=x_4+\alpha x_5, [x_1, x_2]=\beta x_4+\gamma x_5, [x_2, x_1]=\theta x_5, [x_1, x_3]=\delta x_5, [x_2, x_3]=x_4+x_5, [x_3, x_2]=-x_4, \quad \alpha, \beta, \gamma, \theta, \delta \in\cc$.

\item[$\clr_{9}$] $[x_1, x_1]=x_4+\alpha x_5, [x_1, x_2]=\beta x_4+\gamma x_5, [x_2, x_2]=\theta x_5, [x_1, x_3]=\delta x_5, [x_3, x_1]=x_5, [x_2, x_3]=x_4+\lambda x_5, [x_3, x_2]=-x_4, \quad \alpha, \beta, \gamma, \theta, \delta, \lambda \in\cc$.

\item[$\clr_{10}$] $[x_1, x_1]=x_4+\alpha x_5, [x_1, x_2]=\beta x_4+\gamma x_5, [x_2, x_1]=\theta x_5, [x_1, x_3]=\delta x_5, [x_3, x_1]=\lambda x_5, [x_2, x_3]=x_4+\mu x_5, [x_3, x_2]=-x_4, [x_3, x_3]=x_5, \quad \alpha, \beta, \gamma, \theta, \delta, \lambda, \mu \in\cc$.

\item[$\clr_{11}$] $[x_1, x_1]=x_4+\alpha x_5, [x_1, x_2]=\beta x_4+\gamma x_5, [x_2, x_1]=\theta x_5, [x_2, x_2]=\delta x_5, [x_2, x_3]=\lambda x_4+\mu x_5, [x_3, x_2]=x_4, \quad \alpha, \beta, \gamma, \theta, \delta, \lambda, \mu \in\cc$.

\item[$\clr_{12}$] $[x_1, x_1]=x_4, [x_1, x_2]=\alpha x_4+\beta x_5, [x_2, x_1]=\gamma x_5, [x_2, x_2]=\theta x_5, [x_1, x_3]=x_5, [x_2, x_3]=\delta x_4+\lambda x_5, [x_3, x_2]=x_4, \quad \alpha, \beta, \gamma, \theta, \delta, \lambda \in\cc$.

\item[$\clr_{13}$] $[x_1, x_1]=x_4+\alpha x_5, [x_1, x_2]=\beta x_4+\gamma x_5, [x_2, x_1]=\theta x_5, [x_2, x_2]=\delta x_5, [x_1, x_3]=\lambda x_5, [x_3, x_1]=x_5, [x_2, x_3]=\mu x_4+\omega x_5, [x_3, x_2]=x_4, \quad \alpha, \beta, \gamma, \theta, \delta, \lambda, \mu, \omega \in\cc$.

\item[$\clr_{14}$] $[x_1, x_1]=x_4+\alpha x_5, [x_1, x_2]=\beta x_4+\gamma x_5, [x_2, x_1]=\theta x_5, [x_2, x_2]=\delta x_5, [x_1, x_3]=\lambda x_5, [x_2, x_3]=\mu x_4+\omega x_5, [x_3, x_2]=x_4, [x_3, x_3]=x_5, \quad \alpha, \beta, \gamma, \theta, \delta, \lambda, \mu, \omega \in\cc$.

\item[$\clr_{15}$] $[x_1, x_1]=\alpha x_5, [x_1, x_2]=\beta x_4+\gamma x_5, [x_2, x_1]=x_4+\theta x_5, [x_2, x_2]=\delta x_5, [x_1, x_3]=x_4+\lambda x_5, [x_3, x_1]=\mu x_5, [x_2, x_3]=\omega x_5, [x_3, x_2]=x_4, [x_3, x_3]=\varphi x_5, \quad \alpha, \beta, \gamma, \theta, \delta, \lambda, \mu, \omega, \varphi \in\cc$.

\end{description}
\end{scriptsize}
\end{thm}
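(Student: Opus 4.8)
Since $A^3=[A,A^2]=0$ and (by the hypothesis $\dim Leib(A)=\dim A^2=2$) $A^2=Leib(A)$, the identity $[[a,b]+[b,a],c]=0$ coming from the Leibniz rule gives $[Leib(A),A]=0$, hence $[A^2,A]=0=[A,A^2]$ and $A^2\subseteq Z(A)$; being non-split, $Z(A)\subseteq A^2$, so $A^2=Leib(A)=Z(A)$ (as the text preceding the theorem already records). Fix a basis $\{e_4,e_5\}$ of this ideal and extend to $\{e_1,e_2,e_3,e_4,e_5\}$. As $e_4,e_5$ are central, the only possibly nonzero products are $[e_i,e_j]$ with $i,j\in\{1,2,3\}$, each lying in $\mathrm{span}\{e_4,e_5\}$; write $[e_i,e_j]=m_{ij}e_4+n_{ij}e_5$. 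All triple products vanish, so the Leibniz identity is automatically satisfied: every pair of $3\times3$ matrices $(M,N)=((m_{ij}),(n_{ij}))$ yields a nilpotent Leibniz algebra with $A^3=0$. So the task is to classify pairs $(M,N)$ of complex $3\times3$ matrices up to the group generated by simultaneous congruence $(M,N)\mapsto(P^{\top}MP,P^{\top}NP)$, $P\in GL_3(\cc)$ (change of $e_1,e_2,e_3$; the $W$-components of the new generators are irrelevant since $W=\mathrm{span}\{e_4,e_5\}$ is central), and linear reparametrisation $(M,N)\mapsto(aM+bN,cM+dN)$, $\left(\begin{smallmatrix}a&b\\c&d\end{smallmatrix}\right)\in GL_2(\cc)$ (change of $e_4,e_5$), subject to: $M,N$ linearly independent ($\equiv\dim A^2=2$), their symmetric parts $M^s,N^s$ linearly independent ($\equiv\dim Leib(A)=2$), and $A$ non-split.

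\textbf{Plan of the case analysis.} The plan is a stratified reduction. First I would normalise the pencil of antisymmetric parts $\mathrm{span}\{M^a,N^a\}$, a subspace of the $3$-dimensional space of alternating $3\times3$ matrices, using the $GL_2$ action: $\dim\mathrm{span}\{M^a,N^a\}$ is $0$, $1$, or $2$. If it is $0$, $(M,N)$ is a pencil of symmetric forms over $\cc$, which I would bring to normal form by the Weierstrass--Kronecker classification of symmetric matrix pencils (equivalently, by direct reduction as in the earlier sections); if it is $1$, I would arrange $N^a=0$ with $M^a$ a fixed rank-$2$ alternating matrix and use its congruence stabiliser (a parabolic subgroup of $GL_3$) to reduce $M^s,N^s$; the dimension-$2$ case is the most delicate. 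Within every branch I would further split on the ranks of $M^s$ and $N^s$, on their common kernels, and on the position of these kernels relative to the radical of the antisymmetric part, each time recording which of the remaining $18$ structure constants the residual base changes can scale to $0$ or $1$. The bookkeeping follows exactly the pattern of the proofs of the preceding theorems: ``if a given constant is nonzero, a base change of such-and-such shape removes it (or normalises it to $1$), reducing to an already-treated form; if it is zero, a new sub-branch opens.'' Finally one collects the surviving normal forms, which are precisely $\ca_{161},\dots,\ca_{261},\clr_1,\dots,\clr_{15}$, and checks that each has the asserted invariants ($\dim A^2=\dim Leib(A)=2$, $A^3=0$) and is non-split.

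\textbf{Main obstacle.} The difficulty is twofold. The first, purely combinatorial, obstacle is the sheer number of branches (over a hundred normal forms): one must carry out a disciplined enumeration that provably omits nothing, and the most error-prone points are the ``boundary'' sub-branches where a generically available base change degenerates because a denominator vanishes — these are exactly what produce the one- and multi-parameter families $\ca_i(\alpha),\ca_i(\alpha,\beta),\dots$ and must be peeled off by hand. The second, and deeper, obstacle — the reason the families $\clr_1,\dots,\clr_{15}$ appear separately — is that in the most generic sub-branches the congruence stabiliser of the pencil is essentially trivial, so the $GL_3\times GL_2$-orbits admit no clean closed description; there the theorem is established only in the weak sense that \emph{every} such algebra is isomorphic to a member of one of these families, the precise isomorphism relations inside $\clr_1,\dots,\clr_{15}$ being left unresolved (just as the remarks after the neighbouring theorems leave the analogous multi-parameter families open). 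Thus the proof I propose would establish exhaustiveness of the full list and irredundancy for the $\ca_i$ families, while flagging $\clr_1,\dots,\clr_{15}$ as normal-form families whose internal equivalences are not determined here.
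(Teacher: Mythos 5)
Your setup is sound and agrees with the paper's: from $A^3=0$, non-splitness, and $Leib(A)=A^2$ one gets $A^2=Leib(A)=Z(A)$, so the algebra is encoded by a pair of $3\times 3$ matrices $(M,N)$ recording the $e_4$- and $e_5$-components of the brackets on $A/A^2$, and isomorphism is exactly the $GL_3\times GL_2$ action by simultaneous congruence and recombination; your translations of $\dim A^2=2$ and $\dim Leib(A)=2$ into linear independence of $M,N$ and of their symmetric parts are also correct. The proposed stratification by $\dim\mathrm{span}\{M^a,N^a\}\in\{0,1,2\}$, with Weierstrass--Kronecker normal forms for symmetric pencils in the bottom stratum, is a genuinely different and more invariant organizing principle than the paper's, which instead annihilates individual structure constants ($\gamma_5$, then $\beta_5$, then $\gamma_3$, $\beta_3$, \dots) by explicit, sometimes quadratic, substitutions.

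The gap is that you have written a plan, not a proof. The content of this theorem is the explicit list of $101$ families $\ca_{161},\dots,\ca_{261}$ plus $\clr_1,\dots,\clr_{15}$ together with their parameter restrictions, and the content of its proof is the exhaustive branch-by-branch reduction that terminates in exactly that list. Your argument replaces all of this with the assertion that ``one collects the surviving normal forms, which are precisely'' the stated ones. Nothing in the proposal derives, for example, the restriction $\alpha(1+\beta)\neq\beta$ in $\ca_{220}(\alpha,\beta)$, or shows that a family such as $\ca_{250}(\alpha)$ with $[x_2,x_1]=-x_4+\frac{-1-\alpha^2}{\alpha}x_5$ actually arises as a terminal normal form, or certifies that no orbit of the $GL_3\times GL_2$ action has been omitted; these facts are only obtainable by actually executing the several hundred sub-branches (including the degenerate ones where a normalizing substitution fails because a denominator vanishes, which you correctly identify as the source of the parametric families, but do not carry out). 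Until that enumeration is performed and each branch's terminal form is matched against the list, the theorem is not established; what the proposal does establish is only the (correct) reduction of the classification to a pencil-of-bilinear-forms problem.
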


\begin{proof} Let $Leib(A)=A^2=Z(A)=\rm span\{e_4, e_5\}$. Then the nonzero products in $A=\rm span\{e_1, e_2, e_3, e_4, e_5\}$ are given by
\begin{align*}
[e_1, e_1]=\alpha_1e_4+\alpha_2e_5, [e_1, e_2]=\alpha_3e_4+ \alpha_4e_5, [e_2, e_1]=\alpha_5e_4+\alpha_6e_5, [e_2, e_2]=\beta_1e_4+\beta_2e_5, \\ [e_1, e_3]=\beta_3e_4+\beta_4e_5, [e_3, e_1]=\beta_5e_4+\beta_6e_5, [e_2, e_3]=\gamma_1e_4+\gamma_2e_5, [e_3, e_2]=\gamma_3e_4+\gamma_4e_5, \\ [e_3, e_3]=\gamma_5e_4+\gamma_6e_5.
\end{align*}
Without loss of generality we can assume $\gamma_5=0$ because if $\gamma_5\neq0$ and $\beta_1=0$(resp. $\beta_1\neq0$) then with the base change $x_1=e_1, x_2=e_3, x_3=e_2, x_4=e_4, x_5=e_5$(resp. $x_1=e_1, x_2=e_2, x_3=e_2+xe_3, x_4=e_4, x_5=e_5$ where $\gamma_5x^2+(\gamma_1+\gamma_3)x+\beta_1=0$) we can make $\gamma_5=0$. Note that if $\beta_5\neq0$ and $\gamma_3=0$(resp. $\gamma_3\neq0$) then with the base change $x_1=e_2, x_2=e_1, x_3=e_3, x_4=e_4, x_5=e_5$(resp. $x_1=\gamma_3e_1-\beta_5e_2, x_2=e_2, x_3=e_3, x_4=e_4, x_5=e_5$) we can make $\beta_5=0$. So we can assume $\beta_5=0$. 
\\ \indent {\bf Case 1:} Let $\gamma_3=0$. Then we have the following products in $A$:
\begin{multline} \label{eq202,1}
[e_1, e_1]=\alpha_1e_4+\alpha_2e_5, [e_1, e_2]=\alpha_3e_4+ \alpha_4e_5, [e_2, e_1]=\alpha_5e_4+\alpha_6e_5, [e_2, e_2]=\beta_1e_4+\beta_2e_5, \\ [e_1, e_3]=\beta_3e_4+\beta_4e_5, [e_3, e_1]=\beta_6e_5, [e_2, e_3]=\gamma_1e_4+\gamma_2e_5, [e_3, e_2]=\gamma_4e_5, [e_3, e_3]=\gamma_6e_5.
\end{multline}
If $\beta_3\neq0$ and $\gamma_1=0$(resp. $\gamma_1\neq0$) then with the base change $x_1=e_2, x_2=e_1, x_3=e_3, x_4=e_4, x_5=e_5$(resp. $x_1=\gamma_1e_1-\beta_3e_2, x_2=e_2, x_3=e_3, x_4=e_4, x_5=e_5$) we can make $\beta_3=0$. So let $\beta_3=0$. 
\\ \indent {\bf Case 1.1:} Let $\gamma_1=0$. Then we have the following products in $A$:
\begin{multline} \label{eq202,2}
[e_1, e_1]=\alpha_1e_4+\alpha_2e_5, [e_1, e_2]=\alpha_3e_4+ \alpha_4e_5, [e_2, e_1]=\alpha_5e_4+\alpha_6e_5, [e_2, e_2]=\beta_1e_4+\beta_2e_5, \\ [e_1, e_3]=\beta_4e_5, [e_3, e_1]=\beta_6e_5, [e_2, e_3]=\gamma_2e_5, [e_3, e_2]=\gamma_4e_5, [e_3, e_3]=\gamma_6e_5.
\end{multline}
Note that if $\beta_6\neq0$ and $\gamma_4=0$(resp. $\gamma_4\neq0$) then with the base change $x_1=e_2, x_2=e_1, x_3=e_3, x_4=e_4, x_5=e_5$(resp. $x_1=\gamma_4e_1-\beta_6e_2, x_2=e_2, x_3=e_3, x_4=e_4, x_5=e_5$) we can make $\beta_6=0$. So we can assume $\beta_6=0$. 
\\ \indent {\bf Case 1.1.1:} Let $\gamma_4=0$. Then we have the following products in $A$:
\begin{multline} \label{eq202,3}
[e_1, e_1]=\alpha_1e_4+\alpha_2e_5, [e_1, e_2]=\alpha_3e_4+ \alpha_4e_5, [e_2, e_1]=\alpha_5e_4+\alpha_6e_5, [e_2, e_2]=\beta_1e_4+\beta_2e_5, \\ [e_1, e_3]=\beta_4e_5, [e_2, e_3]=\gamma_2e_5, [e_3, e_3]=\gamma_6e_5.
\end{multline}
Without loss of generality we can assume $\beta_4=0$ because if $\beta_4\neq0$ and $\gamma_2=0$(resp. $\gamma_2\neq0$) then with the base change $x_1=e_2, x_2=e_1, x_3=e_3, x_4=e_4, x_5=e_5$(resp. $x_1=\gamma_2e_1-\beta_4e_2, x_2=e_2, x_3=e_3, x_4=e_4, x_5=e_5$) we can make $\beta_4=0$. 
\\ \indent {\bf Case 1.1.1.1:} Let $\gamma_2=0$. Then $\gamma_6\neq0$ since $\dim(Z(A))=2$. Hence we have the following products in $A$:
\begin{multline} \label{eq202,4}
[e_1, e_1]=\alpha_1e_4+\alpha_2e_5, [e_1, e_2]=\alpha_3e_4+ \alpha_4e_5, [e_2, e_1]=\alpha_5e_4+\alpha_6e_5, [e_2, e_2]=\beta_1e_4+\beta_2e_5, \\ [e_3, e_3]=\gamma_6e_5.
\end{multline}
Note that if $\alpha_1\neq0$ and $\beta_1=0$(resp. $\beta_1\neq0$) then with the base change $x_1=e_2, x_2=e_1, x_3=e_3, x_4=e_4, x_5=e_5$(resp. $x_1=xe_1+e_2, x_2=e_2, x_3=e_3, x_4=e_4, x_5=e_5$ where $\alpha_1x^2+(\alpha_3+\alpha_5)x+\beta_1=0$) we can make $\alpha_1=0$. So we can assume $\alpha_1=0$. 
\\ \indent {\bf Case 1.1.1.1.1:} Let $\beta_1=0$. Then $\alpha_3+\alpha_5\neq0$ since $\dim(Leib(A))=2$. Hence we have the following products in $A$:

\begin{equation} \label{eq202,5}
[e_1, e_1]=\alpha_2e_5, [e_1, e_2]=\alpha_3e_4+ \alpha_4e_5, [e_2, e_1]=\alpha_5e_4+\alpha_6e_5, [e_2, e_2]=\beta_2e_5, [e_3, e_3]=\gamma_6e_5.
\end{equation}

\indent {\bf Case 1.1.1.1.1.1:} Let $\alpha_5=0$. Then $\alpha_3\neq0$ since $\dim(A^2)=2$. Hence we have the following products in $A$:
\begin{equation} \label{eq202,6}
[e_1, e_1]=\alpha_2e_5, [e_1, e_2]=\alpha_3e_4+ \alpha_4e_5, [e_2, e_1]=\alpha_6e_5, [e_2, e_2]=\beta_2e_5,[e_3, e_3]=\gamma_6e_5.
\end{equation}
\begin{itemize}
\item If $\alpha_2=0, \beta_2=0$ then $\alpha_6\neq0$ since $A$ is non-split. Then the base change $x_1=e_1, x_2=\frac{\gamma_6}{\alpha_6}e_2, x_3=e_3, x_4=\frac{\gamma_6}{\alpha_6}(\alpha_3e_4+ \alpha_4e_5), x_5=\gamma_6e_5$ shows that $A$ is isomorphic to $\ca_{164}(0)$.
\item If $\alpha_2=0, \beta_2\neq0$ and $\alpha_6=0$ then the base change $x_1=e_1, x_2=e_2, x_3=\sqrt{\frac{\beta_2}{\gamma_6}}e_3, x_4=\alpha_3e_4+ \alpha_4e_5, x_5=\beta_2e_5$ shows that $A$ is isomorphic to $\ca_{165}(0)$.
\item If $\alpha_2=0, \beta_2\neq0$ and $\alpha_6\neq0$ then the base change $x_1=\beta_2e_1, x_2=\alpha_6e_2, x_3=\sqrt{\frac{\alpha^2_6\beta_2}{\gamma_6}}e_3, x_4=\alpha_6\beta_2(\alpha_3e_4+ \alpha_4e_5), x_5=\alpha^2_6\beta_2e_5$ shows that $A$ is isomorphic to $\ca_{166}(0)$.

\item If $\alpha_2\neq0, \beta_2=0$ and $\alpha_6=0$ then the base change $x_1=e_1, x_2=e_2, x_3=\sqrt{\frac{\alpha_2}{\gamma_6}}e_3, x_4=\alpha_3e_4+ \alpha_4e_5, x_5=\alpha_2e_5$ shows that $A$ is isomorphic to $\ca_{161}$.

\item If $\alpha_2\neq0, \beta_2=0$ and $\alpha_6\neq0$ then the base change $x_1=\alpha_6e_1, x_2=\alpha_2e_2, x_3=\sqrt{\frac{\alpha_2\alpha^2_6}{\gamma_6}}e_3, x_4=\alpha_2\alpha_6(\alpha_3e_4+ \alpha_4e_5), x_5=\alpha_2\alpha^2_6e_5$ shows that $A$ is isomorphic to $\ca_{162}$.

\item If $\alpha_2\neq0$ and $\beta_2\neq0$ then the base change $x_1=e_1, x_2=\sqrt{\frac{\alpha_2}{\beta_2}}e_2, x_3=\sqrt{\frac{\alpha_2}{\gamma_6}}e_3, x_4=\sqrt{\frac{\alpha_2}{\beta_2}}(\alpha_3e_4+ \alpha_4e_5), x_5=\alpha_2e_5$ shows that $A$ is isomorphic to $\ca_{163}(\alpha)$.
\end{itemize}

\indent {\bf Case 1.1.1.1.1.2:} Let $\alpha_5\neq0$. If $\alpha_3=0$ then the base change $x_1=e_2, x_2=e_1, x_3=e_3, x_4=e_4, x_5=e_5$ shows that $A$ is isomorphic to an algebra with the nonzero products given by (\ref{eq202,6}). Hence $A$ is isomorphic to $\ca_{161}, \ca_{162}, \ldots, \ca_{165}(\alpha)$ or $\ca_{166}(\alpha)$. Then suppose $\alpha_3\neq0$. 
\\ \indent {\bf Case 1.1.1.1.1.2.1:} Let $\alpha_2=0$. Then we have the following products in $A$:
\begin{equation} \label{eq202,7}
[e_1, e_2]=\alpha_3e_4+ \alpha_4e_5, [e_2, e_1]=\alpha_5e_4+\alpha_6e_5, [e_2, e_2]=\beta_2e_5, [e_3, e_3]=\gamma_6e_5.
\end{equation}

\begin{itemize}
\item If $\beta_2=0$ then $\alpha_3\alpha_6-\alpha_4\alpha_5\neq0$ since $A$ is non-split. Then the base change $x_1=e_1, x_2=\frac{\alpha_3\gamma_6}{\alpha_3\alpha_6-\alpha_4\alpha_5}e_2, x_3=e_3, x_4=\frac{\alpha_3\gamma_6}{\alpha_3\alpha_6-\alpha_4\alpha_5}(\alpha_3e_4+ \alpha_4e_5), x_5=\gamma_6e_5$ shows that $A$ is isomorphic to $\ca_{164}(\alpha)(\alpha\in\cc\backslash\{-1, 0\})$.

\item If $\beta_2\neq0$ and $\alpha_3\alpha_6-\alpha_4\alpha_5=0$ then the base change $x_1=e_1, x_2=e_2, x_3=\sqrt{\frac{\beta_2}{\gamma_6}}e_3, x_4=\alpha_3e_4+ \alpha_4e_5, x_5=\beta_2e_5$ shows that $A$ is isomorphic to $\ca_{165}(\alpha)(\alpha\in\cc\backslash\{-1, 0\})$.

\item If $\beta_2\neq0$ and $\alpha_3\alpha_6-\alpha_4\alpha_5\neq0$ then the base change $x_1=\beta_2e_1, x_2=\frac{\alpha_3\alpha_6-\alpha_4\alpha_5}{\alpha_3}e_2, x_3=\frac{\alpha_3\alpha_6-\alpha_4\alpha_5}{\alpha_3}\sqrt{\frac{\beta_2}{\gamma_6}}e_3, x_4=\frac{(\alpha_3\alpha_6-\alpha_4\alpha_5)\beta_2}{\alpha_3}(\alpha_3e_4+ \alpha_4e_5), x_5=\beta_2(\frac{\alpha_3\alpha_6-\alpha_4\alpha_5}{\alpha_3})^2e_5$ shows that $A$ is isomorphic to $\ca_{166}(\alpha)(\alpha\in\cc\backslash\{-1, 0\})$.
\end{itemize}

\indent {\bf Case 1.1.1.1.1.2.2:} Let $\alpha_2\neq0$. If $\beta_2=0$ then the base change $x_1=e_2, x_2=e_1, x_3=e_3, x_4=e_4, x_5=e_5$ shows that $A$ is isomorphic to an algebra with the nonzero products given by (\ref{eq202,7}). Hence $A$ is isomorphic to $\ca_{164}(\alpha), \ca_{165}(\alpha)$ or $\ca_{166}(\alpha)$. So let $\beta_2\neq0$. Then the base change $x_1=e_1, x_2=\sqrt{\frac{\alpha_2}{\beta_2}}e_2, x_3=\sqrt{\frac{\alpha_2}{\gamma_6}}e_3, x_4=\sqrt{\frac{\alpha_2}{\beta_2}}(\alpha_3e_4+ \alpha_4e_5), x_5=\alpha_2e_5$ shows that $A$ is isomorphic to $\ca_{167}(\alpha, \beta)$.
\\ \indent {\bf Case 1.1.1.1.2:} Let $\beta_1\neq0$. If $\alpha_3+\alpha_5\neq0$ then the base change $x_1=e_1, x_2=\beta_1e_1-(\alpha_3+\alpha_5)e_2, x_3=e_3, x_4=e_4, x_5=e_5$ shows that $A$ is isomorphic to an algebra with the nonzero products given by (\ref{eq202,5}). Hence $A$ is isomorphic to $\ca_{161}, \ca_{162}, \ldots, \ca_{166}(\alpha)$ or $\ca_{167}(\alpha, \beta)$. So let $\alpha_3+\alpha_5=0$. 
\\ \indent {\bf Case 1.1.1.1.2.1:} Let $\alpha_3=0$.
\\ \indent {\bf Case 1.1.1.1.2.1.1:} Let $\alpha_2=0$. 
\begin{itemize}
\item If $\alpha_6=0$ then $\alpha_4\neq0$ since $A$ is non-split. Then the base change $x_1=\frac{\gamma_6}{\alpha_4}e_1, x_2=e_2, x_3=e_3, x_4=\beta_1e_4+\beta_2e_5, x_5=\gamma_6e_5$ shows that $A$ is isomorphic to $\ca_{168}$.

\item If $\alpha_6\neq0$ then the base change $x_1=\frac{\gamma_6}{\alpha_6}e_1, x_2=e_2, x_3=e_3, x_4=\beta_1e_4+\beta_2e_5, x_5=\gamma_6e_5$ shows that $A$ is isomorphic to $\ca_{169}(\alpha)$.

\end{itemize}

\indent {\bf Case 1.1.1.1.2.1.2:} Let $\alpha_2\neq0$. Without loss of generality, we can assume $\alpha_6=0$, because if $\alpha_6\neq0$ then with the base change $x_1=e_1, x_2=\alpha_6e_1-\alpha_2e_2, x_3=e_3, x_4=e_4, x_5=e_5$ we can make $\alpha_6=0$. Note that $\alpha_4\neq0$ since $A$ is non-split. Then the base change $x_1=\alpha_4e_1, x_2=\alpha_2e_2, x_3=\sqrt{\frac{\alpha_2\alpha^2_4}{\gamma_6}}e_3, x_4=\alpha^2_2(\beta_1e_4+\beta_2e_5), x_5=\alpha_2\alpha^2_4e_5$ shows that $A$ is isomorphic to $\ca_{170}$.

\indent {\bf Case 1.1.1.1.2.2:} Let $\alpha_3\neq0$. Take $\theta_1=\frac{\alpha_4\beta_1-\beta_2\alpha_3}{\alpha_3\gamma_6}$ and $\theta_2=\frac{\alpha_6\beta_1+\beta_2\alpha_3}{\alpha_3\gamma_6}$. The base change $y_1=\frac{\beta_1}{\alpha_3}e_1, y_2=e_2, y_3=e_3, y_4=\beta_1e_4+\beta_2e_5, y_5=\gamma_6e_5$ shows that $A$ is isomorphic to the following algebra:
\begin{align*}
[y_1, y_1]=\frac{\alpha_2\beta^2_1}{\alpha^2_3\gamma_6}y_5, [y_1, y_2]=y_4+\theta_1y_5, [y_2, y_1]=-y_4+\theta_2y_5, [y_2, y_2]=y_4, [y_3, y_3]=y_5.
 \end{align*}

\begin{itemize}
\item If $\alpha_2=0$ and $\theta_2=-\theta_1$ then $\theta_1, \theta_2\neq0$ since $A$ is non-split. Then the base change $x_1=y_1, x_2=y_2, x_3=\sqrt{\theta_1}y_3, x_4=y_4, x_5=\theta_1y_5$ shows that $A$ is isomorphic to $\ca_{171}$.

\item If $\alpha_2=0$ and $\theta_2\neq-\theta_1$ then the base change $x_1=y_1, x_2=\frac{-\theta_2}{\theta_1+\theta_2}y_1+y_2, x_3=\sqrt{\theta_1+\theta_2}y_3, x_4=y_4-\theta_2y_5, x_5=(\theta_1+\theta_2)y_5$ shows that $A$ is isomorphic to $\ca_{172}$.

\item If $\alpha_2\neq0$ then the base change $x_1=y_1, x_2=y_2, x_3=\sqrt{\frac{\alpha_2\beta^2_1}{\alpha^2_3\gamma_6}}y_3, x_4=y_4, x_5=\frac{\alpha_2\beta^2_1}{\alpha^2_3\gamma_6}y_5$ shows that $A$ is isomorphic to $\ca_{173}(\alpha, \beta)$.

\end{itemize}

\indent {\bf Case 1.1.1.2:} Let $\gamma_2\neq0$.
\\ \indent {\bf Case 1.1.1.2.1:} Let $\beta_1=0$. Then we have the following products in $A$:
\begin{multline} \label{eq202,8}
[e_1, e_1]=\alpha_1e_4+\alpha_2e_5, [e_1, e_2]=\alpha_3e_4+ \alpha_4e_5, [e_2, e_1]=\alpha_5e_4+\alpha_6e_5, [e_2, e_2]=\beta_2e_5, \\ [e_2, e_3]=\gamma_2e_5, [e_3, e_3]=\gamma_6e_5.
\end{multline}
\\ \indent {\bf Case 1.1.1.2.1.1:} Let $\gamma_6=0$. Note that if $\beta_2\neq0$ then with the base change $x_1=e_1, x_2=\gamma_2e_2-\beta_2e_3, x_3=e_3, x_4=e_4, x_5=e_5$ we can make $\beta_2=0$. So we can assume $\beta_2=0$. 
\\ \indent {\bf Case 1.1.1.2.1.1.1:} Let $\alpha_1=0$. Then $\alpha_3+\alpha_5\neq0$ since $\dim(Leib(A))=2$.
\\ \indent {\bf Case 1.1.1.2.1.1.1.1:} Let $\alpha_5=0$. Without loss of generality we can assume $\alpha_6=0$ because if $\alpha_6\neq0$ then with the base change $x_1=\gamma_2e_1-\alpha_6e_3, x_2=e_2, x_3=e_3, x_4=e_4, x_5=e_5$ we can make $\alpha_6=0$. 
\begin{itemize}
\item If $\alpha_2=0$ then the base change $x_1=e_1, x_2=e_2, x_3=e_3, x_4=\alpha_3e_4+ \alpha_4e_5, x_5=\gamma_2e_5$ shows that $A$ is isomorphic to $\ca_{174}$.

\item If $\alpha_2\neq0$ then the base change $x_1=e_1, x_2=e_2, x_3=\frac{\alpha_2}{\gamma_2}e_3, x_4=\alpha_3e_4+ \alpha_4e_5, x_5=\alpha_2e_5$ shows that $A$ is isomorphic to $\ca_{175}$.

\end{itemize}

\indent {\bf Case 1.1.1.2.1.1.1.2:} Let $\alpha_5\neq0$. Take $\theta=\alpha_4\alpha_5-\alpha_3\alpha_6$.
\begin{itemize}
\item If $\alpha_2=0$ and $\theta=0$(resp. $\theta\neq0$) then the base change $x_1=e_1, x_2=e_2, x_3=e_3, x_4=\alpha_5e_4+ \alpha_6e_5, x_5=\gamma_2e_5$(resp. $x_1=\frac{\alpha_3\gamma_2}{\theta}e_1+e_3, x_2=e_2, x_3=e_3, x_4=\frac{\alpha_3\alpha_5\gamma_2}{\theta}e_4+(\frac{\alpha_3\alpha_6\gamma_2}{\theta}+\gamma_2)e_5, x_5=\gamma_2e_5$) shows that $A$ is isomorphic to $\ca_{176}(\alpha)$.

\item If $\alpha_2\neq0$ and $\theta=0$(resp. $\theta\neq0$) then the base change $x_1=e_1, x_2=e_2, x_3=\frac{\alpha_2}{\gamma_2}e_3, x_4=\alpha_5e_4+ \alpha_6e_5, x_5=\alpha_2e_5$(resp. $x_1=\frac{\alpha_3\gamma_2}{\theta}e_1+e_3, x_2=e_2, x_3=\frac{\alpha_2}{\gamma_2}(\frac{\alpha_3\gamma_2}{\theta})^2e_3, x_4=\frac{\alpha_3\alpha_5\gamma_2}{\theta}e_4+(\frac{\alpha_3\alpha_6\gamma_2}{\theta}+\gamma_2)e_5, x_5=\alpha_2(\frac{\alpha_3\gamma_2}{\theta})^2e_5$) shows that $A$ is isomorphic to $\ca_{177}(\alpha)$.
\end{itemize}

\indent {\bf Case 1.1.1.2.1.1.2:} Let $\alpha_1\neq0$.
\\ \indent {\bf Case 1.1.1.2.1.1.2.1:} Let $\alpha_5=0$. Note that if $\alpha_6\neq0$ then with the base change $x_1=\gamma_2e_1-\alpha_6e_3, x_2=e_2, x_3=e_3, x_4=e_4, x_5=e_5$ we can make $\alpha_6=0$. So let $\alpha_6=0$. Then we have the following products in $A$:
\begin{equation} \label{eq202,9}
[e_1, e_1]=\alpha_1e_4+\alpha_2e_5, [e_1, e_2]=\alpha_3e_4+ \alpha_4e_5, [e_2, e_3]=\gamma_2e_5.
\end{equation}
\begin{itemize}
\item If $\alpha_3=0$ then $\alpha_4\neq0$ since $A$ is non-split. Then the base change $x_1=\gamma_2e_1, x_2=e_2, x_3=\alpha_4e_3, x_4=\gamma^2_2(\alpha_1e_4+\alpha_2e_5), x_5=\alpha_4\gamma_2e_5$ shows that $A$ is isomorphic to $\ca_{178}$.

\item If $\alpha_3\neq0$ and $\alpha_1\alpha_4-\alpha_2\alpha_3=0$ then the base change $x_1=\alpha_3e_1, x_2=\alpha_1e_2, x_3=e_3, x_4=\alpha^2_3(\alpha_1e_4+\alpha_2e_5), x_5=\alpha_1\gamma_2e_5$ shows that $A$ is isomorphic to $\ca_{179}$.

\item If $\alpha_3\neq0$ and $\alpha_1\alpha_4-\alpha_2\alpha_3\neq0$ then the base change $x_1=\alpha_3e_1, x_2=\alpha_1e_2, x_3=\frac{\alpha_3(\alpha_1\alpha_4-\alpha_2\alpha_3)}{\alpha_1\gamma_2}e_3, x_4=\alpha^2_3(\alpha_1e_4+\alpha_2e_5), x_5=\alpha_3(\alpha_1\alpha_4-\alpha_2\alpha_3)e_5$ shows that $A$ is isomorphic to $\ca_{180}$.

\end{itemize}

\indent {\bf Case 1.1.1.2.1.1.2.2:} Let $\alpha_5\neq0$. Take $\theta_1=\alpha_5(\alpha_1\alpha_4-\alpha_2\alpha_3)$ and $\theta_2=\alpha_5(\alpha_1\alpha_6-\alpha_2\alpha_5)$. Then the base change $y_1=\alpha_5e_1, y_2=\alpha_1e_2, y_3=e_3, y_4=\alpha^2_5(\alpha_1e_4+\alpha_2e_5), y_5=e_5$ shows that $A$ is isomorphic to the following algebra:
\begin{align*}
[y_1, y_1]=y_4, [y_1, y_2]=\frac{\alpha_3}{\alpha_5}y_4+\theta_1y_5, [y_2, y_1]=y_4+\theta_2y_5, [y_2, y_3]=\alpha_1\gamma_2y_5.
\end{align*}
Note that if $\theta_2\neq0$ then with the base change $x_1=\alpha_1\gamma_2y_1-\theta_2y_3, x_2=y_2, x_3=y_3, x_4=y_4, x_5=y_5$ we can make $\theta_2=0$. So we can assume $\theta_2=0$. 
\begin{itemize}
\item If $\alpha_3=0$ then the base change $x_1=y_1, x_2=y_1-y_2-\frac{\theta_1}{\alpha_1\gamma_2}y_3, x_3=y_3, x_4=y_4, x_5=y_5$ shows that $A$ is isomorphic to an algebra with the nonzero products given by (\ref{eq202,9}). Hence $A$ is isomorphic to $\ca_{178}, \ca_{179}$ or $\ca_{180}$.
\item If $\alpha_3\neq0, \alpha_3=\alpha_5$ and $\theta_1=0$ then the base change $x_1=-iy_1+iy_2, x_2=-iy_3, x_3=-y_1, x_4=\alpha_1\gamma_2y_5, x_5=y_4$ shows that $A$ is isomorphic to $\ca_{161}$.

\item If $\alpha_3\neq0, \alpha_3=\alpha_5$ and $\theta_1\neq0$ then the base change $x_1=\alpha_1\gamma_2y_1, x_2=\alpha_1\gamma_2y_2, x_3=\theta_1y_3, x_4=(\alpha_1\gamma_2)^2y_4, x_5=\theta_1(\alpha_1\gamma_2)^2y_5$ shows that $A$ is isomorphic to $\ca_{181}$.

\item If $\alpha_3\neq0, \alpha_3\neq\alpha_5$ and $\theta_1=0$ then the base change $x_1=y_1, x_2=y_2, x_3=y_3, x_4=y_4, x_5=\alpha_1\gamma_2y_5$ shows that $A$ is isomorphic to $\ca_{182}(\alpha)$.

\item If $\alpha_3\neq0, \alpha_3\neq\alpha_5$ and $\theta_1\neq0$ then the base change $x_1=\alpha_1\gamma_2y_1, x_2=\alpha_1\gamma_2y_2, x_3=\theta_1y_3, x_4=(\alpha_1\gamma_2)^2y_4, x_5=\theta_1(\alpha_1\gamma_2)^2y_5$ shows that $A$ is isomorphic to $\ca_{183}(\alpha)$.

\end{itemize}

\indent {\bf Case 1.1.1.2.1.2:} Let $\gamma_6\neq0$. 
\\ \indent {\bf Case 1.1.1.2.1.2.1:} Let $\alpha_1=0$. Then $\alpha_3+\alpha_5\neq0$ since $\dim(Leib(A))=2$.
\\ \indent {\bf Case 1.1.1.2.1.2.1.1:} Let $\alpha_5=0$. Then $\alpha_3\neq0$. 
\begin{itemize}
\item If $\alpha_2=0$ and $\alpha_6=0$ then the base change $x_1=e_1, x_2=\gamma_6e_2, x_3=\gamma_2e_3, x_4=\gamma_6(\alpha_3e_4+\alpha_4e_5), x_5=\gamma^2_2\gamma_6e_5$ shows that $A$ is isomorphic to $\ca_{184}(\alpha)$.

\item If $\alpha_2=0$ and $\alpha_6\neq0$ then the base change $x_1=\frac{\gamma^2_2}{\alpha_6}e_1, x_2=\gamma_6e_2, x_3=\gamma_2e_3, x_4=\frac{\gamma^2_2\gamma_6}{\alpha_6}(\alpha_3e_4+\alpha_4e_5), x_5=\gamma^2_2\gamma_6e_5$ shows that $A$ is isomorphic to $\ca_{185}(\alpha)$.

\item If $\alpha_2\neq0$ then the base change $x_1=\sqrt{\frac{\gamma^2_2\gamma_6}{\alpha_2}}e_1, x_2=\gamma_6e_2, x_3=\gamma_2e_3, x_4=\gamma_6\sqrt{\frac{\gamma^2_2\gamma_6}{\alpha_2}}(\alpha_3e_4+\alpha_4e_5), x_5=\gamma^2_2\gamma_6e_5$ shows that $A$ is isomorphic to $\ca_{186}(\alpha, \beta)$.

\end{itemize}

\indent {\bf Case 1.1.1.2.1.2.1.2:} Let $\alpha_5\neq0$. Take $\theta=\alpha_4\alpha_5-\alpha_3\alpha_6$.
\begin{itemize}
\item If $\alpha_2=0, \theta=0, \alpha_3=0$ and $\beta_2=0$ then the base change $x_1=\gamma_2e_3, x_2=-\gamma_6e_2+\gamma_2e_3, x_3=-e_1, x_4=\gamma^2_2\gamma_6e_5, x_5=\gamma_6(\alpha_5e_4+\alpha_6e_5)$ shows that $A$ is isomorphic to $\ca_{179}$.

\item If $\alpha_2=0, \theta=0, \alpha_3=0$ and $\beta_2\neq0$ then the base change $x_1=\frac{x\gamma_2+\gamma_6}{\gamma_6}e_3, x_2=xe_2+e_3, x_3=e_1, x_4=\frac{(x\gamma_2+\gamma_6)^2}{\gamma_6}e_5, x_5=x(\alpha_5e_4+\alpha_6e_5)$ (where $\beta_2x^2+\gamma_2x+\gamma_6=0$) shows that $A$ is isomorphic to $\ca_{182}(\alpha)$.

\item If $\alpha_2=0, \theta=0$ and $\alpha_3\neq0$ then the base change $x_1=e_1, x_2=\gamma_6e_2, x_3=\gamma_2e_3, x_4=\gamma_6(\alpha_5e_4+\alpha_6e_5), x_5=\gamma^2_2\gamma_6e_5$ shows that $A$ is isomorphic to $\ca_{187}(\alpha, \beta)$.

\item If $\alpha_2=0$ and $\theta\neq0$ then the base change $x_1=\frac{\alpha_5\gamma^2_2}{\theta}e_1, x_2=\gamma_6e_2, x_3=\gamma_2e_3, x_4=\frac{\alpha_5\gamma^2_2\gamma_6}{\theta}(\alpha_5e_4+\alpha_6e_5), x_5=\gamma^2_2\gamma_6e_5$ shows that $A$ is isomorphic to $\ca_{188}(\alpha, \beta)$.

\item If $\alpha_2\neq0$ then the base change $x_1=\sqrt{\frac{\gamma^2_2\gamma_6}{\alpha_2}}e_1, x_2=\gamma_6e_2, x_3=\gamma_2e_3, x_4=\gamma_6\sqrt{\frac{\gamma^2_2\gamma_6}{\alpha_2}}(\alpha_5e_4+\alpha_6e_5), x_5=\gamma^2_2\gamma_6e_5$ shows that $A$ is isomorphic to $\ca_{189}(\alpha, \beta, \gamma)$.

\end{itemize}

\indent {\bf Case 1.1.1.2.1.2.2:} Let $\alpha_1\neq0$. 
\\ \indent {\bf Case 1.1.1.2.1.2.2.1:} Let $\alpha_5=0$. 
\begin{itemize}
\item If $\alpha_3=0, \alpha_6=0$ and $\beta_2=0$ then $\alpha_4\neq0$ since $A$ is non-split. Then the base change $x_1=\frac{-\gamma^2_2}{\alpha_4}e_1-\gamma_6e_2, x_2=e_2, x_3=\gamma_6e_2-\gamma_2e_3, x_4=\frac{\alpha_1\gamma^4_2}{\alpha^2_4}e_4+(\frac{\alpha_2\gamma^4_2}{\alpha^2_4}+\gamma^2_2\gamma_6)e_5, x_5=-\gamma^2_2e_5$ shows that $A$ is isomorphic to $\ca_{178}$.

\item If $\alpha_3=0, \alpha_6=0$ and $\beta_2\neq0$ then $\alpha_4\neq0$ since $A$ is non-split. Then the base change $x_1=\frac{\gamma^2_2}{\alpha_4}e_1, x_2=\gamma_6e_2, x_3=\gamma_2e_3, x_4=(\frac{\gamma^2_2}{\alpha_4})^2(\alpha_1e_4+\alpha_2e_5), x_5=\gamma^2_2\gamma_6e_5$ shows that $A$ is isomorphic to $\ca_{190}(\alpha)$.

\item If $\alpha_3=0, \alpha_6\neq0$ and $(\alpha_4, \beta_2)=(0, 0)$ then the base change $x_1=e_1, x_2=\frac{\alpha_6\gamma_6}{\gamma^2_2}e_2, x_3=\frac{\alpha_6}{\gamma_2}e_3, x_4=\alpha_1e_4+\alpha_2e_5, x_5=\frac{\alpha^2_6\gamma_6}{\gamma^2_2}e_5$ shows that $A$ is isomorphic to $\ca_{191}$.

\item If $\alpha_3=0$ and $\alpha_6\neq0$ then w.s.c.o.b. $A$ is isomorphic to $\ca_{190}(\alpha)$.

\item If $\alpha_3\neq0$ then the base change $x_1=\frac{\alpha_3\gamma_6}{\alpha_1}e_1, x_2=\gamma_6e_2, x_3=\gamma_2e_3, x_4=(\frac{\alpha_3\gamma_6}{\alpha_1})^2(\alpha_1e_4+\alpha_2e_5), x_5=\gamma^2_2\gamma_6e_5$ shows that $A$ is isomorphic to $\ca_{192}(\alpha, \beta, \gamma)$.
\end{itemize}

\indent {\bf Case 1.1.1.2.1.2.2.2:} Let $\alpha_5\neq0$. Take $\theta_1=\frac{\alpha_5\gamma_6(\alpha_1\alpha_6-\alpha_2\alpha_5)}{\alpha^2_1\gamma^2_2}$ and $\theta_2=\frac{\alpha_5\gamma_6(\alpha_1\alpha_4-\alpha_2\alpha_3)}{\alpha^2_1\gamma^2_2}$. The base change $y_1=\frac{\alpha_5}{\alpha_1}e_1, y_2=e_2, y_3=\frac{\gamma_2}{\gamma_6}e_3, x_4=(\frac{\alpha_5}{\alpha_1})^2(\alpha_1e_4+\alpha_2e_5), x_5=\frac{\gamma^2_2}{\gamma_6}e_5$ shows that $A$ is isomorphic to the following algebra:
\begin{align*}
[y_1, y_1]=y_4, [y_1, y_2]=\frac{\alpha_3}{\alpha_5}y_4+\theta_1y_5, [y_2, y_1]=y_4+\theta_2y_5, [y_2, y_2]=\frac{\beta_2\gamma_6}{\gamma^2_2}y_5, [y_2, y_3]=y_5, [y_3, y_3]=y_5.
\end{align*}

\begin{itemize}
\item If $\alpha_3=0$ then the base change $x_1=y_1, x_2=y_1-y_2, x_3=y_3, x_4=y_4, x_5=y_5$ shows that $A$ is isomorphic to $\ca_{192}(\alpha, \beta, \gamma)$.

\item If $\alpha_3\neq0, \alpha_3=\alpha_5, \theta_1=0, \theta_2=0$ and $\frac{\beta_2\gamma_6}{\gamma^2_2}=0$ then the base change $x_1=-iy_1+iy_2, x_2=-iy_1+iy_2-iy_3, x_3=-y_1, x_4=y_4+y_5, x_5=y_4$ shows that $A$ is isomorphic to $\ca_{163}(1)$.

\item If $\alpha_3\neq0, \alpha_3=\alpha_5, \theta_1=0, \theta_2=0$ and $\frac{\beta_2\gamma_6}{\gamma^2_2}=\frac{1}{4}$ then w.s.c.o.b. $A$ is isomorphic to $\ca_{173}(\alpha, \beta)((\alpha+\beta)^2=2(\alpha-\beta))$.

\item If $\alpha_3\neq0, \alpha_3=\alpha_5, \theta_1=0, \theta_2=0$ and $\frac{\beta_2\gamma_6}{\gamma^2_2}\in\cc\backslash\{0, \frac{1}{4}\}$ then w.s.c.o.b. $A$ is isomorphic to $\ca_{167}(\alpha, \sqrt{(\alpha-1)^2})$.

\item If $\alpha_3\neq0, \alpha_3=\alpha_5$ and $(\theta_1, \theta_2)\neq(0, 0)$ then the base change $x_1=y_1, x_2=y_2, x_3=y_3, x_4=y_4, x_5=y_5$ shows that $A$ is isomorphic to $\ca_{193}(\alpha, \beta, \gamma)$.

\item If $\alpha_3\neq0, \alpha_3\neq\alpha_5$ and $\theta_1=0=\theta_2=\beta_2$ then w.s.c.o.b. $A$ is isomorphic to $\ca_{192}(0, 0, \gamma)(\gamma\in\cc\backslash\{0\})$.

\item If $\alpha_3\neq0, \alpha_3\neq\alpha_5$ and $(\theta_1, \theta_2, \frac{\beta_2\gamma_6}{\gamma^2_2})\neq(0, 0, 0)$ then the base change $x_1=y_1, x_2=y_2, x_3=y_3, x_4=y_4, x_5=y_5$ shows that $A$ is isomorphic to $\ca_{194}(\alpha, \beta, \gamma, \theta)$.
\end{itemize}

\indent {\bf Case 1.1.1.2.2:} Let $\beta_1\neq0$. If $(\alpha_1, \alpha_3+\alpha_5)\neq(0, 0)$ then the base change $x_1=e_1, x_2=e_1+xe_2, x_3=e_3, x_4=e_4, x_5=e_5$(where $\beta_1x^2+(\alpha_3+\alpha_5)x+\alpha_1=0$) shows that $A$ is isomorphic to an algebra with the nonzero products given by (\ref{eq202,8}). Hence $A$ is isomorphic to $\ca_{161}, \ca_{163}(\alpha), \ca_{167}(\alpha, \beta)$ or $\ca_{173}(\alpha, \beta), \ca_{174}, \ldots, \ca_{194}(\alpha, \beta, \gamma, \theta)$. So let $\alpha_1=0=\alpha_3+\alpha_5$. 
 \\ \indent {\bf Case 1.1.1.2.2.1:} Let $\gamma_6=0$. Then $\gamma_2\neq0$ since $\dim(Z(A))=2$. Take $\theta_1=\frac{\alpha_4\beta_1-\alpha_3\beta_2}{\beta_1\gamma_2}$ and $\theta_2=\frac{\alpha_6\beta_1+\alpha_3\beta_2}{\beta_1\gamma_2}$. Then the base change $y_1=e_1, y_2=e_2, y_3=e_3, y_4=\beta_1e_4+\beta_2e_5, y_5=\gamma_2e_5$ shows that $A$ is isomorphic to the following algebra:
\begin{align*}
[y_1, y_1]=\frac{\alpha_2}{\gamma_2}y_5, [y_1, y_2]=\frac{\alpha_3}{\beta_1}y_4+ \theta_1y_5, [y_2, y_1]=-\frac{\alpha_3}{\beta_1}y_4+\theta_2y_5, [y_2, y_2]=y_4, [y_2, y_3]=y_5.
\end{align*}
Note that if $\theta_2\neq0$ then with the base change $x_1=\gamma_2y_1-\theta_2y_3, x_2=y_2, x_3=y_3, x_4=y_4, x_5=y_5$ we can make $\theta_2=0$. So let $\theta_2=0$. 
\begin{itemize}
\item If $\alpha_3=0$ and $\alpha_2\neq0$ then w.s.c.o.b. $A$ is isomorphic to $\ca_{169}(0)$.
\item If $\alpha_3=0$ and $\alpha_2=0$ then $\theta_1\neq0$ since $A$ is non-split. Then the base change $x_1=y_1, x_2=y_2, x_3=\theta_1y_3, x_4=y_4, x_5=\theta_1y_5$ shows that $A$ is isomorphic to $\ca_{195}$.

\item If $\alpha_3\neq0$ and $\alpha_2=0$ then the base change $x_1=\frac{1}{(\frac{\alpha_3}{\beta_1})^{2/3}}y_1-\frac{\theta_1}{(\frac{\alpha_3}{\beta_1})^{2/3}}y_3, x_2=-y_1+(\frac{\alpha_3}{\beta_1})^{1/3}y_2+\theta_1y_3, x_3=\frac{1}{(\frac{\alpha_3}{\beta_1})^{1/3}}y_3, x_4=(\frac{\alpha_3}{\beta_1})^{2/3}y_4+\frac{\theta_1}{(\frac{\alpha_3}{\beta_1})^{1/3}}y_5, x_5=y_5$ shows that $A$ is isomorphic to $\ca_{196}$.

\item If $\alpha_3\neq0$ and $\alpha_2\neq0$ then the base change $x_1=y_1+\theta_1y_3, x_2=-\frac{\alpha_3\gamma_2\theta_1}{\alpha_2\beta_1}y_1+\frac{\alpha_3}{\beta_1}y_2, x_3=\frac{\alpha_2\beta_1}{\alpha_3\gamma_2}y_3, x_4=(\frac{\alpha_3}{\beta_1})^2y_4, x_5=\frac{\alpha_2}{\gamma_2}y_5$ shows that $A$ is isomorphic to $\ca_{197}$.
\end{itemize}

\indent {\bf Case 1.1.1.2.2.2:} Let $\gamma_6\neq0$. Take $\theta_1=\frac{\alpha_4\beta_1-\alpha_3\beta_2}{\beta_1\gamma_6}$ and $\theta_2=\frac{\alpha_6\beta_1+\alpha_3\beta_2}{\beta_1\gamma_6}$. Then the base change $y_1=e_1, y_2=e_2, y_3=e_3, y_4=\beta_1e_4+\beta_2e_5, y_5=\gamma_6e_5$ shows that $A$ is isomorphic to the following algebra:
\begin{align*}
[y_1, y_1]=\frac{\alpha_2}{\gamma_6}y_5, [y_1, y_2]=\frac{\alpha_3}{\beta_1}y_4+ \theta_1y_5, [y_2, y_1]=-\frac{\alpha_3}{\beta_1}y_4+\theta_2y_5, [y_2, y_2]=y_4, [y_2, y_3]=\frac{\gamma_2}{\gamma_6}y_5, [y_3, y_3]=y_5.
\end{align*}
Without loss of generality we can assume $\theta_1=0$ because if $\theta_1\neq0$ then with the base change $x_1=y_1, x_2=xy_1+y_2, x_3=y_3, x_4=y_4+(\frac{\alpha_2}{\gamma_6}x^2+(\theta_1+\theta_2)x)y_5, x_5=y_5$(where $\frac{\alpha_2\alpha_3}{\beta_1\gamma_6}x^2+(\frac{\alpha_3(\theta_1+\theta_2)}{\beta_1}-\frac{\alpha_2}{\gamma_6})x-\theta_1=0$) we can make $\theta_1=0$. 
\begin{itemize}
\item If $\frac{\alpha_3}{\beta_1}=0$ and $\frac{\alpha_2}{\gamma_6}=0$ then $\theta_2\neq0$ since $A$ is non-split. Then w.s.c.o.b. $A$ is isomorphic to $\ca_{169}(0)$.

\item If $\frac{\alpha_3}{\beta_1}=0, \frac{\alpha_2}{\gamma_6}\neq0$ and $(\frac{\gamma_6\theta_2}{\alpha_2\gamma_2})^2+1=0$ then the base change $x_1=\sqrt{\frac{\gamma^2_2}{\alpha_2\gamma_6}}y_1, x_2=y_2, x_3=\frac{\gamma_2}{\gamma_6}y_3, x_4=y_4, x_5=(\frac{\gamma_2}{\gamma_6})^2y_5$ shows that $A$ is isomorphic to $\ca_{198}$.

\item If $\frac{\alpha_3}{\beta_1}=0, \frac{\alpha_2}{\gamma_6}\neq0$ and $(\frac{\gamma_6\theta_2}{\alpha_2\gamma_2})^2+1\neq0$ then w.s.c.o.b. $A$ is isomorphic to $\ca_{170}$.

\item If $\frac{\alpha_3}{\beta_1}\neq0$ then the base change $x_1=\frac{\beta_1}{\alpha_3}y_1, x_2=y_2, x_3=\frac{\gamma_2}{\gamma_6}y_3, x_4=y_4, x_5=(\frac{\gamma_2}{\gamma_6})^2y_5$ shows that $A$ is isomorphic to $\ca_{199}(\alpha, \beta)$.
\end{itemize}

\indent {\bf Case 1.1.2:} Let $\gamma_4\neq0$. If $\gamma_6\neq0$ then the base change $x_1=e_1, x_2=\gamma_6e_2-\gamma_4e_3, x_3=e_3, x_4=e_4, x_5=e_5$ shows that $A$ is isomorphic to an algebra with the nonzero products given by $(\ref{eq202,3})$. Hence $A$ is isomorphic to $\ca_{161}, \ca_{162}, \ldots, \ca_{198}$ or $\ca_{199}(\alpha, \beta)$. So let $\gamma_6=0$.
\\ \indent {\bf Case 1.1.2.1:} Let $\beta_1=0$. Then the products in $A$ are the following:
\begin{multline} \label{eq202,10}
[e_1, e_1]=\alpha_1e_4+\alpha_2e_5, [e_1, e_2]=\alpha_3e_4+ \alpha_4e_5, [e_2, e_1]=\alpha_5e_4+\alpha_6e_5, [e_2, e_2]=\beta_2e_5, \\ [e_1, e_3]=\beta_4e_5, [e_2, e_3]=\gamma_2e_5, [e_3, e_2]=\gamma_4e_5.
\end{multline}

\indent {\bf Case 1.1.2.1.1:} Let $\alpha_1=0$. Then $\alpha_3+\alpha_5\neq$ since $\dim(Leib(A))=2$. Note that if $\alpha_4\neq0$ then with the base change $x_1=\gamma_4e_1-\alpha_4e_3, x_2=e_2, x_3=e_3, x_4=e_4, x_5=e_5$ we can make $\alpha_4=0$. So we can assume $\alpha_4=0$. 
\indent {\bf Case 1.1.2.1.1.1:} Let $\alpha_3=0$. Then $\alpha_5\neq0$. 
\\\indent {\bf Case 1.1.2.1.1.1.1:} Let $\beta_4=0$.
\\ \indent {\bf Case 1.1.2.1.1.1.1.1:} Let $\beta_2=0$. Note that if $\alpha_2=0$ then $\gamma_2+\gamma_4\neq0$ since $\dim(Leib(A))=2$. Then we have the following products in $A$:
\begin{equation} \label{eq202,11}
[e_1, e_1]=\alpha_2e_5, [e_2, e_1]=\alpha_5e_4+\alpha_6e_5, [e_2, e_3]=\gamma_2e_5, [e_3, e_2]=\gamma_4e_5.
\end{equation}

\begin{itemize}
\item If $\alpha_2=0$ and $\gamma_2=0$ then the base change $x_1=e_3, x_2=\frac{\alpha_5}{\gamma_4}e_2, x_3=\frac{\gamma_4}{\alpha^2_5}e_1, x_4=\alpha_5e_5, x_5=e_4+\frac{\alpha_6}{\alpha_5}e_5$ shows that $A$ is isomorphic to $\ca_{174}$.
\item If $\alpha_2=0$ and $\gamma_2\neq0$ then the base change $x_1=e_3, x_2=\frac{\gamma_4}{\gamma_2}e_2, x_3=\frac{\gamma_2}{\gamma_4}e_1, x_4=\gamma_4e_5, x_5=\alpha_5e_4+\alpha_6e_5$ shows that $A$ is isomorphic to $\ca_{176}(\alpha)$.
\item If $\alpha_2\neq0$ then the base change $x_1=e_1, x_2=e_2, x_3=\frac{\alpha_2}{\gamma_4}e_3, x_4=\alpha_5e_4+\alpha_6e_5, x_5=\alpha_2e_5$ shows that $A$ is isomorphic to $\ca_{200}(\alpha)$.
\end{itemize}

\indent {\bf Case 1.1.2.1.1.1.1.2:} Let $\beta_2\neq0$. If $\gamma_2+\gamma_4\neq0$ then the base change $x_1=e_1, x_2=-\frac{\gamma_2+\gamma_4}{\beta_2}e_2+e_3, x_3=e_3, x_4=e_4, x_5=e_5$ shows that $A$ is isomorphic to an algebra with the nonzero products given by (\ref{eq202,11}). Hence $A$ is isomorphic to $\ca_{174}, \ca_{176}(\alpha)$ or $\ca_{200}(\alpha)$. So let $\gamma_2+\gamma_4=0$. 
\begin{itemize}
\item If $\alpha_2=0$ then the base change $x_1=-\frac{\beta_2}{\gamma_2}e_3, x_2=e_2, x_3=e_1, x_4=\beta_2e_5, x_5=\alpha_5e_4+\alpha_6e_5$ shows that $A$ is isomorphic to $\ca_{196}$.
\item If $\alpha_2\neq0$ then the base change $x_1=\sqrt{\frac{\beta_2}{\alpha_2}}e_1, x_2=e_2, x_3=\frac{\beta_2}{\gamma_2}e_3, x_4=\sqrt{\frac{\beta_2}{\alpha_2}}(\alpha_5e_4+\alpha_6e_5), x_5=\beta_2e_5$ shows that $A$ is isomorphic to $\ca_{201}$.
\end{itemize}

\indent {\bf Case 1.1.2.1.1.1.2:} Let $\beta_4\neq0$.
\begin{itemize}
\item If $\alpha_2=0$ and $\beta_2=0$ then the base change $x_1=\gamma_4e_1, x_2=\beta_4e_2, x_3=e_3, x_4=\beta_4\gamma_4(\alpha_5e_4+\alpha_6e_5), x_5=\beta_4\gamma_4e_5$ shows that $A$ is isomorphic to $\ca_{202}(\alpha)$.

\item If $\alpha_2=0$ and $\beta_2\neq0$ then the base change $x_1=\gamma_4e_1, x_2=\beta_4e_2, x_3=\frac{\beta_2\beta_4}{\gamma_4}e_3, x_4=\beta_4\gamma_4(\alpha_5e_4+\alpha_6e_5), x_5=\beta_2\beta^2_4e_5$ shows that $A$ is isomorphic to $\ca_{203}(\alpha)$.

\item If $\alpha_2\neq0$ and $\frac{\beta_2\beta^2_4}{\alpha_2\gamma^2_4}+\frac{\gamma_2}{\gamma_4}=-1$ then w.s.c.o.b. $A$ is isomorphic to $\ca_{202}(\alpha)$.
\item If $\alpha_2\neq0$ and $\frac{\beta_2\beta^2_4}{\alpha_2\gamma^2_4}+\frac{\gamma_2}{\gamma_4}\neq-1$ then w.s.c.o.b. $A$ is isomorphic to $\ca_{203}(\alpha)$.
\end{itemize}

\indent {\bf Case 1.1.2.1.1.2:} Let $\alpha_3\neq0$.
\\ \indent {\bf Case 1.1.2.1.1.2.1:} Let $\beta_2=0$. Then the products in $A$ are the following:
\begin{multline} \label{eq202,12}
[e_1, e_1]=\alpha_2e_5, [e_1, e_2]=\alpha_3e_4+ \alpha_4e_5, [e_2, e_1]=\alpha_5e_4+\alpha_6e_5, \\ [e_1, e_3]=\beta_4e_5, [e_2, e_3]=\gamma_2e_5, [e_3, e_2]=\gamma_4e_5.
\end{multline}

\indent {\bf Case 1.1.2.1.1.2.1.1:} Let $\beta_4=0$. Take $\theta_1=\frac{\alpha_3\alpha_6-\alpha_4\alpha_5}{\alpha_3}$ and $\theta_2=\frac{\alpha_5\gamma_4-\alpha_3\gamma_2}{\alpha_3}$. The base change $y_1=e_1, y_2=e_2, y_3=e_3, y_4=\alpha_3e_4+\alpha_4e_5, y_5=e_5$ shows that $A$ is isomorphic to the following algebra:
\begin{align*}
[y_1, y_1]=\alpha_2y_5, [y_1, y_2]=y_4, [y_2, y_1]=\frac{\alpha_5}{\alpha_3}y_4+\theta_1y_5, [y_2, y_3]=\gamma_2y_5, [y_3, y_2]=\gamma_4y_5.
\end{align*}
Note that if $\theta_2\neq0$ then we can assume $\theta_1=0$, because if $\theta_1\neq0$ then with the base change $x_1=\theta_2y_1+\theta_1y_3, x_2=y_2, x_3=y_3, x_4=\theta_2y_4+\gamma_4\theta_1y_5, x_5=y_5$ we can make $\theta_1=0$. 
\begin{itemize}
\item If $\alpha_2=0$ then the base change $x_1=y_1, x_2=y_2, x_3=y_3, x_4=y_4, x_5=\gamma_4y_5$ shows that $A$ is isomorphic to $\ca_{204}(\alpha, \beta)(\alpha\neq\beta)$.
\item If $\alpha_2\neq0$  then the base change $x_1=y_1, x_2=y_2, x_3=\frac{\alpha_2}{\gamma_4}y_3, x_4=y_4, x_5=\alpha_2y_5$ shows that $A$ is isomorphic to $\ca_{205}(\alpha, \beta)(\alpha\neq\beta)$.
\end{itemize}

Now suppose $\theta_2=0$. 

\begin{itemize}
\item If $\theta_1=0$ and $\alpha_2=0$ then w.s.c.o.b. $A$ is isomorphic to $\ca_{204}(\alpha, \alpha)$.
\item If $\theta_1=0$ and $\alpha_2\neq0$ then w.s.c.o.b. $A$ is isomorphic to $\ca_{205}(\alpha, \alpha)$.
\item If $\theta_1\neq0$ and $\alpha_2=0$ then $x_1=y_1, x_2=y_2, x_3=\frac{\theta_1}{\gamma_4}y_3, x_4=y_4, x_5=\theta_1y_5$ shows that $A$ is isomorphic to $\ca_{206}(\alpha)$.
\item If $\theta_1\neq0$ and $\alpha_2\neq0$ then $x_1=y_1, x_2=\frac{\alpha_2}{\theta_1}y_2, x_3=\frac{\theta_1}{\gamma_4}y_3, x_4=\frac{\alpha_2}{\theta_1}y_4, x_5=\alpha_2y_5$ shows that $A$ is isomorphic to $\ca_{207}(\alpha)$.
\end{itemize}

\indent {\bf Case 1.1.2.1.1.2.1.2:} Let $\beta_4\neq0$. Note that if $\alpha_2\neq0$ then with the base change $x_1=\beta_4e_1-\alpha_2e_3, x_2=e_2, x_3=e_3, x_4=e_4, x_5=e_5$ we can make $\alpha_2=0$. So we can assume $\alpha_2=0$.
\begin{itemize}
\item If $\alpha_3\alpha_6-\alpha_4\alpha_5=0$ then the base change $x_1=e_1, x_2=\frac{\beta_4}{\gamma_4}e_2, x_3=e_3, x_4=\frac{\beta_4}{\gamma_4}(\alpha_3e_4+ \alpha_4e_5), x_5=\beta_4e_5$ shows that $A$ is isomorphic to $\ca_{208}(\alpha, \beta)$.
\item If $\alpha_3\alpha_6-\alpha_4\alpha_5\neq0$ and $\gamma_4=-\gamma_2$ then w.s.c.o.b. $A$ is isomorphic to $\ca_{208}(\alpha, -1)$.
\item If $\alpha_3\alpha_6-\alpha_4\alpha_5\neq0$ and $\gamma_4\neq-\gamma_2$ then the base change $x_1=e_1, x_2=\frac{\beta_4}{\gamma_4}e_2, x_3=\frac{\alpha_3\alpha_6-\alpha_4\alpha_5}{\alpha_5\gamma_4}e_3, x_4=\frac{\beta_4}{\gamma_4}(\alpha_3e_4+ \alpha_4e_5), x_5=\frac{(\alpha_3\alpha_6-\alpha_4\alpha_5)\beta_4}{\alpha_5\gamma_4}e_5$ shows that $A$ is isomorphic to $\ca_{209}(\alpha, \beta)$.
\end{itemize}

\indent {\bf Case 1.1.2.1.1.2.2:} Let $\beta_2\neq0$. If $\gamma_2+\gamma_4\neq0$ then the base change $x_1=e_1, x_2=-\frac{\gamma_2+\gamma_4}{\beta_2}e_2+e_3, x_3=e_3, x_4=e_4, x_5=e_5$ shows that $A$ is isomorphic to an algebra with the nonzero products given by (\ref{eq202,12}). Hence $A$ is isomorphic to $\ca_{204}(\alpha, \beta), \ca_{205}(\alpha, \beta), \ldots, \ca_{208}(\alpha, \beta)$ or $\ca_{209}(\alpha, \beta)$. So let $\gamma_2+\gamma_4=0$. 
\\ \indent {\bf Case 1.1.2.1.1.2.2.1:} Let $\beta_4=0$. Take $\theta=\frac{\alpha_3\alpha_6-\alpha_4\alpha_5}{\alpha_3}$. The base change $y_1=e_1, y_2=e_2, y_3=e_3, y_4=\alpha_3e_4+ \alpha_4e_5, y_5=e_5$ shows that $A$ is isomorphic to the following algebra:
\begin{align*}
[y_1, y_1]=\alpha_2y_5, [y_1, y_2]=y_4, [y_2, y_1]=\frac{\alpha_5}{\alpha_3}y_4+\theta y_5, [y_2, y_2]=\beta_2y_5, [y_2, y_3]=\gamma_2y_5=-[y_3, y_2].
\end{align*}
Note that if $\theta\neq0$ then with the base change $x_1=y_1-\frac{\alpha_3\theta}{(\alpha_3+\alpha_5)\gamma_2}y_3, x_2=y_2, x_3=y_3, x_4=y_4, x_5=y_5$ we can make $\theta=0$. So let $\theta=0$. 
\begin{itemize}
\item If $\alpha_2=0$ then the base change $x_1=y_1, x_2=y_2, x_3=\frac{\beta_2}{\gamma_2}y_3, x_4=y_4, x_5=\beta_2y_5$ shows that $A$ is isomorphic to $\ca_{210}(\alpha)$.
\item If $\alpha_2\neq0$ then the base change $x_1=\sqrt{\frac{\beta_2}{\alpha_2}}y_1, x_2=y_2, x_3=\frac{\beta_2}{\gamma_2}y_3, x_4=\sqrt{\frac{\beta_2}{\alpha_2}}y_4, x_5=\beta_2y_5$ shows that $A$ is isomorphic to $\ca_{211}(\alpha)$.
\end{itemize}

\indent {\bf Case 1.1.2.1.1.2.2.2:} Let $\beta_4\neq0$. Without loss of generality we can assume $\alpha_2=0$, because if $\alpha_2\neq0$ then with the base change $x_1=\beta_4e_1-\alpha_2e_3, x_2=e_2, x_3=e_3, x_4=e_4, x_5=e_5$ we can make $\alpha_2=0$. Take $\theta=\frac{(\alpha_4\alpha_5-\alpha_3\alpha_6)\beta_4\gamma_2}{\alpha_5\beta_2}$. The base change $y_1=\frac{\gamma_2}{\beta_4}e_1, y_2=e_2, y_3=\frac{\beta_2}{\gamma_2}e_3, y_4=\frac{\gamma_2}{\beta_4}(\alpha_3e_4+\frac{\alpha_3\alpha_6}{\alpha_5}e_5), y_5=\beta_2e_5$ shows that $A$ is isomorphic to the following algebra:
\begin{align*}
[y_1, y_2]=y_4+\theta y_5, [y_2, y_1]=\frac{\alpha_5}{\alpha_3}y_4, [y_2, y_2]=y_5, [y_1, y_3]=y_5, [y_2, y_3]=y_5=-[y_3, y_2].
\end{align*}
Note that if $\theta\neq0$ then with the base change $x_1=y_1, x_2=y_2-\theta y_3, x_3=y_3, x_4=y_4, x_5=y_5$ we can make $\theta=0$. So we can assume $\theta=0$. Then the base change $x_1=y_1, x_2=y_2, x_3=y_3, x_4=y_4, x_5=y_5$ shows that $A$ is isomorphic to $\ca_{212}(\alpha)$.
\\ \indent {\bf Case 1.1.2.1.2:} Let $\alpha_1\neq0$.
\\ \indent {\bf Case 1.1.2.1.2.1:} Let $\beta_2=0$. 
\begin{multline} \label{eq202,13}
[e_1, e_1]=\alpha_1e_4+\alpha_2e_5, [e_1, e_2]=\alpha_3e_4+ \alpha_4e_5, [e_2, e_1]=\alpha_5e_4+\alpha_6e_5, \\ [e_1, e_3]=\beta_4e_5, [e_2, e_3]=\gamma_2e_5, [e_3, e_2]=\gamma_4e_5.
\end{multline}
\\ \indent {\bf Case 1.1.2.1.2.1.1:} Let $\alpha_3=0$. If $\alpha_4\neq0$ then with the base change $x_1=\gamma_4e_1-\alpha_4e_3, x_2=e_2, x_3=e_3, x_4=e_4, x_5=e_5$ we can make $\alpha_4=0$. So we can assume $\alpha_4=0$. Take $\theta=\frac{\alpha_1\alpha_6-\alpha_2\alpha_5}{\alpha_1\gamma_4}$. The base change $y_1=e_1, y_2=e_2, y_3=e_3, y_4=\alpha_1e_4+\alpha_2e_5, y_5=\gamma_4e_5$ shows that $A$ is isomorphic to the following algebra:
\begin{align*}
[y_1, y_1]=y_4, [y_2, y_1]=\frac{\alpha_5}{\alpha_1}y_4+\theta y_5, [y_1, y_3]=\frac{\beta_4}{\gamma_4}y_5, [y_2, y_3]=\frac{\gamma_2}{\gamma_4}y_5, [y_3, y_2]=y_5.
\end{align*}

Notice that if $\alpha_5=0$ and $\theta=0$ then $\beta_4\neq0$ since $A$ is non-split. 
\begin{itemize}
\item If $\alpha_5=0, \theta=0$ and $\frac{\gamma_2}{\gamma_4}=0$ then w.s.c.o.b. $A$ is isomorphic to $\ca_{178}$.
\item If $\alpha_5=0, \theta=0$ and $\frac{\gamma_2}{\gamma_4}=1$ then w.s.c.o.b. $A$ is isomorphic to $\ca_{198}$.
\item If $\alpha_5=0, \theta=0$ and $\frac{\gamma_2}{\gamma_4}\in\cc\backslash\{0, 1\}$ then the base change $x_1=y_1, x_2=\frac{\beta_4}{\gamma_4}y_2, x_3=y_3, x_4=y_4, x_5=\frac{\beta_4}{\gamma_4}y_5$ shows that $A$ is isomorphic to $\ca_{213}(\alpha)$.
\item If $\alpha_5=0, \theta\neq0, \beta_4=0$ and $\frac{\gamma_2}{\gamma_4}=0$ then the base change $x_1=-y_1+\frac{1}{\theta}y_2, x_2=\theta y_3, x_3=\frac{1}{\theta}y_2+\theta y_3, x_4=y_4-y_5, x_5=y_5$ shows that $A$ is isomorphic to $\ca_{191}$.
\item If $\alpha_5=0, \theta\neq0, \beta_4=0$ and $\frac{\gamma_2}{\gamma_4}=1$ then w.s.c.o.b. $A$ is isomorphic to $\ca_{198}$.
\item If $\alpha_5=0, \theta\neq0, \beta_4=0$ and $\frac{\gamma_2}{\gamma_4}\in\cc\backslash\{0, 1\}$ then w.s.c.o.b. $A$ is isomorphic to $\ca_{213}(\alpha)$.
\item If $\alpha_5=0, \theta\neq0, \beta_4\neq0$ and $\frac{\gamma_2}{\gamma_4}=0$ then the base change $x_1=y_1, x_2=\frac{\beta_4}{\gamma_4}y_2, x_3=\theta y_3, x_4=y_4, x_5=\frac{\beta_4\theta}{\gamma_4}y_5$ shows that $A$ is isomorphic to $\ca_{214}$.
\item If $\alpha_5=0, \theta\neq0, \beta_4\neq0$ and $\frac{\gamma_2}{\gamma_4}=1$ then w.s.c.o.b. $A$ is isomorphic to $\ca_{170}$.
\item If $\alpha_5=0, \theta\neq0, \beta_4\neq0$ and $\frac{\gamma_2}{\gamma_4}\in\cc\backslash\{0, 1\}$ then w.s.c.o.b. $A$ is isomorphic to $\ca_{190}(\alpha)$.
\item If $\alpha_5\neq0, \theta=0, \beta_4=0$ and $\frac{\gamma_2}{\gamma_4}=0$ then w.s.c.o.b. $A$ is isomorphic to $\ca_{184}(0)$.
\item If $\alpha_5\neq0, \theta=0, \beta_4=0$ and $\frac{\gamma_2}{\gamma_4}\in\cc\backslash\{-1, 0\}$ then w.s.c.o.b. $A$ is isomorphic to $\ca_{187}(\alpha, 0)$. Note that $\frac{\gamma_2}{\gamma_4}\neq-1$ since $\dim(Leib(A))=2$.
\item If $\alpha_5\neq0, \theta=0$ and $\beta_4\neq0$ then the base change $x_1=y_1, x_2=\frac{\alpha_1}{\alpha_5}y_2, x_3=y_3, x_4=y_4, x_5=\frac{\alpha_1}{\alpha_5}y_5$ shows that $A$ is isomorphic to $\ca_{215}(\alpha, \beta)$.
\item If $\alpha_5\neq0, \theta\neq0, \frac{\gamma_2}{\gamma_4}=-1$ and $\beta_4=0$ then the base change $x_1=y_1, x_2=\frac{\alpha_1}{\alpha_5}y_2, x_3=-\theta y_3, x_4=y_4, x_5==\frac{\alpha_1\theta}{\alpha_5}y_5$ shows that $A$ is isomorphic to $\ca_{216}$.
\item If $\alpha_5\neq0, \theta\neq0, \frac{\gamma_2}{\gamma_4}=-1$ and $\beta_4\neq0$ then w.s.c.o.b. $A$ is isomorphic to $\ca_{215}(\alpha, -1)$.
\item If $\alpha_5\neq0, \theta\neq0$ and $\frac{\gamma_2}{\gamma_4}\neq-1$ then w.s.c.o.b. $A$ is isomorphic to $\ca_{217}(\alpha, \beta)$.
\end{itemize}

\indent {\bf Case 1.1.2.1.2.1.2:} Let $\alpha_3\neq0$. Take $\theta_1=\frac{\alpha_1\alpha_4-\alpha_2\alpha_3}{\alpha_1\gamma_4}$ and $\theta_2=\frac{\alpha_1\alpha_6-\alpha_2\alpha_5}{\alpha_1\gamma_4}$. The base change $y_1=e_1, y_2=\frac{\alpha_1}{\alpha_3}e_2, y_3=e_3, y_4=\alpha_1e_4+\alpha_2e_5, y_5=\frac{\alpha_1\gamma_4}{\alpha_3}e_5$ shows that $A$ is isomorphic to the following algebra:
\begin{align*}
[y_1, y_1]=y_4, [y_1, y_2]=y_4+\theta_1y_5, [y_2, y_1]=\frac{\alpha_5}{\alpha_3}y_4+\theta_2y_5, [y_1, y_3]=\frac{\alpha_3\beta_4}{\alpha_1\gamma_4}y_5, [y_2, y_3]=\frac{\gamma_2}{\gamma_4}y_5, [y_3, y_2]=y_5.
\end{align*}

Note that if $\theta_1=0, \theta_2=0$ and $\frac{\alpha_3\beta_4}{\alpha_1\gamma_4}=0$ then $\frac{\gamma_2}{\gamma_4}\neq-1$ since $\dim(Leib(A))=2$.

\begin{itemize}
\item If $\theta_2=0, \theta_1=0, \frac{\alpha_3\beta_4}{\alpha_1\gamma_4}=0$ and $\frac{\alpha_5}{\alpha_3}=1$ then w.s.c.o.b. $A$ is isomorphic to $\ca_{165}(\alpha)$.

\item If $\theta_2=0, \theta_1=0, \frac{\alpha_3\beta_4}{\alpha_1\gamma_4}=0, \frac{\alpha_5}{\alpha_3}\neq1$ and $\frac{\gamma_2}{\gamma_4}=0$ then w.s.c.o.b. $A$ is isomorphic to $\ca_{184}(\alpha)$.

\item If $\theta_2=0, \theta_1=0, \frac{\alpha_3\beta_4}{\alpha_1\gamma_4}=0, \frac{\alpha_5}{\alpha_3}\neq1$ and $\frac{\gamma_2}{\gamma_4}\in\cc\backslash\{-1, 0\}$ then w.s.c.o.b. $A$ is isomorphic to $\ca_{187}(\alpha, \beta)$.

\item If $\theta_2=0, \theta_1=0, \frac{\alpha_3\beta_4}{\alpha_1\gamma_4}\neq0$ and $\frac{\alpha_5}{\alpha_3}=0$ then w.s.c.o.b. $A$ is isomorphic to $\ca_{215}(\alpha, \beta)$.

\item If $\theta_2=0, \theta_1=0, \frac{\alpha_3\beta_4}{\alpha_1\gamma_4}\neq0$ and $\frac{\alpha_5}{\alpha_3}\neq0$ then the base change $x_1=y_1, x_2=y_2, x_3=y_3, x_4=y_4, x_5=y_5$ shows that $A$ is isomorphic to $\ca_{218}(\alpha, \beta, \gamma)$.

\item If $\theta_2=0, \theta_1\neq0, \frac{\alpha_3\beta_4}{\alpha_1\gamma_4}=1+\frac{\gamma_2}{\gamma_4}, \frac{\alpha_5}{\alpha_3}(1+\frac{\gamma_2}{\gamma_4})=\frac{\gamma_2}{\gamma_4}$ and $\frac{\gamma_2}{\gamma_4}=0$ then the base change $x_1=\frac{1}{\theta_1}y_1, x_2=\frac{1}{\theta_1}y_2, x_3=y_3, x_4=(\frac{1}{\theta_1})^2y_4, x_5=y_5$ shows that $A$ is isomorphic to $\ca_{219}$.

\item If $\theta_2=0, \theta_1\neq0, \frac{\alpha_3\beta_4}{\alpha_1\gamma_4}=1+\frac{\gamma_2}{\gamma_4}, \frac{\alpha_5}{\alpha_3}(1+\frac{\gamma_2}{\gamma_4})=\frac{\gamma_2}{\gamma_4}$ and $\frac{\gamma_2}{\gamma_4}\in\cc\backslash\{-1, 0\}$ then w.s.c.o.b. $A$ is isomorphic to $\ca_{218}(\frac{\alpha-1}{\alpha}, \alpha, \alpha-1)(\alpha\in\cc\backslash\{0, 1\})$.

\item If $\theta_2=0, \theta_1\neq0, \frac{\alpha_3\beta_4}{\alpha_1\gamma_4}=1+\frac{\gamma_2}{\gamma_4}$ and $\frac{\alpha_5}{\alpha_3}(1+\frac{\gamma_2}{\gamma_4})\neq\frac{\gamma_2}{\gamma_4}$ then the base change $x_1=\frac{1}{\theta_1}y_1, x_2=\frac{1}{\theta_1}y_2, x_3=y_3, x_4=(\frac{1}{\theta_1})^2y_4, x_5=y_5$ shows that $A$ is isomorphic to $\ca_{220}(\alpha, \beta)$.

\item If $\theta_2=0, \theta_1\neq0, \frac{\alpha_3\beta_4}{\alpha_1\gamma_4}\neq1+\frac{\gamma_2}{\gamma_4}, \frac{\alpha_5}{\alpha_3}=0, \frac{\gamma_2}{\gamma_4}=0$ and $ \frac{\alpha_3\beta_4}{\alpha_1\gamma_4}=0$ then w.s.c.o.b. $A$ is isomorphic to $\ca_{184}(0)$.

\item If $\theta_2=0, \theta_1\neq0, \frac{\alpha_3\beta_4}{\alpha_1\gamma_4}\neq1+\frac{\gamma_2}{\gamma_4}, \frac{\alpha_5}{\alpha_3}=0, \frac{\gamma_2}{\gamma_4}=0$ and $ \frac{\alpha_3\beta_4}{\alpha_1\gamma_4}\in\cc\backslash\{0, 1\}$ then w.s.c.o.b. $A$ is isomorphic to $\ca_{215}(\alpha, \alpha)(\alpha\in\cc\backslash\{-1, 0\})$.

\item If $\theta_2=0, \theta_1\neq0, \frac{\alpha_3\beta_4}{\alpha_1\gamma_4}\neq1+\frac{\gamma_2}{\gamma_4}, \frac{\alpha_5}{\alpha_3}=0$ and $\frac{\gamma_2}{\gamma_4}=-1$ then w.s.c.o.b. $A$ is isomorphic to $\ca_{215}(\alpha, \alpha-1)$.

\item If $\theta_2=0, \theta_1\neq0, \frac{\alpha_3\beta_4}{\alpha_1\gamma_4}\neq1+\frac{\gamma_2}{\gamma_4}, \frac{\alpha_5}{\alpha_3}=0$ and $\frac{\gamma_2}{\gamma_4}\in\cc\backslash\{-1, 0\}$ then w.s.c.o.b. $A$ is isomorphic to $\ca_{217}(\alpha, \beta)$.

\item If $\theta_2=0, \theta_1\neq0, \frac{\alpha_3\beta_4}{\alpha_1\gamma_4}\neq1+\frac{\gamma_2}{\gamma_4}, \frac{\alpha_5}{\alpha_3}=1, \frac{\alpha_3\beta_4}{\alpha_1\gamma_4}=\frac{\gamma_2}{\gamma_4}$ and $\frac{\gamma_2}{\gamma_4}=0$ then w.s.c.o.b. $A$ is isomorphic to $\ca_{165}(0)$.

\item If $\theta_2=0, \theta_1\neq0, \frac{\alpha_3\beta_4}{\alpha_1\gamma_4}\neq1+\frac{\gamma_2}{\gamma_4}, \frac{\alpha_5}{\alpha_3}=1, \frac{\alpha_3\beta_4}{\alpha_1\gamma_4}=\frac{\gamma_2}{\gamma_4}$ and $\frac{\gamma_2}{\gamma_4}\neq0$ then w.s.c.o.b. $A$ is isomorphic to $\ca_{218}(-1, \alpha, \alpha)$.

\item If $\theta_2=0, \theta_1\neq0, \frac{\alpha_3\beta_4}{\alpha_1\gamma_4}\neq1+\frac{\gamma_2}{\gamma_4}, \frac{\alpha_5}{\alpha_3}=1$ and $\frac{\alpha_3\beta_4}{\alpha_1\gamma_4}\neq\frac{\gamma_2}{\gamma_4}$ then the base change $x_1=\frac{1}{\theta_1}y_1, x_2=\frac{1}{\theta_1}y_2, x_3=y_3, x_4=(\frac{1}{\theta_1})^2y_4, x_5=y_5$ shows that $A$ is isomorphic to $\ca_{221}(\alpha, \beta)$.

\item If $\theta_2=0, \theta_1\neq0, \frac{\alpha_3\beta_4}{\alpha_1\gamma_4}\neq1+\frac{\gamma_2}{\gamma_4}, \frac{\alpha_5}{\alpha_3}\in\cc\backslash\{0, 1\}, \frac{\gamma_2}{\gamma_4}=0$ and $\frac{\alpha_3\beta_4}{\alpha_1\gamma_4}=0$ then w.s.c.o.b. $A$ is isomorphic to $\ca_{184}(\alpha)(\alpha\in\cc\backslash\{0\})$.

\item If $\theta_2=0, \theta_1\neq0, \frac{\alpha_3\beta_4}{\alpha_1\gamma_4}\neq1+\frac{\gamma_2}{\gamma_4}, \frac{\alpha_5}{\alpha_3}\in\cc\backslash\{0, 1\}, \frac{\gamma_2}{\gamma_4}=0$ and $\frac{\alpha_3\beta_4}{\alpha_1\gamma_4}\in\cc\backslash\{0, 1\}$ then the base change $x_1=\frac{1}{\theta_1}y_1, x_2=\frac{1}{\theta_1}y_2, x_3=y_3, x_4=(\frac{1}{\theta_1})^2y_4, x_5=y_5$ shows that $A$ is isomorphic to $\ca_{222}(\alpha, \beta)$.

\item If $\theta_2=0, \theta_1\neq0, \frac{\alpha_3\beta_4}{\alpha_1\gamma_4}\neq1+\frac{\gamma_2}{\gamma_4}, \frac{\alpha_5}{\alpha_3}\in\cc\backslash\{0, 1\}, \frac{\gamma_2}{\gamma_4}\neq0$ and $\frac{\alpha_5\beta_4}{\alpha_1\gamma_4}=\frac{\gamma_2}{\gamma_4}$ then w.s.c.o.b. $A$ is isomorphic to $\ca_{218}(\alpha, \beta, \alpha\beta)(\alpha\in\cc\backslash\{0,1\}, \beta\in\cc\{0\})$.

\item If $\theta_2=0, \theta_1\neq0, \frac{\alpha_3\beta_4}{\alpha_1\gamma_4}\neq1+\frac{\gamma_2}{\gamma_4}, \frac{\alpha_5}{\alpha_3}\in\cc\backslash\{0, 1\}, \frac{\gamma_2}{\gamma_4}\neq0, \frac{\alpha_5\beta_4}{\alpha_1\gamma_4}\neq\frac{\gamma_2}{\gamma_4}$ and $(\frac{\alpha_5}{\alpha_3}+1)\frac{\alpha_3\beta_4}{\alpha_1\gamma_4}=\frac{\gamma_2}{\gamma_4}$ then w.s.c.o.b. $A$ is isomorphic to $\ca_{222}(\alpha, \beta)$.

\item If $\theta_2=0, \theta_1\neq0, \frac{\alpha_3\beta_4}{\alpha_1\gamma_4}\neq1+\frac{\gamma_2}{\gamma_4}, \frac{\alpha_5}{\alpha_3}\in\cc\backslash\{0, 1\}, \frac{\gamma_2}{\gamma_4}\neq0, \frac{\alpha_5\beta_4}{\alpha_1\gamma_4}\neq\frac{\gamma_2}{\gamma_4}$ and $(\frac{\alpha_5}{\alpha_3}+1)\frac{\alpha_3\beta_4}{\alpha_1\gamma_4}\neq\frac{\gamma_2}{\gamma_4}$ then the base change $x_1=\frac{1}{\theta_1}y_1, x_2=\frac{1}{\theta_1}y_2, x_3=y_3, x_4=(\frac{1}{\theta_1})^2y_4, x_5=y_5$ shows that $A$ is isomorphic to $\ca_{223}(\alpha, \beta, \gamma)$.

\item If $\theta_2\neq0, \frac{\alpha_5}{\alpha_3}=0, \frac{\alpha_3\beta_4}{\alpha_1\gamma_4}=1+\frac{\gamma_2}{\gamma_4}$ and $\theta_1=-1$ then w.s.c.o.b. $A$ is isomorphic to $\ca_{215}(\alpha, \beta)$.

\item If $\theta_2\neq0, \frac{\alpha_5}{\alpha_3}=0, \frac{\alpha_3\beta_4}{\alpha_1\gamma_4}=1+\frac{\gamma_2}{\gamma_4}$ and $\theta_1\neq-1$ then w.s.c.o.b. $A$ is isomorphic to $\ca_{220}(0, \alpha)$.

\item If $\theta_2\neq0, \frac{\alpha_5}{\alpha_3}=0, \frac{\alpha_3\beta_4}{\alpha_1\gamma_4}\neq1+\frac{\gamma_2}{\gamma_4}, \theta_1\frac{\gamma_2}{\gamma_4}=1-\frac{\alpha_3\beta_4}{\alpha_1\gamma_4}$ and $\frac{\alpha_3\beta_4}{\alpha_1\gamma_4}=0$ then the base change $x_1=\frac{1}{\theta_2}y_1, x_2=\frac{1}{\theta_2}y_2, x_3=y_3, x_4=(\frac{1}{\theta_2})^2y_4, x_5=y_5$ shows that $A$ is isomorphic to $\ca_{224}(\alpha)$.

\item If $\theta_2\neq0, \frac{\alpha_5}{\alpha_3}=0, \frac{\alpha_3\beta_4}{\alpha_1\gamma_4}\neq1+\frac{\gamma_2}{\gamma_4}, \theta_1\frac{\gamma_2}{\gamma_4}=1-\frac{\alpha_3\beta_4}{\alpha_1\gamma_4}$ and $\frac{\alpha_3\beta_4}{\alpha_1\gamma_4}\neq0$ then w.s.c.o.b. $A$ is isomorphic to $\ca_{225}(\alpha, \beta)$.

\item If $\theta_2\neq0, \frac{\alpha_5}{\alpha_3}=0, \frac{\alpha_3\beta_4}{\alpha_1\gamma_4}\neq1+\frac{\gamma_2}{\gamma_4}$ and $\theta_1\frac{\gamma_2}{\gamma_4}\neq1-\frac{\alpha_3\beta_4}{\alpha_1\gamma_4}$ then w.s.c.o.b. $A$ is isomorphic to $\ca_{227}(\alpha, \beta)$.

\item If $\theta_2\neq0, \frac{\alpha_5}{\alpha_3}=1, \frac{\alpha_3\beta_4}{\alpha_1\gamma_4}=0$ and $\frac{\gamma_2\theta_1}{\gamma_4}=1$ then w.s.c.o.b. $A$ is isomorphic to $\ca_{165}(\alpha)$.

\item If $\theta_2\neq0, \frac{\alpha_5}{\alpha_3}=1, \frac{\alpha_3\beta_4}{\alpha_1\gamma_4}=0, \frac{\gamma_2\theta_1}{\gamma_4}\neq1$ and $\frac{\gamma_2}{\gamma_4}=-1$ then w.s.c.o.b. $A$ is isomorphic to $\ca_{220}(1, -1)$.

\item If $\theta_2\neq0, \frac{\alpha_5}{\alpha_3}=1, \frac{\alpha_3\beta_4}{\alpha_1\gamma_4}=0, \frac{\gamma_2\theta_1}{\gamma_4}\neq1$ and $\frac{\gamma_2}{\gamma_4}=0$ then the base change $x_1=\frac{1}{\theta_2}y_1, x_2=\frac{1}{\theta_2}y_2, x_3=y_3, x_4=(\frac{1}{\theta_2})^2y_4, x_5=y_5$ shows that $A$ is isomorphic to $\ca_{225}$.

\item If $\theta_2\neq0, \frac{\alpha_5}{\alpha_3}=1, \frac{\alpha_3\beta_4}{\alpha_1\gamma_4}=0, \frac{\gamma_2\theta_1}{\gamma_4}\neq1$ and $\frac{\gamma_2}{\gamma_4}\in\cc\backslash\{-1, 0\}$ then w.s.c.o.b. $A$ is isomorphic to $\ca_{221}(0, \alpha)$.

\item If $\theta_2\neq0, \frac{\alpha_5}{\alpha_3}=1, \frac{\alpha_3\beta_4}{\alpha_1\gamma_4}\neq0, \frac{\alpha_3\beta_4}{\alpha_1\gamma_4}=1+\frac{\gamma_2}{\gamma_4}$ and $\theta_1=\frac{\gamma_2}{\gamma_4}$ then w.s.c.o.b. $A$ is isomorphic to $\ca_{218}(0, \alpha, \alpha-1)$.

\item If $\theta_2\neq0, \frac{\alpha_5}{\alpha_3}=1, \frac{\alpha_3\beta_4}{\alpha_1\gamma_4}\neq0, \frac{\alpha_3\beta_4}{\alpha_1\gamma_4}=1+\frac{\gamma_2}{\gamma_4}$ and $\theta_1\neq\frac{\gamma_2}{\gamma_4}$ then w.s.c.o.b. $A$ is isomorphic to $\ca_{220}(1, \alpha)$.

\item If $\theta_2\neq0, \frac{\alpha_5}{\alpha_3}=1, \frac{\alpha_3\beta_4}{\alpha_1\gamma_4}\neq0, \frac{\alpha_3\beta_4}{\alpha_1\gamma_4}\neq1+\frac{\gamma_2}{\gamma_4}$ and $\frac{\alpha_3\beta_4}{\alpha_1\gamma_4}=\frac{\gamma_2}{\gamma_4}$ then the base change $x_1=\frac{1}{\theta_2}y_1, x_2=\frac{1}{\theta_2}y_2, x_3=y_3, x_4=(\frac{1}{\theta_2})^2y_4, x_5=y_5$ shows that $A$ is isomorphic to $\ca_{226}(\alpha)$.

\item If $\theta_2\neq0, \frac{\alpha_5}{\alpha_3}=1, \frac{\alpha_3\beta_4}{\alpha_1\gamma_4}\neq0, \frac{\alpha_3\beta_4}{\alpha_1\gamma_4}\neq1+\frac{\gamma_2}{\gamma_4}$ and $\frac{\alpha_3\beta_4}{\alpha_1\gamma_4}\neq\frac{\gamma_2}{\gamma_4}$ then w.s.c.o.b. $A$ is isomorphic to $\ca_{221}(\alpha, \beta)$.

\item If $\theta_2\neq0, \frac{\alpha_5}{\alpha_3}\in\cc\backslash\{0, 1\}, \theta_1(\frac{\alpha_5\beta_4}{\alpha_1\gamma_4}-\frac{\gamma_2}{\gamma_4})=\frac{\alpha_3\beta_4}{\alpha_1\gamma_4}-1$ and $\frac{\alpha_3\beta_4}{\alpha_1\gamma_4}=0$ then w.s.c.o.b. $A$ is isomorphic to $\ca_{187}(\alpha, \beta)$.

\item If $\theta_2\neq0, \frac{\alpha_5}{\alpha_3}\in\cc\backslash\{0, 1\}, \theta_1(\frac{\alpha_5\beta_4}{\alpha_1\gamma_4}-\frac{\gamma_2}{\gamma_4})=\frac{\alpha_3\beta_4}{\alpha_1\gamma_4}-1$ and $\frac{\alpha_3\beta_4}{\alpha_1\gamma_4}\neq0$ then w.s.c.o.b. $A$ is isomorphic to $\ca_{218}(\alpha, \beta, \gamma)(\alpha\in\cc\backslash\{0, 1\}, \beta\in\cc\backslash\{0\}, \gamma\in\cc)$.

\item If $\theta_2\neq0, \frac{\alpha_5}{\alpha_3}\in\cc\backslash\{0, 1\}, \theta_1(\frac{\alpha_5\beta_4}{\alpha_1\gamma_4}-\frac{\gamma_2}{\gamma_4})\neq\frac{\alpha_3\beta_4}{\alpha_1\gamma_4}-1, \frac{\alpha_5\beta_4}{\alpha_1\gamma_4}=\frac{\gamma_2}{\gamma_4}-\frac{\alpha_3\beta_4}{\alpha_1\gamma_4}$ and $\frac{\alpha_3\beta_4}{\alpha_1\gamma_4}=0$ then the base change $x_1=\frac{1}{\theta_2}y_1, x_2=\frac{1}{\theta_2}y_2, x_3=y_3, x_4=(\frac{1}{\theta_2})^2y_4, x_5=y_5$ shows that $A$ is isomorphic to $\ca_{227}(\alpha)$.

\item If $\theta_2\neq0, \frac{\alpha_5}{\alpha_3}\in\cc\backslash\{0, 1\}, \theta_1(\frac{\alpha_5\beta_4}{\alpha_1\gamma_4}-\frac{\gamma_2}{\gamma_4})\neq\frac{\alpha_3\beta_4}{\alpha_1\gamma_4}-1, \frac{\alpha_5\beta_4}{\alpha_1\gamma_4}=\frac{\gamma_2}{\gamma_4}-\frac{\alpha_3\beta_4}{\alpha_1\gamma_4}$ and $\frac{\alpha_3\beta_4}{\alpha_1\gamma_4}\neq0$ then w.s.c.o.b. $A$ is isomorphic to $\ca_{222}(\alpha, \beta)$.

\item If $\theta_2\neq0, \frac{\alpha_5}{\alpha_3}\in\cc\backslash\{0, 1\}, \theta_1(\frac{\alpha_5\beta_4}{\alpha_1\gamma_4}-\frac{\gamma_2}{\gamma_4})\neq\frac{\alpha_3\beta_4}{\alpha_1\gamma_4}-1, \frac{\alpha_5\beta_4}{\alpha_1\gamma_4}\neq\frac{\gamma_2}{\gamma_4}-\frac{\alpha_3\beta_4}{\alpha_1\gamma_4}, \frac{\alpha_3\beta_4}{\alpha_1\gamma_4}=1+\frac{\gamma_2}{\gamma_4}$ and $\frac{\alpha_5}{\alpha_3}(1+\frac{\gamma_2}{\gamma_4})=\frac{\gamma_2}{\gamma_4}$ then the base change $x_1=\frac{1}{\theta_2}y_1, x_2=\frac{1}{\theta_2}y_2, x_3=y_3, x_4=(\frac{1}{\theta_2})^2y_4, x_5=y_5$ shows that $A$ is isomorphic to $\ca_{228}(\alpha)$.

\item If $\theta_2\neq0, \frac{\alpha_5}{\alpha_3}\in\cc\backslash\{0, 1\}, \theta_1(\frac{\alpha_5\beta_4}{\alpha_1\gamma_4}-\frac{\gamma_2}{\gamma_4})\neq\frac{\alpha_3\beta_4}{\alpha_1\gamma_4}-1, \frac{\alpha_5\beta_4}{\alpha_1\gamma_4}\neq\frac{\gamma_2}{\gamma_4}-\frac{\alpha_3\beta_4}{\alpha_1\gamma_4}, \frac{\alpha_3\beta_4}{\alpha_1\gamma_4}=1+\frac{\gamma_2}{\gamma_4}$ and $\frac{\alpha_5}{\alpha_3}(1+\frac{\gamma_2}{\gamma_4})\neq\frac{\gamma_2}{\gamma_4}$ then w.s.c.o.b. $A$ is isomorphic to $\ca_{220}(\alpha, \beta)$.

\item If $\theta_2\neq0, \frac{\alpha_5}{\alpha_3}\in\cc\backslash\{0, 1\}, \theta_1(\frac{\alpha_5\beta_4}{\alpha_1\gamma_4}-\frac{\gamma_2}{\gamma_4})\neq\frac{\alpha_3\beta_4}{\alpha_1\gamma_4}-1, \frac{\alpha_5\beta_4}{\alpha_1\gamma_4}\neq\frac{\gamma_2}{\gamma_4}-\frac{\alpha_3\beta_4}{\alpha_1\gamma_4}$ and $\frac{\alpha_3\beta_4}{\alpha_1\gamma_4}\neq1+\frac{\gamma_2}{\gamma_4}$ then w.s.c.o.b. $A$ is isomorphic to $\ca_{223}(\alpha, \beta, \gamma)$.

\end{itemize}

\indent {\bf Case 1.1.2.1.2.2:} Let $\beta_2\neq0$. If $\gamma_2+\gamma_4\neq0$ then the base change $x_1=e_1, x_2=-\frac{\gamma_2+\gamma_4}{\beta_2}e_2+e_3, x_3=e_3, x_4=e_4, x_5=e_5$ shows that $A$ is isomorphic to an algebra with the nonzero products given by (\ref{eq202,13}). Hence $A$ is isomorphic to $\ca_{165}(\alpha), \ca_{170}, \ca_{178},  \ca_{184}(\alpha),  \ca_{187}(\alpha, \beta), \ca_{190}(\alpha), \ca_{191}, \ca_{198}, \ca_{205}(\alpha, \beta), \ca_{208}(\alpha, \beta)$ or $\ca_{213}(\alpha), \ca_{214}, \ldots, \ca_{228}(\alpha)$. So let $\gamma_2+\gamma_4=0$. 
\\ \indent {\bf Case 1.1.2.1.2.2.1:} Let $\alpha_3=0$. Note that if $\alpha_4\neq0$ then with the base change $x_1=\gamma_2e_1+\alpha_4e_3, x_2=e_2, x_3=e_3, x_4=e_4, x_5=e_5$ we can make $\alpha_4=0$. So we can assume $\alpha_4=0$. Take $\theta=\frac{\alpha_1\alpha_6-\alpha_2\alpha_5}{\alpha_1\beta_2}$. The base change $y_1=e_1, y_2=e_2, y_3=\frac{\beta_2}{\gamma_2}e_3, y_4=\alpha_1e_4+\alpha_2e_5, y_5=\beta_2e_5$ shows that $A$ is isomorphic to the following algebra:
\begin{align*}
[y_1, y_1]=y_4, [y_2, y_1]=\frac{\alpha_5}{\alpha_1}y_4+\theta y_5, [y_2, y_2]=y_5, [y_1, y_3]=\frac{\beta_4}{\gamma_2}y_5, [y_2, y_3]=y_5=-[y_3, y_2].
\end{align*}

\begin{itemize}
\item If $\alpha_5=0$ and $\frac{\beta_4}{\gamma_2}=0$ then $\theta\neq0$ since $A$ is non-split. Then the base change $x_1=y_1, x_2=\theta y_2, x_3=\theta y_3, x_4=y_4, x_5=\theta^2y_5$ shows that $A$ is isomorphic to $\ca_{229}$.

\item If $\alpha_5=0$ and $\frac{\beta_4}{\gamma_2}\neq0$ then w.s.c.o.b. $A$ is isomorphic to $\ca_{190}(\frac{1}{4})$.

\item If $\alpha_5\neq0, \frac{\alpha_5\beta_4}{\alpha_1\gamma_2}=0$ and $\frac{\alpha_5\theta}{\alpha_1}=0$ then w.s.c.o.b. $A$ is isomorphic to $\ca_{199}(0, 0)$.

\item If $\alpha_5\neq0, \frac{\alpha_5\beta_4}{\alpha_1\gamma_2}=0$ and $\frac{\alpha_5\theta}{\alpha_1}=1$ then w.s.c.o.b. $A$ is isomorphic to $\ca_{219}$.

\item If $\alpha_5\neq0, \frac{\alpha_5\beta_4}{\alpha_1\gamma_2}=0$ and $\frac{\alpha_5\theta}{\alpha_1}\in\cc\backslash\{0, 1\}$ then the base change $x_1=\frac{\alpha_5}{\alpha_1}y_1, x_2=y_2, x_3=y_3, x_4=(\frac{\alpha_5}{\alpha_1})^2y_4, x_5=y_5$ shows that $A$ is isomorphic to $\ca_{230}(\alpha)$.

\item If $\alpha_5\neq0, \frac{\alpha_5\beta_4}{\alpha_1\gamma_2}=1$ and $\frac{\alpha_5\theta}{\alpha_1}=0$ then w.s.c.o.b. $A$ is isomorphic to $\ca_{219}$.

\item If $\alpha_5\neq0, \frac{\alpha_5\beta_4}{\alpha_1\gamma_2}=1$ and $\frac{\alpha_5\theta}{\alpha_1}\neq0$ then the base change $x_1=\frac{\alpha_5}{\alpha_1}y_1, x_2=y_2, x_3=y_3, x_4=(\frac{\alpha_5}{\alpha_1})^2y_4, x_5=y_5$ shows that $A$ is isomorphic to $\ca_{231}(\alpha)$.

\item If $\alpha_5\neq0$ and $\frac{\alpha_5\beta_4}{\alpha_1\gamma_2}\in\cc\backslash\{0, 1\}$ then w.s.c.o.b. $A$ is isomorphic to $\ca_{220}(0, \alpha)$.
\end{itemize}

\indent {\bf Case 1.1.2.1.2.2.2:} Let $\alpha_3\neq0$. Take $\theta_1=\frac{(\alpha_1\alpha_4-\alpha_2\alpha_3)\alpha_3}{\alpha^2_1\beta_2}$ and $\theta_2=\frac{(\alpha_1\alpha_6-\alpha_2\alpha_5)\alpha_3}{\alpha^2_1\beta_2}$. The base change $y_1=e_1, y_2=\frac{\alpha_1}{\alpha_3}e_2, y_3=\frac{\alpha_1\beta_2}{\alpha_3\gamma_2}e_3, y_4=\alpha_1e_4+\alpha_2e_5, y_5=\frac{\alpha^2_1\beta_2}{\alpha^2_3}e_5$ shows that $A$ is isomorphic to the following algebra:
\begin{align*}
[y_1, y_1]=y_4, [y_1, y_2]=y_4+\theta_1y_5, [y_2, y_1]=\frac{\alpha_5}{\alpha_3}y_4+\theta_2y_5, [y_2, y_2]=y_5, [y_1, y_3]=\frac{\alpha_3\beta_4}{\alpha_1\gamma_2}y_5, [y_2, y_3]=y_5=-[y_3, y_2].
\end{align*}

\begin{itemize}
\item If $\frac{\alpha_5}{\alpha_3}=0, \frac{\alpha_3\beta_4}{\alpha_1\gamma_2}=0$ and $\theta_1+\theta_2=0$ then w.s.c.o.b. $A$ is isomorphic to $\ca_{199}(0, 0)$.

\item If $\frac{\alpha_5}{\alpha_3}=0, \frac{\alpha_3\beta_4}{\alpha_1\gamma_2}=0$ and $\theta_1+\theta_2=1$ then w.s.c.o.b. $A$ is isomorphic to $\ca_{216}$.

\item If $\frac{\alpha_5}{\alpha_3}=0, \frac{\alpha_3\beta_4}{\alpha_1\gamma_2}=0$ and $\theta_1+\theta_2\in\cc\backslash\{0, 1\}$ then w.s.c.o.b. $A$ is isomorphic to $\ca_{230}(\alpha)$.

\item If $\frac{\alpha_5}{\alpha_3}=0$ and $\frac{\alpha_3\beta_4}{\alpha_1\gamma_2}\neq0$ then w.s.c.o.b. $A$ is isomorphic to $\ca_{217}(\alpha, \alpha-1)$.

\item If $\frac{\alpha_5}{\alpha_3}=1, \frac{\alpha_3\beta_4}{\alpha_1\gamma_2}=0$ and $\theta_1+\theta_2=0$ then w.s.c.o.b. $A$ is isomorphic to $\ca_{171}$.

\item If $\frac{\alpha_5}{\alpha_3}=1, \frac{\alpha_3\beta_4}{\alpha_1\gamma_2}=0$ and $\theta_1+\theta_2=\frac{1}{2}$ then w.s.c.o.b. $A$ is isomorphic to $\ca_{220}(1, -1)$.

\item If $\frac{\alpha_5}{\alpha_3}=1, \frac{\alpha_3\beta_4}{\alpha_1\gamma_2}=0$ and $\theta_1+\theta_2\in\cc\backslash\{0, \frac{1}{2}\}$ then w.s.c.o.b. $A$ is isomorphic to $\ca_{232}(\alpha)$.

\item If $\frac{\alpha_5}{\alpha_3}=1, \frac{\alpha_3\beta_4}{\alpha_1\gamma_2}=1$ and $\theta_2=\frac{1}{2}$ then w.s.c.o.b. $A$ is isomorphic to $\ca_{226}(1)$.

\item If $\frac{\alpha_5}{\alpha_3}=1, \frac{\alpha_3\beta_4}{\alpha_1\gamma_2}=1$ and $\theta_2\neq\frac{1}{2}$ then w.s.c.o.b. $A$ is isomorphic to $\ca_{233}(\alpha)$.

\item If $\frac{\alpha_5}{\alpha_3}=1$ and $\frac{\alpha_3\beta_4}{\alpha_1\gamma_2}\in\cc\backslash\{0, 1\}$ then w.s.c.o.b. $A$ is isomorphic to $\ca_{221}(\alpha, 2\alpha-1)$.

\item If $\frac{\alpha_5}{\alpha_3}\in\cc\backslash\{0, 1\}, \frac{\alpha_3\beta_4}{\alpha_1\gamma_2}=0$ and $\frac{\alpha_5}{\alpha_3}(\theta_1+\theta_2)=1-(\theta_1+\theta_2)$ then w.s.c.o.b. $A$ is isomorphic to $\ca_{220}(\alpha, -1)$.

\item If $\frac{\alpha_5}{\alpha_3}\in\cc\backslash\{0, 1\}, \frac{\alpha_3\beta_4}{\alpha_1\gamma_2}=0$ and $\frac{\alpha_5}{\alpha_3}(\theta_1+\theta_2)\neq1-(\theta_1+\theta_2)$ then w.s.c.o.b. $A$ is isomorphic to $\ca_{234}(\alpha, \beta)$.

\item If $\frac{\alpha_5}{\alpha_3}\in\cc\backslash\{0, 1\}, \frac{\alpha_3\beta_4}{\alpha_1\gamma_2}\neq0$ and $\frac{\alpha_3\alpha_5\beta_4}{\alpha_1\alpha_3\gamma_2}=\frac{\alpha_3\beta_4}{\alpha_1\gamma_2}-1$ then w.s.c.o.b. $A$ is isomorphic to $\ca_{235}(\alpha)$.

\item If $\frac{\alpha_5}{\alpha_3}\in\cc\backslash\{0, 1\}, \frac{\alpha_3\beta_4}{\alpha_1\gamma_2}\neq0$ and $\frac{\alpha_3\alpha_5\beta_4}{\alpha_1\alpha_3\gamma_2}\neq\frac{\alpha_3\beta_4}{\alpha_1\gamma_2}-1$ then w.s.c.o.b. $A$ is isomorphic to $\ca_{223}(\alpha, \beta, 1+\beta)$.
\end{itemize}

\indent {\bf Case 1.1.2.2:} Let $\beta_1\neq0$. If $(\alpha_1, \alpha_3+\alpha_5)\neq(0, 0)$ then the base change $x_1=e_1, x_2=e_1+xe_2, x_3=e_3, x_4=e_4, x_5=e_5$(where $\beta_1x^2+(\alpha_3+\alpha_5)x+\alpha_1=0$) shows that $A$ is isomorphic to an algebra with the nonzero products given by (\ref{eq202,10}). Hence $A$ is isomorphic to $\ca_{165}(\alpha), \ca_{170}, \ca_{171}, \ca_{174}, \ca_{175}, \ca_{176}(\alpha), \ca_{177}(\alpha),$ $ \ca_{178},  \ca_{184}(\alpha),  \ca_{187}(\alpha, \beta), \ca_{190}(\alpha), \ca_{191}, \ca_{196}$ or $ \ca_{198}, \ca_{199}(\alpha, \beta), \ldots, \ca_{235}(\alpha)$. Then let $\alpha_1=0=\alpha_3+\alpha_5$. 
\\ \indent {\bf Case 1.1.2.2.1:} Let $\alpha_3=0$.
\\ \indent {\bf Case 1.1.2.2.1.1:} Let $\beta_4=0$. Note that if $\alpha_4\neq0$ then with the base change $x_1=\gamma_4e_1-\alpha_4e_3, x_2=e_2, x_3=e_3, x_4=e_4, x_5=e_5$ we can make $\alpha_4=0$. So we can assume $\alpha_4=0$. Then the base change $x_1=e_3, x_2=e_2, x_3=e_1, x_4=e_4, x_5=e_5$ shows that $A$ is isomorphic to an algebra with the nonzero products given by (\ref{eq202,3}). Hence $A$ is isomorphic to $\ca_{161}, \ca_{162}, \ldots, \ca_{198}$ or $\ca_{199}(\alpha, \beta)$.

\indent {\bf Case 1.1.2.2.1.2:} Let $\beta_4\neq0$. Without loss of generality we can assume $\alpha_2=0$, because if $\alpha_2\neq0$ then with the base change $x_1=\beta_4e_1-\alpha_2e_3, x_2=e_2, x_3=e_3, x_4=e_4, x_5=e_5$ we can make $\alpha_2=0$. If $\gamma_2\neq0$ then with the base change $x_1=e_1, x_2=\gamma_2e_1-\beta_4e_2, x_3=e_3, x_4=e_4, x_5=e_5$ we can make $\gamma_2=0$. So we can assume $\gamma_2=0$. If $\alpha_4\neq0$ then with the base change $x_1=e_1, x_2=\beta_4e_2-\alpha_4e_3, x_3=e_3, x_4=e_4, x_5=e_5$ we can make $\alpha_4=0$. So we can assume $\alpha_4=0$. 
\begin{itemize}
\item If $\alpha_6=0$ then w.s.c.o.b. $A$ is isomorphic to $\ca_{191}$.
\item If $\alpha_6\neq0$ then w.s.c.o.b. $A$ is isomorphic to $\ca_{214}$.
\end{itemize}

\indent {\bf Case 1.1.2.2.2:} Let $\alpha_3\neq0$. 
\\ \indent {\bf Case 1.1.2.2.2.1:} Let $\beta_4=0$. Take $\theta_1=\frac{\alpha_4\beta_1-\alpha_3\beta_2}{\beta_1}$ and  $\theta_2=\frac{\alpha_6\beta_1+\alpha_3\beta_2}{\beta_1}$. The base change $y_1=e_1, y_2=e_2, y_3=e_3, y_4=\beta_1e_4+\beta_2e_5, y_5=e_5$ shows that $A$ is isomorphic to the following algebra:
\begin{align*}
[y_1, y_1]=\alpha_2y_5, [y_1, y_2]=\frac{\alpha_3}{\beta_1}y_4+\theta_1y_5, [y_2, y_1]=-\frac{\alpha_3}{\beta_1}y_4+\theta_2y_5, [y_2, y_2]=y_4, \\ [y_2, y_3]=\gamma_2y_5, [y_3, y_2]=\gamma_4y_5.
\end{align*}

Without loss of generality we can assume $\theta_1=0$, because if $\theta_1\neq0$ then with the base change $x_1=\gamma_4y_1-\theta_1y_3, x_2=y_2, x_3=y_3, x_4=y_4, x_5=y_5$ we can make $\theta_1=0$. 
\begin{itemize}
\item If $\alpha_2=0$ then $(\theta_2, \frac{\gamma_2}{\gamma_4})\neq(0, -1)$ since $\dim(Leib(A))=2$. Then w.s.c.o.b. $A$ is isomorphic to $\ca_{210}(\alpha)$.
\item If $\alpha_2\neq0$ then w.s.c.o.b. $A$ is isomorphic to $\ca_{236}(\alpha)$.
\end{itemize}

\indent {\bf Case 1.1.2.2.2.2:} Let $\beta_4\neq0$. Note that if $\gamma_2\neq0$ then with the base change $x_1=e_1, x_2=\gamma_2e_1-\beta_4e_2, x_3=e_3, x_4=e_4, x_5=e_5$ we can make $\gamma_2=0$. So we can assume $\gamma_2=0$. Take $\theta_1=\frac{\alpha_4\beta_1-\alpha_3\beta_2}{\beta_1}$ and  $\theta_2=\frac{\alpha_6\beta_1+\alpha_3\beta_2}{\beta_1}$. The base change $y_1=e_1, y_2=e_2, y_3=e_3, y_4=\beta_1e_4+\beta_2e_5, y_5=e_5$ shows that $A$ is isomorphic to the following algebra:
\begin{align*}
[y_1, y_1]=\alpha_2y_5, [y_1, y_2]=\frac{\alpha_3}{\beta_1}y_4+\theta_1y_5, [y_2, y_1]=-\frac{\alpha_3}{\beta_1}y_4+\theta_2y_5, [y_2, y_2]=y_4, [y_1, y_3]=\beta_4y_5, \\ [y_3, y_2]=\gamma_4y_5.
\end{align*}
Without loss of generality we can assume $\theta_1=0$, because if $\theta_1\neq0$ then with the base change $x_1=\gamma_4y_1-\theta_1y_3, x_2=y_2, x_3=y_3, x_4=y_4, x_5=y_5$ we can make $\theta_1=0$. 
\begin{itemize}
\item If $\alpha_2=0$ and $\theta_2=0$ then the base change $x_1=\frac{\beta_1}{\alpha_3}y_1, x_2=y_2, x_3=y_3, x_4=y_4, x_5=\gamma_4y_5$ shows that $A$ is isomorphic to $\ca_{237}(\alpha)$.

\item If $\alpha_2=0, \theta_2\neq0$ and $\frac{\beta_1\beta_4}{\alpha_3\gamma_4}=1$ then w.s.c.o.b. $A$ is isomorphic to $\ca_{237}(1)$.

\item If $\alpha_2=0, \theta_2\neq0$ and $\frac{\beta_1\beta_4}{\alpha_3\gamma_4}\in\cc\backslash\{0, 1\}$ then the base change $x_1=\frac{\beta_1}{\alpha_3}y_1, x_2=y_2, x_3=\frac{\beta_1\theta_2}{\alpha_3\gamma_4}y_3, x_4=y_4, x_5=\frac{\beta_1\theta_2}{\alpha_3}y_5$ shows that $A$ is isomorphic to $\ca_{238}(\alpha)$.

\item If $\alpha_2\neq0$ and $ \frac{\beta_1\beta_4}{\alpha_3\gamma_4}=1$ then the base change $x_1=\frac{\beta_1}{\alpha_3}y_1, x_2=y_2, x_3=\frac{\alpha_2\beta^2_1}{\alpha^2_3\gamma_4}y_3, x_4=y_4, x_5=\frac{\alpha_2\beta^2_1}{\alpha^2_3}y_5$ shows that  $A$ is isomorphic to $\ca_{239}$.

\item If $\alpha_2\neq0, \frac{\beta_1\beta_4}{\alpha_3\gamma_4}\in\cc\backslash\{0, 1\}$ and $\frac{\beta_4\theta_2}{\alpha_2\gamma_4}(\frac{\beta_1\beta_4}{\alpha_3\gamma_4}-1)=-1$ then w.s.c.o.b. $A$ is isomorphic to $\ca_{237}(\alpha)(\alpha\in\cc\backslash\{0, 1\})$.
\item If $\alpha_2\neq0, \frac{\beta_1\beta_4}{\alpha_3\gamma_4}\in\cc\backslash\{0, 1\}$ and $\frac{\beta_4\theta_2}{\alpha_2\gamma_4}(\frac{\beta_1\beta_4}{\alpha_3\gamma_4}-1)\neq-1$ then w.s.c.o.b. $A$ is isomorphic to $\ca_{238}(\alpha)$.
\end{itemize}

\indent {\bf Case 1.2:} Let $\gamma_1\neq0$. 
\\ \indent {\bf Case 1.2.1:} Let $\alpha_1=0$. 
\\ \indent {\bf Case 1.2.1.1:} Let $\alpha_3=0$. Then if $\alpha_5=0$(resp. $\alpha_5\neq0$) then the base change $x_1=e_3, x_2=e_2, x_3=e_1, x_4=e_4, x_5=e_5$(resp. $x_1=e_1, x_2=e_2, x_3=\gamma_1e_1-\alpha_5e_3, x_4=e_4, x_5=e_5$) shows that $A$ is isomorphic to an algebra with the nonzero products given by (\ref{eq202,2}). Hence $A$ is isomorphic to $\ca_{161}, \ca_{162}, \ldots, \ca_{238}(\alpha)$ or $\ca_{239}$.

\indent {\bf Case 1.2.1.2:} Let $\alpha_3\neq0$.
\\ \indent {\bf Case 1.2.1.2.1:} Let $\alpha_2=0$. Then we have the following products in $A$:
\begin{multline} \label{eq202,14}
[e_1, e_2]=\alpha_3e_4+\alpha_4e_5, [e_2, e_1]=\alpha_5e_4+\alpha_6e_5, [e_2, e_2]=\beta_1e_4+\beta_2e_5, [e_1, e_3]=\beta_4e_5, \\ [e_3, e_1]=\beta_6e_5, [e_2, e_3]=\gamma_1e_4+\gamma_2e_5, [e_3, e_2]=\gamma_4e_5, [e_3, e_3]=\gamma_6e_5.
\end{multline}
\\ \indent {\bf Case 1.2.1.2.1.1:} Let $\gamma_6=0$.
\\ \indent {\bf Case 1.2.1.2.1.1.1:} Let $\beta_6=0$. If $\beta_1\neq0$ then with the base change $x_1=e_1, x_2=\gamma_1e_2-\beta_1e_3, x_3=e_3, x_4=e_4, x_5=e_5$ we can make $\beta_1=0$. So we can assume $\beta_1=0$. Take $\theta_1=\frac{\alpha_4\gamma_1-\alpha_3\gamma_2}{\alpha_3}, \theta_2=\frac{\alpha_6\gamma_1-\alpha_5\gamma_2}{\alpha_3}$. The base change $y_1=\frac{\gamma_1}{\alpha_3}e_1, y_2=e_2, y_3=e_3, y_4=\gamma_1e_4+\gamma_2e_5, y_5=e_5$ shows that $A$ is isomorphic to the following algebra:
\begin{align*}
[y_1, y_2]=y_4+\theta_1y_5, [y_2, y_1]=\frac{\alpha_5}{\alpha_3}y_4+\theta_2y_5, [y_2, y_2]=\beta_2y_5, [y_1, y_3]=\frac{\beta_4\gamma_1}{\alpha_3}y_5, [y_2, y_3]=y_4, [y_3, y_2]=\gamma_4y_5.
\end{align*}

\begin{itemize}
\item If $\gamma_4=0, \beta_4=0, \theta_2=0$ and $\theta_1=0$ then $\beta_2\neq0$ since $\dim(Leib(A))=2$. Then w.s.c.o.b. $A$ is isomorphic to $\ca_{195}$.

\item If $\gamma_4=0, \beta_4=0, \theta_2=0$ and $\theta_1\neq0$ then w.s.c.o.b. $A$ is isomorphic to $\ca_{174}$.

\item If $\gamma_4=0, \beta_4=0, \theta_2\neq0$ and $\frac{\theta_1}{\theta_2}=-1$ then $\beta_2\neq0$ since $\dim(Leib(A))=2$. Then w.s.c.o.b. $A$ is isomorphic to $\ca_{196}$.

\item If $\gamma_4=0, \beta_4=0, \theta_2\neq0$ and $\frac{\theta_1}{\theta_2}=0$ then the base change $x_1=y_1-\frac{\alpha_5}{\alpha_3}y_3, x_2=-\frac{\beta_2}{\theta_2}y_1+y_2+\frac{(\alpha_3+\alpha_5)\beta_2}{\alpha_3\theta_2}y_3, x_3=y_3, x_4=y_4, x_5=\theta_2y_5$ shows that $A$ is isomorphic to $\ca_{240}$.

\item If $\gamma_4=0, \beta_4=0, \theta_2\neq0$ and $\frac{\theta_1}{\theta_2}\in\cc\backslash\{-1, 0\}$ then w.s.c.o.b. $A$ is isomorphic to $\ca_{176}(\alpha)(\alpha\in\cc\backslash\{-1, 0\})$.

\item If $\gamma_4=0, \beta_4\neq0, \theta_2=0, \beta_2=0, \theta_1=0$ and $\frac{\alpha_5}{\alpha_3}=0$ then w.s.c.o.b. $A$ is isomorphic to $\ca_{208}(0, 0)$.

\item If $\gamma_4=0, \beta_4\neq0, \theta_2=0, \beta_2=0, \theta_1=0$ and $\frac{\alpha_5}{\alpha_3}\neq0$ then w.s.c.o.b. $A$ is isomorphic to $\ca_{215}(\alpha, \alpha)$.

\item If $\gamma_4=0, \beta_4\neq0, \theta_2=0, \beta_2=0, \theta_1\neq0$ and $\frac{\alpha_5}{\alpha_3}=1$ then w.s.c.o.b. $A$ is isomorphic to $\ca_{186}(0, 0)$.

\item If $\gamma_4=0, \beta_4\neq0, \theta_2=0, \beta_2=0, \theta_1\neq0$ and $\frac{\alpha_5}{\alpha_3}\neq1$ then w.s.c.o.b. $A$ is isomorphic to $\ca_{192}(\alpha, 0, 0)$.

\item If $\gamma_4=0, \beta_4\neq0, \theta_2=0, \beta_2\neq0, \frac{\alpha_5}{\alpha_3}=0$ and $\frac{\alpha_3\theta_1}{\beta_4\gamma_1}=-4$ then w.s.c.o.b. $A$ is isomorphic to $\ca_{208}(0, 0)$.

\item If $\gamma_4=0, \beta_4\neq0, \theta_2=0, \beta_2\neq0, \frac{\alpha_5}{\alpha_3}=0$ and $\frac{\alpha_3\theta_1}{\beta_4\gamma_1}\neq-4$ then w.s.c.o.b. $A$ is isomorphic to $\ca_{192}(0, 0, 0)$.

\item If $\gamma_4=0, \beta_4\neq0, \theta_2=0, \beta_2\neq0$ and $\frac{\alpha_5}{\alpha_3}\neq0$ then the base change $x_1=y_1, x_2=\frac{\beta_4\gamma_1}{\alpha_3\theta_2}y_2, x_3=y_3, x_4=\frac{\beta_4\gamma_1}{\alpha_3\theta_2}y_4, x_5=\frac{\beta_4\gamma_1}{\alpha_3}y_5$ shows that $A$ is isomorphic to $\ca_{241}(\alpha, \beta)$.

\item If $\gamma_4=0, \beta_4\neq0, \theta_2\neq0, \frac{\beta_2\beta_4\gamma_1}{\alpha_3\theta^2_2}(\frac{\alpha_5}{\alpha_3})^2+\frac{\alpha_5}{\alpha_3}=-1$ and $\frac{\alpha_5\theta_1}{\alpha_3\theta_2}=\frac{\alpha_5}{\alpha_3}+2$ then w.s.c.o.b. $A$ is isomorphic to $\ca_{215}(\alpha, \alpha)$.

\item If $\gamma_4=0, \beta_4\neq0, \theta_2\neq0, \frac{\beta_2\beta_4\gamma_1}{\alpha_3\theta^2_2}(\frac{\alpha_5}{\alpha_3})^2+\frac{\alpha_5}{\alpha_3}=-1$ and $\frac{\alpha_5\theta_1}{\alpha_3\theta_2}\neq\frac{\alpha_5}{\alpha_3}+2$ then w.s.c.o.b. $A$ is isomorphic to $\ca_{241}(\alpha, \beta)(\alpha,\beta\in\cc\backslash\{0\})$.

\item If $\gamma_4=0, \beta_4\neq0, \theta_2\neq0, \frac{\beta_2\beta_4\gamma_1}{\alpha_3\theta^2_2}(\frac{\alpha_5}{\alpha_3})^2+\frac{\alpha_5}{\alpha_3}\neq-1, \frac{\beta_2\beta_4\gamma_1}{\alpha_3\theta^2_2}(\frac{\alpha_5}{\alpha_3})^2+\frac{\alpha_5\theta_1}{\alpha_3\theta_2}=1$ and $\frac{\alpha_5}{\alpha_3}=1$ then w.s.c.o.b. $A$ is isomorphic to $\ca_{186}(0, 0)$.

\item If $\gamma_4=0, \beta_4\neq0, \theta_2\neq0, \frac{\beta_2\beta_4\gamma_1}{\alpha_3\theta^2_2}(\frac{\alpha_5}{\alpha_3})^2+\frac{\alpha_5}{\alpha_3}\neq-1, \frac{\beta_2\beta_4\gamma_1}{\alpha_3\theta^2_2}(\frac{\alpha_5}{\alpha_3})^2+\frac{\alpha_5\theta_1}{\alpha_3\theta_2}=1$ and $\frac{\alpha_5}{\alpha_3}\neq1$ then w.s.c.o.b. $A$ is isomorphic to $\ca_{192}(\alpha, 0, 0)$.

\item If $\gamma_4=0, \beta_4\neq0, \theta_2\neq0, \frac{\beta_2\beta_4\gamma_1}{\alpha_3\theta^2_2}(\frac{\alpha_5}{\alpha_3})^2+\frac{\alpha_5}{\alpha_3}\neq-1$ and $\frac{\beta_2\beta_4\gamma_1}{\alpha_3\theta^2_2}(\frac{\alpha_5}{\alpha_3})^2+\frac{\alpha_5\theta_1}{\alpha_3\theta_2}\neq1$ then w.s.c.o.b. $A$ is isomorphic to $\ca_{241}(\alpha, \beta)$.

\item If $\gamma_4\neq0, \beta_4=0, \frac{\theta_2}{\gamma_4}=\frac{\alpha_5}{\alpha_3}-\frac{\theta_1}{\gamma_4}+1$ and $\beta_2=0$ then w.s.c.o.b. $A$ is isomorphic to $\ca_{242}(\alpha)$.

\item If $\gamma_4\neq0, \beta_4=0, \frac{\theta_2}{\gamma_4}=\frac{\alpha_5}{\alpha_3}-\frac{\theta_1}{\gamma_4}+1, \beta_2\neq0$ and $\frac{\theta_2}{\gamma_4}=-1$ then w.s.c.o.b. $A$ is isomorphic to $\ca_{243}$.

\item If $\gamma_4\neq0, \beta_4=0, \frac{\theta_2}{\gamma_4}=\frac{\alpha_5}{\alpha_3}-\frac{\theta_1}{\gamma_4}+1, \beta_2\neq0$ and $\frac{\theta_2}{\gamma_4}\neq-1$ then w.s.c.o.b. $A$ is isomorphic to $\ca_{210}(\alpha)$.

\item If $\gamma_4\neq0, \beta_4=0, \frac{\theta_2}{\gamma_4}\neq\frac{\alpha_5}{\alpha_3}-\frac{\theta_1}{\gamma_4}+1$ and $\frac{\theta_2}{\gamma_4}=-\frac{( \frac{\theta_1}{\gamma_4}-\frac{\alpha_5}{\alpha_3})^2}{4}$ then w.s.c.o.b. $A$ is isomorphic to $\ca_{206}(\alpha)$.

\item If $\gamma_4\neq0, \beta_4=0, \frac{\theta_2}{\gamma_4}\neq\frac{\alpha_5}{\alpha_3}-\frac{\theta_1}{\gamma_4}+1$ and $\frac{\theta_2}{\gamma_4}\neq-\frac{( \frac{\theta_1}{\gamma_4}-\frac{\alpha_5}{\alpha_3})^2}{4}$ then w.s.c.o.b. $A$ is isomorphic to $\ca_{204}(\alpha, \beta)$.

\item If $\gamma_4\neq0, \beta_4\neq0$ and $\frac{\alpha_5}{\alpha_3}=0$ then w.s.c.o.b. $A$ is isomorphic to $\ca_{244}(\alpha, \beta)$.

\item If $\gamma_4\neq0, \beta_4\neq0$ and $\frac{\alpha_5}{\alpha_3}\neq0$ then w.s.c.o.b. $A$ is isomorphic to $\ca_{245}(\alpha, \beta, \gamma)$.
\end{itemize}

\indent {\bf Case 1.2.1.2.1.1.2:} Let $\beta_6\neq0$. If $\gamma_4\neq0$ then with the base change $x_1=e_1, x_2=\gamma_4e_1-\beta_6e_2, x_3=e_3, x_4=e_4, x_5=e_5$ we can make $\gamma_4=0$. So we can assume $\gamma_4=0$. If $\beta_1\neq0$ then with the base change $x_1=e_1, x_2=\gamma_1e_2-\beta_1e_3, x_3=e_3, x_4=e_4, x_5=e_5$ we can make $\beta_1=0$. So we can assume $\beta_1=0$. Take $\theta_1=\frac{\alpha_4\gamma_1-\alpha_3\gamma_2}{\beta_6\gamma_1}, \theta_2=\frac{\alpha_5\gamma_1-\alpha_6\gamma_2}{\beta_6\gamma_1}$ and $\theta_3=\sqrt{\frac{\beta_6\gamma_1}{\alpha_3\beta_2}}$. The base change $y_1=\frac{\gamma_1}{\alpha_3}e_1, y_2=e_2, y_3=e_3, y_4=\gamma_1e_4+\gamma_2e_5, y_5=\frac{\beta_6\gamma_1}{\alpha_3}e_5$ shows that $A$ is isomorphic to the following algebra:
\begin{align*}
[y_1, y_2]=y_4+\theta_1y_5, [y_2, y_1]=\frac{\alpha_5}{\alpha_3}y_4+\theta_2y_5, [y_2, y_2]=\frac{\alpha_3\beta_2}{\beta_6\gamma_1}y_5, [y_1, y_3]=\frac{\beta_4}{\beta_6}y_5, [y_3, y_1]=y_5, [y_2, y_3]=y_4.
\end{align*}

\begin{itemize}
\item If $\beta_2=0, \theta_2=0, \theta_1=0, \frac{\alpha_5}{\alpha_3}=0$ and $\frac{\beta_4}{\beta_6}=0$ then w.s.c.o.b. $A$ is isomorphic to $\ca_{202}(0)$.

\item If $\beta_2=0, \theta_2=0, \theta_1=0, \frac{\alpha_5}{\alpha_3}=0$ and $\frac{\beta_4}{\beta_6}\in\cc\backslash\{-1, 0\}$ then w.s.c.o.b. $A$ is isomorphic to $\ca_{208}(\alpha, 0)$.

\item If $\beta_2=0, \theta_2=0, \theta_1=0, \frac{\alpha_5}{\alpha_3}\neq0$ and $\frac{\beta_4}{\beta_6}=0$ then w.s.c.o.b. $A$ is isomorphic to $\ca_{215}(\alpha, 0)$.

\item If $\beta_2=0, \theta_2=0, \theta_1=0, \frac{\alpha_5}{\alpha_3}\neq0$ and $\frac{\beta_4}{\beta_6}\in\cc\backslash\{-1, 0\}$ then w.s.c.o.b. $A$ is isomorphic to $\ca_{218}(\alpha, \beta, 0)$.

\item If $\beta_2=0, \theta_2=0, \theta_1\neq0$ and $\frac{\beta_4}{\beta_6}=-1$ then w.s.c.o.b. $A$ is isomorphic to $\ca_{246}$.

\item If $\beta_2=0, \theta_2=0, \theta_1\neq0$ and $\frac{\beta_4}{\beta_6}\neq-1$ then w.s.c.o.b. $A$ is isomorphic to $\ca_{247}(\alpha, \beta)$.

\item If $\beta_2=0, \theta_2\neq0, \frac{\theta_1}{\theta_2}=\frac{\beta_4}{\beta_6}$ and $\frac{\alpha_5}{\alpha_3}=1$ then w.s.c.o.b. $A$ is isomorphic to $\ca_{189}(\alpha, i, 0)$.

\item If $\beta_2=0, \theta_2\neq0, \frac{\theta_1}{\theta_2}=\frac{\beta_4}{\beta_6}, \frac{\alpha_5}{\alpha_3}\neq1$ and $\frac{\beta_4}{\beta_6}=0$ then w.s.c.o.b. $A$ is isomorphic to $\ca_{192}(\alpha, -\alpha^2, \alpha-\alpha^2)(\alpha\in\cc\backslash\{0\})$.

\item If $\beta_2=0, \theta_2\neq0, \frac{\theta_1}{\theta_2}=\frac{\beta_4}{\beta_6}, \frac{\alpha_5}{\alpha_3}\neq1$ and $\frac{\beta_4}{\beta_6}=1$ then w.s.c.o.b. $A$ is isomorphic to $\ca_{193}(\alpha-\sqrt{\alpha}, \alpha, 0)$.

\item If $\beta_2=0, \theta_2\neq0, \frac{\theta_1}{\theta_2}=\frac{\beta_4}{\beta_6}, \frac{\alpha_5}{\alpha_3}\neq1$ and $\frac{\beta_4}{\beta_6}\in\cc\backslash\{-1, 0, 1\}$ then w.s.c.o.b. $A$ is isomorphic to $\ca_{194}(\alpha, \sqrt{\beta}+\alpha\beta, \beta, 0)(\alpha\in\cc\backslash\{-1, 0, 1\}, \beta\in\cc\backslash\{0\})$.

\item If $\beta_2=0, \theta_2\neq0, \frac{\theta_1}{\theta_2}\neq\frac{\beta_4}{\beta_6}, \frac{\beta_4}{\beta_6}=0, \frac{\alpha_5\theta_1}{\alpha_3\theta_2}=1$ and $\frac{\alpha_5}{\alpha_3}=-1$ then w.s.c.o.b. $A$ is isomorphic to $\ca_{246}$.
\item If $\beta_2=0, \theta_2\neq0, \frac{\theta_1}{\theta_2}\neq\frac{\beta_4}{\beta_6}, \frac{\beta_4}{\beta_6}=0, \frac{\alpha_5\theta_1}{\alpha_3\theta_2}=1$ and $\frac{\alpha_5}{\alpha_3}\neq-1$ then w.s.c.o.b. $A$ is isomorphic to $\ca_{247}(0, \alpha)$.
\item If $\beta_2=0, \theta_2\neq0, \frac{\theta_1}{\theta_2}\neq\frac{\beta_4}{\beta_6}, \frac{\beta_4}{\beta_6}=0$ and $\frac{\alpha_5\theta_1}{\alpha_3\theta_2}\neq1$ then w.s.c.o.b. $A$ is isomorphic to $\ca_{248}(\alpha, \beta)$.
\item If $\beta_2=0, \theta_2\neq0, \frac{\theta_1}{\theta_2}\neq\frac{\beta_4}{\beta_6}$ and $\frac{\beta_4}{\beta_6}\neq0$ then w.s.c.o.b. $A$ is isomorphic to $\ca_{249}(\alpha, \beta, \gamma)$.

\item If $\beta_2\neq0, \frac{\beta_4}{\beta_6}=0, \frac{\theta_1\theta_2}{\theta^2_3}-\frac{\alpha_5}{\alpha_3}(\frac{\theta_1}{\theta_3})^2+1=0$ and $\frac{\alpha_5}{\alpha_3}=-1$ then w.s.c.o.b. $A$ is isomorphic to $\ca_{250}(\alpha)$.

\item If $\beta_2\neq0, \frac{\beta_4}{\beta_6}=0, \frac{\theta_1\theta_2}{\theta^2_3}-\frac{\alpha_5}{\alpha_3}(\frac{\theta_1}{\theta_3})^2+1=0$ and $\frac{\alpha_5}{\alpha_3}\neq-1$ then w.s.c.o.b. $A$ is isomorphic to $\ca_{247}(\alpha, \beta)(\alpha\in\cc\backslash\{0\}, \beta\in\cc\backslash\{-1\})$.

\item If $\beta_2\neq0, \frac{\beta_4}{\beta_6}=0$ and $\frac{\theta_1\theta_2}{\theta^2_3}-\frac{\alpha_5}{\alpha_3}(\frac{\theta_1}{\theta_3})^2+1\neq0$ then w.s.c.o.b. $A$ is isomorphic to $\ca_{251}(\alpha, \beta, \gamma)$.

\item If $\beta_2\neq0, \frac{\beta_4}{\beta_6}\neq0$ and $\frac{\theta_2}{\theta_3}=0$ then w.s.c.o.b. $A$ is isomorphic to $\ca_{252}(\alpha, \beta, \gamma)$.

\item If $\beta_2\neq0, \frac{\beta_4}{\beta_6}\neq0, \frac{\theta_2}{\theta_3}\neq0, \frac{\theta_1}{\theta_3}=\frac{2\beta_4\theta_2}{\beta_6\theta_3}, \frac{\beta_4\theta^2_2}{\beta_6\theta^2_3}=-1$ and $\frac{\alpha_5}{\alpha_3}=0$ then w.s.c.o.b. $A$ is isomorphic to $\ca_{245}(0, \alpha, 0)$.

\item If $\beta_2\neq0, \frac{\beta_4}{\beta_6}\neq0, \frac{\theta_2}{\theta_3}\neq0, \frac{\theta_1}{\theta_3}=\frac{2\beta_4\theta_2}{\beta_6\theta_3}, \frac{\beta_4\theta^2_2}{\beta_6\theta^2_3}=-1, \frac{\alpha_5}{\alpha_3}\neq0$ and $\frac{\beta_4}{\beta_6}=-1$ then w.s.c.o.b. $A$ is isomorphic to $\ca_{245}(0, -1, 0)$.

\item If $\beta_2\neq0, \frac{\beta_4}{\beta_6}\neq0, \frac{\theta_2}{\theta_3}\neq0, \frac{\theta_1}{\theta_3}=\frac{2\beta_4\theta_2}{\beta_6\theta_3}, \frac{\beta_4\theta^2_2}{\beta_6\theta^2_3}=-1, \frac{\alpha_5}{\alpha_3}\neq0$ and $\frac{\beta_4}{\beta_6}\neq-1$ then w.s.c.o.b. $A$ is isomorphic to $\ca_{253}(\alpha, \beta)$.

\item If $\beta_2\neq0, \frac{\beta_4}{\beta_6}\neq0, \frac{\theta_2}{\theta_3}\neq0, \frac{\theta_1}{\theta_3}=\frac{2\beta_4\theta_2}{\beta_6\theta_3}$ and $\frac{\beta_4\theta^2_2}{\beta_6\theta^2_3}\neq-1$ then w.s.c.o.b. $A$ is isomorphic to $\ca_{254}(\alpha, \beta, \gamma)$.

\item If $\beta_2\neq0, \frac{\beta_4}{\beta_6}\neq0, \frac{\theta_2}{\theta_3}\neq0, \frac{\theta_1}{\theta_3}\neq\frac{2\beta_4\theta_2}{\beta_6\theta_3}, \frac{\beta_4}{\beta_6}=-1$ and $(\frac{\theta_2}{\theta_3})^2+\frac{\theta_1\theta_2}{\theta^2_3}+1=0$ then w.s.c.o.b. $A$ is isomorphic to $\ca_{249}(-1, -1, \alpha)(\alpha\in\cc\backslash\{-1, 0\})$.

\item If $\beta_2\neq0, \frac{\beta_4}{\beta_6}\neq0, \frac{\theta_2}{\theta_3}\neq0, \frac{\theta_1}{\theta_3}\neq\frac{2\beta_4\theta_2}{\beta_6\theta_3}, \frac{\beta_4}{\beta_6}=-1$ and $\frac{\theta^2_2}{\theta^2_3}+\frac{\theta_1\theta_2}{\theta^2_3}+1\neq0$ then w.s.c.o.b. $A$ is isomorphic to $\ca_{255}(\alpha, \beta)$.

\item If $\beta_2\neq0, \frac{\beta_4}{\beta_6}\neq0, \frac{\theta_2}{\theta_3}\neq0,\frac{\theta_1}{\theta_3}\neq\frac{2\beta_4\theta_2}{\beta_6\theta_3}$ and $\frac{\beta_4}{\beta_6}\neq-1$ then w.s.c.o.b. $A$ is isomorphic to $\ca_{256}(\alpha, \beta, \gamma, \theta)$.
\end{itemize}

\indent {\bf Case 1.2.1.2.1.2:} Let $\gamma_6\neq0$. If $\beta_1\neq0$ then with the base change $x_1=e_1, x_2=\gamma_1e_2-\beta_1e_3, x_3=e_3, x_4=e_4, x_5=e_5$ we can make $\beta_1=0$. So we can assume $\beta_1=0$. Take $\theta_1=\frac{\alpha_4\gamma_1-\alpha_3\gamma_2}{\alpha_3\gamma_6}$ and $\theta_2=\frac{\alpha_6\gamma_1-\alpha_5\gamma_2}{\alpha_3\gamma_6}$. The base change $y_1=\frac{\gamma_1}{\alpha_3}e_1, y_2=e_2, y_3=e_3, y_4=\gamma_1e_4+\gamma_2e_5, y_5=\gamma_6e_5$ shows that $A$ is isomorphic to the following algebra:
\begin{align*}
[y_1, y_2]=y_4+\theta_1y_5, [y_2, y_1]=\frac{\alpha_5}{\alpha_3}y_4+\theta_2y_5, [y_2, y_2]=\frac{\beta_2}{\gamma_6}y_5, [y_1, y_3]=\frac{\beta_4\gamma_1}{\alpha_3\gamma_6}y_5, [y_3, y_1]=\frac{\beta_6\gamma_1}{\alpha_3\gamma_6}y_5, \\ [y_2, y_3]=y_4, [y_3, y_2]=\frac{\gamma_4}{\gamma_6}y_5, [y_3, y_3]=y_5.
\end{align*}
\begin{itemize}
\item If $\frac{\gamma_4}{\gamma_6}=0, \frac{\beta_2}{\gamma_6}=0, \theta_2=0, \theta_1=0, \frac{\beta_4\gamma_1}{\alpha_3\gamma_6}=0, \frac{\beta_6\gamma_1}{\alpha_3\gamma_6}=0$ and $\frac{\alpha_5}{\alpha_3}=0$ then w.s.c.o.b. $A$ is isomorphic to $\ca_{191}$.

\item If $\frac{\gamma_4}{\gamma_6}=0, \frac{\beta_2}{\gamma_6}=0, \theta_2=0, \theta_1=0, \frac{\beta_4\gamma_1}{\alpha_3\gamma_6}=0, \frac{\beta_6\gamma_1}{\alpha_3\gamma_6}=0$ and $\frac{\alpha_5}{\alpha_3}=1$ then w.s.c.o.b. $A$ is isomorphic to $\ca_{198}$.

\item If $\frac{\gamma_4}{\gamma_6}=0, \frac{\beta_2}{\gamma_6}=0, \theta_2=0, \theta_1=0, \frac{\beta_4\gamma_1}{\alpha_3\gamma_6}=0, \frac{\beta_6\gamma_1}{\alpha_3\gamma_6}=0$ and $\frac{\alpha_5}{\alpha_3}\in\cc\backslash\{0, 1\}$ then w.s.c.o.b. $A$ is isomorphic to $\ca_{213}(\alpha)$.

\item If $\frac{\gamma_4}{\gamma_6}=0, \frac{\beta_2}{\gamma_6}=0, \theta_2=0, \theta_1=0, \frac{\beta_4\gamma_1}{\alpha_3\gamma_6}=0, \frac{\beta_6\gamma_1}{\alpha_3\gamma_6}\neq0$ and $ \frac{\alpha_5}{\alpha_3}=\frac{\beta_6\gamma_1}{\alpha_3\gamma_6}$ then w.s.c.o.b. $A$ is isomorphic to $\ca_{208}(0, \alpha)(\alpha\in\cc\backslash\{0\})$.

\item If $\frac{\gamma_4}{\gamma_6}=0, \frac{\beta_2}{\gamma_6}=0, \theta_2=0, \theta_1=0, \frac{\beta_4\gamma_1}{\alpha_3\gamma_6}=0, \frac{\beta_6\gamma_1}{\alpha_3\gamma_6}\neq0, \frac{\alpha_5}{\alpha_3}\neq\frac{\beta_6\gamma_1}{\alpha_3\gamma_6}$ and $\frac{\alpha_5}{\alpha_3}=0$ then w.s.c.o.b. $A$ is isomorphic to $\ca_{202}(\alpha)(\alpha\in\cc\backslash\{0\})$.

\item If $\frac{\gamma_4}{\gamma_6}=0, \frac{\beta_2}{\gamma_6}=0, \theta_2=0, \theta_1=0, \frac{\beta_4\gamma_1}{\alpha_3\gamma_6}=0, \frac{\beta_6\gamma_1}{\alpha_3\gamma_6}\neq0, \frac{\alpha_5}{\alpha_3}\neq\frac{\beta_6\gamma_1}{\alpha_3\gamma_6}$ and $\frac{\alpha_5}{\alpha_3}\neq0$ then w.s.c.o.b. $A$ is isomorphic to $\ca_{215}(\alpha, \beta)(\alpha, \beta\in\cc\backslash\{0\})$.

\item If $\frac{\gamma_4}{\gamma_6}=0, \frac{\beta_2}{\gamma_6}=0, \theta_2=0, \theta_1=0, \frac{\beta_4\gamma_1}{\alpha_3\gamma_6}\neq0, \frac{\beta_6\gamma_1}{\alpha_3\gamma_6}=\frac{\alpha_5}{\alpha_3}-\frac{\beta_4\gamma_1}{\alpha_3\gamma_6}$ and $\frac{\beta_6\gamma_1}{\alpha_3\gamma_6}=0$ then w.s.c.o.b. $A$ is isomorphic to $\ca_{202}(\alpha)(\alpha\in\cc\backslash\{0\})$.

\item If $\frac{\gamma_4}{\gamma_6}=0, \frac{\beta_2}{\gamma_6}=0, \theta_2=0, \theta_1=0, \frac{\beta_4\gamma_1}{\alpha_3\gamma_6}\neq0, \frac{\beta_6\gamma_1}{\alpha_3\gamma_6}=\frac{\alpha_5}{\alpha_3}-\frac{\beta_4\gamma_1}{\alpha_3\gamma_6}, \frac{\beta_6\gamma_1}{\alpha_3\gamma_6}\neq0$ and $\frac{\alpha_5}{\alpha_3}=0$ then w.s.c.o.b. $A$ is isomorphic to $\ca_{237}(\alpha)$.

\item If $\frac{\gamma_4}{\gamma_6}=0, \frac{\beta_2}{\gamma_6}=0, \theta_2=0, \theta_1=0, \frac{\beta_4\gamma_1}{\alpha_3\gamma_6}\neq0, \frac{\beta_6\gamma_1}{\alpha_3\gamma_6}=\frac{\alpha_5}{\alpha_3}-\frac{\beta_4\gamma_1}{\alpha_3\gamma_6}, \frac{\beta_6\gamma_1}{\alpha_3\gamma_6}\neq0$ and $\frac{\alpha_5}{\alpha_3}\neq0$ then w.s.c.o.b. $A$ is isomorphic to $\ca_{208}(\alpha, \beta)(\alpha\in\cc\backslash\{-1\}, \beta\in\cc\backslash\{0\})$.

\item If $\frac{\gamma_4}{\gamma_6}=0, \frac{\beta_2}{\gamma_6}=0, \theta_2=0, \theta_1=0, \frac{\beta_4\gamma_1}{\alpha_3\gamma_6}\neq0, \frac{\beta_6\gamma_1}{\alpha_3\gamma_6}\neq\frac{\alpha_5}{\alpha_3}-\frac{\beta_4\gamma_1}{\alpha_3\gamma_6}, \frac{\beta_6\gamma_1}{\alpha_3\gamma_6}=0$ and $\frac{\alpha_5}{\alpha_3}=0$ then the base change $x_1=y_1, x_2=y_2, x_3=y_3, x_4=y_4, x_5=y_5$ shows that $A$ is isomorphic to $\ca_{257}(\alpha)$.

\item If $\frac{\gamma_4}{\gamma_6}=0, \frac{\beta_2}{\gamma_6}=0, \theta_2=0, \theta_1=0, \frac{\beta_4\gamma_1}{\alpha_3\gamma_6}\neq0, \frac{\beta_6\gamma_1}{\alpha_3\gamma_6}\neq\frac{\alpha_5}{\alpha_3}-\frac{\beta_4\gamma_1}{\alpha_3\gamma_6}, \frac{\beta_6\gamma_1}{\alpha_3\gamma_6}=0$ and $\frac{\alpha_5}{\alpha_3}\neq0$ then w.s.c.o.b. $A$ is isomorphic to $\ca_{215}(\alpha, \beta)(\alpha, \beta\in\cc\backslash\{0\})$.

\item If $\frac{\gamma_4}{\gamma_6}=0, \frac{\beta_2}{\gamma_6}=0, \theta_2=0, \theta_1=0, \frac{\beta_4\gamma_1}{\alpha_3\gamma_6}\neq0, \frac{\beta_6\gamma_1}{\alpha_3\gamma_6}\neq\frac{\alpha_5}{\alpha_3}-\frac{\beta_4\gamma_1}{\alpha_3\gamma_6}, \frac{\beta_6\gamma_1}{\alpha_3\gamma_6}\neq0$ and $\frac{\alpha_5}{\alpha_3}=0$ then w.s.c.o.b. $A$ is isomorphic to $\ca_{208}(\alpha, \beta)(\alpha\in\cc\backslash\{-1, 0\}, \beta\in\cc\backslash\{0\}$.

\item If $\frac{\gamma_4}{\gamma_6}=0, \frac{\beta_2}{\gamma_6}=0, \theta_2=0, \theta_1=0, \frac{\beta_4\gamma_1}{\alpha_3\gamma_6}\neq0, \frac{\beta_6\gamma_1}{\alpha_3\gamma_6}\neq\frac{\alpha_5}{\alpha_3}-\frac{\beta_4\gamma_1}{\alpha_3\gamma_6}, \frac{\beta_6\gamma_1}{\alpha_3\gamma_6}\neq0$ and $\frac{\alpha_5}{\alpha_3}\neq0$ then w.s.c.o.b. $A$ is isomorphic to $\ca_{218}(\alpha, \beta, \gamma)(\alpha, \beta, \gamma\in\cc\backslash\{0\})$.

\item If $\frac{\gamma_4}{\gamma_6}=0, \frac{\beta_2}{\gamma_6}=0, \theta_2=0, \theta_1\neq0, \frac{\alpha_5}{\alpha_3}=\frac{\beta_6\gamma_1}{\alpha_3\gamma_6}, \frac{\beta_4\gamma_1}{\alpha_3\gamma_6}=0$ and $\frac{\beta_6\gamma_1}{\alpha_3\gamma_6}=0$ then w.s.c.o.b. $A$ is isomorphic to $\ca_{217}(0, 0)$.

\item If $\frac{\gamma_4}{\gamma_6}=0, \frac{\beta_2}{\gamma_6}=0, \theta_2=0, \theta_1\neq0, \frac{\alpha_5}{\alpha_3}=\frac{\beta_6\gamma_1}{\alpha_3\gamma_6}, \frac{\beta_4\gamma_1}{\alpha_3\gamma_6}=0$ and $\frac{\beta_6\gamma_1}{\alpha_3\gamma_6}=1$ then w.s.c.o.b. $A$ is isomorphic to $\ca_{189}(0, 0, 0)$.

\item If $\frac{\gamma_4}{\gamma_6}=0, \frac{\beta_2}{\gamma_6}=0, \theta_2=0, \theta_1\neq0, \frac{\alpha_5}{\alpha_3}=\frac{\beta_6\gamma_1}{\alpha_3\gamma_6}, \frac{\beta_4\gamma_1}{\alpha_3\gamma_6}=0$ and $\frac{\beta_6\gamma_1}{\alpha_3\gamma_6}\in\cc\backslash\{0, 1\}$ then w.s.c.o.b. $A$ is isomorphic to $\ca_{192}(0, \alpha, \alpha)$.

\item If $\frac{\gamma_4}{\gamma_6}=0, \frac{\beta_2}{\gamma_6}=0, \theta_2=0, \theta_1\neq0, \frac{\alpha_5}{\alpha_3}=\frac{\beta_6\gamma_1}{\alpha_3\gamma_6}, \frac{\beta_4\gamma_1}{\alpha_3\gamma_6}=-1$ and $\frac{\beta_6\gamma_1}{\alpha_3\gamma_6}=0$ then w.s.c.o.b. $A$ is isomorphic to $\ca_{189}(0, 0, 0)$.

\item If $\frac{\gamma_4}{\gamma_6}=0, \frac{\beta_2}{\gamma_6}=0, \theta_2=0, \theta_1\neq0, \frac{\alpha_5}{\alpha_3}=\frac{\beta_6\gamma_1}{\alpha_3\gamma_6}, \frac{\beta_4\gamma_1}{\alpha_3\gamma_6}=-1$ and $\frac{\beta_6\gamma_1}{\alpha_3\gamma_6}\neq0$ then the base change $x_1=y_1, x_2=\frac{1}{\theta_1}y_2, x_3=y_3, x_4=\frac{1}{\theta_1}y_4, x_5=y_5$ shows that $A$ is isomorphic to $\ca_{258}(\alpha)$.

\item If $\frac{\gamma_4}{\gamma_6}=0, \frac{\beta_2}{\gamma_6}=0, \theta_2=0, \theta_1\neq0, \frac{\alpha_5}{\alpha_3}=\frac{\beta_6\gamma_1}{\alpha_3\gamma_6}, \frac{\beta_4\gamma_1}{\alpha_3\gamma_6}\in\cc\backslash\{-1, 0\}$ and $\frac{\beta_6\gamma_1}{\alpha_3\gamma_6}=0$ then w.s.c.o.b. $A$ is isomorphic to $\ca_{192}(0, \alpha, \alpha)$.

\item If $\frac{\gamma_4}{\gamma_6}=0, \frac{\beta_2}{\gamma_6}=0, \theta_2=0, \theta_1\neq0, \frac{\alpha_5}{\alpha_3}=\frac{\beta_6\gamma_1}{\alpha_3\gamma_6}, \frac{\beta_4\gamma_1}{\alpha_3\gamma_6}\in\cc\backslash\{-1, 0\}, \frac{\beta_6\gamma_1}{\alpha_3\gamma_6}\neq0$ and $\frac{\alpha_5}{\alpha_3}=\frac{\beta_4\gamma_1}{\alpha_3\gamma_6}$ then w.s.c.o.b. $A$ is isomorphic to $\ca_{258}(\alpha)$.

\item If $\frac{\gamma_4}{\gamma_6}=0, \frac{\beta_2}{\gamma_6}=0, \theta_2=0, \theta_1\neq0, \frac{\alpha_5}{\alpha_3}=\frac{\beta_6\gamma_1}{\alpha_3\gamma_6}, \frac{\beta_4\gamma_1}{\alpha_3\gamma_6}\in\cc\backslash\{-1, 0\}, \frac{\beta_6\gamma_1}{\alpha_3\gamma_6}\neq0, \frac{\alpha_5}{\alpha_3}\neq\frac{\beta_4\gamma_1}{\alpha_3\gamma_6}$ and $\frac{\alpha_5}{\alpha_3}=1$ then w.s.c.o.b. $A$ is isomorphic to $\ca_{258}(\alpha)$.

\item If $\frac{\gamma_4}{\gamma_6}=0, \frac{\beta_2}{\gamma_6}=0, \theta_2=0, \theta_1\neq0, \frac{\alpha_5}{\alpha_3}=\frac{\beta_6\gamma_1}{\alpha_3\gamma_6}, \frac{\beta_4\gamma_1}{\alpha_3\gamma_6}\in\cc\backslash\{-1, 0\}, \frac{\beta_6\gamma_1}{\alpha_3\gamma_6}\neq0, \frac{\alpha_5}{\alpha_3}\neq\frac{\beta_4\gamma_1}{\alpha_3\gamma_6}$ and $\frac{\alpha_5}{\alpha_3}\neq1$ then the base change $x_1=y_1, x_2=\frac{1}{\theta_1}y_2, x_3=y_3, x_4=\frac{1}{\theta_1}y_4, x_5=y_5$ shows that $A$ is isomorphic to $\ca_{259}(\alpha, \beta)$.

\item If $\frac{\gamma_4}{\gamma_6}=0, \frac{\beta_2}{\gamma_6}=0, \theta_2=0, \theta_1\neq0, \frac{\alpha_5}{\alpha_3}\neq\frac{\beta_6\gamma_1}{\alpha_3\gamma_6}, \frac{\alpha_5}{\alpha_3}=\frac{\beta_4\gamma_1}{\alpha_3\gamma_6}$ and $\frac{\beta_4\gamma_1}{\alpha_3\gamma_6}=0$ then w.s.c.o.b. $A$ is isomorphic to $\ca_{248}(\alpha, 0)$.

\item If $\frac{\gamma_4}{\gamma_6}=0, \frac{\beta_2}{\gamma_6}=0, \theta_2=0, \theta_1\neq0, \frac{\alpha_5}{\alpha_3}\neq\frac{\beta_6\gamma_1}{\alpha_3\gamma_6}, \frac{\alpha_5}{\alpha_3}=\frac{\beta_4\gamma_1}{\alpha_3\gamma_6}, \frac{\beta_4\gamma_1}{\alpha_3\gamma_6}\neq0$ and $\frac{\beta_6\gamma_1}{\alpha_3\gamma_6}=0$ then w.s.c.o.b. $A$ is isomorphic to $\ca_{248}(\alpha, 0)$.

\item If $\frac{\gamma_4}{\gamma_6}=0, \frac{\beta_2}{\gamma_6}=0, \theta_2=0, \theta_1\neq0, \frac{\alpha_5}{\alpha_3}\neq\frac{\beta_6\gamma_1}{\alpha_3\gamma_6}, \frac{\alpha_5}{\alpha_3}=\frac{\beta_4\gamma_1}{\alpha_3\gamma_6}, \frac{\beta_4\gamma_1}{\alpha_3\gamma_6}\neq0$ and $\frac{\beta_6\gamma_1}{\alpha_3\gamma_6}\neq0$ then the base change $x_1=y_1, x_2=\frac{1}{\theta_1}y_2, x_3=y_3, x_4=\frac{1}{\theta_1}y_4, x_5=y_5$ shows that $A$ is isomorphic to $\ca_{260}(\alpha, \beta)$.

\item If $\frac{\gamma_4}{\gamma_6}=0, \frac{\beta_2}{\gamma_6}=0, \theta_2=0, \theta_1\neq0, \frac{\alpha_5}{\alpha_3}\neq\frac{\beta_6\gamma_1}{\alpha_3\gamma_6}$ and $\frac{\alpha_5}{\alpha_3}\neq\frac{\beta_4\gamma_1}{\alpha_3\gamma_6}$ then the base change $x_1=y_1, x_2=\frac{1}{\theta_1}y_2, x_3=y_3, x_4=\frac{1}{\theta_1}y_4, x_5=y_5$ shows that $A$ is isomorphic to $\ca_{261}(\alpha, \beta, \gamma)$.

\item If $\frac{\gamma_4}{\gamma_6}=0, \frac{\beta_2}{\gamma_6}=0$ and $\theta_2\neq0$ then the base change $x_1=\theta_2y_1, x_2=y_2, x_3=\theta_2y_3, x_4=\theta_2y_4, x_5=\theta^2_2y_5$ shows that $A$ is isomorphic to $\clr_{1}$.

\item If $\frac{\gamma_4}{\gamma_6}=0$ and $\frac{\beta_2}{\gamma_6}\neq0$ then the base change $x_1=y_1, x_2=\sqrt{\frac{\gamma_6}{\beta_2}}y_2, x_3=y_3, x_4=\sqrt{\frac{\gamma_6}{\beta_2}}y_4, x_5=y_5$ shows that $A$ is isomorphic to $\clr_{2}$.

\item If $\frac{\gamma_4}{\gamma_6}\neq0$ then the base change $x_1=y_1, x_2=\frac{\gamma_6}{\gamma_4}y_2, x_3=y_3, x_4=\frac{\gamma_6}{\gamma_4}y_4, x_5=y_5$ shows that $A$ is isomorphic to $\clr_{3}$.

\end{itemize}

\indent {\bf Case 1.2.1.2.2:} Let $\alpha_2\neq0$. If $(\beta_4+\beta_6, \gamma_6)\neq(0, 0)$ then the base change $x_1=xe_1+e_3, x_2=e_2, x_3=e_3, x_4=e_4, x_5=e_5$(where $\alpha_2x^2+(\beta_4+\beta_6)x+\gamma_6=0$) shows that $A$ is isomorphic to an algebra with the nonzero products given by (\ref{eq202,14}). Hence $A$ is isomorphic to $\ca_{174}, \ca_{176}(\alpha), \ca_{186}(\alpha, \beta), \ca_{189}(\alpha, \beta, \gamma), \ca_{191}, $ $\ca_{192}(\alpha, \beta, \gamma), \ca_{193}(\alpha, \beta, \gamma), \ca_{194}(\alpha, \beta, \gamma, \theta), \ca_{195}, \ca_{196}, \ca_{198}, \ca_{202}(\alpha), \ca_{204}(\alpha, \beta), \ca_{206}(\alpha), \ca_{208}(\alpha, \beta), \ca_{210}(\alpha), $ $\ca_{213}(\alpha), \ca_{215}(\alpha, \beta), \ca_{217}(\alpha, \beta), \ca_{218}(\alpha, \beta, \gamma), \ca_{237}(\alpha), \clr_{1}, \clr_{2}, \clr_{3}$ or $\ca_{240}, \ca_{241}(\alpha, \beta), \ldots, \ca_{261}(\alpha, \beta, \gamma)$.
So let $\beta_4+\beta_6=0=\gamma_6$. Without loss of generality we can assume $\alpha_5=0$ because if $\alpha_5\neq0$ then with the base change $x_1=\gamma_1e_1-\alpha_5e_3, x_2=e_2, x_3=e_3, x_4=e_4, x_5=e_5$ we can make $\alpha_5=0$. Furthermore, if $\beta_1\neq0$ then with the base change $x_1=e_1, x_2=\gamma_1e_2-\beta_1e_3, x_3=e_3, x_4=e_4, x_5=e_5$ we can make $\beta_1=0$. So we can assume $\beta_1=0$. Take $\theta=\frac{\alpha_3(\alpha_4\gamma_1-\alpha_3\gamma_2)}{\alpha_2\gamma^2_1}$. The base change $y_1=\frac{\gamma_1}{\alpha_3}e_1, y_2=e_2, y_3=e_3, y_4=\gamma_1e_4+\gamma_2e_5, y_5=\frac{\alpha_2\gamma^2_1}{\alpha^2_3}e_5$ shows that $A$ is isomorphic to the following algebra:
\begin{align*}
[y_1, y_1]=y_5, [y_1, y_2]=y_4+\theta y_5, [y_2, y_1]=\frac{\alpha_3\alpha_6}{\alpha_2\gamma_1}y_5, [y_2, y_2]=\frac{\beta_2\alpha^2_3}{\alpha_2\gamma^2_1}y_5, [y_1, y_3]=\frac{\alpha_3\beta_4}{\alpha_2\gamma_1}y_5=-[y_3, y_1], \\ [y_2, y_3]=y_4, [y_3, y_2]=\frac{\alpha^2_3\gamma_4}{\alpha_2\gamma^2_1}y_5.
\end{align*}
Then the base change $x_1=y_1, x_2=y_2, x_3=y_3, x_4=y_4, x_5=y_5$ shows that $A$ is isomorphic to $\clr_{4}$.

\indent {\bf Case 1.2.2:} Let $\alpha_1\neq0$.
\\ \indent {\bf Case 1.2.2.1:} Let $\alpha_1\gamma_2-\alpha_2\gamma_1=0$. If $\alpha_3\neq0$ then with the base change $x_1=e_1, x_2=\alpha_3e_1-\alpha_1e_2, x_3=e_3, x_4=e_4, x_5=e_5$ we can make $\alpha_3=0$. So we can assume $\alpha_3=0$. Then we have the following products in $A$:
\begin{multline} \label{eq202,15}
[e_1, e_1]=\alpha_1e_4+\alpha_2e_5, [e_1, e_2]=\alpha_4e_5, [e_2, e_1]=\alpha_5e_4+\alpha_6e_5, [e_2, e_2]=\beta_1e_4+\beta_2e_5, [e_1, e_3]=\beta_4e_5, \\ [e_3, e_1]=\beta_6e_5, [e_2, e_3]=\gamma_1e_4+\gamma_2e_5, [e_3, e_2]=\gamma_4e_5, [e_3, e_3]=\gamma_6e_5.
\end{multline}
If $\alpha_5\neq0$ then with the base change $x_1=\gamma_1e_1-\alpha_5e_3, x_2=e_2, x_3=e_3, x_4=e_4, x_5=e_5$ we can make $\alpha_5=0$. So we can assume $\alpha_5=0$. Furthermore, if $\beta_1\neq0$ then with the base change $x_1=e_1, x_2=\gamma_1e_2-\beta_1e_3, x_3=e_3, x_4=e_4, x_5=e_5$ we can make $\beta_1=0$. So we can assume $\beta_1=0$. The base change $y_1=e_1, y_2=\frac{\alpha_1}{\gamma_1}e_2, y_3=e_3, y_4=\frac{\alpha_1}{\gamma_1}(\gamma_1e_4+\gamma_2e_5), y_5=e_5$ shows that $A$ is isomorphic to the following algebra:
\begin{align*}
[y_1, y_1]=y_4, [y_1, y_2]=\frac{\alpha_1\alpha_4}{\gamma_1}y_5, [y_2, y_1]=\frac{\alpha_1\alpha_6}{\gamma_1}y_5, [y_2, y_2]=\frac{\alpha^2_1\beta_2}{\gamma^2_1}y_5, [y_1, y_3]=\beta_4y_5, \\ [y_3, y_1]=\beta_6y_5, [y_2, y_3]=y_4, [y_3, y_2]=\frac{\alpha_1\gamma_4}{\gamma_1}y_5, [y_3, y_3]=\gamma_6y_5.
\end{align*}

Then the base change $x_1=y_1, x_2=y_2, x_3=y_3, x_4=y_4, x_5=y_5$ shows that $A$ is isomorphic to $\clr_{5}$.

\indent {\bf Case 1.2.2.2:} Let $\alpha_1\gamma_2-\alpha_2\gamma_1\neq0$. Without loss of generality we can assume $\alpha_3=0$ because if $\alpha_3\neq0$ then with the base change $x_1=e_1, x_2=\alpha_3e_1-\alpha_1e_2, x_3=e_3, x_4=e_4, x_5=e_5$ we can make $\alpha_3=0$. 
Furthermore, if $\beta_1\neq0$ then with the base change $x_1=e_1, x_2=\gamma_1e_2-\beta_1e_3, x_3=e_3, x_4=e_4, x_5=e_5$ we can make $\beta_1=0$. So we can assume $\beta_1=0$. Take $\theta_1=\frac{\alpha_2\gamma_1-\alpha_1\gamma_2}{\gamma_1}$ and $\theta_2=\frac{\alpha_1(\alpha_6\gamma_2-\alpha_5\gamma_1)}{\gamma^2_1\theta_1}$. The base change $y_1=e_1, y_2=\frac{\alpha_1}{\gamma_1}e_2, y_3=e_3, y_4=\frac{\alpha_1}{\gamma_1}(\gamma_1e_4+\gamma_2e_5), y_5=\theta_1e_5$ shows that $A$ is isomorphic to the following algebra:
\begin{align*}
[y_1, y_1]=y_4+y_5, [y_1, y_2]=\frac{\alpha_1\alpha_4}{\gamma_1\theta_1}e_5, [y_2, y_1]=\frac{\alpha_5}{\gamma_1}y_4+\theta_2y_5, [y_2, y_2]=\frac{\alpha^2_1\beta_4}{\gamma^2_1\theta_1}y_5, [y_1, y_3]=\frac{\beta_4}{\theta_1}y_5, \\ [y_3, y_1]=\frac{\beta_6}{\theta_1}y_5, [y_2, y_3]=y_4, [y_3, y_2]=\frac{\alpha_1\gamma_4}{\gamma_1\theta_1}y_5, [y_3, y_3]=\frac{\gamma_6}{\theta_1}y_5.
\end{align*}
Note that if $(\beta_4+\beta_6, \gamma_6)\neq(0, 0)$ then the base change $x_1=xy_1+y_3, x_2=y_2, x_3=y_3, x_4=y_4, x_5=y_5$((where $\theta_1x^2+(\beta_4+\beta_6)x+\gamma_6=0$) shows that $A$ is isomorphic to an algebra with the nonzero products given by $(\ref{eq202,15})$. Hence $A$ is isomorphic to $\clr_{5}$.
\\So let $\beta_4+\beta_6=0=\gamma_6$. Without loss of generality we can assume $\alpha_5=0$ because if $\alpha_5\neq0$ then with the base change $x_1=\gamma_1y_1-\alpha_5y_3, x_2=y_2, x_3=y_3, x_4=y_4, x_5=y_5$ we can make $\alpha_5=0$. Then the base change $x_1=y_1, x_2=y_2, x_3=y_3, x_4=y_4, x_5=y_5$ shows that $A$ is isomorphic to $\clr_{6}$.
\\ \indent {\bf Case 2:} Let $\gamma_3\neq0$. 
\\ \indent {\bf Case 2.1:} Let $\beta_3=0$. 
\\ \indent {\bf Case 2.1.1:} Let $\alpha_1=0$. If $\alpha_3=0$(resp. $\alpha_3\neq0$) then the base change $x_1=e_3, x_2=e_2, x_3=e_1, x_4=e_4, x_5=e_5$(resp. $x_1=e_1, x_2=e_2, x_3=\gamma_3e_1-\alpha_3e_3, x_4=e_4, x_5=e_5$) shows that $A$ is isomorphic to an algebra with the nonzero products given by (\ref{eq202,1}). Hence $A$ is isomorphic to $\ca_{161}, \ca_{162}, \ldots, \ca_{261}(\alpha, \beta, \gamma), \clr_{1}, \clr_{2}, \clr_{3},$ $\clr_{4}, \clr_{5}$ or $\clr_{6}$.

\indent {\bf Case 2.1.2:} Let $\alpha_1\neq0$.
\\ \indent {\bf Case 2.1.2.1:} Let $\gamma_1=0$. Without loss of generality we can assume $\alpha_5=0$ because if $\alpha_5\neq0$ then with the base change $x_1=e_1, x_2=\alpha_5e_1-\alpha_1e_2, x_3=e_3, x_4=e_4, x_5=e_5$ we can make $\alpha_5=0$. Similarly we can assume $\beta_1=0$ because if $\beta_1\neq0$ then with the base change $x_1=e_1, x_2=\gamma_3e_2-\beta_1e_3, x_3=e_3, x_4=e_4, x_5=e_5$ we can make $\beta_1=0$. The base change $x_1=e_1, x_2=e_3, x_3=e_2, x_4=e_4, x_5=e_5$ shows that $A$ is isomorphic to an algebra with the nonzero products given by (\ref{eq202,1}). Hence $A$ is isomorphic to $\ca_{161}, \ca_{162}, \ldots, \ca_{261}(\alpha, \beta, \gamma), \clr_{1}, \clr_{2}, \clr_{3}, \clr_{4}, \clr_{5}$ or $\clr_{6}$.

\indent {\bf Case 2.1.2.2:} Let $\gamma_1\neq0$.
\\ \indent {\bf Case 2.1.2.2.1:} Let $\gamma_1+\gamma_3=0$. If $\beta_1\neq0$ then with the base change $x_1=e_1, x_2=e_1+xe_2, x_3=e_3, x_4=e_4, x_5=e_5$(where $\beta_1x^2+(\alpha_3+\alpha_5)x+\alpha_1=0$) we can make $\beta_1=0$. so we can assume $\beta_1=0$. Furthermore we can assume $\alpha_5=0$ because if $\alpha_5\neq0$ then with the base change $x_1=\gamma_1e_1-\alpha_5e_3, x_2=e_2, x_3=e_3, x_4=e_4, x_5=e_5$ we can make $\alpha_5=0$. 

\indent {\bf Case 2.1.2.2.1.1:} Let $\gamma_6=0$.
\\ \indent {\bf Case 2.1.2.2.1.1.1:} Let $\beta_6=0$.
\\ \indent {\bf Case 2.1.2.2.1.1.1.1:} Let $\gamma_2+\gamma_4=0$. Take $\theta_1=\frac{\alpha_2\gamma_1-\alpha_1\gamma_2}{\gamma_1}$ and $\theta_2=\frac{\alpha_4\gamma_1-\alpha_3\gamma_2}{\gamma_1}$. The base change $y_1=e_1, y_2=e_2, y_3=\frac{\alpha_1}{\gamma_1}e_3, y_4=\frac{\alpha_1}{\gamma_1}(\gamma_1e_4+\gamma_2e_5), y_5=e_5$ shows that $A$ is isomorphic to the following algebra:
\begin{align*}
[y_1, y_1]=y_4+\theta_1y_5, [y_1, y_2]=\frac{\alpha_3}{\alpha_1}y_4+\theta_2y_5, [y_2, y_1]=\alpha_6y_5, [y_2, y_2]=\beta_2y_5, [y_1, y_3]=\frac{\alpha_1\beta_4}{\gamma_1}y_5, \\ [y_2, y_3]=y_4=-[y_3, y_2].
\end{align*}

Then the base change $x_1=y_1, x_2=y_2, x_3=y_3, x_4=y_4, x_5=y_5$ shows that $A$ is isomorphic to $\clr_{7}$.

\indent {\bf Case 2.1.2.2.1.1.1.2:} Let $\gamma_2+\gamma_4\neq0$. Without loss of generality we can assume $\beta_2=0$ because if $\beta_2\neq0$ then with the base change $x_1=e_1, x_2=(\gamma_2+\gamma_4)e_2-\beta_2e_3, x_3=e_3, x_4=e_4, x_5=e_5$ we can make $\beta_2=0$. Take $\theta_1=\frac{\alpha_2\gamma_1+\alpha_1\gamma_4}{\alpha_1(\gamma_2+\gamma_4)}$ and $\theta_2=\frac{\alpha_4\gamma_1+\alpha_3\gamma_4}{\alpha_1(\gamma_2+\gamma_4)}$. The base change $y_1=e_1, y_2=e_2, y_3=\frac{\alpha_1}{\gamma_1}e_3, y_4=\frac{\alpha_1}{\gamma_1}(\gamma_1e_4-\gamma_4e_5), y_5=\frac{\alpha_1(\gamma_2+\gamma_4)}{\gamma_1}e_5$ shows that $A$ is isomorphic to the following algebra:
\begin{align*}
[y_1, y_1]=y_4+\theta_1y_5, [y_1, y_2]=\frac{\alpha_3}{\alpha_1}y_4+\theta_2y_5, [y_2, y_1]=\frac{\alpha_6\gamma_1}{\alpha_1(\gamma_2+\gamma_4)}y_5, [y_1, y_3]=\frac{\beta_4}{\gamma_2+\gamma_4}y_5, \\ [y_2, y_3]=y_4+y_5, [y_3, y_2]=-y_4.
\end{align*}
Then the base change $x_1=y_1, x_2=y_2, x_3=y_3, x_4=y_4, x_5=y_5$ shows that $A$ is isomorphic to $\clr_{8}$.
\\ \indent {\bf Case 2.1.2.2.1.1.2:} Let $\beta_6\neq0$. Note that if $\alpha_6\neq0$ then with the base change $x_1=e_1, x_2=\beta_6e_2-\alpha_6e_3, x_3=e_3, x_4=e_4, x_5=e_5$ we can make $\alpha_6=0$. So let $\alpha_6=0$. Take $\theta_1=\frac{\alpha_2\gamma_1+\alpha_1\gamma_4}{\alpha_1\beta_6}, \theta_2=\frac{\alpha_4\gamma_1+\alpha_3\gamma_4}{\alpha_1\beta_6}$ and $\theta_3=\frac{\gamma_2+\gamma_4}{\beta_6}$. The base change $y_1=e_1, y_2=e_2, y_3=\frac{\alpha_1}{\gamma_1}e_3, y_4=\frac{\alpha_1}{\gamma_1}(\gamma_1e_4-\gamma_4e_5), y_5=\frac{\alpha_1\beta_6}{\gamma_1}e_5$ shows that $A$ is isomorphic to the following algebra:
\begin{align*}
[y_1, y_1]=y_4+\theta_1y_5, [y_1, y_2]=\frac{\alpha_3}{\alpha_1}y_4+\theta_2y_5, [y_2, y_2]=\frac{\beta_2\gamma_1}{\alpha_1\beta_6}y_5, [y_1, y_3]=\frac{\beta_4}{\beta_6}y_5, [y_3, y_1]=y_5, \\ [y_2, y_3]=y_4+\theta_3y_5, [y_3, y_2]=-y_4.
\end{align*}
Then the base change $x_1=y_1, x_2=y_2, x_3=y_3, x_4=y_4, x_5=y_5$ shows that $A$ is isomorphic to $\clr_{9}$.
\\ \indent {\bf Case 2.1.2.2.1.2:} Let $\gamma_6\neq0$. Without loss of generality we can assume $\beta_2=0$ because if $\beta_2\neq0$ then with the base change $x_1=e_1, x_2=xe_2+e_3, x_3=e_3, x_4=e_4, x_5=e_5$(where $\beta_2x^2+(\gamma_2+\gamma_4)x+\gamma_6=0$) we can make $\beta_2=0$. Take $\theta_1=\frac{\alpha_2\gamma_1+\alpha_1\gamma_4}{\gamma_1\gamma_6}, \theta_2=\frac{\alpha_1(\alpha_4\gamma_1+\alpha_3\gamma_4)}{\gamma^2_1\gamma_6}$ and $\theta_3=\frac{\alpha_1(\gamma_2+\gamma_4)}{\gamma_1\gamma_6}$. The base change $y_1=e_1, y_2=\frac{\alpha_1}{\gamma_1}e_2, y_3=e_3, y_4=\frac{\alpha_1}{\gamma_1}(\gamma_1e_4-\gamma_4e_5), y_5=\gamma_6e_5$ shows that $A$ is isomorphic to the following algebra:
\begin{align*}
[y_1, y_1]=y_4+\theta_1y_5, [y_1, y_2]=\frac{\alpha_3}{\gamma_1}y_4+\theta_2y_5, [y_2, y_1]=\frac{\alpha_1\alpha_6}{\gamma_1\gamma_6}y_5, [y_1, y_3]=\frac{\beta_4}{\gamma_6}y_5, [y_3, y_1]=\frac{\beta_6}{\gamma_6}y_5, \\ [y_2, y_3]=y_4+\theta_3y_5, [y_3, y_2]=-y_4, [y_3, y_3]=y_5.
\end{align*}
Then the base change $x_1=y_1, x_2=y_2, x_3=y_3, x_4=y_4, x_5=y_5$ shows that $A$ is isomorphic to $\clr_{10}$.
\\ \indent {\bf Case 2.1.2.2.2:} Let $\gamma_1+\gamma_3\neq0$.
\\ \indent {\bf Case 2.1.2.2.2.1:} Let $\gamma_6=0$.
\\ \indent {\bf Case 2.1.2.2.2.1.1:} Let $\beta_6=0$.
\\ \indent {\bf Case 2.1.2.2.2.1.1.1:} Let $\beta_4=0$. Without loss of generality we can assume $\alpha_5=0$ because if $\alpha_5\neq0$ then with the base change $x_1=e_1, x_2=\alpha_5e_1-\alpha_1e_2, x_3=e_3, x_4=e_4, x_5=e_5$ we can make $\alpha_5=0$. Furthermore if $\beta_1\neq0$ then with the base change $x_1=e_1, x_2=(\gamma_1+\gamma_3)e_2-\beta_1e_3, x_3=e_3, x_4=e_4, x_5=e_5$ we can make $\beta_1=0$. So we can assume $\beta_1=0$. Take $\theta_1=\frac{\alpha_2\gamma_3-\alpha_1\gamma_4}{\gamma_3}, \theta_2=\frac{\alpha_4\gamma_3-\alpha_3\gamma_4}{\gamma_3}$ and $\theta_3=\frac{\alpha_1(\gamma_2\gamma_3-\gamma_1\gamma_4)}{\gamma^2_3}$. The base change $y_1=e_1, y_2=e_2, y_3=\frac{\alpha_1}{\gamma_3}e_3, y_4=\frac{\alpha_1}{\gamma_3}(\gamma_3e_4+\gamma_4e_5), y_5=e_5$ shows that $A$ is isomorphic to the following algebra:
\begin{align*}
[y_1, y_1]=y_4+\theta_1y_5, [y_1, y_2]=\frac{\alpha_3}{\alpha_1}y_4+\theta_2y_5, [y_2, y_1]=\alpha_6y_5, [y_2, y_2]=\beta_2y_5, [y_2, y_3]=\frac{\gamma_1}{\gamma_3}y_4+\theta_3y_5, \\ [y_3, y_2]=y_4.
\end{align*}
Then the base change $x_1=y_1, x_2=y_2, x_3=y_3, x_4=y_4, x_5=y_5$ shows that $A$ is isomorphic to $\clr_{11}$.
\\ \indent {\bf Case 2.1.2.2.2.1.1.2:} Let $\beta_4\neq0$. Take $\theta_1=\frac{\alpha_2\gamma_3-\alpha_1\gamma_4}{\alpha_1\beta_4}, \theta_2=\frac{\alpha_4\gamma_3-\alpha_3\gamma_4}{\alpha_1\beta_4}, \theta_3=\frac{\alpha_5}{\alpha_1}, \theta_4=\frac{\alpha_6\gamma_3-\alpha_5\gamma_4}{\alpha_1\beta_4}, \theta_5=\frac{\beta_1}{\alpha_1}, \theta_6=\frac{\beta_2\gamma_3-\beta_1\gamma_4}{\alpha_1\beta_4}$ and $\theta_7=\frac{\alpha_1(\gamma_2\gamma_3-\gamma_1\gamma_4)}{\alpha_1\beta_4\gamma_3}$. The base change $y_1=e_1, y_2=e_2, y_3=\frac{\alpha_1}{\gamma_3}e_3, y_4=\frac{\alpha_1}{\gamma_3}(\gamma_3e_4+\gamma_4e_5), y_5=\frac{\alpha_1\beta_4}{\gamma_3}e_5$ shows that $A$ is isomorphic to the following algebra:
\begin{align*}
[y_1, y_1]=y_4+\theta_1y_5, [y_1, y_2]=\frac{\alpha_3}{\alpha_1}y_4+\theta_2y_5, [y_2, y_1]=\theta_3y_4+\theta_4y_5, [y_2, y_2]=\theta_5y_4+\theta_6y_5, [y_1, y_3]=y_5, \\ [y_2, y_3]=\frac{\gamma_1}{\gamma_3}y_4+\theta_7y_5, [y_3, y_2]=y_4.
\end{align*}
Without loss of generality we can assume $\theta_1=0$ because if $\theta_1\neq0$ then with the base change $x_1=y_1-\theta_1y_3, x_2=y_2, x_3=y_3, x_4=y_4, x_5=y_5$ we can make $\theta_1=0$. Also if $\theta_3\neq0$ then with the base change $x_1=y_1, x_2=\theta_3y_1-y_2, x_3=y_3, x_4=y_4, x_5=y_5$ we can make $\theta_3=0$. So we can assume $\theta_3=0$. Furthermore we can assume $\theta_5=0$ because if $\theta_5\neq0$ then with the base change $x_1=y_1, x_2=(\frac{\gamma_1}{\gamma_3}+1)y_2-\theta_5y_3, x_3=y_3, x_4=y_4, x_5=y_5$ we can make $\theta_5=0$. Then the base change $x_1=y_1, x_2=y_2, x_3=y_3, x_4=y_4, x_5=y_5$ shows that $A$ is isomorphic to $\clr_{12}$.
\\ \indent {\bf Case 2.1.2.2.2.1.2:} Let $\beta_6\neq0$. Without loss of generality we can assume $\alpha_5=0$ because if $\alpha_5\neq0$ then with the base change $x_1=e_1, x_2=\alpha_5e_1-\alpha_1e_2, x_3=e_3, x_4=e_4, x_5=e_5$ we can make $\alpha_5=0$. Furthermore if $\beta_1\neq0$ then with the base change $x_1=e_1, x_2=(\gamma_1+\gamma_3)e_2-\beta_1e_3, x_3=e_3, x_4=e_4, x_5=e_5$ we can make $\beta_1=0$. So we can assume $\beta_1=0$. Take $\theta_1=\frac{\alpha_2\gamma_3-\alpha_1\gamma_4}{\alpha_1\beta_6}, \theta_2=\frac{\alpha_4\gamma_3-\alpha_3\gamma_4}{\alpha_1\beta_6}$ and $\theta_3=\frac{\alpha_1(\gamma_2\gamma_3-\gamma_1\gamma_4)}{\alpha_1\beta_6\gamma_3}$. The base change $y_1=e_1, y_2=e_2, y_3=\frac{\alpha_1}{\gamma_3}e_3, y_4=\frac{\alpha_1}{\gamma_3}(\gamma_3e_4+\gamma_4e_5), y_5=\frac{\alpha_1\beta_6}{\gamma_3}e_5$ shows that $A$ is isomorphic to the following algebra:
\begin{align*}
[y_1, y_1]=y_4+\theta_1y_5, [y_1, y_2]=\frac{\alpha_3}{\alpha_1}y_4+\theta_2y_5, [y_2, y_1]=\frac{\alpha_6\gamma_3}{\alpha_1\beta_6}y_5, [y_2, y_2]=\frac{\beta_2\gamma_3}{\alpha_1\beta_6}y_5, [y_1, y_3]=\frac{\beta_4}{\beta_6}y_5,\\  [y_3, y_1]=y_5, [y_2, y_3]=\frac{\gamma_1}{\gamma_3}y_4+\theta_3y_5, [y_3, y_2]=y_4.
\end{align*}
Then the base change $x_1=y_1, x_2=y_2, x_3=y_3, x_4=y_4, x_5=y_5$ shows that $A$ is isomorphic to $\clr_{13}$.
\\ \indent {\bf Case 2.1.2.2.2.2:} Let $\gamma_6\neq0$. Note that if $\beta_6\neq0$ then with the base change $x_1=\gamma_6e_1-\beta_6e_3, x_2=e_2, x_3=e_3, x_4=e_4, x_5=e_5$ we can make $\beta_6=0$. So we can assume $\beta_6=0$. Without loss of generality we can assume $\alpha_5=0$ because if $\alpha_5\neq0$ then with the base change $x_1=e_1, x_2=\alpha_5e_1-\alpha_1e_2, x_3=e_3, x_4=e_4, x_5=e_5$ we can make $\alpha_5=0$. Furthermore if $\beta_1\neq0$ then with the base change $x_1=e_1, x_2=(\gamma_1+\gamma_3)e_2-\beta_1e_3, x_3=e_3, x_4=e_4, x_5=e_5$ we can make $\beta_1=0$. So we can assume $\beta_1=0$. Take $\theta_1=\frac{\alpha_2\gamma_3-\alpha_1\gamma_4}{\gamma_3\gamma_6}, \theta_2=\frac{\alpha_1(\alpha_4\gamma_3-\alpha_3\gamma_4)}{\gamma^2_3\gamma_6}$ and $\theta_3=\frac{\alpha_1(\gamma_2\gamma_3-\gamma_1\gamma_4)}{\gamma^2_3\gamma_6}$. The base change $y_1=e_1, y_2=\frac{\alpha_1}{\gamma_3}e_2, y_3=e_3, y_4=\frac{\alpha_1}{\gamma_3}(\gamma_3e_4+\gamma_4e_5), y_5=\gamma_6e_5$ shows that $A$ is isomorphic to the following algebra:
\begin{align*}
[y_1, y_1]=y_4+\theta_1y_5, [y_1, y_2]=\frac{\alpha_3}{\gamma_3}y_4+\theta_2y_5, [y_2, y_1]=\frac{\alpha_1\alpha_6}{\gamma_3\gamma_6}y_5, [y_2, y_2]=\frac{\alpha^2_1\beta_2}{\gamma^2_3\gamma_6}y_5, [y_1, y_3]=\frac{\beta_4}{\gamma_6}y_5,\\ [y_2, y_3]=\frac{\gamma_1}{\gamma_3}y_4+\theta_3y_5, [y_3, y_2]=y_4, [y_3, y_3]=y_5.
\end{align*}
Then the base change $x_1=y_1, x_2=y_2, x_3=y_3, x_4=y_4, x_5=y_5$ shows that $A$ is isomorphic to $\clr_{14}$.
\\ \indent {\bf Case 2.2:} Let $\beta_3\neq0$. Without loss of generality we can assume $\gamma_1=0$ because if $\gamma_1\neq0$ then with the base change $x_1=e_1, x_2=\gamma_1e_1-\beta_3e_2, x_3=e_3, x_4=e_4, x_5=e_5$ we can make $\gamma_1=0$. Note that if $\beta_1\neq0$ then with the base change $x_1=e_1, x_2=\gamma_3e_2-\beta_1e_3, x_3=e_3, x_4=e_4, x_5=e_5$ we can make $\beta_1=0$. So we can assume $\beta_1=0$. Furthermore if $\alpha_1\neq0$ then with the base change $x_1=\beta_3e_1-\alpha_1e_3, x_2=e_2, x_3=e_3, x_4=e_4, x_5=e_5$ we can make $\alpha_1=0$. So let $\alpha_1=0$. 

\indent {\bf Case 2.2.1:} Let $\alpha_5=0$. Then the base change $x_1=e_1, x_2=e_3, x_3=e_2, x_4=e_4, x_5=e_5$ shows that $A$ is isomorphic to an algebra with the nonzero products given by (\ref{eq202,1}). Hence $A$ is isomorphic to $\ca_{161}, \ca_{162}, \ldots, \ca_{261}(\alpha, \beta, \gamma), \clr_{1}, \clr_{2}, \clr_{3}, \clr_{4}, \clr_{5}$ or $\clr_{6}$.

\indent {\bf Case 2.2.2:} Let $\alpha_5\neq0$. Take $\theta_1=\frac{\beta_3(\alpha_4\gamma_3-\alpha_3\gamma_4)}{\alpha^2_5}, \theta_2=\frac{\beta_3(\alpha_6\gamma_3-\alpha_5\gamma_4)}{\alpha^2_5}$ and $\theta_3=\frac{\beta_4\gamma_3-\beta_3\gamma_4}{\alpha_5}$. The base change $y_1=\frac{\gamma_3}{\alpha_5}e_1, y_2=\frac{\beta_3}{\alpha_5}e_2, y_3=e_3, y_4=\frac{\beta_3}{\alpha_5}(\gamma_3e_4+\gamma_4e_5), y_5=e_5$ shows that $A$ is isomorphic to the following algebra:
\begin{align*}
[y_1, y_1]=\frac{\alpha_2\gamma^2_3}{\alpha^2_5}y_5, [y_1, y_2]=\frac{\alpha_3}{\alpha_5}y_4+\theta_1y_5, [y_2, y_1]=y_4+\theta_2y_5, [y_2, y_2]=\frac{\beta_2\beta^2_3}{\alpha^2_5}y_5, [y_1, y_3]=y_4+\theta_3y_5, \\ [y_3, y_1]=\frac{\beta_6\gamma_3}{\alpha_5}y_5, [y_2, y_3]=\frac{\beta_3\gamma_2}{\alpha_5}y_5, [y_3, y_2]=y_4, [y_3, y_3]=\gamma_6y_5.
\end{align*}
Then the base change $x_1=y_1, x_2=y_2, x_3=y_3, x_4=y_4, x_5=y_5$ shows that $A$ is isomorphic to $\clr_{15}$.
\end{proof}

Now we give the conditions for two Leibniz algebras of the infinite families to be isomorphic for the families obtained in Theorem \ref{thm202}. 

\addtocounter{table}{-1}

\newpage

\begin{scriptsize}
\begin{center}
    \begin{longtable}{ | l | p{6cm} | l | p{6cm} |}
    \hline
   Class & Isomorphism criterion &  Class & Isomorphism criterion  \\ \hline
    $\ca_{3}(\alpha)$ & $\alpha^2_2=\alpha^2_1$ & $\ca_{45}(\alpha, \beta)$ & $\alpha_2=\alpha_1$ and $\beta_2=\beta_1$ \\ \hline
    $\ca_{4}(\alpha)$ & $\alpha_2=\alpha_1$ or $\alpha_2=\frac{1}{\alpha_1}$  & $\ca_{46}(\alpha)$ & $\alpha_2=\alpha_1$ \\ \hline
    $\ca_{5}(\alpha)$ & $\alpha_2=\alpha_1$ & $\ca_{47}(\alpha)$ & $\alpha_2=\alpha_1$   \\ \hline
    $\ca_{6}(\alpha)$ & $\alpha_2=\alpha_1$ &  $\ca_{48}(\alpha, \beta)$ & $\alpha_2=\alpha_1$ and $\beta_2=\beta_1$  \\ \hline
    $\ca_{7}(\alpha, \beta)$ & $(\alpha_2=\alpha_1$ and $\beta^2_2=\beta^2_1$) or 
    $(\alpha_2=\frac{1}{\alpha_1}$ and $\beta^2_2=(\frac{\beta_1}{\alpha_1})^2$) & $\ca_{49}(\alpha, \beta)$ & $\alpha_2=\alpha_1$ and $\beta_2=\beta_1$   \\ \hline
    $\ca_{9}(\alpha)$ & $\alpha_2=\alpha_1$ & $\ca_{50}(\alpha)$ & $\alpha_2=\alpha_1$  \\ \hline
   $\ca_{13}(\alpha, \beta)$ & $(\alpha_2+\beta_2)^2+2(\alpha_2-\beta_2)=(\alpha_1+\beta_1)^2+2(\alpha_1-\beta_1)$ &  $\ca_{51}(\alpha)$ & $\alpha_2=\alpha_1$ \\ \hline
    $\ca_{16}(\alpha)$ & $\alpha_2=\alpha_1$ & $\ca_{52}(\alpha)$ & $\alpha_2=\alpha_1$ \\ \hline
    $\ca_{17}(\alpha)$ & $\alpha_2=\alpha_1$ &  $\ca_{53}(\alpha)$ & $\alpha_2=\alpha_1$ \\ \hline
    $\ca_{22}(\alpha)$ & $\alpha_2=\alpha_1$ or $\alpha_2=\frac{1}{\alpha_1}$  & $\ca_{55}(\alpha, \beta)$ & $\alpha_2=\alpha_1$ and $\beta_2=\beta_1$ \\ \hline
    $\ca_{23}(\alpha)$ & $\alpha_2=\alpha_1$ or $\alpha_2=\frac{1}{\alpha_1}$  & $\ca_{57}(\alpha, \beta)$ & $\alpha_2=\alpha_1$ and $\beta_2=\beta_1$  \\ \hline   
    $\ca_{24}(\alpha)$ & $\alpha_2=\alpha_1$ & $\ca_{58}(\alpha, \beta, \gamma)$ & ($\alpha_2=\alpha_1$ and $\beta_2=\beta_1$ and $\gamma_2=\gamma_1$) or ($\alpha_2=\frac{1}{\alpha_1}$ and $\beta_2=-\alpha_1\beta_1$ and $\gamma_2=-\beta_1-\alpha_1\beta_1+\gamma_1$)  \\ \hline
    $\ca_{25}(\alpha)$ & $\alpha_2=\alpha_1$ &  $\ca_{60}(\alpha, \beta)$ & $(\alpha_2=\alpha_1$ and $\beta_2=\beta_1$) or $(\alpha_2=\frac{1}{\alpha_1}$ and $\beta_2=-1-\alpha_1-\alpha_1\beta_1$) \\ \hline    
    $\ca_{26}(\alpha, \beta)$ & $\alpha^2_2=\alpha^2_1$ and $\beta_2=\beta_1$ & $\ca_{61}(\alpha, \beta)$ & $(\alpha_2=\alpha_1$ and $\beta_2=\beta_1$) or $(\alpha_2=-\alpha_1$ and $\beta_2=-2\alpha_1+\beta_1$)  \\ \hline    
    $\ca_{27}(\alpha, \beta)$ & $\alpha_2=\alpha_1$ and $\beta_2=\beta_1$ & $\ca_{62}(\alpha, \beta)$ & $\alpha_2=\alpha_1$ and $\beta_2=\beta_1$  \\ \hline
    $\ca_{28}(\alpha, \beta)$ & $\alpha_2=\alpha_1$ and $\beta_2=\beta_1$ &  $\ca_{63}(\alpha, \beta, \gamma)$ & hard to compute  \\\hline
    $\ca_{29}(\alpha, \beta, \gamma)$ & $\alpha_2=\alpha_1$ and $\beta^2_2=\beta^2_1$ and $\gamma_2=\gamma_1$ & $\ca_{64}(\alpha)$ & $\alpha_2=\alpha_1$   \\ \hline        
    $\ca_{30}(\alpha)$ & $\alpha_2=\alpha_1$ & $\ca_{66}(\alpha)$ & $\alpha^2_2=\alpha^2_1$  \\ \hline    
    $\ca_{32}(\alpha, \beta, \gamma)$ & $\alpha_2=\alpha_1$ and $\beta_2=\beta_1$ and $\gamma_2=\gamma_1$ & $\ca_{67}(\alpha)$ & $\alpha_2=\alpha_1$ or $\alpha_2=\frac{1}{\alpha_1}$  \\ \hline       
    $\ca_{33}(\alpha, \beta, \gamma)$ & ($\alpha_2=\alpha_1$ and $\beta_2=\beta_1$ and $\gamma_2=\gamma_1$) or ($\alpha_2=-\alpha_1$ and $\beta_2=-\beta_1$ and $\gamma_2=-2\alpha_1-2\beta_1+\gamma_1$) &  $\ca_{68}(\alpha)$ & $\alpha_2=\alpha_1$ or $\alpha_2=-1-\alpha_1$ \\ \hline      
    $\ca_{34}(\alpha, \beta, \gamma, \theta)$ & ($\alpha_2=\alpha_1$ and $\beta_2=\beta_1$ and $\gamma_2=\gamma_1$ and $\theta_2=\theta_1$) or ($\alpha_2=\frac{1}{\alpha_1}$ and $\beta_2=-\alpha_1\beta_1$ and $\gamma_2=-\alpha_1\gamma_1$ and $\theta_2=-\beta_1-\alpha_1\beta_1-\gamma_1-\alpha_1\gamma_1+\theta_1$) & $\ca_{70}(\alpha)$ & $\alpha_2=\alpha_1$  \\ \hline         
    $\ca_{39}(\alpha, \beta)$ & $\alpha_2=\alpha_1$ and $\beta^2_2-2\alpha_1\beta_2-\beta^2_1+2\alpha_1\beta_1=0$ & $\ca_{71}(\alpha)$ & $\alpha_2=\alpha_1$  \\ \hline
    $\ca_{40}(\alpha)$ & $\alpha_2=\alpha_1$ & $\ca_{72}(\alpha)$ & $\alpha_2=\alpha_1$ or $\alpha_2=\frac{\alpha_1}{2\alpha_1-1}$ \\ \hline
    $\ca_{42}(\alpha)$ & $\alpha_2=\alpha_1$ & $\ca_{73}(\alpha)$ & $\alpha_2=\alpha_1$  \\ \hline        
    $\ca_{43}(\alpha)$ & $\alpha_2=\alpha_1$ & $\ca_{74}(\alpha, \beta)$ & $(\alpha_2=\alpha_1$ and $\beta_2=\beta_1$) or $(\alpha_2=\frac{1}{\alpha_1}$ and $\beta_2=\frac{\alpha_1\beta_1}{\alpha_1\beta_1+\beta_1-1}$)  \\ \hline   
       
    $\ca_{44}(\alpha, \beta)$ & $(\alpha_2=\alpha_1$ and $\beta_2=\beta_1$) or $(\alpha_2=\beta_1$ and $\beta_2=\alpha_1$) & $\ca_{75}(\alpha)$ & $\alpha_2=\alpha_1$  \\ \hline    
\newpage
\hline
  $\ca_{76}(\alpha)$ & $\alpha_2=\alpha_1$ &  $\ca_{91}(\alpha, \beta, \gamma)$ & ($\alpha_2=\alpha_1$ and $\beta_2=\beta_1$ and $\gamma_2=\gamma_1$) or ($\alpha_2=-\alpha_1$ and $\beta_2=\beta_1$ and $\gamma_2=-\gamma_1$) \\ \hline
    $\ca_{77}(\alpha)$ & $\alpha_2=\alpha_1$  & $\ca_{92}(\alpha, \beta, \gamma)$ & $\alpha^2_2=\alpha^2_1$ and $\beta_2=\beta_1$ and $\gamma_2=\gamma_1$ \\ \hline
    $\ca_{78}(\alpha)$ & $\alpha_2=\alpha_1$ & $\ca_{93}(\alpha, \beta)$ & $\alpha^2_2=\alpha^2_1$ and $\beta_2=\beta_1$  \\ \hline
    $\ca_{81}(\alpha, \beta)$ & $(\alpha_2=\alpha_1$ and $\beta_2=\beta_1$) or $(\alpha_2=-\alpha_1$ and $\beta_2=\beta_1$) or ($\alpha_2=\frac{\alpha_1\beta_1-(\beta_1-2)\sqrt{\alpha^2_1+4\beta_1+4}}{2\beta_1}$ and $\beta_2=\frac{\alpha^2_1+2\beta_1+4-\alpha_1\sqrt{\alpha^2_1+4\beta_1+4}}{2\beta_1}$) or ($\alpha_2=\frac{-\alpha_1\beta_1+(\beta_1-2)\sqrt{\alpha^2_1+4\beta_1+4}}{2\beta_1}$ and $\beta_2=\frac{\alpha^2_1+2\beta_1+4-\alpha_1\sqrt{\alpha^2_1+4\beta_1+4}}{2\beta_1}$) or ($\alpha_2=\frac{-\alpha_1\beta_1-(\beta_1-2)\sqrt{\alpha^2_1+4\beta_1+4}}{2\beta_1}$ and $\beta_2=\frac{\alpha^2_1+2\beta_1+4+\alpha_1\sqrt{\alpha^2_1+4\beta_1+4}}{2\beta_1}$) or ($\alpha_2=\frac{\alpha_1\beta_1+(\beta_1-2)\sqrt{\alpha^2_1+4\beta_1+4}}{2\beta_1}$ and $\beta_2=\frac{\alpha^2_1+2\beta_1+4+\alpha_1\sqrt{\alpha^2_1+4\beta_1+4}}{2\beta_1}$) &  $\ca_{94}(\alpha, \beta, \gamma)$ & $\alpha^2_2=\alpha^2_1$ and $\beta_2=\beta_1$ and $\gamma_2=\gamma_1$  \\ \hline
    $\ca_{82}(\alpha)$ & $\alpha_2=\alpha_1$ & $\ca_{95}(\alpha, \beta)$ & $(\alpha_2=\alpha_1$ and $\beta_2=\beta_1$) or $(\alpha_2=\alpha_1$ and $\beta_2=-\beta_1$   \\ \hline
    $\ca_{84}(\alpha, \beta)$ & $\alpha_2=\alpha_1$ and $\beta_2=\beta_1$ & $\ca_{96}(\alpha, \beta, \gamma, \theta)$ & ($\alpha_2=\alpha_1$ and $\beta_2=\beta_1$ and $\gamma_2=\gamma_1$ and $\theta_2=\theta_1$) or ($\alpha_2=-\alpha_1$ and $\beta_2=\beta_1$ and $\gamma_2=-\gamma_1$ and $\theta_2=\theta_1$)   \\ \hline
   $\ca_{85}(\alpha, \beta, \gamma)$ & $\alpha_2=\alpha_1$ and $\beta_2=\beta_1$ and $\gamma_2=\gamma_1$  &  $\ca_{97}(\alpha)$ & $\alpha_2=\alpha_1$ \\ \hline
    $\ca_{87}(\alpha, \beta)$ & $\alpha_2=\alpha_1$ and $\beta_2=\beta_1$ & $\ca_{98}(\alpha)$ & $\alpha^2_2=\alpha^2_1$ \\ \hline
    $\ca_{88}(\alpha, \beta)$ & $\alpha_2=\alpha_1$ and $\beta_2=\beta_1$ &  $\ca_{99}(\alpha, \beta)$ & $(\alpha_2=\alpha_1$ and $\beta_2=\beta_1$) or $(\alpha_2=-\beta_1$ and $\beta_2=-\alpha_1$) or ($\alpha_2=\alpha_1\beta_1$ and $\beta_2=-\frac{1}{\alpha_1}$) or ($\alpha_2=\alpha_1\beta_1$ and $\beta_2=\frac{1}{\beta_1}$) or ($\alpha_2=-\frac{1}{\beta_1}$ and $\beta_2=-\alpha_1\beta_1$) or ($\alpha_2=\frac{1}{\alpha_1}$ and $\beta_2=-\alpha_1\beta_1$)  \\ \hline
    $\ca_{89}(\alpha, \beta, \gamma)$ & $\alpha_2=\alpha_1$ and $\beta_2=\beta_1$ and $\gamma_2=\gamma_1$  & $\ca_{100}(\alpha, \beta)$ & hard to compute  \\ \hline
    $\ca_{90}(\alpha)$ & $\alpha^2_2=\alpha^2_1$ & $\ca_{101}(\alpha, \beta, \gamma)$ & hard to compute  \\ \hline   
    \end{longtable}
    \captionof{table}{Condition of isomorphism classes}  
\end{center}
\end{scriptsize}

Throughout this work, we use Mathematica program implementing Algorithm 2.6 given in \cite{cil2012} which determines if given two Leibniz algebras are isomorphic. However we note that for some difficult cases in Theorem \ref{thm202}, this program cannot decide whether given two Leibniz algebras are isomorphic. Note that in Theorem \ref{thm202} we obtain 101 distinct isomorphism classes $\ca_{161}, \ca_{162}, \ldots, \ca_{261}(\alpha, \beta, \gamma)$; and additional 15 algebras $\clr_{1}, \ldots, \clr_{15}$ that are not distinct and can be isomorphic to the isomorphism classes $\ca_{161}, \ca_{162}, \ldots, \ca_{261}(\alpha, \beta, \gamma)$.
\\
\\
{\bf Acknowledgments} This work is part of author's Ph.D. Thesis submitted to North Carolina State University. The author thanks to his advisors, Dr. Kailash Misra and Dr. Ernest Stitzinger, for their guidance throughout his Ph.D.


\bibliographystyle{amsplain}

\begin{thebibliography}{35}

\bibitem{ao2006} Albeverio, S., Omirov, B. A., Rakhimov I.S. Classification of 4-dimensional nilpotent complex Leibniz algebras. \emph{Extracta mathematicae}, 2006, 21(3):197-210.

\bibitem{ao1999} Ayupov, Sh. A., Omirov, B. A. On 3-dimensional Leibniz algebras. \emph{Uzbek. Math. Zh.}, 1999, no. 1, p. 9-14.

\bibitem{ao2001} Ayupov, Sh. A., Omirov, B. A. On some classes of nilpotent Leibniz algebras. \emph{Sibirsk. Math. Zh.}, 2001, 42(1):18-29.

\bibitem{b2011} Barnes, D. Some theorems on Leibniz algebras. \emph{Comm. Algebra}, 2011, 39(7):2463-2472.

\bibitem{b1965} Bloh, A. On a generalization of Lie algebra notion. \emph{Math. in USSR Doklady}, 1965, 165(3):471-473. 

\bibitem{cil2012} Casas, J.-M., Insua M.-A.,  Ladra, M., Ladra, S. An algorithm for the classification of 3-dimensional complex Leibniz algebras. \emph{Linear Algebra Appl.}, 2012, 9:3747-3756.

\bibitem{dms2014} Demir, I., Misra, K.C,  Stitzinger, E. On some structures of Leibniz algebras, in Recent Advances in Representation Theory, Quantum Groups, Algebraic Geometry, and Related Topics, Contemporary Mathematics, vol. 623, Amer. Math. Soc., Providence, RI, 2014, pp. 41-54.

\bibitem{dms2016} Demir, I., Misra, K.C,  Stitzinger, E. Classification of Some Solvable Leibniz Algebras. \emph{Algebras and Representation Theory}, 2015, 18(5), DOI: 10.1007/s10468-015-9580-5.

\bibitem{dms2016c} Demir, I., Misra, K.C,  Stitzinger, E. On classification of four dimensional nilpotent Leibniz Algebras. \emph{Comm. Algebra}, 2016, 45(3):1012-1018.

\bibitem{d2016} Demir, I., Classification of $5-$Dimensional Complex Nilpotent Leibniz Algebras. \emph{PhD Thesis}, North Carolina State University, Raleigh, NC, USA, 2016.

\bibitem{g1998} Gong, M.-P., Classification of nilpotent Lie algebras of dimension $7$. \emph{PhD Thesis}, University of Waterloo, Waterloo, Canada, 1998.

\bibitem{l1993}  Loday, J.-L., Une version non commutative des algebres de Lie: les algebres de Leibniz, \emph{Enseign. Math. (2)}, 1993, 39(3-4):269-293.

\bibitem{lp1993} Loday, J.-L., Pirashvili, T.  Universal enveloping algebras of Leibniz algebras and (co)homology. \emph{Math. Ann.}, 1993, 296(1):139-158.

\bibitem{rb2010} Rakhimov, I.S.,  Bekbaev U.D. On isomorphisms and invariants of finite dimensional complex filiform Leibniz algebras. \emph{Comm. Algebra}, 2010, 38(12):4705-4738.

\bibitem{rr2012} Rikhsiboev, I.M.,  Rakhimov I.S. Classification of three dimensional complex Leibniz algebras. \emph{AIP Conference Proc.}, 2012, 1450:358-362,  DOI: 10.1063/1.4724168.

\bibitem{t2010} Teran, F.D. Canonical forms for congruence of matrices: a tribute to H. W. Turnbull and A. C. Aitken. 2nd meeting on Linear Algebra, Matrix Analysis and Applications(ALAMA 2010), Valencia(Spain), 2010.

\end{thebibliography}

\end{document}